\DeclareSymbolFontAlphabet{\mathbb}{AMSb} 
\DeclareSymbolFontAlphabet{\mathbbl}{bbold}
\newcommand{\Z}{\mathbb{Z}}
\newcommand{\D}{\mathbb{D}}
\newcommand{\F}{\mathbb{F}}
\newcommand{\N}{\mathbb{N}}
\newcommand{\R}{\mathbb{R}}
\newcommand{\Q}{\mathbb{Q}}
\newcommand{\A}{\mathbb{A}}
\renewcommand{\L}{\mathbb{L}}
\renewcommand{\P}{\mathbb{P}}
\newcommand{\C}{\mathbb{C}}
\newcommand{\G}{\mathbb{G}}
\newcommand{\mC}{\mathcal{C}}
\newcommand{\mE}{\mathcal{E}}
\newcommand{\mI}{\mathcal{I}}
\newcommand{\mO}{\mathcal{O}}
\newcommand{\mS}{\mathcal{S}}
\newcommand{\mX}{\mathcal{X}}
\newcommand{\sH}{\mathscr{H}}
\newcommand{\fX}{\mathfrak{X}} 
\newcommand{\fY}{\mathfrak{Y}}
\newcommand{\fm}{\mathfrak{m}}
\newcommand{\tu}{\textup}
\newcommand{\cl}{\overline}
\newcommand{\ul}{\underline}
\newcommand{\so}{\Rightarrow} 
\renewcommand{\le}{\leqslant}
\renewcommand{\ge}{\geqslant}
\newcommand{\ra}{\rightarrow}
\newcommand{\sq}{\widetilde}
\newcommand{\minus}{\backslash}
\DeclareMathOperator{\GL}{GL}
\DeclareMathOperator{\lra}{\: \longrightarrow \:} 
\DeclareMathOperator{\isom}{\;\xrightarrow{\: {}_{\sim} \:} \;} 
\DeclareMathOperator{\Gal}{Gal}
\newcommand{\gr}[1]{\langle {#1} \rangle} 
\DeclareMathOperator{\spec}{Spec}
\DeclareMathOperator{\spf}{Spf}
\DeclareMathOperator{\spa}{Spa}
\DeclareMathOperator{\ett}{\textup{\'et}}
\DeclareMathOperator{\ket}{\textup{k\'et}}
\DeclareMathOperator{\proet}{\textup{pro\'et}}
\DeclareMathOperator{\proket}{\textup{prok\'et}}
\theoremstyle{theorem}
\newtheorem{theorem}{Theorem}[section]
\newtheorem{proposition}[theorem]{Proposition} 
\newtheorem{corollary}[theorem]{Corollary}
\newtheorem{lemma}[theorem]{Lemma}
\newtheorem{question}[theorem]{Question}
\newtheorem*{theorem*}{Theorem}
\newtheorem*{proposition*}{Proposition}
\theoremstyle{definition}
\newtheorem{example}[theorem]{Example}
\newtheorem{definition}[theorem]{Definition}
  \newtheorem{construction}[theorem]{Construction} 
\newtheorem{notation}[theorem]{Notation}
\newtheorem{remark}[theorem]{Remark}
\title{\large $\textup{Monodromy and rigidity of crystalline local systems}$}
\author{Hansheng Diao}
\email{hdiao@mail.tsinghua.edu.cn}
\address{Yau Mathematical Sciences Center, Tsinghua University, China.}
\author{Zijian Yao}
\email{yao@math.ucsb.edu}
\address{Department of Mathematics, University of California, Santa Barbara, CA 93106, USA.}
\numberwithin{equation}{section}
\begin{document}

\begin{abstract} 
We study several rigidity properties of $p$-adic local systems on a smooth  rigid analytic space  $X$ over a $p$-adic field. We prove that the monodromy of the log isocrystal attached to a  $p$-adic local system is ``rigid'' along irreducible components of the special fiber. Then we give several applications. First, suppose that $X$ has good reduction. We show that if a family of semistable representations is crystalline at one classical point on $X$, then it is crystalline everywhere. Second, combining with the $p$-adic monodromy theorem recently studied by the authors and their collaborators, we prove the following surprising rigidity result conjectured by Shankar: for any $p$-adic local system on a smooth projective variety with good reduction, if it is potentially crystalline at one classical point, then it is potentially crystalline everywhere. Finally, we show that if a $p$-adic local system on the complement of a reduced normal crossing divisor on a smooth  rigid analytic space is crystalline at all classical points, then it extends uniquely to a $p$-adic local system on the entire space. In other words, such a local system cannot have geometric monodromy if it has no arithmetic monodromy everywhere on the complement of a reduced normal crossing divisor.  
\end{abstract}

\maketitle
 \thispagestyle{empty}
\tableofcontents

\section{\large Introduction} 

\subsection*{\large Main results on rigidity of crystallinity} \indent 
\vspace*{0.2cm}

\noindent The goal of this paper is to investigate certain \textit{rigidity properties} for $p$-adic local systems. 
Throughout the article, let $K/\Q_p$ be a discretely-valued nonarchimedean field extension with perfect residue field $k$ and let $\mO_K$ be its ring of integers. We pick a uniformizer $\varpi \in K$ and normalize the absolute value on $K$ such that $|\varpi| = 1/p$. In \cite{Liu_Zhu}, Liu--Zhu prove the following rigidity result for de Rham local systems. 

\begin{theorem*}[Liu--Zhu] 
Let $X$ be a geometrically connected rigid analytic space over $K$ and let $\L$ be an \'etale $\Q_p$-local system on $X$. If there exists a classical point\footnote{Throughout the article, by a \emph{classical point} on a smooth rigid analytic space over $K$ (viewed as an adic space), we mean a point $\spa(K', \mO_{K'})\ra X$ for some finite extension $K'/K$. In particular, it has residue field $K'$. When $X$ is the rigid analytification of a smooth algebraic variety over $K$, a classical point is the same as a closed point on the underlying algebraic variety.} $x_0$ on $X$ such that the restriction $\L|_{x_0}$ is de Rham as a $p$-adic Galois representation,\footnote{More precisely, we consider the stalk $\L_{\bar{x}_0}$ for some geometric point $\bar{x}_0$ above $x_0$, which can be viewed as a $p$-adic Galois representation of the residue field of $x_0$. Abusing the notation, we write $\L|_{x_0}$ for the $p$-adic Galois representation $\L_{\bar{x}_0}$ throughout this article. When $\L|_{x_0}$ is de Rham (resp., crystalline, semistable), we also say that $\L$ is de Rham (resp., crystalline, semistable) at $x_0$.} then $\L$ is a de Rham local system in the sense of \cite{Scholze_p_adic_Hodge}. In particular, it is de Rham at all classical points on $X$. 
\end{theorem*}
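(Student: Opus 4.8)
\medskip
\noindent\textbf{Sketch of a proof strategy.} The plan is to attach to an arbitrary $\Q_p$-local system $\L$ on $X$ a coherent $\mO_X$-module that detects de Rham-ness fibrewise, to show this module is a vector bundle so that its rank is locally constant, and then to propagate the de Rham property out of the single point $x_0$ using connectedness of $X$. Concretely, let $\nu\colon X_{\proet}\to X_{\ett}$ be the projection and let $\mathcal{O}\mathbb{B}_{\mathrm{dR}}$ denote Scholze's relative de Rham period sheaf on $X_{\proet}$, with its filtration $\Fil^\bullet$ and its $\mO_X$-linear integrable connection $\nabla$, which we extend to $\L\otimes_{\Q_p}\mathcal{O}\mathbb{B}_{\mathrm{dR}}$ via the second factor. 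Recall that $\L$ is de Rham in the sense of \cite{Scholze_p_adic_Hodge} exactly when
\[
\mathcal{D}_{\mathrm{dR}}(\L)\ :=\ \nu_*\!\bigl(\L\otimes_{\Q_p}\mathcal{O}\mathbb{B}_{\mathrm{dR}}\bigr)
\]
is a vector bundle of rank $d:=\mathrm{rank}\,\L$, equivalently when the natural (always injective) map $\mathcal{D}_{\mathrm{dR}}(\L)\otimes_{\mO_X}\mathcal{O}\mathbb{B}_{\mathrm{dR}}\to\L\otimes_{\Q_p}\mathcal{O}\mathbb{B}_{\mathrm{dR}}$ is an isomorphism. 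The first substantial step is to prove, for \emph{every} $\L$, that $\mathcal{D}_{\mathrm{dR}}(\L)$ is a \emph{coherent} $\mO_X$-module of rank $\le d$. Here I would proceed by d\'evissage along the $t$-adic filtration of $\mathcal{O}\mathbb{B}_{\mathrm{dR}}$, whose graded pieces are built from Tate twists of the completed structure sheaf $\widehat{\mO}_X$; finiteness of $\nu_*$ of such twists against $\L$ is precisely what relative Sen theory and the decompletion results of Kedlaya--Liu provide. I expect this coherence statement to be one of the two main obstacles.

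\smallskip
Next I would exploit the connection. A coherent $\mO_X$-module carrying an integrable connection on a smooth rigid analytic space over a characteristic-zero field is automatically locally free, so $\mathcal{D}_{\mathrm{dR}}(\L)$ is a vector bundle; in particular $x\mapsto\dim_{k(x)}\bigl(\mathcal{D}_{\mathrm{dR}}(\L)\otimes_{\mO_X}k(x)\bigr)$ is locally constant, hence constant on the connected space $X$, equal to some integer $r\le d$. It now suffices to show $r=d$.

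\smallskip
Finally I would feed in the hypothesis at $x_0$. Write $x_0=\spa(K',\mO_{K'})$ with $K'/K$ finite, fix a geometric point $\bar x_0$ above it, and recall that de Rham-ness of $\L|_{x_0}$ means $D_{\mathrm{dR}}(\L|_{x_0}):=(\L|_{\bar x_0}\otimes_{\Q_p}\Bdr)^{G_{K'}}$ is $d$-dimensional and trivializes $\L|_{\bar x_0}\otimes\Bdr$. The key move — the second main obstacle — is to \emph{spread out} these periods: on a small affinoid $U\ni x_0$ with \'etale coordinates $T_1,\dots,T_n$ vanishing at $x_0$, a careful analysis of the local structure of $\mathcal{O}\mathbb{B}_{\mathrm{dR}}$ near the classical point $x_0$ (plus a convergence estimate) should let one extend a $\Bdr$-basis of $D_{\mathrm{dR}}(\L|_{x_0})$, by $\nabla$-horizontal Taylor expansion in the $T_i$, to $d$ linearly independent sections of $\L\otimes\mathcal{O}\mathbb{B}_{\mathrm{dR}}$ over $U$. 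This forces $\dim_{k(x_0)}\bigl(\mathcal{D}_{\mathrm{dR}}(\L)\otimes k(x_0)\bigr)\ge d$, hence $r=d$, so $\mathcal{D}_{\mathrm{dR}}(\L)$ is a vector bundle of rank $d$ and $\L$ is de Rham. Restricting the resulting isomorphism $\mathcal{D}_{\mathrm{dR}}(\L)\otimes_{\mO_X}\mathcal{O}\mathbb{B}_{\mathrm{dR}}\isom\L\otimes_{\Q_p}\mathcal{O}\mathbb{B}_{\mathrm{dR}}$ at an arbitrary classical point and passing to Galois invariants of the fibre then shows $\L$ is de Rham there as well. Apart from the two flagged inputs — coherence of $\mathcal{D}_{\mathrm{dR}}(\L)$ for arbitrary $\L$, and the spreading-out of periods near $x_0$ — the argument is formal.
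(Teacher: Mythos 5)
The paper cites this theorem from \cite{Liu_Zhu} without reproducing a proof, so I am comparing your sketch against Liu--Zhu's original argument. Your outline reproduces its architecture faithfully: form $\mathcal{D}_{\mathrm{dR}}(\L)=\nu_*(\L\otimes_{\Q_p}\mathcal{O}\mathbb{B}_{\mathrm{dR}})$, prove it is a vector bundle carrying an integrable connection so that its rank is locally constant on the connected space $X$, and pin the rank to $d=\mathrm{rk}\,\L$ by comparing the fibre at $x_0$ with $D_{\mathrm{dR}}(\L|_{x_0})$. The two obstacles you flag --- coherence via decompletion/relative Sen theory, and the fibre comparison at a classical point --- are precisely where the technical work is carried out in \cite{Liu_Zhu}, so you have identified the correct objects and the correct hard steps.

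The description of the second obstacle as a ``$\nabla$-horizontal Taylor expansion in the $T_i$'' is where I would push back, as it mislocates the difficulty. The $\nabla$-horizontal elements of $\L\otimes\mathcal{O}\mathbb{B}_{\mathrm{dR}}$ form exactly $\L\otimes\mathbb{B}_{\mathrm{dR}}$ (since $\mathbb{B}_{\mathrm{dR}}$ is the sheaf of $\nabla$-constants of $\mathcal{O}\mathbb{B}_{\mathrm{dR}}$), so a $\Bdr$-period of $\L|_{x_0}$ is already $\nabla$-flat after the inclusion $\Bdr\hookrightarrow\mathcal{O}\mathbb{B}_{\mathrm{dR}}$, and a formal expansion in the $T_i$ produces nothing new. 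The genuine issue is Galois descent: a priori the natural map $\mathcal{D}_{\mathrm{dR}}(\L)\otimes k(x_0)\ra D_{\mathrm{dR}}(\L|_{x_0})$ is injective but need not be surjective, because $G_{K'}$-invariance of a period of the stalk $\L_{\bar x_0}$ does not spread to invariance under the full (geometric-plus-arithmetic) fundamental group of an open neighborhood $U\ni x_0$. Liu--Zhu circumvent this by first establishing that the geometric functor $\mathcal{RH}(\L)$ --- the analogous pushforward after base change to $\widehat{\overline{K}}$ --- is \emph{always} a vector bundle of rank exactly $d$, via a decompletion argument over affinoids with toric charts, and then descending; the surjectivity of the fibre map at $x_0$ falls out of this structural comparison, not from a convergent power-series construction. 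Your high-level plan is right, but the second box cannot be closed by formal Taylor expansion alone; you will need the decompletion and descent machinery of \cite{Liu_Zhu}.
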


As Liu--Zhu have already remarked in \cite{Liu_Zhu}, the naive analogues of rigidity of de Rham local systems do not hold for crystalline or semistable local systems.\footnote{The notions of \textit{de Rham}, \textit{semistable}, and \textit{crystalline} $p$-adic local systems are natural generalizations of Fontaine's \textit{de Rham}, \textit{semistable}, and \textit{crystalline} $p$-adic Galois representations. We refer the reader to \cite{Scholze_p_adic_Hodge} for the notion of de Rham local systems, and to \cite{Faltings_90, Faltings_almost, Andreatta_Iovita_crys, Andreatta_Iovita_st, Tsuji_notes, GY, DLMS2} for the notion of crystalline and semistable local systems under the presence of smooth or semistable integral models. For the purpose of this article, let us note that, to a semistable \'etale $\Z_p$-local system on the adic generic fiber $X$ of a semistable $p$-adic formal model $\fX$ defined over $\mO_K$, one can attach a log $F$-isocrystal on its special fiber $\fX_s$, where $\fX_s$ is equipped with the pullback log structure from the divisorial log structure $\mO_{\fX, \ett} \cap (\mO_{\fX, \ett}[\frac{1}{p}])^{\times}$ on $\fX$ (see Definition \ref{def:st_local_sys}). We refer the reader to \cite[\S 3.5 \& \S 3.6]{DLMS2} and the references therein for more details. Also note that, if $\fX$ turns out to be smooth, the log structure on its special fiber $\fX_s$ is simply the one associated to the pre-log structure $\N \ra \mO_{\fX_s, \ett}$ sending $1 \mapsto 0$ (also see \S \ref{sec:log_crystals}). } 
The central theme of this article concerns the following natural question: \textit{in what sense is crystallinity rigid?}  \\

To put our results into context, it is perhaps elucidative 
to first consider what happens in the $l$-adic setting for a prime $l$ different from $p$.

\begin{proposition*}[Oswal--Shankar--Zhu]
    Let $\mathcal X$ be a smooth connected proper scheme over $\mO_K$ and let $X$ be its generic fiber over $K$. Let $\mathbb M$ be an \'etale $\Z_l$-local system on $X$ of rank $d$ with $l \ne p$. Further suppose that $p$ is coprime to the cardinality of $\mathrm{GL}_d(\F_l)$. If $\mathbb M$ is unramified at a closed $K$-point $x_0 \in X(K)$ (i.e., $\mathbb M|_{x_0}$ is unramified as a $p$-adic Galois representation), then $\mathbb M$ is unramified at all classical points on $X$. 
\end{proposition*}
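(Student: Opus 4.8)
The plan is to leverage the specialization map for the étale fundamental group together with a finiteness argument coming from the coprimality hypothesis. First I would reduce the ramification question to a statement about the image of the monodromy representation. Since $\mathbb{M}$ has rank $d$, it corresponds to a continuous representation $\rho \colon \pi_1^{\ett}(X) \to \mathrm{GL}_d(\Z_l)$. Being unramified at a classical point $x \colon \spa(K',\mathcal{O}_{K'}) \to X$ means precisely that the composite $\mathrm{Gal}(\overline{K'}/K') \to \pi_1^{\ett}(X) \xrightarrow{\rho} \mathrm{GL}_d(\Z_l)$ kills the inertia subgroup $I_{K'}$. Because $\mathcal{X}/\mathcal{O}_K$ is smooth and proper, every classical point extends to an $\mathcal{O}_{K'}$-point of $\mathcal{X}$ (after enlarging $K'$), so its inertia maps into the inertia of the generic point of $\mathcal{X}$, and one is reduced to understanding how the wild/tame inertia interacts with $\rho$ across different points of the special fiber.

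The heart of the argument is the following: the coprimality of $p$ with $|\mathrm{GL}_d(\F_l)|$ forces the image of any inertia-at-$p$ subgroup under $\rho \bmod l$ to be trivial, hence (by the theory of $l$-adic representations of inertia groups, i.e., Grothendieck's quasi-unipotence together with the fact that the pro-$p$ wild inertia maps to a pro-$l$ group) the image of the \emph{full} inertia under $\rho$ is pro-$l$ and, being also a quotient of the tame quotient which is pro-$p'$ away from a $\widehat{\Z}^{(p')}$-part, is in fact \emph{unipotent} — controlled by a single nilpotent "monodromy operator" $N$ up to conjugacy. Concretely, I would show that $\rho$ restricted to any decomposition group at a point of the special fiber factors (after passing to a finite cover independent of the point, using that $|\mathrm{GL}_d(\F_l)|$ is prime to $p$ to kill the tame part uniformly) through a representation whose inertia image is topologically generated by $\exp(t N)$ for a nilpotent $N$ and a suitable $l$-adic parameter $t$. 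This $N$ is a section of the nearby-cycles / local-monodromy local system on the special fiber $\mathcal{X}_s$, which is constructible; and the locus where $N = 0$ is open. The hypothesis at $x_0$ says this locus is nonempty.

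To conclude I need that the locus $\{N = 0\}$ is also closed, i.e., that $N$ is locally constant — this is the rigidity input, and it is exactly the $l$-adic shadow of the main monodromy-rigidity theorem of this paper for the $p$-adic case. In the $l\ne p$ situation it follows from the smooth and proper base change theorem: the sheaf $R^0 j_* \mathbb{M}$ and the associated monodromy filtration are compatible with specialization on the smooth proper family $\mathcal{X}/\mathcal{O}_K$, so the rank of $N$ at a point of $\mathcal{X}_s$ does not jump, and since $\mathcal{X}_s$ is connected (as $\mathcal{X}/\mathcal{O}_K$ is proper, smooth, with connected generic fiber $X$) the vanishing of $N$ at one point propagates to all points. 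Then $\rho$ restricted to every decomposition group at every point of $\mathcal{X}_s$ has trivial inertia image, i.e., $\mathbb{M}$ is unramified at every classical point on $X$.

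\textbf{Main obstacle.} The delicate step is the uniform trivialization of the tame inertia. A priori the tame-inertia image at a point $x$ can be a nontrivial cyclic subgroup of $\mathrm{GL}_d(\Z_l)$ of order prime to $p$ but divisible by $l$, which would \emph{not} be killed by the coprimality of $p$ with $|\mathrm{GL}_d(\F_l)|$ alone — one must use that such an image, being an $l$-adic deformation of an element of $\mathrm{GL}_d(\F_l)$ of order prime to $p$, is actually of order prime to $p$ on the nose (Hensel/Teichmüller), and then that the coprimality hypothesis forces it into the pro-$l$ part, hence trivial once we also quotient by the wild part. Making this argument uniform in $x$ — so that a \emph{single} finite étale cover of $X$ trivializes all the tame ramification simultaneously — is where I expect to spend the most care; the proper base change theorem is what ultimately makes it uniform, by comparing the local situation at any point with the chosen point $x_0$ through the connected special fiber.
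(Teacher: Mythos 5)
Your opening step matches the paper's: since $p\nmid|\GL_d(\F_l)|$ and $\ker\bigl(\GL_d(\Z_l)\to\GL_d(\F_l)\bigr)$ is pro-$l$, the group $\GL_d(\Z_l)$ is pro-prime-to-$p$, so the monodromy $\rho\colon\pi_1(X)\to\GL_d(\Z_l)$ factors through $\pi_1(X)^{(p)}$. From there the two arguments diverge completely. The paper's proof is a one-line descent: it invokes the short exact sequence
\[
1 \longrightarrow I_K^{(p)} \longrightarrow \pi_1(X)^{(p)} \longrightarrow \pi_1(\mathcal X)^{(p)} \longrightarrow 1,
\]
which holds because $\mathcal X$ is smooth over $\mathcal O_K$, so that $\mathcal X_s$ is a single smooth irreducible divisor and Zariski--Nagata purity makes the kernel of $\pi_1(X)^{(p)}\to\pi_1(\mathcal X)^{(p)}$ be topologically normally generated by one copy of $I_K^{(p)}$. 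Unramifiedness at $x_0$ kills $\rho$ on that copy, hence $\rho$ descends to $\pi_1(\mathcal X)^{(p)}$, i.e.\ $\mathbb M$ extends over the integral model; restricting along the $\mathcal O_{K'}$-point above any classical point $x'$ (which exists by properness) shows $\mathbb M|_{x'}$ factors through $\pi_1(\spec\mathcal O_{K'})=\widehat\Z$, i.e.\ is unramified. You never invoke this exact sequence nor the irreducibility of $\mathcal X_s$, and these are exactly what make vanishing at one point propagate.

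Your alternative route — assembling a pointwise "monodromy operator $N$" on $\mathcal X_s$ and showing it is locally constant — has two genuine gaps. First, coprimality of $p$ with $|\GL_d(\F_l)|$ does \emph{not} force the tame inertia image to be pro-$l$ or unipotent. The image of $I_{K'}^{\mathrm{tame}}\cong\widehat\Z^{(p')}(1)$ in $\GL_d(\Z_l)$ is procyclic, and its reduction mod $l$ can be any cyclic subgroup of $\GL_d(\F_l)$ of order prime to $p$, including order divisible by primes $q\neq l,p$. The hypothesis says nothing about such $q$-torsion, so there is no "forcing into the pro-$l$ part"; your Hensel/Teichm\"uller observation only reconfirms that the order is prime to $p$, which was already known, and does not make the image trivial. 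Second, the claim that $N$ is a constructible section on $\mathcal X_s$ whose vanishing locus is clopen, and that smooth--proper base change proves this, is not justified: that theorem governs the cohomology of a smooth proper family, not the local inertia action of an arbitrary local system on the generic fiber at each point of the special fiber. Finally, your uniformization device of passing to a finite \'etale cover trivializing tame ramification would, even if it worked, only yield \emph{potential} unramifiedness, which is strictly weaker than the statement to be proved.
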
 

This result is essentially \cite[Proposition 3.6]{OSZ}. For convenience of the reader, let us recall the salient point of the proof, which uses \'etale fundamental groups.

\begin{proof}    
By assumption, $\mathbb M$ gives rise to a monodromy representation $\rho: \pi_1 (X)^{(p)} \ra \GL_d (\Z_l)$, where ${(-)}^{(p)}$ denotes the maximal prime-to-$p$ quotient. We have a short exact sequence  
\[
1 \ra I_{K}^{(p)} \ra \pi_1(X)^{(p)} \ra \pi_1  (\mathcal X)^{(p)} \ra 1
\]
where $I_K$ is the inertia group of $K$. 
Since  $\mathbb M$ is unramified at $x_0$, $\rho$ factors through $\pi_1 (\mathcal X)^{(p)}$. Therefore,  $\mathbb M$ spreads to a local system on the integral model $\mathcal X$ and the claim follows. 
\end{proof}

In other words, when $X$ has good reduction, the property of being unramified is quite ``rigid'' along families of $l$-adic representations. From this perspective, the question on rigidity of crystallinity for $p$-adic local systems seeks $p$-adic analogues of this phenomenon. Our first result towards this direction is the following.

\begin{theorem} \label{theorem:main_intro}
Let $\fX$ be a smooth connected $p$-adic formal scheme over $\mO_K$. Let $X$ be the rigid analytic generic fiber of $\fX$, viewed as an adic space over $\mathrm{Spa}(K, \mO_K)$. Let $\L$ be a \textit{semistable} \'etale $\Z_p$-local system on $X$. If there exists a classical point $x_0\in X(K)$\footnote{By $x_0\in X(K)$, we mean that the classical point $x_0$ is $K$-rational, namely, it has residue field precisely $K$.} such that $\L$ is crystalline at $x_0$,  then $\L$ is a crystalline local system. In particular, it is crystalline at all classical points on $X$. 
\end{theorem}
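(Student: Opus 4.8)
The plan is to pass from the semistable local system $\L$ to the associated log $F$-isocrystal on the special fiber $\fX_s$ and to characterize crystallinity in terms of the vanishing of the monodromy operator $N$ of this log isocrystal. More precisely, since $\fX$ is smooth, the special fiber $\fX_s$ carries the log structure associated to $\N \to \mO_{\fX_s,\ett}$, $1 \mapsto 0$, so it makes sense to speak of the monodromy operator $N \colon \mE \to \mE$ on the log isocrystal $\mE$ attached to $\L$. The key expected input (which should be established in the body of the paper, in the ``monodromy rigidity'' section advertised in the abstract) is that $\L$ is crystalline if and only if $N = 0$ on the isocrystal $\mE$; equivalently, the monodromy of the log isocrystal vanishes identically along the (in this case unique, as $\fX$ is connected) irreducible component of $\fX_s$. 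I would then argue that the locus where $N$ vanishes is both open and closed, or rather that $N$ being a horizontal section of a coherent sheaf with connection on an irreducible space, its vanishing at one point propagates to the whole component — this is the ``rigidity along irreducible components of the special fiber'' of the theorem cited from the abstract.

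The concrete steps are: \textbf{(1)} Attach to the semistable $\Z_p$-local system $\L$ its log $F$-isocrystal $\mE$ on $\fX_s$, with its Frobenius, connection, and monodromy operator $N$, using the constructions of \cite{DLMS2} recalled in the excerpt. \textbf{(2)} Reduce ``crystalline at $x_0$'' to ``$N$ vanishes at the point $s_0 \in \fX_s$ obtained by reducing $x_0$'': here I use that $x_0 \in X(K)$ is a $K$-rational point with good reduction, so it extends to an $\mO_K$-point of $\fX$, whose special fiber $s_0$ is a $k$-point of $\fX_s$; the restriction of $\mE$ (with its monodromy) to $s_0$ recovers the filtered $(\varphi,N)$-module (or rather the weakly admissible module / log-crystalline data) attached to the Galois representation $\L|_{x_0}$, and $\L|_{x_0}$ is crystalline precisely when the resulting $N$ is zero. \textbf{(3)} Invoke the rigidity statement for the monodromy operator: $N$ is a global section of $\mathcal{E}nd(\mE)$ which is horizontal for the induced connection (by the standard commutation $N\nabla = \nabla N$ up to the expected twist, or because $N$ is Frobenius-equivariant up to the factor $p$ and Frobenius-equivariant horizontal sections on a connected special fiber are determined by their value at one point). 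Since $\fX_s$ is connected (as $\fX$ is smooth and connected) and $N$ vanishes at $s_0$, conclude $N \equiv 0$, hence $\mE$ is a genuine (non-log) $F$-isocrystal and $\L$ is crystalline. \textbf{(4)} Finally, ``crystalline at all classical points'' follows because the restriction of a crystalline local system to any classical point is a crystalline Galois representation — this is part of the formalism of crystalline local systems, or can be deduced by applying the construction at each such point.

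The main obstacle I expect is Step (3): making precise the sense in which the monodromy operator $N$ is ``rigid'' along $\fX_s$. One has to check that $N$ is indeed a horizontal (or Frobenius-compatible) section — this requires unwinding the compatibility of $N$, $\nabla$, and $\varphi$ in the log-prismatic or log-crystalline construction of $\mE$ from $\L$ — and then that a horizontal section of a coherent sheaf on a connected (possibly non-reduced, non-smooth) formal special fiber which vanishes at a closed point vanishes everywhere. The cleanest route is probably via the Frobenius structure: a section $s$ with $\varphi^*s = p\, s$ (up to units) on a connected scheme over a perfect field, if it vanishes at one point, must be nilpotent and in fact zero, because Frobenius contracts. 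A secondary technical point is Step (2): one must match the ``pointwise'' notion of crystallinity for the Galois representation $\L|_{x_0}$ (via Fontaine's $D_{\mathrm{st}}$, $D_{\mathrm{cris}}$) with the fiber at $s_0$ of the globally-constructed log isocrystal $\mE$; this is a compatibility between the relative and the arithmetic (point-wise) comparison constructions, which should be available from \cite{DLMS2} and the references therein.
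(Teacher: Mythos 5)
Your Steps (1), (2), and (4) are correct and match the paper's reduction: by Definition \ref{def:st_local_sys}, crystallinity of $\L$ is equivalent to the attached log $F$-isocrystal $\mE$ on $\fX_s^{\log}$ descending to an $F$-isocrystal on $\fX_s$; specializing $\mE$ at the reduction $s_0$ of $x_0$ recovers Fontaine's $(\varphi,N)$-module of $\L|_{x_0}$, so the hypothesis says exactly that the monodromy of $\mE$ vanishes at the single closed point $s_0$. The theorem then follows in one line from the rigidity result (Theorem \ref{theorem:log_crystal_1}), which is precisely the paper's proof of Theorem \ref{theorem:main_intro_1}. The gap is your Step (3) --- the rigidity of the monodromy --- which you rightly identify as the main obstacle, and neither of your candidate mechanisms closes it as stated. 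Horizontality of $N$ (which in the log setting holds only up to a Gauss--Manin-type correction) does not by itself give local constancy of the nilpotent rank of $N$, because $\ker(N)$ need not be locally free; and the slogan ``$\varphi^*N = pN$, so Frobenius contracts'' is not an argument: the relation $N\varphi = p\varphi N$ forces each fiber $N_z$ to be nilpotent by slope considerations, but it does not on its own control how $N$ varies over $\fX_s$. The Frobenius route that does work is the one the introduction explicitly attributes to Guo--Yang: they prove, via a relative Hyodo--Kato theorem, that $\ker(N_{\mE'})$ is itself a sub-$F$-isocrystal, and constancy of the rank of an $F$-isocrystal on a connected base then finishes the argument. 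But that structural fact is nontrivial, depends essentially on the Frobenius, and only works rationally.

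The paper's own proof of Theorem \ref{theorem:log_crystal_1} takes a different and strikingly elementary route. It regards the log (iso)crystal as a descent datum on the $p$-completed affine log crystalline site: after reducing \'etale-locally to the case of $\A^{1,\log}_k$ and choosing a basis, the descent isomorphism is a matrix $T_\delta = N_0 + N_1 t + N_2 t^2 + \cdots$ in the PD variable $t$ coming from the log structure, and the cocycle condition translates into explicit functional equations on the $N_i$. Combined with a $p$-adic Weierstrass preparation argument, these equations show that $N_1$ vanishing at a single closed point forces $N_1 \equiv 0$, and then $N_i \equiv 0$ for all $i \ge 1$ by induction, so the descent datum already lives over the non-log crystalline site. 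This argument uses no Frobenius structure at all --- a point the authors explicitly flag as surprising --- and in particular yields the rigidity statement integrally for log crystals as well as rationally for log isocrystals, which the Frobenius route cannot do.
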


In other words, rigidity of crystallinity is satisfied for semistable local systems on an rigid analytic space with \textit{good reduction}. This rigidity property in turn has the following geometric incarnation (see \cite[Theorem 3.4]{OSZ} for a comparison).   

\begin{corollary} \label{cor:good_reduction_everywhere}
Let $X$ be a smooth connected rigid analytic space over $K$ with good reduction and let $\pi: A \ra X$ be a family of abelian varieties over $X$. Suppose that the fibers of $\pi$ have semistable reduction at all classical points on $X$. If there is a classical point $x_0 \in X(K)$ such that the fiber $A_{x_0}$ of $\pi$ has good reduction, then $A$ has good reduction over every classical point on $X$.  
\end{corollary}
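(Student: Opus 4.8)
The plan is to deduce the statement from Theorem \ref{theorem:main_intro}, applied to the relative first $p$-adic \'etale cohomology $\L := R^1\pi_{\ett,*}\Z_p$ on $X$, equivalently the $\Z_p$-linear dual of the relative $p$-adic Tate module of $A$. This is a lisse \'etale $\Z_p$-local system on $X$ of rank $2g$ with $g = \dim(A/X)$, and for every classical point $x$ of $X$ with residue field $K'$ its fibre is the $p$-adic Galois representation $\L|_x = H^1_{\ett}(A_{x,\cl{K'}}, \Z_p)$ attached to the abelian variety $A_x$ over $K'$.

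The first — and, I expect, the main — step is to check that $\L$ is a \emph{semistable} \'etale $\Z_p$-local system on $X$ in the sense of Definition \ref{def:st_local_sys}, so that Theorem \ref{theorem:main_intro} is applicable. Since $\pi$ is proper and has semistable reduction at every classical point, and classical points are dense in $X$, one shows that $A$ extends to a semistable (e.g.\ polarized semi-abelian, after the usual rigidification) family $\mA \ra \fX$ over the smooth formal model $\fX$, using that the semi-abelian locus of the relevant N\'eron model over $\fX$ is open and meets every fibre. The relative first $p$-adic \'etale cohomology of such a family is exactly $\L$, and it is a semistable local system by the comparison theorems recalled in the footnote above and in \cite[\S 3.5--3.6]{DLMS2}. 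The delicate point here is precisely the passage from the pointwise hypothesis (semistable reduction at each classical point) to a genuine semistable integral model of the whole family over $\fX$; everything else is formal.

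It remains to feed this into the crystalline rigidity of Theorem \ref{theorem:main_intro}. At the distinguished point $x_0 \in X(K)$, the abelian variety $A_{x_0}$ has good reduction over $\mO_K$, so by the ``only if'' direction of the $p$-adic N\'eron--Ogg--Shafarevich criterion (Fontaine; Coleman--Iovita) the representation $\L|_{x_0}$ is crystalline. Thus $\L$ is a semistable local system which is crystalline at the $K$-rational point $x_0$, and Theorem \ref{theorem:main_intro} yields that $\L$ is a crystalline local system; in particular $\L|_x$ is crystalline for every classical point $x$ of $X$. Finally, for any classical point $x$ with residue field $K'$, the representation $H^1_{\ett}(A_{x,\cl{K'}}, \Z_p)$ is crystalline, so by the ``if'' direction of the same criterion (for which we moreover already know $A_x$ has semistable reduction) $A_x$ has good reduction over $\mO_{K'}$. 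This is the precise $p$-adic analogue of \cite[Theorem 3.4]{OSZ}, with Theorem \ref{theorem:main_intro} replacing the $\ell$-adic rigidity of the unramified condition.
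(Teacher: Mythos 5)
Your overall route is the right one and essentially the intended deduction: package the family as a $\Z_p$-local system $\L$ (the Tate module or its dual $R^1\pi_{\ett,*}\Z_p$, either works), invoke the $p$-adic N\'eron--Ogg--Shafarevich criterion of Coleman--Iovita to translate between reduction type of $A_x$ and crystallinity/semistability of $\L|_x$, and then apply Theorem~\ref{theorem:main_intro}. The paper does not give a separate proof of this corollary; it is meant to fall out exactly this way.

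The one place where you overcomplicate, and where your argument is actually at risk, is the step you yourself flag as ``the delicate point'': you try to establish that $\L$ is semistable in the sense of Definition~\ref{def:st_local_sys} by building a global semi-abelian model $\mA \ra \fX$ from the N\'eron model. Over a base of dimension greater than one a smooth N\'eron model of an abelian variety need not exist (only the so-called N\'eron-lft model, which is not separated in general), and the ``semi-abelian locus meets every fibre'' assertion is not something one gets for free from the pointwise hypothesis; this would require a genuine spreading-out argument that you have not supplied. You do not need any of it. The paper cites the pointwise criterion of Guo--Yang (\cite{GY}, used explicitly in \S\ref{section:punctured disc}): for a smooth $p$-adic formal scheme $\fX$ with generic fibre $X$, an \'etale $\Z_p$-local system on $X$ is semistable in the sense of Definition~\ref{def:st_local_sys} if and only if it is semistable at every classical point. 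Since semistable reduction of $A_x$ at every classical point $x$ gives semistability of $\L|_x$ at every classical point by Coleman--Iovita, $\L$ is a semistable local system directly, with no global integral model of $A$ required. With that substitution your proof is correct: $\L$ is semistable, crystalline at $x_0$ by good reduction of $A_{x_0}$, hence crystalline everywhere by Theorem~\ref{theorem:main_intro}, and the ``crystalline plus already-known semistable reduction implies good reduction'' direction of Coleman--Iovita finishes it.
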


\begin{remark} \label{remark:example_of_good_reduction} 
It may be helpful to remark that, examples of $X$ that satisfies the condition in Theorem \ref{theorem:main_intro} include smooth proper varieties with good reduction in the usual sense, the $p$-adic unit disc $\D = \{|z| \le 1\}$,  the ``thin annulus'' $\{|z| = 1\} \subset \D$,  
etc. It excludes the punctured disc $\D^\times = \{ 0 < |z| \le 1\}$, or ``thick annuli'' of the form $\{r_1\le |z| \le r_2\}$ where $r_1, r_2 \in p^{\Q}$ and $0 < r_1 < r_2$, or algebraic varieties such as $\A^1$ and $\G_m$. 
\end{remark}

More generally, for rigid analytic spaces with \textit{semistable reduction} instead of good reduction, we have a similar rigidity result, except that now one needs to test crystallinity at multiple classical points, one for each irreducible component of the special fiber. For illustration purposes, let us provide the simplest example of what we can prove, and refer the reader to Theorem \ref{cor:rigidity_crystalline_over_semistable} for the precise statement in the general case. For the setup, consider the closed unit disc 
\[\D=\D_K=\spa(K\langle z\rangle, \mO_K\langle z\rangle)=\{|z|\le 1\}\]
over $K$ and consider the ``thick annulus''   
\[ A_1 = \{ 1/p \le |z| \le 1 \}\subset \D.\] 
Let $\mathbb B_0 = \{|z| = 1\}$ and $\mathbb B_1 = \{|z| = 1/p\}$ be its outer and inner boundaries, which are both what we refer to as ``thin annuli''. Let $U_1 = \{1/p < |z| < 1\}$ denote the open annulus inside $A_1$.  Note that both $\mathbb B_0$ and $\mathbb B_1$ have good reduction in our sense (with special fiber being a copy of $\G_m$ over $k$). On the other hand, $A_1$ has semistable reduction with a standard semistable formal model  
\[ 
\spf \mO_K \gr{x, y}/(xy - \varpi).
\] 
Figure \ref{fig:model_for_A1} below provides an illustration of $A_1$ and the special fiber of its semistable model, which consists of two copies of $\A^1_k$ intersecting at a point. Under this setup, we prove

\begin{theorem} \label{thm:main_intro_for_log_schemes}
Let $\L$ be a \textit{semistable} \'etale $\Z_p$-local system on $A_1$. Then the following are equivalent. 
\begin{enumerate}
    \item $\L$ is crystalline at one classical point $x_0 \in \mathbb B_0(K)$ \textit{and} at one classical point $x_1 \in \mathbb B_1(K)$.  
    \item $\L$ is crystalline at all classical points on $U_1$. 
    \item $\L$ is crystalline at all classical points on $A_1$. 
    \item $\L$ is a crystalline local system\footnote{This is not a standard terminology in most of the existing literature because $A_1$ does not have good reduction.} in the sense of Definition \ref{def:st_local_sys}.
\end{enumerate}
\end{theorem}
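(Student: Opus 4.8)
The plan is to translate all four conditions into a single statement about the monodromy operator of the log $F$-isocrystal attached to $\L$, and then to exploit the rigidity of that operator along the irreducible components of the special fiber. Concretely, let $\fX=\spf\mO_K\gr{x,y}/(xy-\varpi)$ be the standard semistable model of $A_1$, with special fiber $\fX_s=Z_0\cup Z_1$, where $Z_0=\{y=0\}\cong\A^1_k$ and $Z_1=\{x=0\}\cong\A^1_k$ meet transversally at the node $\mathfrak n=\{x=y=0\}$, and equip $\fX_s$ with the log structure pulled back from the divisorial log structure on $\fX$. Following \cite{DLMS2} and the references there, the semistable local system $\L$ gives rise to a log $F$-isocrystal $\mE$ on $(\fX_s,M)$ together with a monodromy operator $N=N_{\mE}\in\End(\mE)$, horizontal for the log connection on $\mE$ and satisfying $N\varphi=p\varphi N$. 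By Definition \ref{def:st_local_sys}, condition (4) is exactly the assertion that $N_{\mE}=0$.

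The two inputs I would isolate are the following. First, a \emph{pointwise dictionary}: for a classical point $x$ of $A_1$ whose specialization $\bar x$ lies away from $\mathfrak n$ --- in particular for every $\spa(K',\mO_{K'})$-point of $\mathbb{B}_0$ or of $\mathbb{B}_1$, which specializes into $Z_0\setminus\mathfrak n$, resp.\ $Z_1\setminus\mathfrak n$ --- the pullback of $(\mE,N_{\mE})$ along $\bar x$ recovers the filtered $(\varphi,N)$-module of the semistable representation $\L|_x$, functorially in $x$; hence $\L|_x$ is crystalline if and only if $N_{\mE}$ vanishes at $\bar x$. Second, the \emph{rigidity of $N_{\mE}$}: since $N_{\mE}$ is a horizontal endomorphism of $\mE$, its vanishing locus along each component $Z_i\cong\A^1_k$ is open and closed on $Z_i\setminus\mathfrak n$; as $Z_i\setminus\mathfrak n\cong\G_m$ is connected and $\mE$ extends across $\mathfrak n$, one concludes $N_{\mE}|_{Z_i}=0$ as soon as $N_{\mE}$ vanishes at a single closed point of $Z_i$. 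The log structure at $\mathfrak n$ is precisely what prevents $N_{\mE}$ from being forced constant on all of $\fX_s$, and hence the reason two test points are genuinely needed.

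Granting these, the equivalences follow by establishing $(1)\Rightarrow(4)$, $(2)\Rightarrow(4)$, $(4)\Rightarrow(3)$, together with the trivial $(3)\Rightarrow(1)$ and $(3)\Rightarrow(2)$ (note $\mathbb{B}_0(K)\ni\{z=1\}$ and $\mathbb{B}_1(K)\ni\{z=\varpi\}$, and both annuli sit inside $A_1$). For $(1)\Rightarrow(4)$: crystallinity of $\L$ at $x_0\in\mathbb{B}_0(K)$ and $x_1\in\mathbb{B}_1(K)$ gives, by the dictionary, vanishing of $N_{\mE}$ at a point of $Z_0$ and at a point of $Z_1$; by rigidity $N_{\mE}|_{Z_0}=N_{\mE}|_{Z_1}=0$, and since $N_{\mE}$ is a section over $\fX_s=Z_0\cup Z_1$ vanishing on the dense open $\fX_s\setminus\mathfrak n$ it vanishes identically, which is (4). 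For $(2)\Rightarrow(4)$: crystallinity at all classical points of $U_1$, which is the tube of $\mathfrak n$ and whose classical points are dense in it, forces (after suitable ramified base changes that detect them) the partial monodromies of $\mE$ along each of the two branches at $\mathfrak n$ to vanish; by rigidity this again propagates to $N_{\mE}|_{Z_0}=N_{\mE}|_{Z_1}=0$, hence $N_{\mE}=0$. Finally $(4)\Rightarrow(3)$ is the compatibility of a crystalline local system with restriction to classical points: for points away from $\mathfrak n$ it is the dictionary, and for points of $U_1$ (which specialize onto $\mathfrak n$) one passes to a semistable model after a ramified base change and checks that $N_{\mE}$, being zero, pulls back to zero along the resulting chain of $\P^1$'s.

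The main obstacle I expect is the rigidity input and its interaction with the node: realizing the monodromy operator as a bona fide horizontal section of an appropriate (log) connection on $\mE$, and isolating its partial monodromy contributions along the two branches at $\mathfrak n$ so that the behavior along $Z_0$ and along $Z_1$ can be read off independently. This is exactly the content of the general monodromy-rigidity theorem of the paper, from which the present statement --- and more generally Theorem \ref{cor:rigidity_crystalline_over_semistable} --- is to be deduced; by contrast, the pointwise dictionary and the base-change bookkeeping at the node are comparatively routine, resting on the $p$-adic comparison between semistable local systems and log $F$-isocrystals recalled in \cite{DLMS2, GY}.
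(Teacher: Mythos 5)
Your proposal is correct and follows the same route the paper takes: translate all four conditions into statements about the monodromy operator of the log $F$-isocrystal $\mE$ attached to $\L$, and then deduce the equivalences from the monodromy-rigidity theorem for log (iso)crystals over the semistable log scheme $D^{\log}$ (Theorem \ref{theorem:log_crystal_2}), which is exactly what the paper does when it says Theorem \ref{theorem:log_crystal_2} "immediately implies" the statement. One small remark: your gloss that rigidity follows "since $N_{\mE}$ is a horizontal endomorphism" whose vanishing locus is open and closed is closer to the Guo--Yang formulation (relative Hyodo--Kato, with Frobenius), which the paper explicitly contrasts with its own Frobenius-free descent-data computation; but since you correctly attribute the rigorous rigidity input to the paper's theorem, this is a difference of intuition rather than a gap.
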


For both of the theorems above, we start with a \textit{semistable} local system and prove rigidity of crystallinity in a suitable sense. Perhaps more surprisingly, we have the following rigidity result for general \'etale $\Z_p$-local systems. This was first conjectured by Ananth Shankar.\footnote{This was communicated to us in a private conversation. In fact, we expect this conjecture to hold for all geometrically connected smooth rigid analytic spaces with good reduction.} 

\begin{theorem}[Shankar's conjecture]\label{thm:conjecture_for_projective_varieties_intro}
Let $\mX$ be a geometrically connected smooth projective scheme over $\mO_K$ and let $X$ be its generic fiber over $K$. Let $\L$ be an \'etale $\Z_p$-local system on $X$. If there exists a classical point $x_0$ on $X$ such that $\L|_{x_0}$ is potentially crystalline, then $\L$ is potentially crystalline at all classical points on $X$. 
\end{theorem}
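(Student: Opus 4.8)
The plan is to reduce to Theorem~\ref{theorem:main_intro} by first arranging that $\L$ becomes \emph{semistable} after a controlled base change, so that the good reduction hypothesis can be exploited; the input bridging ``\'etale'' and ``semistable'' is the $p$-adic monodromy theorem, while the input upgrading ``semistable, crystalline at one point'' to ``crystalline everywhere'' is Theorem~\ref{theorem:main_intro} itself.

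Concretely I would proceed in four steps. First, $\L|_{x_0}$ is potentially crystalline, hence de Rham, so by the Liu--Zhu rigidity theorem $\L$ is a de Rham local system on $X$, hence de Rham at every classical point. Second, I would apply the $p$-adic monodromy theorem for de Rham local systems to $\L$: after replacing $K$ by a finite extension $L$ and $X_L$ by a connected finite \'etale cover $h\colon X'\to X_L$, the pullback $h^{*}\L$ becomes semistable with respect to the resulting formal model over $\mO_L$, and one wants this model to again be smooth over $\mO_L$ --- automatic for the base change $\mO_K\hookrightarrow\mO_L$, and something that must be arranged for the cover $h$ using that $\mX$ is proper and smooth. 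Third, choosing a preimage $x_0'$ of $x_0$ under $h$ and enlarging $L$ so that $x_0'$ is $L$-rational and so that $L$ contains a finite extension of $K$ over which $\L|_{x_0}$ is crystalline, the representation $\L|_{x_0'}$ becomes genuinely crystalline; Theorem~\ref{theorem:main_intro} then applies to $X'$ (which still has good reduction over $\mO_L$) and to the semistable local system $h^{*}\L$, which is crystalline at the $L$-point $x_0'$, so $h^{*}\L$ is a crystalline local system and in particular is crystalline at all classical points of $X'$. Fourth, for an arbitrary classical point $x$ of $X$ I would pick a point $x'$ of $X'$ above it: the residue field extension $\kappa(x')/\kappa(x)$ is finite and $\L|_{x'}$ is crystalline, so $\L|_{x}$ is potentially crystalline, as desired.

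The main obstacle is the second step: controlling the reduction type of the cover produced by the monodromy theorem. What is essential is that $\L$ become semistable over a base still possessing \emph{good} reduction, so that Theorem~\ref{theorem:main_intro} --- which tests crystallinity at a single $K$-rational point --- is available, rather than its semistable counterpart (Theorem~\ref{cor:rigidity_crystalline_over_semistable}), which would demand a crystalline point on every component of the special fiber, information we do not have starting from the single point $x_0$. If the monodromy theorem already yields semistability after a finite extension of $K$ alone, there is nothing more to do; if a genuine finite \'etale cover $X'\to X_L$ is unavoidable, one needs an additional argument --- via specialization of fundamental groups and purity, or via a semistable reduction input for $X'$ together with a separate treatment of its extra components --- to ensure $X'$ can be taken with good reduction. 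The remaining subtleties (enlarging $L$ so that a preimage of $x_0$ is rational and crystalline, and checking that semistability and good reduction persist under the base changes involved) are routine.
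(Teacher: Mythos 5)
Your opening moves (Liu--Zhu to get de Rham everywhere, then the $p$-adic monodromy theorem, then aim for Theorem~\ref{theorem:main_intro}) match the paper's strategy, but the central difficulty you flag in Step~2 is not a ``remaining subtlety'' one can finesse --- it is the whole problem, and the fix you sketch does not exist. Two concrete issues. First, the $p$-adic monodromy theorem (Theorem~\ref{thm:p_adic_monodromy}) in this paper is a statement about \emph{curves}, and it produces a finite cover $f\colon Y\to X$ that need not be \'etale and whose reduction is merely \emph{semistable}. There is no hope of forcing $Y$ to have good reduction: already for $X=\P^1_K$ and $\L$ coming from a universal elliptic curve, any cover trivializing the monodromy has genuinely bad (semistable) reduction. ``Specialization of fundamental groups and purity'' cannot help because the obstruction is geometric (the cover ramifies, the Jacobian degenerates), not group-theoretic. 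Second, once you accept that $Y$ is only semistable, Theorem~\ref{theorem:main_intro} is unavailable and you must use Theorem~\ref{cor:rigidity_crystalline_over_semistable}, which --- as you correctly note --- requires a crystalline point on \emph{every} irreducible component of $\fY_s$; you only have one. Worse still, after fixing a formal model $\widehat f\colon\fY\to\fX$, some irreducible components of $\fY_s$ may be contracted to single points of $\fX_s$, so the rigidity-of-monodromy machinery for log isocrystals cannot even be run on those components starting from information on $X$.

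The paper's actual proof has to confront both of these. It first reduces to the case of curves via a Bertini-type argument (Proposition~\ref{mainprop:reducing_to_curves}, \S\ref{ss:reducing_to_curves}): given two closed points on $X$ with distinct specializations, one finds a smooth projective curve in $X$ with good reduction through both; this is why the general case follows from the curve case. Then, to deal with the semistable cover, the bulk of the work is the ``dodging the contracting locus'' theorem (Theorem~\ref{thm:dodging_contracting_locus_general} and its refinement Theorem~\ref{thm:dodging}): one replaces $Y$ by another cover, modified via a $p$-adic perturbation, so that a chosen preimage $y_0$ of $x_0$ lies on a \emph{finite} component of $\fY_s$. This yields crystallinity on one component; the remaining components are reached by a ``linking'' argument (Lemma~\ref{lemma: linked} and the discussion around it), propagating crystallinity from one finite component to another through their overlapping images in $X$, while the finitely many bad points are handled by re-running the dodging argument for each one. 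None of this is routine, and it cannot be replaced by the good-reduction shortcut you propose.
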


Let us remark that, the proof of this result combines the methods and techniques developed in this article with the \textit{$p$-adic monodromy theorem} for curves, where the latter is established in the upcoming work of the authors and their collaborators \cite{DDMY}. We shall return to the detail of this discussion later in the introduction. \\
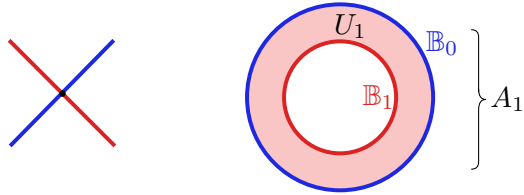
\begin{figure}[h]
\begin{tikzpicture}[x=0.4pt,y=0.4pt,yscale=-1,xscale=1]
\draw  [color={rgb, 255:red, 30; green, 40; blue, 225 }  ,draw opacity=1 ][fill={rgb, 255:red, 247; green, 151; blue, 151 }  ,fill opacity=0.56 ][line width=1.5]  (313,154) .. controls (313,105.4) and (352.4,66) .. (401,66) .. controls (449.6,66) and (489,105.4) .. (489,154) .. controls (489,202.6) and (449.6,242) .. (401,242) .. controls (352.4,242) and (313,202.6) .. (313,154) -- cycle ;
\draw  [color={rgb, 255:red, 220; green, 35; blue, 35 }  ,draw opacity=1 ][fill={rgb, 255:red, 255; green, 255; blue, 255 }  ,fill opacity=1 ][line width=1.5]  (348,154) .. controls (348,124.73) and (371.73,101) .. (401,101) .. controls (430.27,101) and (454,124.73) .. (454,154) .. controls (454,183.27) and (430.27,207) .. (401,207) .. controls (371.73,207) and (348,183.27) .. (348,154) -- cycle ;
\draw   (525,222) .. controls (529.67,222) and (532,219.67) .. (532,215) -- (532,166) .. controls (532,159.33) and (534.33,156) .. (539,156) .. controls (534.33,156) and (532,152.67) .. (532,146)(532,149) -- (532,97) .. controls (532,92.33) and (529.67,90) .. (525,90) ;
\draw [color={rgb, 255:red, 220; green, 35; blue, 35 }  ,draw opacity=1 ][line width=1.5]    (88,100) -- (188,200) ;
\draw [color={rgb, 255:red, 30; green, 40; blue, 225 }  ,draw opacity=1 ][line width=1.5]    (89,200) -- (187,100) ;
\draw  [fill={rgb, 255:red, 0; green, 0; blue, 0 }  ,fill opacity=1 ] (141,150.5) .. controls (141,149.12) and (139.88,148) .. (138.5,148) .. controls (137.12,148) and (136,149.12) .. (136,150.5) .. controls (136,151.88) and (137.12,153) .. (138.5,153) .. controls (139.88,153) and (141,151.88) .. (141,150.5) -- cycle ;

\draw (420,140) node [anchor=north west][inner sep=0.75pt]  [color={rgb, 255:red, 220; green, 35; blue, 35 }  ,opacity=1 ] [align=left] {$\displaystyle \mathbb{B}_{1}$};
\draw (480,89) node [anchor=north west][inner sep=0.75pt]  [color={rgb, 255:red, 30; green, 40; blue, 225 }  ,opacity=1 ] [align=left] {$\displaystyle \mathbb{B}_{0}$};
\draw (393,73) node [anchor=north west][inner sep=0.75pt]   [align=left] {$\displaystyle U_{1}$};
\draw (541,142) node [anchor=north west][inner sep=0.75pt]   [align=left] {$\displaystyle A_{1}$};
\end{tikzpicture} 
    \caption{$A_1$ with the special fiber of its standard semistable model}
    \label{fig:model_for_A1}
\end{figure} 

From a slightly different perspective, the results above all indicate that, within a $p$-adic local system, the ``arithmetic monodromy'' arising from (the filtered $(\varphi, N)$-modules attached to) the $p$-adic Galois representations satisfies suitable rigidity properties.\footnote{More precisely, according to Theorem \ref{theorem:main_intro}, if $X$ has good reduction and $\L$ has no arithmetic monodromy at one classical point on $X$, then there is no arithmetic monodromy anywhere. When the rigid analytic space in question has semistable reduction, one needs to test arithmetic monodromy at multiple points, as in Theorem \ref{thm:main_intro_for_log_schemes}.} 
On the other hand,  we may ask whether one can infer any information about the ``geometric monodromy'' of a $p$-adic local system, which is of global nature, from information of the purely local notion of arithmetic monodromy. To this end, we prove the following result.

\begin{theorem} \label{mainthm:extend_across_ncd}
Let $Y$ be a smooth rigid analytic space over $K$. Let $D \subset Y$ be a reduced normal crossing divisor \footnote{See, for example, \cite[Example 2.3.17]{DLLZ1} for the notion of reduced normal crossing divisor.} and let $U = Y -  D$ be the complement of $D$. Let $\L$ be an \'etale $\Z_p$-local system on $U$. 
\begin{enumerate}
    \item 
Suppose that $\L$ is crystalline at all classical points on $U$, then $\L$ extends (necessarily uniquely) to an \'etale $\Z_p$-local system on $Y$. 
\item Moreover, if $Y$ is quasi-compact, then, up to replacing $K$ by a finite extension, $\L$ is crystalline at all classical points on $Y$.
\end{enumerate}
\end{theorem}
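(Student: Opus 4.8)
The plan is to build on the rigidity package already assembled: Theorem \ref{theorem:main_intro} (rigidity of crystallinity over good reduction), together with the monodromy rigidity for log isocrystals along irreducible components of the special fiber that underlies Theorem \ref{thm:main_intro_for_log_schemes} and Theorem \ref{cor:rigidity_crystalline_over_semistable}. The point of Theorem \ref{mainthm:extend_across_ncd} is to upgrade these local statements into a \emph{geometric} extension statement across a normal crossing divisor. First I would reduce to a purely local question. Since the claim that $\L$ extends is local on $Y$ and the extension, if it exists, is unique (a local system on $U$ dense in $Y$ determines at most one extension, because $Y$ is normal and $U$ is the complement of a divisor, so $\pi_1(Y)$ is a quotient of $\pi_1(U)$ — concretely one uses that $\mO_Y$-lattices extend over codimension-one points and Zariski--Nagata purity in the rigid-analytic setting, cf.\ \cite{DLLZ1}), it suffices to work \'etale-locally on $Y$ near a point of $D$. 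So I may assume $Y = \spa(K\langle T_1,\dots,T_n\rangle,\,\mO_K\langle\cdots\rangle)$ is a polydisc and $D = \{T_1\cdots T_r = 0\}$, with the standard smooth formal model $\fY = \spf \mO_K\langle T_1,\dots,T_n\rangle$ and divisorial log structure along $\{T_1\cdots T_r=0\}\cup\{$special fiber$\}$. Note that $\fY$ is \emph{smooth}, not merely semistable; its special fiber is a single affine space $\mathbb A^n_k$ (irreducible!), equipped with the log structure coming from the coordinate hyperplanes $\overline D$.

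The heart of the argument is then: given a semistable (indeed one wants: any) $\Z_p$-local system on $U$ that is crystalline at all classical points, show the associated log isocrystal on $(\fY_s, \overline D)$ has trivial residues (trivial monodromy) along each component of $\overline D$, so that it descends to a genuine isocrystal on $\mathbb A^n_k$ with no log poles, and correspondingly $\L$ extends across $D$. Concretely I would proceed in two stages. Stage one: reduce to the case $n=r=1$, i.e.\ $Y=\D$, $D=\{T=0\}$, $U=\D^\times$, by a standard fibration/restriction argument — restrict $\L$ to the "transverse discs" $\D$ meeting $D$ at a single point and apply the one-variable case on each, then use that crystallinity on all of $U$ propagates (via Theorem \ref{theorem:main_intro} applied on the good-reduction loci $\{|T_i|=1\}$ and the semistable loci) to conclude the residues vanish everywhere; this is the same bootstrapping as in the proof of Theorem \ref{cor:rigidity_crystalline_over_semistable}. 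Stage one also needs the input that $\L$, a priori only given on $U$, is \emph{semistable} near $D$: for this one passes to the semistable formal model of an annulus $\{1/p\le|T|\le 1\}$ and invokes the $p$-adic monodromy theorem / the structure theory of \cite{DLMS2} to see that, after a finite base change, $\L$ becomes semistable in a neighborhood of $\{|T|=1\}$ — which is exactly part (2) of the theorem reappearing as an ingredient. Stage two, the one-variable case: let $\fY=\spf\mO_K\langle T\rangle$, $A = \{1/p \le |T|\le 1\}$ with its semistable model $\spf\mO_K\langle x,y\rangle/(xy-\varpi)$, special fiber two copies of $\mathbb A^1_k$ meeting at a node, boundaries $\mathbb B_0 = \{|T|=1\}$ and $\mathbb B_1=\{|T|=1/p\}$, both of good reduction with special fiber $\G_m$. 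By Theorem \ref{thm:main_intro_for_log_schemes}, if $\L|_A$ is crystalline at one point of $\mathbb B_0(K)$ and one point of $\mathbb B_1(K)$ — which holds since $\L$ is crystalline at \emph{all} classical points of $U\supset \mathbb B_0, \mathbb B_1$ — then $\L|_A$ is a crystalline local system, i.e.\ its attached log $F$-isocrystal has vanishing monodromy operator $N$ along the node. Translating back: the log isocrystal of $\L$ on $(\fY_s,\{T=0\})$ has trivial residue along the divisor $\{T=0\}$, hence extends to a true ($F$-)isocrystal on $\fY_s=\mathbb A^1_k$ (no log pole), and by the equivalence between (crystalline) local systems with good reduction and $F$-isocrystals with admissible filtration — i.e.\ the theory recalled in the footnotes, \cite{Faltings_90,Andreatta_Iovita_crys,GY,DLMS2} — this isocrystal-with-filtration is the Dieudonn\'e data of an honest crystalline $\Z_p$-local system $\widetilde\L$ on $\D$; its restriction to $\D^\times$ agrees with $\L$ by construction (both have the same isocrystal and filtration, and a crystalline local system on the good-reduction space $\D^\times\supset\{|T|=1\}$ is determined by this data via the same rigidity). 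This gives the desired extension, and uniqueness is automatic. Part (2) is then immediate: after the finite base change of $K$ already used, the extended $\widetilde\L$ on $Y$ is crystalline at all classical points of $Y$ because it is a crystalline local system (Definition \ref{def:st_local_sys}) on $Y$, which has good reduction by hypothesis.

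The main obstacle I expect is Stage one's first half — making precise the passage from "crystalline at classical points of the open $U = Y\smallsetminus D$" to "the log isocrystal attached to a semistable model of $\fY$ has trivial residues along all of $\overline D$". The subtlety is that $\L$ lives on $U$ and is not assumed semistable (or even defined) near $D$, so one cannot directly speak of "its log isocrystal" until one has produced a semistable neighborhood; this requires carefully choosing, near each stratum of $D$ of codimension $c$, an appropriate semistable (toric) formal model of a polyannulus $\prod\{1/p\le|T_i|\le 1\}\times\prod\{|T_j|=1\}$, checking that the $p$-adic local system extends to a semistable one there (here one genuinely needs the monodromy theorem of \cite{DDMY} together with the almost-purity-type arguments of this paper), and then verifying the residues along the $c$ toric divisors all vanish by testing crystallinity on the $2^c$ "corner" good-reduction loci and using Theorem \ref{thm:main_intro_for_log_schemes}/Theorem \ref{cor:rigidity_crystalline_over_semistable} simultaneously in all coordinate directions. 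Once the residue-vanishing is known, the descent of the log isocrystal to an honest isocrystal on the smooth model and the reconstruction of $\widetilde\L$ are formal consequences of the $p$-adic Simpson/Dieudonn\'e dictionary in the good-reduction case. A secondary technical point is globalizing: patching the local extensions $\widetilde\L$ over an admissible cover of $Y$ is harmless because they are canonically isomorphic on overlaps (uniqueness on the dense open $U$), so no descent obstruction arises.
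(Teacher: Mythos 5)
Your overall architecture (reduce to a local polydisc, then to the one-variable punctured disc $\D^\times\subset\D$, and use the rigidity of monodromy for log $F$-isocrystals to kill the residue at $T=0$) is the right one and is the paper's architecture too. The reduction to the one-variable case is also essentially what the paper does, though the paper organizes it more cleanly as an induction on $r$ by writing the inclusion $U\hookrightarrow Y$ as a composite $U\hookrightarrow Y^{(r)}\hookrightarrow Y$ with $Y^{(r)}=S\times(\D^r\smallsetminus\{T_r=0\})$, where the first step is the $(r-1)$-case and the second is the $r=1$ case over the base $S\times\D^{r-1}$, sidestepping the ``$2^c$-corner'' bookkeeping you sketch.

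The genuine gap is in your Stage two. You apply Theorem \ref{thm:main_intro_for_log_schemes} to the thick annulus $A=\{1/p\le|T|\le 1\}$ and conclude that the log $F$-isocrystal of $\L|_A$ has vanishing monodromy at the node of the semistable model of $A$. You then write ``Translating back: the log isocrystal of $\L$ on $(\fY_s,\{T=0\})$ has trivial residue along the divisor $\{T=0\}$.'' This translation is unjustified. The annulus $A$ and the special fiber of its semistable model are located near $|T|\in[1/p,1]$ and carry no information about a neighborhood of $T=0$; and $\L$ is not defined (let alone semistable) on $\D$, so there is no ``log isocrystal of $\L$ on $(\fY_s,\{T=0\})$'' to speak of. What actually exists on $\D$ is the vector bundle with log connection $(\D_{\mathrm{dR},\log}(\L),\nabla_\L)$, and the object whose residue must vanish is precisely this. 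The bridge from ``crystallinity of $\L$ at classical points of $\D^\times$'' to ``the residue of $\nabla_\L$ at $T=0$ vanishes'' requires transporting the isocrystal-level information down the whole chain of annuli $\D_m=\{1/p^m\le|T|\le 1\}$, not just the first one: the paper does this by (a) attaching to each $\mE_m^{\log}$ a vector bundle with log connection via Construction \ref{construction:vector_bundle_with_connections}, (b) using Theorem \ref{theorem:log_crystal_2} to descend $\mE_m^{\log}$ to an $F$-isocrystal on $\mathcal D_{m,s}$, (c) invoking Lemma \ref{lemma:isocrystal_from_P1} (an $F$-isocrystal on $\A^1_k$ extending to $\P^1_k$ forces the connection to trivialize on the corresponding thin annulus), and (d) a sublemma gluing triviality on boundary thin annuli to triviality on the whole annulus, iterated for all $m$, which is what finally kills the residue at $0$ and lets Theorem \ref{prop:residue vs monodromy} convert trivial residue into trivial geometric monodromy. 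Your Stage two bypasses all of (a)--(d), and your concluding reconstruction of $\widetilde\L$ from its ``Dieudonn\'e data'' is also not how the extension is produced: the functor from $F$-isocrystals with filtration back to local systems is not an equivalence in the relative setting, and the paper instead gets the extension directly from the vanishing of geometric monodromy via Theorem \ref{prop:residue vs monodromy}, and then recovers crystallinity of the extension by purity at the Gaussian point.
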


In other words, a $p$-adic local system cannot have geometric monodromy along a normal crossing divisor if it does not have arithmetic monodromy everywhere on the complement. 

\begin{remark} \label{remark:not_really_counterexamples} 
The readers may wonder why our assertions are compatible with examples coming from universal elliptic curves over modular curves. 
To this end, let us first note that, over a modular curve $X_\Gamma$ with good reduction at $p$,\footnote{For example, consider $X_0(N)$ where $p \nmid N$.} 
we only have generalized elliptic curves over the cusps. In particular, the pushforward $R^1 \pi_*  \ul{\Z_p}$ of the constant local system $\ul{\Z_p}$ on the universal generalized elliptic curve $\mE_{\Gamma}$ along the structure map $\pi: \mE_\Gamma \ra X_\Gamma$ only gives rise to a log (Kummer) \'etale local system, so it does not satisfy the assumption of Theorem \ref{thm:conjecture_for_projective_varieties_intro} in our setup. If we remove the cusps, then $Y_\Gamma $ 
is no longer the adic generic fiber of a smooth $p$-adic formal scheme, so it does not have good reduction in our sense as an adic space --- rather it has semistable reduction. 
On the other hand, Theorem \ref{mainthm:extend_across_ncd} requires a $\Z_p$-local system on  the open modular curve $Y_{\Gamma}$  to be crystalline at all classical points to extend across the cusps, which is not satisfied by the restriction of $R^1 \pi_* \ul{\Z_p}$ to $Y_{\Gamma}$.  In fact, from Theorem \ref{thm:conjecture_for_projective_varieties_intro} we can deduce the classical result that the Kummer \'etale local system $R^1 \pi_* \ul{\Z_p}$ never descends to an \'etale local system on $X_\Gamma$. 
\end{remark}

\begin{remark}
Theorem \ref{mainthm:extend_across_ncd} is closely related to (and is a $p$-adic local system analogue of) the Borel extension type results recently studied by Oswal--Shankar--Zhu (see \cite[Theorem 1.1, Corollary 1.7]{OSZ}). We expect that the results developed in this article will simplify certain steps in \textit{loc.cit.}, and should allow one to generalize $p$-adic hyperbolicity (which is one of the main results of \textit{loc.cit.}) to a more general class of $p$-adic symmetric domains and period domains.  
\end{remark}

In the rest of this introduction, we will elaborate on the content and the proof of these results, and discuss the relevant new techniques introduced in this article.

\vspace*{0.1cm}
\subsection*{\large Main results on logarithmic (iso)crystals}
 \noindent 
\vspace*{0.2cm}
 
\noindent  
Let us start with Theorem \ref{theorem:main_intro}, which will follow from a somewhat surprising statement about logarithmic (iso)crystals over smooth varieties in characteristic $p$. 

For the setup, let $k$ be a perfect field of characteristic $p$ and let $Z$ be a geometrically connected smooth variety over $k$. Let $Z^{\log}$ be the fs (fine and saturated) log scheme associated to the pre-log scheme $(Z, \alpha)$, where $\alpha$ is the pre-log structure $\N \ra \mO_{Z_{\ett}}$ sending $1 \mapsto 0$. If $z: \spec k' \ra Z$ is a closed point on $Z$, where $k'/k$ is an extension of perfect fields, we denote by $z^{\log}$ the fiber product 
\[z^{\log} := z \times_Z Z^{\log}\] 
in the category of fs log schemes. Abstractly, $z^{\log}$ is isomorphic to the log point associated to the pre-log algebra $(k', \N \xrightarrow{1 \mapsto 0} k')$.

Let $\mE$ be a log (iso)crystal over $Z^{\log}$. For every closed point $z: \spec k' \ra Z$ on $Z$, $\mE$ restricts to a log (iso)crystal $\mE_z$ over the log point $z^{\log}$. The classical theory of Hyodo--Kato \cite{Hyodo_Kato} tells us that, the data of the log crystal (resp. log isocrystal) $\mE_z$ over $z^{\log}$ gives rise to a $(\varphi, N)$-module over $W(k')$ (resp. over $W(k')[1/p]$), and in particular gives rise to a monodromy operator $N$. We call $N$ the \emph{monodromy} of $\mE$ at $z$. The following result is one of the main technical results of this paper.  

\begin{theorem} \label{theorem:log_crystal}
\begin{enumerate}
\item If a log (iso)crystal $\mE$ over $Z^{\log}$ has trivial monodromy at one closed point on $Z$, then $\mE$ descends to an (iso)crystal over $Z$. In particular, the log (iso)crystal $\mE$ has trivial monodromy everywhere on $Z$. 
\item More generally, for a log (iso)crystal $\mE$ over $Z^{\log}$, the nilpotent rank of the monodromy operator of $\mE$ is constant over closed points on $Z$. 
\end{enumerate}
\end{theorem}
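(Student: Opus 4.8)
The plan is to show that the Hyodo--Kato monodromy of $\mE$ at the various closed points of $Z$ is nothing but the fiberwise restriction of a single horizontal endomorphism $N$ of the underlying (iso)crystal of $\mE$ on $Z$, and then to invoke the fact that a horizontal morphism of (iso)crystals over a \emph{connected} base has kernel, image and cokernel that are subbundles of locally constant rank. Concretely, I would begin by unwinding the log crystalline formalism: evaluating a log (iso)crystal $\mE$ on $Z^{\log}$ on $Z$ itself produces a vector bundle $\mE_0$ carrying an integrable log connection $\nabla\colon\mE_0\to\mE_0\otimes\Omega^1_{Z^{\log}/k}$. Since $\Omega^1_{Z^{\log}/k}=\Omega^1_{Z/k}\oplus\mO_Z\,d\log t$, where $t$ is the coordinate of the log structure (mapping to $0$ in $\mO_Z$), we may write $\nabla=\nabla_Z+N\,d\log t$; integrability of $\nabla$ then says precisely that $\nabla_Z$ is an integrable connection on $\mE_0$ and that $N\in\End_{\mO_Z}(\mE_0)$ is horizontal for $\nabla_Z$. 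The pair $(\mE_0,\nabla_Z)$ is the de Rham realization of an (iso)crystal on $Z$, the \emph{underlying (iso)crystal} of $\mE$, and comparing with the Hyodo--Kato construction, the monodromy of $\mE$ at a closed point $z\colon\spec k'\to Z$ is exactly $N_z:=N\otimes_{\mO_Z}k'$ acting on $(\mE_0)_z$. Finally, pullback along the canonical morphism $Z^{\log}\to Z$ (of $Z$ with trivial log structure) identifies (iso)crystals on $Z$ with the log (iso)crystals on $Z^{\log}$ whose monodromy operator $N$ vanishes; so the assertion ``$\mE$ descends to an (iso)crystal on $Z$'' is equivalent to ``$N=0$''.

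Next I would reduce both statements to the case of isocrystals. The relation $N\varphi=p\varphi N$ coming from the $(\varphi,N)$-module structure forces $N$, hence each $N_z$, to be nilpotent, so the Jordan type of $N_z$ is recorded by the integers $r_i(z):=\dim_{k'}\im(N_z^i)$ for $i\ge 1$, and any reasonable notion of ``nilpotent rank'' is a function of these. Since the rank of a matrix over the domain $W(k')$ is unchanged by inverting $p$, and $N=0\iff N[1/p]=0$, it suffices to work with the underlying \emph{isocrystal} $\mE_0[1/p]$ of $\mE$, where the structure module is an honest vector bundle on a (locally chosen) smooth $p$-adic formal lift of $Z$, and isocrystals on $Z$ form an abelian category.

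Now comes the core argument. For each $i\ge 1$ the endomorphism $N^i$ of $\mE_0[1/p]$ is again horizontal, i.e.\ a morphism of isocrystals on $Z$; hence $\coker(N^i)$ is an isocrystal and therefore locally free, so $\im(N^i)=\ker\bigl(\mE_0[1/p]\twoheadrightarrow\coker(N^i)\bigr)$ is a subbundle of $\mE_0[1/p]$, of constant rank $r_i$ by connectedness of $Z$. Restricting the short exact sequence $0\to\im(N^i)\to\mE_0[1/p]\to\coker(N^i)\to 0$ of vector bundles along a closed point $z$ remains exact, whence $r_i(z)=r_i$ is independent of $z$. This proves (2) --- indeed the stronger statement that the entire Jordan type of the monodromy is constant over closed points of $Z$. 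For (1): if $\mE$ has trivial monodromy at some closed point $z_0$, then $r_1=r_1(z_0)=0$, so $\im(N)=0$, i.e.\ $N=0$; by the first paragraph, $\mE$ then descends to an (iso)crystal on $Z$, and a fortiori its monodromy vanishes at every closed point.

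The step I expect to require the most care is the first paragraph: setting up cleanly the dictionary between log (iso)crystals on $Z^{\log}$ and pairs consisting of an (iso)crystal on $Z$ together with a horizontal nilpotent endomorphism $N$, and verifying that this $N$, restricted fiberwise, indeed computes the Hyodo--Kato monodromy at each closed point. The other inputs --- that a coherent module with integrable connection over a smooth base is locally free after inverting $p$, and that isocrystals on $Z$ form an abelian category --- are standard, and once they are in place the connectedness argument above is elementary linear algebra.
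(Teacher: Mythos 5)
Your proposal takes a genuinely different route from the paper, and the core linear-algebra step (horizontality of $N$ forces $\im(N^i)$ to be a subbundle of locally constant rank, hence constant rank by connectedness of $Z$) is correct and in fact gives a cleaner proof of part~(2) than the paper's conjugation argument, once the framework is in place. However, the argument as written has a substantive gap precisely where you flag it, and I want to be concrete about why it is not a detail.

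The crux is your claim that the category of log (iso)crystals on $Z^{\log}$ is equivalent to the category of pairs consisting of an (iso)crystal on $Z$ together with a horizontal endomorphism $N$, with $\pi^*$ (pullback along $Z^{\log}\to Z$) identified with the ``$N=0$'' objects. You only sketch the ``easy'' direction: evaluating $\mE$ on a lift and decomposing the log connection as $\nabla=\nabla_Z+N\,d\log t$. But what you actually need for part~(1) is the converse, namely that $N=0$ forces $\mE\cong\pi^*\mE_0$; and for this you need to know that the full descent datum in the log crystalline site (a PD power series $T_\delta=\sum_i N_i t^i$ over the log self-coproduct) is determined by $N_0$ and $N_1$, and in particular that $N_1=0$ forces $N_i=0$ for all $i\ge1$. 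This is not a formal consequence of the de Rham picture: the log connection only sees the first-order term, while the log crystal knows all the $N_i$. Establishing that the higher $N_i$ are controlled is exactly what the cocycle analysis in the paper's proof accomplishes. In other words, the ``dictionary'' you invoke is not a black box you can reduce to --- it is, for all practical purposes, the theorem. The paper's Remark after Theorem~\ref{theorem:log_crystal} makes exactly this point about the closely parallel Guo--Yang route: there the equivalence (for log $F$-isocrystals) is established using a relative Hyodo--Kato isomorphism in which Frobenius plays an essential role, and the descent only works rationally. Your version drops Frobenius, which means you have not only to prove the dictionary but to prove it by means not available to Guo--Yang --- and the means the paper supplies is precisely the direct cocycle computation you are trying to avoid.

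Two smaller points. First, you derive nilpotence of $N$ from $N\varphi=p\varphi N$, but that relation is only available for log $F$-(iso)crystals; for log (iso)crystals without Frobenius structure (which the theorem also covers), $N$ need not be nilpotent a priori --- fortunately your rank argument for part~(2) does not actually use nilpotence, only the constancy of $\mathrm{rk}(N^i)$ for each $i$, so this is cosmetic, but the phrase ``forces $N$ to be nilpotent'' overreaches. Second, for part~(1) of the crystal (not isocrystal) case you reduce to the isocrystal via ``$N=0\iff N[1/p]=0$,'' which is fine; but you then need the integral descent statement ``$N=0\Rightarrow\mE$ descends as a crystal,'' and your dictionary is stated (and, in the Guo--Yang incarnation, only proved) rationally. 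The paper's direct descent-datum analysis works integrally for free, which is one of the advantages the authors explicitly claim. If you want to pursue your more conceptual route, the thing to do is to prove the dictionary directly by analyzing the cocycle condition --- at which point you will find yourself reproducing the paper's computation rather than bypassing it.
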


In the theorem above, the underlying scheme of $Z^{\log}$ is smooth. 
One step further, we are able to generalize the results to log schemes $Z^{\log}$ of ``semistable type''. For simplicity, we demonstrate our result in the following toy example. We refer the reader to \S \ref{sec:log_crystals_on_log_schemes} (Theorem \ref{theorem:log_crystal_2_global}) for the rigidity results for general log schemes of semistable type. 

Let $D = \spec k[x, y]/xy$ and let $D^{\log}$ be the log scheme associated to the pre-log ring\footnote{See the illustration on the left in Figure \ref{fig:model_for_A1}.} 
\[ (k[x, y]/xy,\,\, x^\N \oplus y^\N).\] 
Notice that $D$ has a unique singluar point $z_*$ given by $x = y = 0$. Let $D^{\mathrm{sm}} = D-\{z_*\}$ denote the smooth locus. By construction \[D^{\mathrm{sm}} = U_0 \sqcup U_1 \] is a disjoint union of two irreducible components, each being a copy of $\G_m$. Let us note that, for $i = 0, 1$, the log structure on $U_i$ restricted from $D^{\log}$ is simply the log structure associated with the pre-log structure $\N\ra \mO_{U_i, \ett}$ sending $1\mapsto 0$. In particular, given a log (iso)crystal $\mE$ over $D^{\log}$ and a closed point $z$ on either $U_0$ or $U_1$, we can still make sense of the notion of monodromy of $\mE$ at $z$. Under this setup, let us present an example of the rigidity results on log (iso)crystals that we can prove. 

\begin{theorem} \label{theorem:log_crystals_intro_st}
Let $D^{\log}$ be the log scheme associated with the pre-log ring $(k[x, y]/xy,\,\, x^\N \oplus y^\N)$ as above and let $\mE$ be a log (iso)crystal over $D^{\log}$. If $\mE$ has trivial monodromy at one closed point on $U_0$ and one closed point on $U_1$, then $\mE$ descends to an (iso)crystal over $D$. In particular, $\mE$ has trivial monodromy everywhere on $D$.\footnote{At the singular point $z_*$, the log (iso)crystal $\mE$ actually has ``trivial monodromy in all directions'' in the sense of Definition \ref{def:trivial_monodromy_in_all_directions}.} 
\end{theorem}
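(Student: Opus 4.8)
The plan is to reduce Theorem \ref{theorem:log_crystals_intro_st} to the smooth case, Theorem \ref{theorem:log_crystal}, by a local argument near the singular point $z_*$ together with the descent statement already proved over each irreducible component. First I would apply Theorem \ref{theorem:log_crystal}(1) separately to the two smooth pieces $U_0$ and $U_1$: since the log structure restricted to each $U_i$ is the one associated to $\N \ra \mO_{U_i,\ett}$, $1 \mapsto 0$, and $\mE|_{U_i^{\log}}$ has trivial monodromy at one closed point of $U_i$, that theorem gives a descent of $\mE|_{U_i^{\log}}$ to an honest isocrystal $\mF_i$ over $U_i$, and moreover trivial monodromy at \emph{every} closed point of $U_i$. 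The content that remains is to glue this data across $z_*$, i.e.\ to show that the log crystal $\mE$ on $D^{\log}$ is already the pullback of an isocrystal on $D$; equivalently, that the monodromy operators in the two smooth directions both vanish in a Zariski neighborhood of $z_*$, and that the resulting connection has no pole along the divisor $xy = 0$.

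The key technical step is a local computation in the crystalline site of $D^{\log}$ near $z_*$. I would choose a smooth lift, e.g.\ $\spf W\gr{x,y}$ with the log structure generated by $x$ and $y$, or rather work with the PD-envelope of the relevant closed embedding, and describe $\mE$ concretely as a module $M$ with an integrable log connection $\nabla \colon M \ra M \otimes (\mO\, d\log x \oplus \mO\, d\log y)$, together with Frobenius. Write $\nabla = \nabla_0 + \theta_x\, d\log x + \theta_y\, d\log y$ where $\theta_x, \theta_y$ are the two ``residue-type'' endomorphisms; restricting to a closed point on $U_1$ (where $x$ is invertible and $y = 0$ is the local equation) recovers the Hyodo--Kato monodromy operator $N$ along that branch as essentially $\theta_y$ up to the prescribed normalization, and symmetrically for $U_0$ and $\theta_x$. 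The first step above already forces $\theta_x$ and $\theta_y$ to act nilpotently; I then want to upgrade ``nilpotent'' to ``zero.'' Here I would invoke the Frobenius structure: $\varphi$ conjugates the monodromy operator up to a factor of $p$ (the usual relation $N\varphi = p\varphi N$ in a $(\varphi,N)$-module), and a nilpotent operator satisfying such a relation over a $p$-torsion-free base, compatibly with $\varphi$ being an isogeny, is forced to vanish — this is exactly the mechanism already used in Theorem \ref{theorem:log_crystal} to pass from ``nilpotent rank constant'' in part (2) to ``descends'' in part (1). Applying the same argument to $\mE$ itself (not just its restriction to a point) gives $\theta_x = \theta_y = 0$ identically, so $\nabla$ has no log poles and $\mE$ is the pullback of an isocrystal $\mF$ on $D$; restricting, $\mF|_{U_i} = \mF_i$, so the two descents are automatically compatible and no separate gluing lemma across $z_*$ is needed.

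Finally, trivial monodromy everywhere on $D$ — including the assertion in the footnote that $\mE$ has ``trivial monodromy in all directions'' at $z_*$ in the sense of Definition \ref{def:trivial_monodromy_in_all_directions} — is then a formal consequence: once $\mE$ is pulled back from $D$, all of its monodromy operators at all closed points (in every branch direction, and the combined operator at $z_*$) are restrictions of the zero endomorphisms $\theta_x, \theta_y$, hence vanish. The one point requiring a little care is the normalization relating the abstract Hyodo--Kato monodromy of $\mE_z$ over $z^{\log}$ to the endomorphisms $\theta_x, \theta_y$ of the local description; I would handle this by the standard identification of the crystalline cohomology of a log point with a $(\varphi, N)$-module and checking that the log point $z^{\log}$ for $z \in U_i$ sits inside $D^{\log}$ in the expected way (one of $x, y$ invertible, the other the generator of the log structure), which is exactly what the discussion preceding the theorem sets up.

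The main obstacle I expect is the passage from nilpotence to vanishing in a \emph{family} (i.e.\ as operators on $M$ over the local ring, not merely fiberwise at one point): one must be sure the Frobenius-conjugation argument is available integrally near $z_*$ and interacts correctly with the log pole, rather than only after restricting to a closed point. If that global-over-$D$ version of the argument is not directly available, the fallback is to run it fiberwise at every closed point of $D^{\mathrm{sm}}$ using part (2) of Theorem \ref{theorem:log_crystal} applied branch-by-branch, deduce that $\theta_x, \theta_y$ vanish on a dense open, and then use integrability of $\nabla$ plus the Frobenius structure to propagate the vanishing to the generic point of each branch of the divisor and finally to $z_*$ — but I would expect the cleaner route to be the direct one sketched above.
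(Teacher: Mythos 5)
Your proposal has a genuine gap, and it rests on two misreadings of the paper.

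First, the theorem is stated for log (iso)crystals, not log $F$-(iso)crystals; the Frobenius structure that your argument invokes simply isn't available. Worse, the key lemma you lean on — that a nilpotent operator $N$ satisfying $N\varphi = p\varphi N$ with $\varphi$ an isogeny is forced to vanish — is false. Take $M = W^2$, $N = \begin{pmatrix} 0 & 1 \\ 0 & 0 \end{pmatrix}$, $\varphi = \mathrm{diag}(1,p)\cdot\sigma$: then $N\varphi = p\varphi N$ holds with $N\neq 0$. This is the standard shape of a nontrivial $(\varphi,N)$-module; the whole point of the Hyodo--Kato theory is that $N$ is typically a nonzero nilpotent operator satisfying this relation. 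You also misattribute this mechanism to the proof of Theorem \ref{theorem:log_crystal}: that proof never uses Frobenius at all (the paper remarks explicitly that ``Frobenius is irrelevant in the arguments above'' and stresses that ``The irrelevance of Frobenius in establishing such rigidity results is somewhat surprising to us''), and parts (1) and (2) of that theorem are parallel conclusions, not a two-stage ``nilpotent then zero'' argument.

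Second, and more fundamentally, the actual technical content of the theorem is the behavior at the singular point $z_*$, and your proposal does not touch it. Applying Theorem \ref{theorem:log_crystal}(1) on $U_0$ and $U_1$ gives that the restrictions $\mE|_{U_i^{\log}}$ descend to isocrystals on the \emph{open} subsets $U_i$ (and that the residue operators vanish \emph{there}, already zero rather than merely nilpotent). But $U_0\sqcup U_1 = D^{\mathrm{sm}}$ omits $z_*$, and the claim ``restricting, $\mF|_{U_i} = \mF_i$, so no separate gluing lemma is needed'' presupposes exactly the thing to be proved: that the descent extends across $z_*$. The paper handles this by a direct analysis of the descent data over the self-coproduct $A^{(1),\log}$ of the weakly initial object of the whole log-crystalline site of $D$: one writes $T_\delta = \sum_{i,j} F_{ij}(x_1,y_1)\,\delta_x^i\,\delta_y^j$, uses the hypothesis on $U_0, U_1$ to show (via the specialization maps $\iota_0,\iota_1$ of (\ref{eq:i_0_on_self_product})--(\ref{eq:i_1_on_self_product})) that $F_{ij}$ is divisible by $x_1$ for $i\ge 1$ and by $y_1$ for $j\ge 1$, and then runs an induction on $i+j$ on the cocycle identity (\ref{eq:log_scheme_cocycle_part_1})--(\ref{eq:log_scheme_cocycle_part_2}) to upgrade this to divisibility by $(x_1)^i(y_1)^j$. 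That inductive step — the bookkeeping in (\ref{eq:the_coefficient_of_1k})--(\ref{eq:the_term_of_interest_in_star}) — is where the extension across $z_*$ is actually established, and there is no analogue of it in your proposal. Your ``fallback'' of propagating vanishing to $z_*$ via integrability plus Frobenius inherits the same problems: Frobenius isn't assumed, and integrability of $\nabla$ alone does not force residue operators to extend by zero across the intersection locus of a normal crossings divisor.
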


Theorem \ref{theorem:main_intro} (resp. Theorem \ref{thm:main_intro_for_log_schemes}) follows immediately from Theorem \ref{theorem:log_crystal} (resp. Theorem \ref{theorem:log_crystals_intro_st}). We shall discuss the proofs of these theorems in \S \ref{sec:log_crystals} and \S \ref{sec:log_crystals_on_log_schemes}.  

\begin{remark}
During the preparation of this article, we learned that for a log $F$-isocrystal $\mE$ over $D^{\log}$, the first claim in Theorem \ref{theorem:log_crystal} can also be deduced from the recent work \cite{GY} of Guo--Yang as follows. In \textit{loc.cit.}, they show that the category of log $F$-isocrystals over $D^{\log}$ is equivalent to the category of $F$-isocrystals over $D$ equipped with a ``relative monodromy operator'' satisfying certain conditions. Moreover, they show that, if the logarithmic $F$-isocrsytal $(\mE, \varphi)$ corresponds to $(\mE', \varphi', N_{\mE'})$ under this equivalence, where $(\mE', \varphi')$ is an $F$-isocrystal over $D$, then the sub-object $(\mE')^{N_{\mE'} = 0} = \ker (N_{\mE'}) $ is again an $F$-isocrystal over $D$. Thus, if $N_{\mE'} = 0$ at one classical point on $D$, then it is trivial at all classical points. Their method essentially proves a relative version of the Hyodo--Kato isomorphism, where the Frobenius operator plays an essential role in the argument and the descent only seems to work rationally (in other words, for isocrystals). In comparison, our approach studies the descent from the log crystalline site to the crystalline site directly, which works both integrally and rationally, and involves no Frobenius structure in the argument. The irrelevance of Frobenius in establishing such rigidity results is somewhat surprising to us. More importantly, our argument is robust enough to generalize to log smooth settings in \S \ref{sec:log_crystals_on_log_schemes} (for example, to prove Theorem \ref{theorem:log_crystals_intro_st}), which is crucial for the proof of Theorem \ref{thm:conjecture_for_projective_varieties_intro} and of Theorem \ref{mainthm:extend_across_ncd}. 
\end{remark}


\vspace*{0.1cm}
\subsection*{\large Shankar's rigidity conjecture and the $p$-adic monodromy theorem for curves} \noindent 
\vspace*{0.1cm}

\noindent  
 Let us now briefly explain some ingredients of the proof of Theorem \ref{thm:conjecture_for_projective_varieties_intro}, which uses a mixture of techniques, including the full strength of our study of rigidity properties of the monodromy operator attached to logarithmic isocrystals, the $p$-adic monodromy theorem for curves, as well as techniques from both algebraic and rigid analytic geometry.

Our first step is to reduce to the case of curves. This relies on the following assertion, which says that, given a finite collection of closed points on a smooth projective variety with good reduction, we can connect these points by a curve with good reduction. 

\begin{proposition} \label{mainprop:reducing_to_curves}
Let $X$ be a geometrically connected smooth projective variety over $K$ with good reduction and let $T = \{x_1, ..., x_m\}$ be a finite set of closed points on $X$ with distinct specializations to characteristic $p$. Then, up to replacing $K$ by a finite (even unramified) extension, there exists a smooth projective curve $C$ over $K$ with good reduction and a closed embedding $\alpha: C \ra X$ such that $T$ is contained in the image of $C$. 
\end{proposition}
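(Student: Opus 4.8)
The plan is to deduce this from a Bertini-type argument applied to the smooth projective model $\mathcal X$ over $\mathcal O_K$, taking care to control the behavior both on the generic fiber and on the special fiber simultaneously. Let $\mathcal X$ be the smooth projective $\mathcal O_K$-model of $X$, with special fiber $\mathcal X_s$ over $k$. Denote by $\bar x_i \in \mathcal X_s$ the specialization of $x_i$; by hypothesis these are $m$ distinct closed points. Each $x_i$ extends to an $\mathcal O_{K'}$-point $\tilde x_i$ of $\mathcal X$ for a suitable finite extension $K'/K$; enlarging $K$ once at the start, we may assume all the $x_i$ and the curve we produce are defined over $K$ and that $k$ is large enough, so I will suppress $K'$ from the notation from now on. Choosing a projective embedding $\mathcal X \hookrightarrow \mathbb P^N_{\mathcal O_K}$, the goal is to find a linear section (an intersection with a generic linear subspace $\mathbb P^{N-\dim X+1}_{\mathcal O_K}$, or rather a complete intersection of $\dim X - 1$ hyperplanes) that (a) passes through all the points $\tilde x_i$, (b) is smooth over $\mathcal O_K$ of relative dimension one, and (c) is geometrically connected.

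The key steps, in order, are as follows. First, I would reduce to cutting down one dimension at a time: it suffices to find a single hyperplane section $\mathcal H \cap \mathcal X$ that is smooth over $\mathcal O_K$, geometrically connected, of relative dimension $\dim X - 1$, and contains all $\tilde x_i$; then induct. Second, for the existence of such a hyperplane I would run Bertini over the \emph{residue field} $k$ (which we may assume infinite after the initial base change, or handle via Poonen's Bertini over finite fields if one insists on not enlarging $k$): the hyperplanes in $\mathbb P^N_k$ through the finitely many points $\bar x_i$ whose intersection with $\mathcal X_s$ is smooth and geometrically connected form a dense open subset of the relevant linear system, provided $\dim \mathcal X_s \ge 2$ and the $\bar x_i$ impose independent conditions on hyperplanes — which can be arranged after re-embedding by a Veronese (or by passing to a suitable very ample power of the polarization) so that the $\bar x_i$ are in "general position" with respect to the linear system. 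Third, I would lift: a hyperplane $\bar H \subset \mathbb P^N_k$ with the desired properties lifts to $H \subset \mathbb P^N_{\mathcal O_K}$, and I arrange the lift to pass through each $\tilde x_i$ (this is a linear condition on the coefficients over $\mathcal O_K$, solvable because $\bar H$ already passes through $\bar x_i$ and $\mathcal O_K$ is a local ring, so one lifts the linear equations Hensel-style / by Nakayama). Fourth, smoothness of $\mathcal H = H \cap \mathcal X$ over $\mathcal O_K$: since $\mathcal X$ is proper and flat over the local ring $\mathcal O_K$ and the special fiber $\mathcal H_s = \bar H \cap \mathcal X_s$ is smooth over $k$ of the expected dimension, $\mathcal H$ is smooth over $\mathcal O_K$ in a neighborhood of the special fiber, hence everywhere by properness; flatness follows from the fiber dimensions being constant (the generic fiber $H_K \cap X$ has dimension $\le \dim X - 1$ by genericity of the lift among hyperplanes through the $x_i$, combined with the $x_i$ imposing independent conditions, and $\ge \dim X - 1$ since it is cut by one equation in a variety of dimension $\dim X$). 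Fifth, geometric connectedness of $\mathcal H$: the special fiber $\mathcal H_s$ is geometrically connected by the Bertini choice, and connectedness of the special fiber of a proper flat $\mathcal O_K$-scheme implies connectedness of the generic fiber (total space is connected, generic fiber is dense) — and being also smooth, geometrically connected. Iterating $\dim X - 1$ times yields the curve $C \hookrightarrow X$ with good reduction whose image contains $T$.

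The main obstacle I expect is step three combined with controlling dimension on the generic fiber: one must choose the hyperplane sections so that they are \emph{simultaneously} generic on the special fiber (to invoke Bertini smoothness and connectedness there) \emph{and} pass through the prescribed points $x_i$ on the generic fiber without the generic-fiber intersection jumping in dimension or acquiring singularities away from the special fiber. The point-passing constraints are finitely many linear conditions, hence cut out a positive-dimensional (in fact of the expected codimension $m$, after the Veronese re-embedding ensuring the $\bar x_i$ are in general position) linear subvariety of the parameter space of hyperplanes, and within that subfamily Bertini-type irreducibility/smoothness over $k$ still applies to a dense open — but verifying that the chosen lift's generic fiber remains smooth of the correct dimension requires the independence of conditions to hold integrally, which is where the re-embedding and the openness of the smooth locus over $\mathcal O_K$ (together with properness) do the essential work. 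A secondary technical point is handling the base change on $K$: the re-embedding, the general-position arrangement of the $\bar x_i$, and solving the lifting equations may each require enlarging the residue field; one checks these can all be achieved by a single finite unramified extension, consistent with the statement.
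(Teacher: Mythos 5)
Your proposal takes essentially the same route as the paper: cut $\mathcal X$ down to relative dimension one by successive sections (hyperplanes after a large Veronese in your version, degree-$d$ hypersurfaces for $d\gg 0$ in the paper's --- these are the same thing), choosing each section via a Bertini argument over the residue field and then lifting it to $\mathcal O_K$ through the prescribed integral points, iterating on dimension. The two places where the paper does real work are precisely the two places your write-up asserts the conclusion without supplying the argument.

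The more substantial one is the Bertini step. You assert that a general hyperplane through the $\bar x_i$ cuts $\mathcal X_s$ smoothly, \emph{including at the $\bar x_i$ themselves}. But standard Bertini only gives smoothness of a general member away from the base locus of the linear system, which here is exactly $\{\bar x_i\}$; and ``the $\bar x_i$ impose independent conditions,'' which you invoke after a Veronese, controls the dimension of the linear system, not the transversality of a general member with $\mathcal X_s$ at those base points. That transversality is exactly what the second half of the proof of Lemma~\ref{lemma:bertini} establishes: for $d$ large, the evaluation map $\theta_i\colon W(d)_s\to\mathfrak m_{\bar x_i}/\mathfrak m_{\bar x_i}^2$ is surjective, so the locus of degree-$d$ forms through the $\bar x_i$ whose zero set is tangent to $\mathcal X_s$ at some $\bar x_i$ has positive codimension in $\mathbb P(W(d)_s)$; without that dimension count your claim is unproved. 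Your lifting step is a milder gap: ``Hensel/Nakayama'' is the right instinct, and Lemma~\ref{lemma:no_obstruction_in_lifting} makes it precise by noting $H^1(\mathbb P^n_{\mathcal O_K},\mathcal I(d))=0$ for $d\gg 0$, so that $W(d)\twoheadrightarrow W(d)_s$. One point where you are more careful than the paper: you address geometric connectedness of the resulting curve explicitly (Bertini connectedness on the special fiber, plus flatness and properness over $\mathcal O_K$), whereas Lemmas~\ref{lemma:no_obstruction_in_lifting} and~\ref{lemma:bertini} only treat smoothness; connectedness is in fact used downstream in Corollary~\ref{corollary:reducing_to_curves}.
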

 
This claim follows from standard arguments in algebraic geometry on deforming very ample line bundles from characteristic $p$ to characteristic $0$ (we refer readers to \S \ref{ss:reducing_to_curves} for more details). Once we are reduced to the case of curves, we are able to apply the $p$-adic monodromy theorem as alluded to previously.

\begin{theorem}[The $p$-adic monodromy theorem for curves \cite{DDMY}] \label{thm:p_adic_monodromy}
Let $X$ be a smooth projective curve over $K$ and let $\L$ be a  de Rham $\Z_p$-local system on $X$. Then $\L$ is potentially semistable in the following sense: after replacing $K$ by a finite extension if necessary, there exists a finite cover $f:Y\ra X$ between smooth projective curves over $K$,
where $Y$ has semistable reduction over $\mO_K$, such that $f^* \L$ is a semistable local system.    
\end{theorem}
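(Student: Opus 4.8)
The plan is to deduce this from Fontaine's $p$-adic monodromy theorem for $p$-adic Galois representations by way of a geometric globalization, using the $p$-adic Riemann--Hilbert correspondence and the semistable reduction theorem for curves. First I would apply the $p$-adic Riemann--Hilbert correspondence of Liu--Zhu \cite{Liu_Zhu} (together with \cite{DLLZ1}) to reformulate the hypothesis: the de Rham $\Z_p$-local system $\L$ on the smooth proper curve $X$ gives rise to a filtered vector bundle with integrable connection $(\mE,\nabla,\Fil^{\bullet})$ on $X$, and since $X$ is proper and smooth this connection has no poles. The assertion to be proved then concerns only integral models: one must produce a finite cover $f\colon Y \ra X$ with $Y$ semistable such that, on a semistable formal model $\fY$ of $Y$, the pulled-back connection $(f^*\mE, f^*\nabla)$ extends to a log-$\nabla$-module on $\fY_s^{\log}$ with nilpotent residues in a way compatible with the local-system structure --- which is precisely the meaning of $f^*\L$ being semistable in the sense of Definition \ref{def:st_local_sys}.

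Next I would reduce this to a local problem around the special fiber. After replacing $K$ by a finite extension and $X$ by a finite cover we may assume, by the semistable reduction theorem, that $X$ itself has a semistable model $\fX$ over $\mO_K$. Using the description of semistable local systems along the components and nodes of $\fX_s$ developed in \cite{DLMS2}, semistability of a pullback $f^*\L$ can be tested \'etale-locally on the semistable model, so it suffices to treat two local shapes: a formal neighborhood of a smooth point of a component of $\fX_s$, over which $X$ is a closed unit disc $\D$; and a formal neighborhood of a node, over which $X$ is a ``thick annulus'' $\{|\varpi|^n \le |z| \le 1\}$ with its standard semistable model. Accordingly, I would construct the global cover $f\colon Y\ra X$ (after replacing $X$ by its normalization in a suitable finite extension of the function field and further enlarging $K$) so that it is a Kummer cover $z\mapsto z^m$ of sufficiently high degree near each node and \'etale near the smooth points of the components.

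The two local problems are then settled by combining Fontaine's theorem with rigidity. Over the disc $\D$, restriction to the origin gives a de Rham Galois representation of $K$, which is potentially semistable; a rigidity argument in the spirit of Theorem \ref{theorem:main_intro} and Theorem \ref{theorem:log_crystal} propagates semistability from the origin to the whole disc after the corresponding finite base change of $K$ (indeed, once $\L$ is semistable on $\D$, Theorem \ref{theorem:main_intro} even gives crystallinity). Over the thick annulus, the restriction to each classical point is de Rham and hence potentially semistable, and one promotes this to a uniform statement --- that a single Kummer cover $z\mapsto z^m$ trivializes the quasi-unipotent monodromy at both ends of the annulus --- by invoking the $p$-adic local monodromy theorem over the Robba ring (Andr\'e--Mebkhout--Kedlaya) in families, e.g.\ through the relative $p$-adic Hodge theory of Kedlaya--Liu. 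After the Kummer pullback the connection acquires a log extension with nilpotent residues; gluing the local extensions together with the Frobenius structures they carry on the special fiber, and then descending, yields a global semistable structure on $f^*\L$.

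The main obstacle is the node case combined with making the local-to-global step rigorous. At a node one needs the full $p$-adic local monodromy theorem over an annulus, not merely pointwise potential semistability, and in a form that is uniform along the annulus; one must arrange the Kummer covers at the finitely many nodes so that they are induced by a single honest finite cover $Y\ra X$ with $Y$ globally semistable; and one must verify that the resulting local log-$\nabla$-modules, together with the $\varphi$-structures coming from the log crystalline (Hyodo--Kato) realization, glue into a single semistable $\Z_p$-local system in the sense of Definition \ref{def:st_local_sys}. Establishing the dictionary that makes ``semistable local system on $\fX$'' an \'etale-local condition on $\fX_s$ --- so that this globalization argument is legitimate --- is itself a substantial technical point, and is exactly where the logarithmic (iso)crystal methods of \S \ref{sec:log_crystals} and \S \ref{sec:log_crystals_on_log_schemes} of the present article would be brought to bear.
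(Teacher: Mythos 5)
The paper does not prove this statement: Theorem~\ref{thm:p_adic_monodromy} is imported from the forthcoming companion work \cite{DDMY}, so there is no proof in this article to compare against. On its own terms, your scaffolding --- $p$-adic Riemann--Hilbert, semistable reduction, reduce to discs around smooth points of the special fiber and annuli around nodes, Fontaine at the discs, $p$-adic local monodromy at the annuli, glue --- is a plausible blueprint, and you are right that the annulus/gluing step is the crux. But two of your steps contain genuine gaps.

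At the disc step you invoke Theorem~\ref{theorem:main_intro} (and Theorem~\ref{theorem:log_crystal_1} behind it) to ``propagate semistability from the origin.'' Those results take a local system already known to be \emph{semistable} and upgrade crystallinity at one classical point to crystallinity everywhere; their hypothesis is not ``de Rham, potentially semistable at a point,'' and in particular the monodromy-rigidity mechanism has nothing to act on until semistability of the local system (i.e.\ the existence of a log $F$-isocrystal over $\fX_s^{\log}$ comparing with $\L$) has already been established. The local ingredient you actually want here is the relative local monodromy theorem around type I points --- Proposition~\ref{prop:Shimizu_generalization} in this paper, after Shimizu --- whose hypothesis really is de Rham and whose conclusion is horizontal de Rham on a neighborhood after a finite extension. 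Even that is only local on $X_{\C_p}$, so a compactness argument is still needed to manufacture a single cover.

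The annulus step is where the real content lies, and you gesture at it without filling it. Saying that Andr\'e--Mebkhout--Kedlaya ``in families, via relative $p$-adic Hodge theory'' produces a single Kummer cover $z\mapsto z^m$ trivializing the quasi-unipotent monodromy uniformly across the whole annulus is precisely the uniformity assertion the theorem is asking you to prove; the pointwise Robba-ring statement does not hand it to you. And even granting a log-$\nabla$-module with nilpotent residues on a semistable model, ``gluing with the Frobenius structures'' is not the mechanism that turns a vector bundle with connection on the generic fiber into a semistable local system in the sense of Definition~\ref{def:st_local_sys}: the Frobenius lives on a log $F$-isocrystal over the special fiber, which is $p$-adic Hodge-theoretic data, not something you can read off $(\mE,\nabla)$ alone. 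The device the paper itself relies on to globalize semistability --- e.g.\ in \S\ref{section:punctured disc} --- is the pointwise criterion of Guo--Yang \cite{GY}, which says a $\Z_p$-local system on the generic fiber of a semistable formal scheme is semistable if and only if it is semistable at every classical point. Your argument should either route through that criterion (which collapses the problem to the compactness-and-uniformity issue above) or be rewritten to construct the log $F$-isocrystal honestly; as written, the gluing paragraph is not a proof step but a restatement of the difficulty.
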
 



Now let $X$ be a geometrically connected smooth projective curve over $K$ with good reduction and let $\L$ be a $\Z_p$-local system on $X$ as in Theorem \ref{thm:conjecture_for_projective_varieties_intro}. 
The rough idea is to first pass to a finite cover $f:Y \ra X$ over which $\L$ becomes semistable, so that we are under a setup to apply rigidity results such as 
Theorem \ref{theorem:main_intro} and Theorem \ref{thm:main_intro_for_log_schemes}. One major difficulty we encounter in our argument is the following: after passing to the finite cover $Y$, one generally loses the property of having good reduction. Even worse, after choosing a suitable integral model $\widehat{f}:\fY\ra\fX$ of $f$ and consider the induced map $f_s: \fY_s \ra \fX_s$ on the special fibers, it might happen that some irreducible component in $\fY_s$ is entirely contracted to one closed point on $\fX$ (see Figure \ref{fig:f_splitting_into_3_cases} in \S \ref{sec:conjecture} for an illustration of this phenomenon). This makes it difficult to directly apply the techniques we develop in \S \ref{sec:log_crystals} and \S \ref{sec:log_crystals_on_log_schemes} on the rigidity of the monodromy operators of log ($F$-)isocrystals. 

To remedy this issue, we apply some techniques from rigid analytic geometry and perform a slight ``$p$-adic perturbation'' to $Y$  to remove the bad ``contracting locus''  near a given point, at the cost of possibly replacing $Y$ by a further finite cover, and possibly introducing additional contracting loci at some other irrelevant points. We call such a procedure \textit{dodging the contracting locus of $f$}, which is achieved via a careful study of local reduction behavior of maps between curves (not necessarily with good reduction). This eventually reduces to some explicit computations using coordinates on $\P^1$. For such arguments to work, it is also crucial that we enter the realm of rigid analytic geometry in the sense of Huber's adic spaces. 

Let us give an example of how we ``dodge'' the contracting locus for curves over $\P^1$, which is in fact a key step towards proving the general case. For the setup, let $Y$ be a smooth projective curve over $K$, viewed as an adic space, and let $g: Y \ra \P^1_K$ be a finite cover. Let $\xi_0 \in \P^1_K$ be a non-type I point\footnote{For a quick review on points of type I, I\!I, I\!I\!I, I\!V, V on a smooth curve, see \S \ref{ss:points_on_curves}.} and let $s_0 \in \P^1_K$ be a classical point. We now state the dodging theorem in its crudest form (see Theorem \ref{theorem:modified_spreadout_for_dodging} for the precise statement and see Figure \ref{fig:dodging} for a cartoon that illustrates this procedure). 

\begin{theorem} \label{mainthm:modified_spreadout_for_dodging_intro}
Up to replacing $K$ by a finite extension, there exists an open neighborhood $U\subset \P^1_K$ of $\xi_0$ and another finite cover $g': Y'\ra \P^1_K$ between smooth projective curves such that 
\begin{enumerate}
\item $Y'$ has semistable reduction over $\mO_K$;
\item $g$ and $g'$ restrict to the ``same'' map over $U$;
\item there exists a classical point $y_0' \in (g')^{-1} (s_0)$ that lies on the non-contracting locus of $Y'$.
\end{enumerate}
\end{theorem}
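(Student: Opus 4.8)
The plan is to reduce the statement to an explicit local computation on $\P^1$ near the point $\xi_0$, using the fact that away from $\xi_0$ the cover $g$ is already ``good enough'' in the relevant sense. First I would choose a coordinate on $\P^1_K$ so that $\xi_0$ corresponds to the Gauss point of a sub-disc (or annulus) and $s_0$ corresponds to a classical point not lying over $\xi_0$; the non-type-I hypothesis on $\xi_0$ is exactly what allows this normalization, since such points are precisely those that can be realized as ``centers'' of discs or annuli in the sense of the Berkovich/adic dictionary recalled in \S \ref{ss:points_on_curves}. Passing to a finite extension of $K$, I would arrange that all the ramification of $g$ over a small neighborhood of $\xi_0$ is visible rationally. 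The key input is then a spreading-out/semistable-reduction result for finite covers of $\P^1$ (essentially the stable reduction theorem for curves together with simultaneous semistable reduction): after possibly enlarging $K$, one can find a finite cover $g': Y' \to \P^1_K$ with $Y'$ semistable such that $g'$ agrees with $g$ over an open neighborhood $U$ of $\xi_0$. This is the content of ``(1)'' and ``(2)''.

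The substance of the theorem is ``(3)'': arranging that \emph{some} classical point of $(g')^{-1}(s_0)$ avoids the contracting locus. The strategy is to observe that the contracting locus of $g'$ is a closed subset of the special fiber $\fY'_s$ that is disjoint from the component(s) lying over the component of $\fX_s$ containing the specialization of $\xi_0$ (since over $U$ the map $g'$ behaves like $g$, which we may assume is finite, hence non-contracting, near $\xi_0$). Because $s_0$ is a free parameter in the construction --- we have not yet committed to which component of $\fX_s$ it specializes to --- I would argue that, after one more modification of $g'$ (replacing it by a further finite cover, or by composing with a carefully chosen automorphism of $\P^1$ fixing a neighborhood of $\xi_0$), the fiber over $s_0$ can be made to meet a non-contracted component. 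Concretely, I would track how the fiber $(g')^{-1}(s_0)$ sits in $\fY'_s$ via the explicit coordinate description of $g'$ as a map of (semistable models of) $\P^1$'s and curves, reducing to: given a finite map of curves with prescribed local behavior near one point, produce a point in a prescribed fiber landing on a prescribed (non-contracted) component. This is where the ``explicit computations using coordinates on $\P^1$'' enter.

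I expect the main obstacle to be precisely the interaction between the two competing demands in the construction: we must not touch $g$ near $\xi_0$ (constraint (2)), yet we must move the fiber structure over $s_0$ onto the non-contracting locus (constraint (3)), and enforcing semistability of $Y'$ (constraint (1)) may reintroduce contracted components elsewhere. The delicate point is bookkeeping which components of the special fiber of a chosen integral model of $Y'$ get contracted under $g'$, and showing that this contracting locus --- while possibly nonempty and even large --- can always be steered away from at least one point of the fiber over $s_0$. I would handle this by working with a sufficiently fine semistable model (e.g. after enough blowups so that $g'$ is ``toroidal'' in a neighborhood of the relevant fibers) and invoking the local structure theory of such maps, together with the freedom to pass to finite covers of $Y'$ which only refines, but does not destroy, the good behavior over $U$. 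A secondary technical point is that all of this must be carried out in the category of adic spaces rather than classical rigid spaces, so that non-classical points like $\xi_0$ are genuinely points one can localize around; this is a framework issue rather than a mathematical obstruction, but it dictates the language of the argument.
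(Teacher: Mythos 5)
Your write-up correctly identifies the framing and the central tension (we cannot touch $g$ over $U$, yet we must steer the fiber over $s_0$ onto a non-contracted component), but it stops short of the actual mechanism the argument needs, and two of the candidate mechanisms you offer cannot work as stated.

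First, ``composing with a carefully chosen automorphism of $\P^1$ fixing a neighborhood of $\xi_0$'' is vacuous if taken literally: any automorphism of $\P^1$ fixing a nonempty open set is the identity, so it cannot move the fiber over $s_0$. What the paper actually uses is an automorphism $h_n\colon t\mapsto t - t^{n+1}/a^n$ which is \emph{not} the identity near $\xi_0$ but is a uniformly small $p$-adic perturbation of it there; the point is then a separate lemma (the $p$-adic perturbation lemma for finite covers, Lemma~\ref{lemma:p_adic_perturbation}) which says that perturbing the coefficients of the defining polynomial of $g$ by a small amount over a small enough neighborhood of a non-type-I point does not change the finite cover. Your argument has no substitute for this input. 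Second, ``passing to finite covers of $Y'$'' does not by itself help with~(3): contracted components persist under taking further covers. Third, the claim that $s_0$ is ``a free parameter --- we have not yet committed to which component of $\fX_s$ it specializes to'' reverses the logic of the problem: $s_0$ and its specialization are given, and the non-contracting locus disjointness you note near $\xi_0$ says nothing about the fiber over $s_0$, since $s_0\notin U_0$.

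The missing idea is to transport a \emph{known} good fiber to $s_0$ rather than to try to improve the given fiber in place. Concretely: after enlarging $K$ one has a semistable model $\widehat g\colon\fY\to\widehat\P^1$ with nonempty non-contracting locus, so one may pick a classical point $y_1$ on it, specializing into the smooth locus of $\fY_s$, with image $s_1 = g(y_1)$. One then normalizes coordinates (Lemma~\ref{lemma:smart_choice_of_coordinates_for_dodging}) so that $t(s_1)=0$, $t(s_0)=a$, and $|t|$ on $U_0$ is uniformly $\ll|a|$ (or $\gg|a|$), and observes that $h_n$ sends $s_0$ to $s_1$ while $h_n|_{U_0}\to \mathrm{id}$ as $n\to\infty$. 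The base change of $g$ along $h_n$ is the new cover $g'$: the perturbation lemma forces it to agree with $g$ over $U_0$, while its fiber over $s_0$ is (a normalized copy of) the fiber of $g$ over $s_1$, which by design contains $y_1$ on the non-contracting locus. Finally $h_n$ is shown to admit an integral model by admissible blowing-ups disjoint from $\overline{s}_0$, which is what lets one assemble the semistable model $\fY'$ with the required properties. Your proposal gestures at ``explicit computations in coordinates on $\P^1$'' but does not arrive at this ``find a good target fiber, then slide $s_0$ onto it'' strategy, which is the crux.
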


In other words, we can ``dodge'' the contracting locus for curves over $\P^1$ after a ``modification'' of the curve away from some neighborhood of a fixed non-type I point in a suitable sense. 
We refer the reader to \S \ref{sec:dodging} for the precise statement and proof of this theorem along with its variants, as well as the definitions of the contracting and non-contracting loci. The results in \S \ref{sec:dodging} will eventually allow us to complete the proof of Shankar's rigidity conjecture (Theorem \ref{thm:conjecture_for_projective_varieties_intro}), which we explain in \S \ref{sec:conjecture}.

\begin{remark}
One might ask whether a slightly stronger version of Shankar's conjecture is true. Namely, under the same assumption as in Theorem \ref{thm:conjecture_for_projective_varieties_intro}, is it true that $\L$ always becomes a crystalline local system on $X$ by only replacing $K$ by a finite extension. This stronger version of the conjecture turns out to be false,  and one could produce counterexamples in the same spirit of \cite{Lawrence_Li} (see Remark \ref{remark:counter_example_to_optimistic_guess}). In other words, it is crucial that we allow \textit{potential} crystallinity in the statement. 
\end{remark}

Let us also take this opportunity to record the following question, which can be viewed as a geometric analogue of Shankar's conjecture.  

\begin{question} Let $X$ be a geometrically connected smooth projective variety over $K$ with good reduction. If $\pi: Y \ra X$ is a smooth proper family over $X$ such that $Y_{x_0}$ has potential good reduction at one closed point $x_0$ on $X$, then does $Y$ necessarily have potential good reduction everywhere over $X$?
\end{question} 

\vspace*{0.1cm}
\subsection*{\large Extension of local systems across normal crossing divisors} \noindent 
\vspace*{0.2cm}
 
\noindent  
Finally, let us turn to Theorem \ref{mainthm:extend_across_ncd}, which we will deduce from the following special case of a punctured disc. 

\begin{theorem} \label{theorem:main_intro_punctured_disc} 
Let $\D=\D_K=\{|z|\le 1\}$ be the closed unit disc over $K$ and let $\D^\times=\D-\{0\}$ be the punctured closed unit disc. Let $\L$ be an \'etale $\Z_p$-local system on $\mathbb D^\times$. Assume that $\mathbb{L}|_x$ is crystalline at all classical points $x$ in $\mathbb D^\times$. Then $\mathbb{L}$ extends uniquely to an \'etale $\Z_p$-local system on $\mathbb{D}$ and is necessarily a crystalline local system.
\end{theorem}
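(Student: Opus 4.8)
\medskip
\noindent\textit{Proof strategy for Theorem~\ref{theorem:main_intro_punctured_disc}.} The plan is to exhaust $\D^\times$ by the thick annuli $A_n=\{|\varpi|^n\le |z|\le 1\}$, each carrying its standard semistable formal model $\mathfrak{A}_n$ over $\mO_K$ --- whose special fiber $\mathfrak{A}_{n,s}$ is a chain of $n+1$ rational curves, and whose contraction collapsing all components except the outermost one is the special fiber of a morphism $\mathfrak{A}_n\to\widehat{\A}^1=\spf\mO_K\gr{z}$ extending the open immersion $A_n\hookrightarrow\D$ --- and to feed each $A_n$ into the rigidity results of \S\ref{sec:log_crystals_on_log_schemes}. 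To begin with, since $\L$ is crystalline, hence de Rham, at one classical point, the Liu--Zhu rigidity theorem \cite{Liu_Zhu} shows that $\L$ is a de Rham $\Z_p$-local system on all of $\D^\times$.

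\medskip
\noindent The central and hardest step is to upgrade de Rham-ness to semistability: one must show that, after replacing $K$ by a finite extension, $\L|_{A_n}$ is a semistable $\Z_p$-local system with respect to $\mathfrak{A}_n$ (Definition~\ref{def:st_local_sys}). De Rham-ness alone does not suffice here --- already a constant local system $V\otimes\ul{\Z_p}$ on $\D$ is semistable \emph{as a local system} only when $V$ is semistable, not merely de Rham --- so a genuine $p$-adic monodromy input is needed, and since none of the relevant spaces is proper it has to enter \emph{locally at the puncture}. Restricting $\L$ to a boundary annulus shrinking towards $z=0$ produces a de Rham $(\varphi,\Gamma)$-module over a Robba ring, which is potentially semistable by the $p$-adic local monodromy theorem; equivalently, the geometric monodromy of $\L$ around $0$ --- and therefore around every circle $\{|z|=|\varpi|^j\}$, $1\le j\le n$, since these loops are conjugate in $\pi_1(\D^\times)$ --- is quasi-unipotent. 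After a further Kummer pullback $z\mapsto z^m$ these monodromies become unipotent simultaneously, and one concludes that (the pullback of) $\L|_{A_n}$ is semistable with respect to (the base change of) $\mathfrak{A}_n$. The Kummer cover is undone only at the very end, using that crystallinity at all classical points forces the residual finite geometric monodromy it introduces to vanish, so that the extension produced below is an extension of $\L$ itself. I expect this passage from ``de Rham'' to ``semistable'', together with the Kummer-cover bookkeeping, to be the main obstacle.

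\medskip
\noindent Granting semistability, let $\mathcal E_n$ be the log $F$-isocrystal attached to $\L|_{A_n}$ on $\mathfrak{A}_{n,s}$ with its divisorial log structure. For every classical point $x$ of $A_n$, crystallinity of the Galois representation $\L|_x$ is equivalent to the vanishing of the monodromy operator $N$ of $\mathcal E_n$ at the specialization of $x$; since $\L$ is crystalline at all classical points, $N$ vanishes at every closed point in the smooth locus of $\mathfrak{A}_{n,s}$, so by the rigidity theorem for log (iso)crystals over log schemes of semistable type (Theorem~\ref{theorem:log_crystal_2_global}; cf.\ Theorem~\ref{theorem:log_crystals_intro_st}) the log isocrystal $\mathcal E_n$ descends to an honest (iso)crystal over $\mathfrak{A}_{n,s}$ and has trivial monodromy in all directions, including at the nodes --- equivalently, $\L|_{A_n}$ is a crystalline local system on $A_n$ (one may instead quote Theorem~\ref{cor:rigidity_crystalline_over_semistable} directly). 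Vanishing of $N$ means that the geometric monodromy of $\L|_{A_n}$ around the annulus is trivial, so $\L|_{A_n}$ extends across $\{z=0\}$, that is, to the disc $\D$; moreover the descended crystal is constant along the fibers of the contraction $\mathfrak{A}_{n,s}\to\A^1_k$, hence descends to a filtered $F$-crystal on $\A^1_k$, which corresponds to a \emph{crystalline} $\Z_p$-local system $\widetilde\L_n$ on $\D$ restricting to $\L|_{A_n}$. The local systems $\widetilde\L_n$ are mutually compatible and agree with $\L$ on the $A_n$, hence glue to a crystalline $\Z_p$-local system $\widetilde\L$ on $\D$ with $\widetilde\L|_{\D^\times}=\L$; uniqueness of the extension is automatic, since $\D^\times\hookrightarrow\D$ is an open immersion with dense image into a smooth (hence normal) space and therefore induces a surjection on \'etale fundamental groups, so a $\Z_p$-local system on $\D$ is determined by its restriction to $\D^\times$. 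This would complete the proof.
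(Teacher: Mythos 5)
Your plan correctly identifies the decomposition of $\D^\times$ into thick annuli and correctly singles out semistability (with respect to the semistable model) as the central difficulty; you also correctly reach for Theorem~\ref{theorem:log_crystal_2_global}/\ref{cor:rigidity_crystalline_over_semistable} once semistability is in hand. But the mechanism you propose for establishing semistability is not the one that works, and there is a genuine gap there.

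You try to get from ``de Rham on $\D^\times$'' to ``$\L|_{A_n}$ is semistable with respect to $\fA_n$'' by invoking the $p$-adic local monodromy theorem near $z=0$ and a Kummer cover. This route confuses two inequivalent conditions. Quasi-unipotence (or, after Kummer pullback, unipotence) of the \emph{geometric} monodromy of $\L$ around $0$ is a statement about the action of $\pi_1^{\ket}$ on stalks; semistability of $\L|_{A_n}$ in the sense of Definition~\ref{def:st_local_sys} is a $p$-adic Hodge-theoretic condition requiring a log $F$-isocrystal on $\fA_{n,s}^{\log}$ together with a Frobenius-equivariant $\mathbb B_{\mathrm{crys}}$-comparison. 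Unipotent geometric monodromy does not yield the latter, so step~3 of your chain (``conclude that the pullback of $\L|_{A_n}$ is semistable'') is unsupported. In fact, your argument never uses the hypothesis that $\L$ is crystalline at all classical points in this step, which should be a warning sign: that hypothesis is exactly what powers the semistability conclusion. The paper's route is to apply the \emph{pointwise criterion of Guo--Yang} (\cite[Theorem 1.1]{GY}): since $\L|_{\D_m}$ is crystalline (a fortiori semistable) at every classical point, it is a semistable local system with respect to $\fD_m$ --- no Kummer cover, no detour through the Robba ring. Also note that a Kummer cover does not ``introduce'' geometric monodromy into $\L$, it removes it, so the final ``undoing'' paragraph is not coherent as written.

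There is a second, smaller gap in the extension step. After descending $\mE_m^{\log}$ to an $F$-isocrystal $\mE_m$ on $\fD_{m,s}$ via Theorem~\ref{theorem:log_crystal_2}, you assert that vanishing of $N$ ``means that the geometric monodromy of $\L|_{A_n}$ around the annulus is trivial, so $\L|_{A_n}$ extends across $\{z=0\}$,'' and you then descend an $F$-crystal along the contraction $\fD_{m,s}\to\A^1_k$ to produce the extension. This skips the actual work: the paper must pass to the de Rham realization (Construction~\ref{construction:vector_bundle_with_connections} producing $(\mathbb E_m,\nabla_m)=(\D_{\mathrm{dR},\log}(\L)|_{\D_m},\nabla_{\L})$ by \cite[Prop.~3.46]{DLMS2}), then use Lemma~\ref{lemma:isocrystal_from_P1} --- exploiting that the restriction to each $\A^1_k$-component of $\fD_{m,s}$ extends to an $F$-isocrystal on $\P^1_k$ --- to show $\nabla_{\L}$ has a full set of solutions on each thin annulus $\{|z|=1/p^j\}$, then the $\P^1$-gluing sublemma to trivialize over $\{0<|z|\le 1/p\}$, and only then can one conclude zero residue at the puncture and invoke Theorem~\ref{prop:residue vs monodromy} (the Oswal--Shankar--Zhu residue-vs-monodromy theorem) to extend $\L$ across $z=0$. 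Your ``descend along the contraction'' shortcut would need to be justified; as stated it is not. Finally, the paper obtains crystallinity of the extended local system on $\D$ by applying Guo--Yang once more on the thin annulus $\{|z|=1\}$ together with purity (\cite[Theorem 1.2]{Moon}), rather than by tracing through the $F$-crystal descent.
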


In the remainder of this introduction, let us outline the proof of Theorem \ref{theorem:main_intro_punctured_disc}. Let us write $\D_m$ for the closed sub-annulus 
\[ \D_m := \{1/p^m \le  |z| \le 1 \}
\]inside the closed unit disc, which is affinoid and corresponds to the Tate algebra 
\[ K\gr{\frac{\varpi^m}{z}, z} =K\gr{z, z'}/(z z' - \varpi^m).\] Note that $\D_m$ has a standard semistable formal model $\mathcal D_m$ over $\spf \mO_K$ obtained as successive admissible blowing-ups of $\spf \mO_K \gr{z, z'}/(zz' - \varpi^m)$ at the singular point. The special fiber $\mathcal D_{m,s}$ of $\mathcal D_m$ consists of a ``tree'' of smooth curves over $k$, such that each irreducible component of $\mathcal D_{m,s}$ is isomorphic to either $\P^1_k$ or $\A^1_k$. Equivalently, it can be obtained from gluing the standard semistable model 
\[\mathcal A_j = \spf \mO_K \gr{x_{j-1}, y_{j-1}}/(x_{j-1} y_{j-1} - \varpi)\] of the annulus $A_j = \{1/p^j \le  |z| \le 1/p^{j-1}\}$ to $\mathcal A_{j+1}$ along the $p$-adic formal torus, identifying $y_{j-1}$ with $x_j^{-1}$ for each $j = 1, ..., m-1$. As Figure \ref{fig:model_for_punctured_disc} below illustrates, the special fiber $\mathcal D_{m, s}$ of $\mathcal D_m$ has $m+1$ irreducible components, which consists of $m-1$ copies of $\P^1_{k}$ and two copies of $\A^1_{k}$ ``at the two ends''. 
\begin{figure}[h]
    \tikzset{every picture/.style={line width=0.75pt}} 
\scalebox{0.7}{ 
\tikzset{every picture/.style={line width=0.75pt}} 

\begin{tikzpicture}[x=0.75pt,y=0.75pt,yscale=-1,xscale=1]

\draw  [color={rgb, 255:red, 50; green, 70; blue, 225 }  ,draw opacity=1 ][fill={rgb, 255:red, 255; green, 207; blue, 207 }  ,fill opacity=0.72 ][line width=1.5]  (357,142) .. controls (357,81.8) and (405.8,33) .. (466,33) .. controls (526.2,33) and (575,81.8) .. (575,142) .. controls (575,202.2) and (526.2,251) .. (466,251) .. controls (405.8,251) and (357,202.2) .. (357,142) -- cycle ;
\draw  [color={rgb, 255:red, 0; green, 0; blue, 0 }  ,draw opacity=1 ][line width=1.5]  (385,142) .. controls (385,97.26) and (421.26,61) .. (466,61) .. controls (510.74,61) and (547,97.26) .. (547,142) .. controls (547,186.74) and (510.74,223) .. (466,223) .. controls (421.26,223) and (385,186.74) .. (385,142) -- cycle ;
\draw  [color={rgb, 255:red, 0; green, 0; blue, 0 }  ,draw opacity=1 ][line width=1.5]  (406,142) .. controls (406,108.86) and (432.86,82) .. (466,82) .. controls (499.14,82) and (526,108.86) .. (526,142) .. controls (526,175.14) and (499.14,202) .. (466,202) .. controls (432.86,202) and (406,175.14) .. (406,142) -- cycle ;
\draw  [color={rgb, 255:red, 0; green, 0; blue, 0 }  ,draw opacity=1 ][line width=1.5]  (424.25,142) .. controls (424.25,118.94) and (442.94,100.25) .. (466,100.25) .. controls (489.06,100.25) and (507.75,118.94) .. (507.75,142) .. controls (507.75,165.06) and (489.06,183.75) .. (466,183.75) .. controls (442.94,183.75) and (424.25,165.06) .. (424.25,142) -- cycle ;
\draw  [color={rgb, 255:red, 225; green, 35; blue, 35 }  ,draw opacity=1 ][fill={rgb, 255:red, 255; green, 255; blue, 255 }  ,fill opacity=1 ][line width=1.5]  (437.63,142) .. controls (437.63,126.33) and (450.33,113.63) .. (466,113.63) .. controls (481.67,113.63) and (494.38,126.33) .. (494.38,142) .. controls (494.38,157.67) and (481.67,170.38) .. (466,170.38) .. controls (450.33,170.38) and (437.63,157.67) .. (437.63,142) -- cycle ;
\draw [color={rgb, 255:red, 225; green, 35; blue, 35 }  ,draw opacity=1 ][line width=1.5]    (23,107.13) -- (92.18,184.75) ;
\draw [color={rgb, 255:red, 0; green, 0; blue, 0 }  ,draw opacity=1 ][line width=1.5]    (50.83,186.87) -- (126.37,105) ;
\draw [color={rgb, 255:red, 0; green, 0; blue, 0 }  ,draw opacity=1 ][line width=1.5]    (185.63,109.25) -- (254.81,186.87) ;
\draw [color={rgb, 255:red, 50; green, 70; blue, 225 }  ,draw opacity=1 ][line width=1.5]    (213.46,189) -- (289,107.13) ;
\draw    (480,152) -- (484.97,160.29) ;
\draw [shift={(486,162)}, rotate = 239.04] [color={rgb, 255:red, 0; green, 0; blue, 0 }  ][line width=0.75]    (4.37,-1.32) .. controls (2.78,-0.56) and (1.32,-0.12) .. (0,0) .. controls (1.32,0.12) and (2.78,0.56) .. (4.37,1.32)   ;
\draw    (583,178) -- (546.91,166.6) ;
\draw [shift={(545,166)}, rotate = 17.53] [color={rgb, 255:red, 0; green, 0; blue, 0 }  ][line width=0.75]    (4.37,-1.32) .. controls (2.78,-0.56) and (1.32,-0.12) .. (0,0) .. controls (1.32,0.12) and (2.78,0.56) .. (4.37,1.32)   ;
\draw    (571,210) -- (563.14,198.64) ;
\draw [shift={(562,197)}, rotate = 55.3] [color={rgb, 255:red, 0; green, 0; blue, 0 }  ][line width=0.75]    (4.37,-1.32) .. controls (2.78,-0.56) and (1.32,-0.12) .. (0,0) .. controls (1.32,0.12) and (2.78,0.56) .. (4.37,1.32)   ;
\draw [line width=1.5]    (164,149.66) -- (193.81,184.75) ;
\draw [color={rgb, 255:red, 0; green, 0; blue, 0 }  ,draw opacity=1 ][line width=1.5]    (152.46,186.87) -- (228,105) ;
\draw [color={rgb, 255:red, 0; green, 0; blue, 0 }  ,draw opacity=1 ][line width=1.5]    (87.63,106.06) -- (124,146.47) ;
\draw  [fill={rgb, 255:red, 0; green, 0; blue, 0 }  ,fill opacity=1 ][line width=1.5]  (72,162.5) .. controls (72,162.22) and (72.22,162) .. (72.5,162) .. controls (72.78,162) and (73,162.22) .. (73,162.5) .. controls (73,162.78) and (72.78,163) .. (72.5,163) .. controls (72.22,163) and (72,162.78) .. (72,162.5) -- cycle ;
\draw  [color={rgb, 255:red, 0; green, 0; blue, 0 }  ,draw opacity=1 ][fill={rgb, 255:red, 0; green, 0; blue, 0 }  ,fill opacity=1 ] (204,130.5) .. controls (204,130.22) and (204.22,130) .. (204.5,130) .. controls (204.78,130) and (205,130.22) .. (205,130.5) .. controls (205,130.78) and (204.78,131) .. (204.5,131) .. controls (204.22,131) and (204,130.78) .. (204,130.5) -- cycle ;
\draw  [fill={rgb, 255:red, 0; green, 0; blue, 0 }  ,fill opacity=1 ][line width=1.5]  (235,165.5) .. controls (235,165.22) and (235.22,165) .. (235.5,165) .. controls (235.78,165) and (236,165.22) .. (236,165.5) .. controls (236,165.78) and (235.78,166) .. (235.5,166) .. controls (235.22,166) and (235,165.78) .. (235,165.5) -- cycle ;
\draw  [fill={rgb, 255:red, 0; green, 0; blue, 0 }  ,fill opacity=1 ][line width=1.5]  (174,163) .. controls (174,162.45) and (174.45,162) .. (175,162) .. controls (175.55,162) and (176,162.45) .. (176,163) .. controls (176,163.55) and (175.55,164) .. (175,164) .. controls (174.45,164) and (174,163.55) .. (174,163) -- cycle ;
\draw  [color={rgb, 255:red, 0; green, 0; blue, 0 }  ,draw opacity=1 ][fill={rgb, 255:red, 0; green, 0; blue, 0 }  ,fill opacity=1 ] (105.82,126.41) .. controls (105.82,126.08) and (106.08,125.82) .. (106.41,125.82) .. controls (106.74,125.82) and (107,126.08) .. (107,126.41) .. controls (107,126.74) and (106.74,127) .. (106.41,127) .. controls (106.08,127) and (105.82,126.74) .. (105.82,126.41) -- cycle ;
\draw    (282,213.5) .. controls (286.93,178.04) and (293.79,165.87) .. (273.93,136.84) ;
\draw [shift={(273,135.5)}, rotate = 55.01] [color={rgb, 255:red, 0; green, 0; blue, 0 }  ][line width=0.75]    (10.93,-3.29) .. controls (6.95,-1.4) and (3.31,-0.3) .. (0,0) .. controls (3.31,0.3) and (6.95,1.4) .. (10.93,3.29)   ;
\draw    (40,224.5) .. controls (30.15,214.65) and (17.39,170.84) .. (43.77,143.73) ;
\draw [shift={(45,142.5)}, rotate = 136.04] [color={rgb, 255:red, 0; green, 0; blue, 0 }  ][line width=0.75]    (10.93,-3.29) .. controls (6.95,-1.4) and (3.31,-0.3) .. (0,0) .. controls (3.31,0.3) and (6.95,1.4) .. (10.93,3.29)   ;
\draw  [color={rgb, 255:red, 0; green, 0; blue, 0 }  ,draw opacity=1 ] (240,94.5) .. controls (240,89.83) and (237.67,87.5) .. (233,87.5) -- (167.5,87.5) .. controls (160.83,87.5) and (157.5,85.17) .. (157.5,80.5) .. controls (157.5,85.17) and (154.17,87.5) .. (147.5,87.5)(150.5,87.5) -- (82,87.5) .. controls (77.33,87.5) and (75,89.83) .. (75,94.5) ;

\draw (358,135) node [anchor=north west][inner sep=0.75pt]   [align=left] {$\displaystyle A_{1}$};
\draw (384,135) node [anchor=north west][inner sep=0.75pt]  [font=\small] [align=left] {$\displaystyle {\textstyle A_{2}}$};
\draw (404,134) node [anchor=north west][inner sep=0.75pt]  [font=\footnotesize] [align=left] {$\displaystyle \dotsc $};
\draw (421,136) node [anchor=north west][inner sep=0.75pt]  [font=\scriptsize] [align=left] {$\displaystyle {\textstyle A_{m}}$};
\draw (456,265) node [anchor=north west][inner sep=0.75pt]    {$\mathbb{D}_{m}$};
\draw (568,212) node [anchor=north west][inner sep=0.75pt]  [color={rgb, 255:red, 50; green, 70; blue, 225 }  ,opacity=1 ]  {$\mathbb{B}_{0}$};
\draw (582,171) node [anchor=north west][inner sep=0.75pt]  [font=\small,color={rgb, 255:red, 0; green, 0; blue, 0 }  ,opacity=1 ]  {$\mathbb{B}_{1}$};
\draw (467,135) node [anchor=north west][inner sep=0.75pt]  [font=\scriptsize,color={rgb, 255:red, 225; green, 35; blue, 35 }  ,opacity=1 ]  {$\mathbb{B}_{m}$};
\draw (121,136.47) node [anchor=north west][inner sep=0.75pt]  [font=\footnotesize] [align=left] {$\displaystyle \dotsc $};
\draw (141,136.47) node [anchor=north west][inner sep=0.75pt]  [font=\footnotesize] [align=left] {$\displaystyle \dotsc $};
\draw (263,214) node [anchor=north west][inner sep=0.75pt]  [color={rgb, 255:red, 50; green, 70; blue, 225 }  ,opacity=1 ]  {$\mathbb{A}_{k}^{1}$};
\draw (40,213.5) node [anchor=north west][inner sep=0.75pt]  [color={rgb, 255:red, 225; green, 35; blue, 35 }  ,opacity=1 ]  {$\mathbb{A}_{k}^{1}$};
\draw (194,52.5) node [anchor=north west][inner sep=0.75pt]    {$\mathbb{P}_{k}^{1}$};
\draw (93,55) node [anchor=north west][inner sep=0.75pt]  [font=\footnotesize] [align=left] {$\displaystyle ( m-1) \ copies\ of$};
\draw (149,265) node [anchor=north west][inner sep=0.75pt]    {$\mathcal D_{m,s}$};

\end{tikzpicture}
} 
\vspace*{-0.3cm}
    \caption{The special fiber and rigid analytic generic fiber of $\mathcal D_{m}$}
    \label{fig:model_for_punctured_disc}
\end{figure}
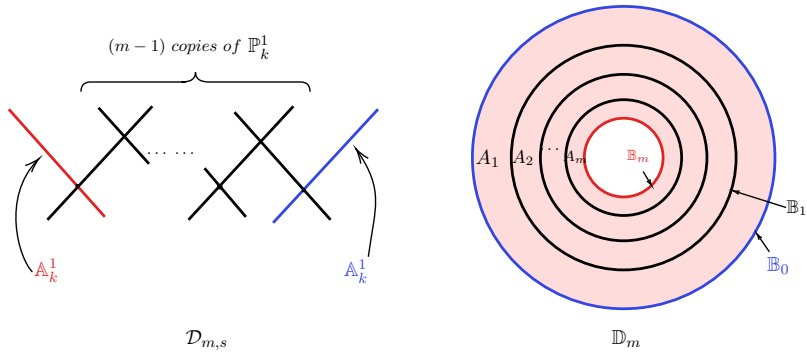

Let us write $\mathcal D_{m, s}^{\log}$ for the special fiber $\mathcal D_{m, s}$ equipped with the pullback of the divisorial log structure on $\mathcal D_m$ given by the divisor $\varpi = 0$ (namely, the divisorial log structure coming from the special fiber). 
Applying the pointwise criterion from \cite{GY}, we know that the local system $\L|_{\D_m}$ on $\D_m$ is a semistable $\Z_p$-local system. Thus it gives rise to a log $F$-isocrystal $\mE_m^{\log}$ over the log scheme $\mathcal D_{m, s}^{\log}$. Now, since $\L$ is crystalline everywhere on $\D_m$, by Theorem \ref{theorem:log_crystals_intro_st}, we know that the log $F$-isocrystal $\mE_m^{\log}$ descends to an $F$-isocrystal $\mE_m$ over the scheme $\mathcal D_{m, s}$. 

On the other hand, using the logarithmic $p$-adic Riemann-Hilbert functor developed by \cite{DLLZ2}, we obtain a vector bundle equipped with a logarithmic connection
\[ 
 (\mathbb{D}_{\mathrm{dR}, \log} (\L),  \nabla_{\L})
\] over the disc $\mathbb D$, with possibly log poles at the origin of the disc. By contemplating the relation between the restriction of the pair $(\mathbb{D}_{\mathrm{dR}, \log} (\L), \nabla_{\L})$ to the annulus $\D_{m}$ and the $F$-isocrystal $\mE_{m}$ over $\mathcal D_{m, s}$ (this is in spirit close to the comparison between log crystalline and log de Rham cohomology), we prove that $\nabla_{\L}$ admits a full set of solutions over each annulus $\D_m$, and consequently admits a full set of solutions over the punctured disc $\mathbb D^\times$. This in turn implies that $\L$ in fact extends to an \'etale local system on the entire disc $\mathbb D$, via a slightly upgraded version of the logarithmic $p$-adic Riemann--Hilbert correspondence (Theorem \ref{prop:residue vs monodromy}). Finally, once we have extended the local system to $\mathbb D$, it is not difficult to see that it is indeed a crystalline local system. We explain this in the end of \S \ref{section:punctured disc}.

\subsection*{Conventions} We largely follow \cite{DLLZ1} and \cite{logprism} for the  convention on log geometry, including the notion of $p$-adic formal schemes, log adic spaces, and Kummer \'etale local systems on log adic spaces, etc. Throughout the article, we use \emph{log crystals} (resp. \emph{log isocrystals}) to denote \emph{finite locally free} log crystals (resp. log isocrystals) on a log scheme, in the sense of \cite[Definition B.18]{DLMS2}. See Definition \ref{def:isocrystals} for details. 


\subsection*{Acknowledgement} 
This article originally evolved from several conversations between the authors and Ananth Shankar. We are very grateful to him for many inspiring conversations. 
It is also pleasure to thank Sasha Beilinson, Dori Bejleri, Brian Conrad,  Matt Emerton, Johan de Jong, Mark Kisin, Kevin Lin, Madhav Nori, Lue Pan, Lie Qian, Zhiyu Tian, and Junyi Xie for discussions related to the content of this paper. During the preparation of the article, the first author was partially supported by the National Key R{\&}D Program of China No. 2023YFA1009703 and No. 2021YFA1000704, and the National Natural Science Foundation of China No. 12422101. Part of this work is completed during the second author's visits to the Yau Mathematical Sciences Center at Tsinghua University in Beijing, he wishes to thank the institute for its great hospitality. Finally, the authors wish to thank the organizers of the $p$-adic geometry workshop in Shenzhen 2024 for their invitation to attend the workshop and for providing an excellent environment to finish part of the writeup of this article.

\newpage 
\section{\large Log (iso)crystals over smooth schemes} \label{sec:log_crystals}
  
The goal of this section is to prove Theorem \ref{theorem:log_crystal} from the introduction (restated as Theorem \ref{theorem:log_crystal_1}) on the rigidity properties of monodromy of log isocrystals over smooth schemes in characteristic $p$, from which we shall deduce Theorem \ref{theorem:main_intro}.  

\subsection{Log crystals and log isocrystals}
Let us first make precise the notion of log crystals and log isocrystals used in this article. Let $k$ be a perfect field of characteristic $p$ and let $Z^{\log} = (Z, M_Z)$ be an fs log scheme over $k$. Let $(Z^{\log})_{\mathrm{crys}}$ denote the big log crystalline site of $Z^{\log}$ over $W(k)$, defined as the colimit over $n$ of the big log crystalline site $(Z^{\log}/W_n(k))_{\mathrm{crys}}$ (see \cite[Tag 07I5]{SP} and \cite{Beilinson}), which comes equipped with a structure sheaf $\mO_{Z^{\log}, \tu{crys}}$.\footnote{Since $k$ is perfect, the site $(Z^{\log})_{\mathrm{crys}}$ is equivalent to the big absolute log crystalline site studied in \cite[1.12]{Beilinson} and \cite[Definition B.6]{DLMS2} and $\mO_{Z^{\log}, \tu{crys}}$ is denoted by  $\mO_{Z^{\log}/\Z_p}$ in both of these references. See the remarks in \cite[1.12]{Beilinson}. } 
Let $F_{\mathrm{crys}}^*$ denote the map on sheaves on  $(Z^{\log})_{\mathrm{crys}}$ induced by the absolute Frobenius map 
\[ F_{\mathrm{crys}}=(\mathrm{Frob}: Z \ra Z,\,\,\, p: M_Z \ra M_Z)\] on the log scheme $Z^{\log}$. We refer the reader to \cite{Beilinson} and \cite[Appendix B]{DLMS2} for more details on these notions. 
\begin{definition} \label{def:isocrystals}
\begin{enumerate}
    \item  A \textit{finite locally free log crystal} over $Z^{\log}$, which we shall simply refer to as a \textit{log crystal} in this article,  is a sheaf of $\mO_{Z^{\log}, \tu{crys}}$-modules $\mE$ such that for each log PD-thickening $(U, T)$ in  $(Z^{\log})_{\mathrm{crys}}$, the induced Zariski sheaf $\mE_{T}$ is a finite locally free $\mO_T$-module, such that for each morphism 
    \[ g: (U', T') \ra (U, T) \]  in the site $(Z^{\log})_{\mathrm{crys}}$, the induced map 
    \[ 
    g^* \mE_T \isom \mE_{T'}
    \] is an isomorphism. 
    \item  A \textit{log isocrystal} over $Z^{\log}$ is an object in the isogeny category of log crystals.\footnote{
    This definition is equivalent to the notion of \textit{finite locally free log isocrystals} in the sense of \cite[Definition B.18]{DLMS2}.}  
    \item  A \textit{log $F$-crystal}  (resp. \textit{log $F$-isocrystal}) over $Z^{\log}$ consists of a pair $(\mE, \varphi)$ where $\mE$ is a log crystal (resp. log isocrystal) over $Z^{\log}$ and $\varphi$ is the Frobenius map 
    \[ \varphi: F_{\mathrm{crys}}^* \mE \ra \mE \] which is compatible with $F_{\mathrm{crys}}$ and becomes an isomorphism of log isocrystals upon inverting $p$. 
\end{enumerate}
\end{definition}

Let $\mathrm{Vect} ((Z^{\log})_{\mathrm{crys}})$ 
(resp.  $\mathrm{Isoc} ((Z^{\log})_{\mathrm{crys}})$) denote the category of log crystals  (resp. log isocrystals)  over $Z^{\log}$, and let $\mathrm{Vect}^{\varphi}((Z^{\log})_{\mathrm{crys}})$  (resp. $\mathrm{Isoc}^{\varphi}((Z^{\log})_{\mathrm{crys}})$) denote the category of log $F$-crystals  (resp. log $F$-isocrystals) over $Z^{\log}$. 

\begin{remark} \label{remark:affine_site}
Let $(A_0, M_0)$ be a pre-log $k$-algebra and let 
\[ Z^{\log} = \spec (A_0, M_0)^a \] be the associated log scheme. Then we sometimes write $(A_0, M_0)_{\mathrm{crys}} = (Z^{\log})_{\mathrm{crys}}$ by a slight abuse of notations. In this article, we will also consider the $p$-completed affine log crystalline site $(A_0, M_0)_{\mathrm{crys}}^{\mathrm{aff}, \wedge}$ over $W(k)$ (for example see \cite[Tag 07HL and 07KH]{SP}). In particular, objects in the opposite category of $(A_0, M_0)_{\mathrm{crys}}^{\mathrm{aff}, \wedge}$ are $p$-completed as $W(k)$-algebras. By Zariski descent, the natural functor 
\[ (A_0, M_0)_{\mathrm{crys}}^{\mathrm{aff}, \wedge} \ra (A_0, M_0)_{\mathrm{crys}} \]on the underlying categories induces an equivalence on the corresponding topoi. 
\end{remark}

\subsection{Setup and notations} \label{ss:notation_section_2}
Now let us recall the setting for Theorem \ref{theorem:log_crystal}.  Let $Z$ be a smooth geometrically connected variety over $k$. Let $s = \spec k$ and let $s^{\log}$ denote the log point associated to the pre-log ring $\N \ra k$ sending $1 \mapsto 0$. We sometimes just write $s^{\log}=(\spec k, 0^{\N})$, and refer to such a point as a \emph{standard log point}. Let $Z^{\log}$ be the fs log scheme obtained from the base change 
    \[
    \begin{tikzcd} 
    Z^{\log} \arrow[r, "\pi"] \arrow[d] & Z \arrow[d] \\ 
    s^{\log} \arrow[r] & s 
    \end{tikzcd},
    \]
%
where we write
\begin{equation} \label{eq:map_on_X_s_forgetting_log}
     \pi: Z^{\log} \ra Z
\end{equation}
for the canonical map induced from $s^{\log} \ra s$. 
Also recall that, for a point $z: \spec k' \ra Z$ on $Z$, we denote by $z^{\log}$ the base change of $z \rightarrow Z$ along $\pi$. Abstractly, $z^{\log}$ is isomorphic to a standard log point $(\spec k', 0^{\N})$.

 \begin{notation}
 Let us also fix the following notations. 
\begin{itemize}   
    \item In a divided power algebra (PD algebra), we write $a^{[n]}$ to denote the $n^{\mathrm{th}}$ divided power of an element $a$, and write $A[x]^{\mathrm{PD}}$ (resp. $A \gr{x}^{\mathrm{PD}}$) for the ring of PD polynomials over a ring $A$ (resp. its $p$-adic completion). 
    \item To ease notation, we often write $W= W(k)$ and write $K_0 = W[1/p]$.  
    \item  Let $\mathrm{Mod}_{/W(k)}^{\varphi}$ denote  the category of $\varphi$-modules over $W(k)$, which consists of a finite free $W(k)$-module $M$ equipped with a $\varphi_{W(k)}$-linear Frobenius map $\varphi: M \ra M$, such that $\varphi$ becomes an isomorphism upon inverting $p$.   One similarly defines the category $\mathrm{Mod}_{/K_0}^{\varphi}$ of $\varphi$-modules over $K_0$.    
    \item Let $\mathrm{Mod}_{/W(k)}^{N}$ denote the category of finite free $W(k)$-modules $M$ equipped with a $W(k)$-linear endomorphism $N: M \ra M$. One similarly defines the category $\mathrm{Mod}_{/K_0}^{N}$.  
    \item  Let $\mathrm{Mod}_{/W(k)}^{\varphi, N}$ denote the category of $(\varphi, N)$-modules over $W(k)$, which consists of a $\varphi$-module $(M, \varphi) \in \mathrm{Mod}^{\varphi}_{/W(k)}$ and a $W(k)$-linear endomorphism $N: M \ra M$, satisfying the condition $N \varphi = p \varphi N.$ One similarly defines the category $\mathrm{Mod}_{/K_0}^{\varphi, N}$ of $(\varphi, N)$-modules over $K_0$.\footnote{Note that the condition $N \varphi = p \varphi N$ forces the endomorphism $N$ to be nilpotent.}
    \item We naturally identify $\mathrm{Mod}_{/W(k)}^{\varphi}$  with the subcategory $ \mathrm{Mod}_{/W(k)}^{\varphi, N = 0}$ of $(\varphi, N)$-modules with $N = 0$. Similarly, we view the category $\mathrm{Mod}_{/W(k)}$ of finite free $W(k)$-modules as the subcategory of $\mathrm{Mod}_{/W(k)}^{N}$ with $N = 0$.  
\end{itemize} 
\end{notation}

\subsection{Monodromy of log (iso)crystals and rigidity}

The work of Hyodo--Kato \cite{Hyodo_Kato}  associates to every log $F$-(iso)crystal over $s^{\log}$  a $(\varphi, N)$-module over $W(k)$ (resp. over $K_0$).   In fact, they associate to every log (iso)crystal an object in $\mathrm{Mod}^{N}_{/W(k)}$ (resp. in $\mathrm{Mod}^{N}_{/K_0}$). We will recall this construction in \S \ref{ss:log_point}.  
The endomorphism $N$ of this associated object is called the \textit{monodromy} of the log $F$-(iso)crystal or log (iso)crystal.

For a point $z:\spec k'\ra Z$ on $Z$ and a log (iso)crystal $\mE$ on $Z^{\log}$, we obtain a log (iso)crystal $\mE_z$ by restricting $\mE$ along $z^{\log}\ra Z^{\log}$. Then the monodromy of $\mE_z$ is called the \emph{monodromy} of $\mE$ at $z$. In particular, we say that $\mE$ \emph{has trivial monodromy at $z$} if the monodromy of $\mE_z$ is zero.

\begin{definition} 
Retain the notations from above. We say that a log (iso)crystal $\mE$ over $Z^{\log}$ \emph{descends} to $Z$ if there exists an (iso)crystal $\mE'$ over $Z$ such that 
\[ \mE \cong \pi^* \mE' \] 
where $\pi$ is the canonical map in (\ref{eq:map_on_X_s_forgetting_log}). 
We make a similar definition for log $F$-(iso)crystals. 
\end{definition}

We can now state the main result of this section.  

\begin{theorem}[Theorem \ref{theorem:log_crystal}]  \label{theorem:log_crystal_1}
Let $Z$ be a smooth geometrically connected variety over $k$ as above and let $\mE$ be a log crystal over $Z^{\log}$. 
\begin{enumerate}
\item If $\mE$ has trivial monodromy at one closed point $z\in Z$, then $\mE$ descends to a crystal over $Z$. In particular, $\mE$ has trivial monodromy at all closed points on $Z$. 
\item  More generally, $\mE$ has ``constant monodromy'' on $Z$; that is, the nilpotent rank of the monodromy operator is constant on the closed points of $Z$. 
\end{enumerate} 
The same assertions apply to log isocrystals, as well as log $F$-crystals and log $F$-isocrystals over $Z^{\log}.$   
\end{theorem}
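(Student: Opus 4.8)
The plan is to work locally on $Z$ and reduce to an explicit computation with divided-power envelopes, then globalize by descent. First I would choose a local chart: Zariski-locally on $Z$ we may find an \'etale map $Z \to \mathbb{A}^n_k$, and it suffices to treat the affine case $Z = \spec A_0$ with a lift $A$ of $A_0$ to a smooth (formally smooth) $W(k)$-algebra, equipped with a lift of Frobenius $\phi_A$. The log structure on $Z^{\log}$ is the one associated to $\N \to A_0$, $1 \mapsto 0$; its crystalline site receives the PD-thickening obtained from the log PD-envelope of the map $A \to A_0$ together with the chart $\N \to A$, $1 \mapsto 0$ — concretely the thickening $A\gr{u}^{\mathrm{PD}} \to A_0$ where $u = \exp(\text{``}\log t\text{''}) - 1$ records the extra log coordinate, with $u \mapsto 0$. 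Evaluating a log crystal $\mE$ on this thickening, and on the analogous thickening for $Z$ itself (no $u$), the crystal condition produces, on the module $M = \mE_{A}$ over $A$ (or $A\gr{u}^{\mathrm{PD}}$), an integrable log connection, and the $u$-direction of the connection is exactly a nilpotent endomorphism $N_u$ of $M$ satisfying the Hyodo--Kato relations; restricting to a closed point $z = \spec k'$ recovers the monodromy of $\mE_z$ as the induced operator on $M \otimes_A k'$. This is the standard Hyodo--Kato description I would recall in \S\ref{ss:log_point}.

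Next, the key point: because $Z$ is connected and the coefficient ring is $W(k)$-flat, the function $z \mapsto (\text{nilpotent rank of } N_u \text{ at } z)$ is \emph{constant}. The nilpotent rank (the largest $r$ with $N_u^r \neq 0$) can only jump \emph{up} on closed subsets (it is lower semicontinuous: $N_u^r \neq 0$ at $z$ is an open condition on the fibre), while at the same time $N_u^r = 0$ globally iff it vanishes at all closed points; but $N_u$ is a \emph{horizontal} endomorphism of $(M, \nabla)$ — it commutes with the connection in the remaining $Z$-directions because the log connection is integrable — hence so is each power $N_u^r$, and a horizontal section of $\End(M)$ that vanishes at one closed point of a connected smooth $Z$ vanishes everywhere (its zero locus is open by semicontinuity and closed since $\End(M)$ has no nonzero horizontal sections supported on a proper closed subset — equivalently, pass to the generic point and use that $\End(M)^\nabla$ is a projective module over the field of constants, here $k$). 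Applying this to the minimal $r$ with $N_u^r = 0$ at the chosen point $z$ gives part (1): if $N_u = 0$ at $z$ then $N_u \equiv 0$, so the log connection has no $u$-component, i.e. it is pulled back from a connection/crystal on $Z$; to upgrade "the evaluation on one thickening descends" to "the crystal descends" I would invoke the equivalence between crystals and modules with integrable (log) connection, valid here since everything in sight is (log-)smooth over $W(k)$, and then glue the local descent data — uniqueness of the descent (no nonzero maps from a crystal to itself that die on $Z^{\log}$, again by the full faithfulness of $\pi^*$) makes the gluing automatic. Part (2) is the same argument applied to each $N_u^r$: the stratification of $Z$ by nilpotent rank has a single stratum.

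Finally, the $F$-structure. For log $F$-crystals (resp. $F$-isocrystals) one simply observes that $\varphi: F_{\mathrm{crys}}^*\mE \to \mE$ is automatically compatible with the descent: once $\mE \cong \pi^*\mE'$, the Frobenius descends because $\pi \circ F_{\mathrm{crys}} = \mathrm{Frob}_Z \circ \pi$ and $\pi^*$ is fully faithful, so $\varphi$ is the pullback of a Frobenius structure on $\mE'$. The isocrystal cases follow by passing to the isogeny category throughout; note the monodromy rank is unchanged by isogeny, so nothing new is needed. The main obstacle I anticipate is bookkeeping the log PD-envelope and checking that the extra crystalline coordinate $u$ genuinely produces a single nilpotent operator with the expected restriction to closed points (and that no Frobenius structure is needed to see this) — i.e. making the Hyodo--Kato-type local computation clean enough that the "horizontal endomorphism vanishing at a point" argument can be applied verbatim; everything after that is soft.
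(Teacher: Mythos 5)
Your route --- pass to the module-with-integrable-log-connection picture on a lift $A/W(k)$, observe that the monodromy $N_u$ is a \emph{horizontal} endomorphism of $(M,\nabla)$, and conclude by rigidity of horizontal sections --- is a genuinely different framing from the paper's, which instead evaluates on the Cech nerve of self-coproducts in the log crystalline site, writes the descent matrix as a power series $T_\delta=\sum N_i t^i$ in the extra PD variable, and extracts everything from the cocycle condition via identities such as $N_1(\alpha_1,\alpha_3)=N_0(\alpha_2,\alpha_3)\,N_1(\alpha_1,\alpha_2)$. These are parallel in spirit, and your framing is cleaner to state. However, you have not proved the lemma that makes it work, namely that a horizontal section of $\underline{\End}(M)$ vanishing at one closed point vanishes identically, and both of your justifications for it are erroneous. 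Semicontinuity of fibrewise rank gives that the zero locus $\{N_u=0\}$ is \emph{closed}, not open; openness is the nontrivial direction, and semicontinuity cannot deliver it (the rank can perfectly well jump \emph{up} off the chosen point, as for the section $x$ of $\mathcal O_{\A^1}$). The parenthetical ``can only jump up on closed subsets'' is also the wrong direction for a lower-semicontinuous invariant. And the ring of constants for crystals over $W(k)$ is $W(k)$ (or $K_0$ after inverting $p$), not $k$; more importantly, the injectivity of the restriction $\End(M)^\nabla\to\End(M)\otimes_A W(k')$ is exactly what must be proved --- citing projectivity over the constants is circular, not an argument.

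The real mathematical content of the paper's computation is precisely a proof of your missing lemma, in two steps you have not supplied: (a) the cocycle identities realize parallel transport, showing that $N_1(\beta,\beta)=N_0(\beta,\alpha)N_1(\alpha,\alpha)N_0(\beta,\alpha)^{-1}$ for any two closed points with the same specialization, so vanishing (and more generally nilpotency rank) propagates across a residue disc; and (b) a rigid-analytic function on the \'etale cover of a disc that vanishes on an entire residue disc is zero, by $p$-adic Weierstrass preparation. Until you prove (a) and (b), your ``open $+$ closed $\Rightarrow$ constant'' step has no open half. A smaller but not purely cosmetic slip: the monodromy at $z$ acts on $M\otimes_A W(k')$, not $M\otimes_A k'$ --- the integral statement requires the former, and confusing the two would collapse the crystal case to the isocrystal case. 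The remaining reductions (\'etale descent to $\A^n_k$, gluing from affine charts, the isogeny category, the observation that Frobenius structures tag along once the underlying crystal descends, and iterating for the powers $N_u^r$ in part~(2)) are all fine as sketched and match the paper's treatment.
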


\subsection{Semistable and crystalline local systems} 

Before we prove Theorem \ref{theorem:log_crystal_1}, let us first demonstrate how to deduce Theorem \ref{theorem:main_intro} from Theorem \ref{theorem:log_crystal_1}. For convenience of the reader, we recall the notion of crystalline and semistable $\Z_p$-local systems. 

\begin{definition}[{\cite[Definition 3.4]{GY}}] \label{def:st_local_sys} Let $\fX$ be a $p$-adic formal scheme over $\mO_K$ with semistable reduction and let $X$ be its adic generic fiber over $K$. Let $\fX_s$ denote the special fiber of $\fX$ and let $\fX_s^{\log}$ denote the log scheme $(\fX_s, \alpha_s)$ where the log structure $\alpha_s$ is obtained from the pullback of the divisorial log structure  
\[\alpha: \mO_{\fX, \ett} \cap (\mO_{\fX, \ett}[1/p])^{\times} \ra \mO_{\fX, \ett} \]  on $\fX$ (also see the setup in \S \ref{ss:st_log_scheme}). Let $\L$ be an \'etale $\Z_p$-local system on $X$. 
\begin{enumerate}
    \item We say that $\L$ is \textit{semistable} if there exists a log $F$-isocrystal $(\mE, \varphi)$ over $\fX_s^{\log}$, together with a Frobenius equivariant isomorphism of $\mathbb{B}_{\mathrm{crys}}$-vector bundles
    \[ 
     \mathbb{B}_{\mathrm{crys}} (\mE) \isom \mathbb{B}_{\mathrm{crys}} \otimes_{\Z_p} \L.
    \] Here $\mathbb{B}_{\mathrm{crys}}(\mE)$ is the sheaf of $\mathbb{B}_{\mathrm{crys}}$-modules on the pro-\'etale site $X_{\proet}$ associated to $(\mE, \varphi)$ as defined in \cite[Construction 4.1]{GY}. 
    \item We say that $\L$ is \textit{crystalline} if there exists an $F$-isocrystal $(\mE, \varphi)$ over $\fX_s$, together with a Frobenius equivariant isomorphism of $\mathbb{B}_{\mathrm{crys}}$-vector bundles
    \[ 
     \mathbb{B}_{\mathrm{crys}} (\mE) \isom \mathbb{B}_{\mathrm{crys}} \otimes_{\Z_p} \L.
    \]
\end{enumerate}
We remark that our $F$-isocrystals and log $F$-isocrystals are necessarily locally free according to Definition \ref{def:isocrystals}. 
\end{definition}

Note that, \textit{a priori}, the notions of crystallinity and semistability of local systems depend on the integral model $\fX$ over $\mO_K$. The fact that they are actually independent of the integral models can be deduced from either the purity result of \cite{DLMS2} or the main result of \cite{GY} on the pointwise criterion for crystalline and semistable local systems. Now let us restate and prove Theorem \ref{theorem:main_intro}. 

\begin{theorem}[Theorem \ref{theorem:main_intro}] \label{theorem:main_intro_1}
Let $\fX$ be a geometrically connected smooth $p$-adic formal scheme over $\mO_K$ with adic generic fiber $X$. Let $\L$ be a semistable \'etale $\Z_p$-local system on $X$ and suppose that there exists a classical point $x_0 \in X(K)$, such that $\L$ is crystalline at $x_0$, then $\L$ is crystalline. In particular, it is crystalline at all classical points on $X$.
\end{theorem}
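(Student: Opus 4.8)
The idea is to translate the hypothesis into a statement about log isocrystals and apply Theorem \ref{theorem:log_crystal_1}. Since $\fX$ is smooth, we may view $\fX$ as a (trivially) semistable $p$-adic formal scheme over $\mO_K$, so that $\fX_s^{\log}$ is simply the special fiber $\fX_s$ equipped with the log structure associated to the pre-log structure $\N \ra \mO_{\fX_s, \ett}$, $1 \mapsto 0$; that is, in the notation of \S\ref{sec:log_crystals}, $\fX_s^{\log} = Z^{\log}$ for $Z = \fX_s$. By hypothesis $\L$ is semistable, so by Definition \ref{def:st_local_sys}(1) it gives rise to a log $F$-isocrystal $(\mE, \varphi)$ over $Z^{\log} = \fX_s^{\log}$ together with a Frobenius-equivariant isomorphism $\mathbb{B}_{\mathrm{crys}}(\mE) \isom \mathbb{B}_{\mathrm{crys}} \otimes_{\Z_p} \L$.

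\textbf{Main step: crystallinity at $x_0$ forces trivial monodromy at the corresponding closed point.} The classical point $x_0 \in X(K)$ specializes to a closed point $z_0 \in \fX_s$ (with residue field $k$, since $x_0$ is $K$-rational and $\fX$ is smooth over $\mO_K$). The restriction of $(\mE, \varphi)$ along $z_0^{\log} \ra \fX_s^{\log}$ is a log $F$-isocrystal over the standard log point, and by the Hyodo--Kato formalism recalled in \S\ref{ss:log_point} this is the filtered $(\varphi, N)$-module attached to the Galois representation $\L|_{x_0}$ (one should cite the compatibility of the construction in \cite{GY} with the pointwise Hyodo--Kato/Fontaine functor, or argue directly via $\mathbb{B}_{\mathrm{crys}}$ that the fiber of $\mathbb{B}_{\mathrm{crys}}(\mE)$ at $x_0$ recovers $D_{\mathrm{st}}(\L|_{x_0})$). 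Since $\L|_{x_0}$ is crystalline by assumption, the monodromy operator $N$ of this $(\varphi, N)$-module vanishes. In other words, the log isocrystal $\mE$ has trivial monodromy at the closed point $z_0 \in \fX_s$.

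\textbf{Conclusion via Theorem \ref{theorem:log_crystal_1}.} Now apply Theorem \ref{theorem:log_crystal_1}(1) (in the log $F$-isocrystal version) to the log $F$-isocrystal $(\mE, \varphi)$ over $Z^{\log} = \fX_s^{\log}$: since it has trivial monodromy at the closed point $z_0$, it descends to an $F$-isocrystal $(\mE', \varphi')$ over $Z = \fX_s$, i.e.\ $\mE \cong \pi^* \mE'$ compatibly with Frobenius. Combining the isomorphism $\pi^*\mathbb{B}_{\mathrm{crys}}(\mE') = \mathbb{B}_{\mathrm{crys}}(\mE) \isom \mathbb{B}_{\mathrm{crys}} \otimes_{\Z_p} \L$ (using that $\mathbb{B}_{\mathrm{crys}}$ of a log $F$-isocrystal pulled back from an honest $F$-isocrystal agrees with the non-logarithmic construction of \cite{GY}) shows that $\L$ satisfies Definition \ref{def:st_local_sys}(2): $\L$ is crystalline. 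The final assertion, that $\L$ is then crystalline at all classical points on $X$, follows because for any classical point $x$ the fiber of the $F$-isocrystal $\mE'$ at the specialization of $x$ recovers $D_{\mathrm{cris}}(\L|_x)$ via the same fiberwise compatibility, exhibiting $\L|_x$ as crystalline; alternatively one invokes the pointwise criterion of \cite{GY} directly.

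\textbf{Expected main obstacle.} The one point requiring care is the compatibility between the ``global'' $\mathbb{B}_{\mathrm{crys}}$-linear algebra package of \cite{GY} and the ``pointwise'' Hyodo--Kato/$D_{\mathrm{st}}$ functor at classical points --- i.e.\ checking that restricting the log $F$-isocrystal $(\mE,\varphi)$ to $z_0^{\log}$ and then forming its Hyodo--Kato $(\varphi,N)$-module genuinely produces $D_{\mathrm{st}}(\L|_{x_0})$, with matching monodromy operators. This is essentially a base-change statement for the period sheaf $\mathbb{B}_{\mathrm{crys}}$ along the inclusion of a classical point, and while morally standard it should be stated carefully (or extracted from the construction in \cite[Construction 4.1]{GY} together with the crystalline comparison at a point). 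Everything else is a formal consequence of Theorem \ref{theorem:log_crystal_1}.
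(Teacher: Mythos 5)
Your proof is correct and takes the same approach as the paper's, whose entire proof is the one-liner ``By Definition \ref{def:st_local_sys}, this immediately follows from Theorem \ref{theorem:log_crystal_1}.'' The compatibilities you flag in your ``Expected main obstacle'' --- that restricting the log $F$-isocrystal of \cite{GY} to $z_0^{\log}$ and applying Hyodo--Kato recovers $D_{\mathrm{st}}(\L|_{x_0})$ with matching monodromy, and that $\mathbb{B}_{\mathrm{crys}}(\pi^*\mE')$ agrees with the non-logarithmic $\mathbb{B}_{\mathrm{crys}}(\mE')$ --- are exactly the content the paper treats as implicit in its citation of \cite{GY}, so your explicit attention to them is sound and fills out the ``immediately'' correctly.
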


\begin{proof}
    By Definition \ref{def:st_local_sys},  this immediately follows from Theorem \ref{theorem:log_crystal_1}. 
\end{proof}

The proof of Theorem \ref{theorem:log_crystal_1} will occupy the rest of this section. The key to the proof is to understand the monodromy of a log (iso)crystal over $Z^{\log}$ via certain descent data. The proof is in fact surprisingly elementary and boils down to explicit computations of coefficients of certain power series.

\subsection{Log crystals over a log point}
\label{ss:log_point}

It is instructive to first review the classical construction of Hyodo--Kato \cite{Hyodo_Kato} using a language that is tailored to our setup.  

\begin{lemma}[Hyodo--Kato]\label{lemma:descent_over_log_point}
Let $s^{\log}=(\spec k, 0^{\N})$ as above. There is a canonical functor 
\[
    \mathrm{ev}: \mathrm{Vect}  ((s^{\log})_{\mathrm{crys}}) \lra  \mathrm{Mod}_{/W(k)}^{N}
\]
which upgrades to a canonical functor 
\[
    \mathrm{ev}: \mathrm{Vect}^{\varphi} ((s^{\log})_{\mathrm{crys}}) \lra  \mathrm{Mod}_{/W(k)}^{\varphi, N}
\]
along the natural forgetful functors on both sides, 
such that a log crystal $\mE_0 \in \mathrm{Vect}((s^{\log})_{\mathrm{crys}})$ (resp. a log $F$-crystal $\mE_0 \in \mathrm{Vect}^{\varphi} ((s^{\log})_{\mathrm{crys}})$) descends to a crystal (resp. an $F$-crystal) $\mE'_0$ over $s$ if and only its monodromy $N$ is trivial, in other words, if and only if $\mathrm{ev}(\mE_0)$ lands in the subcategory $\mathrm{Mod}_{/W(k)}$ (resp. $\mathrm{Mod}_{/W(k)}^{\varphi}$). Similar statements hold for log isocrystals and log $F$-isocrystals over $s^{\log}$. 
\end{lemma}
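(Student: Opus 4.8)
The plan is to make everything completely explicit by working on the $p$-completed affine log crystalline site of $s^{\log} = (\spec k, 0^{\N})$, as in Remark \ref{remark:affine_site}. The key observation is that this site has a particularly simple ``universal'' or ``final-enough'' object. Consider the PD-polynomial ring $W(k)\gr{u}^{\mathrm{PD}}$, the $p$-adic completion of $W(k)[u]^{\mathrm{PD}}$, equipped with the pre-log structure $\N \ra W(k)\gr{u}^{\mathrm{PD}}$ sending $1 \mapsto u$; this gives a log PD-thickening of $s^{\log}$ (the element $u$ maps to $0$ in $k$, and the divided powers $u^{[n]}$ are topologically nilpotent). Write $T_0$ for the trivial thickening $\spec k$ with its log structure. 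Any log crystal $\mE_0$ is determined by the finite free $W(k)$-module $M := \mE_{0, T_0}$ together with the crystal transition data; evaluating on $W(k)\gr{u}^{\mathrm{PD}}$ and comparing with the two projections from the self-product of this thickening produces a connection-type structure. Concretely, the crystal structure furnishes an isomorphism $\mE_0$ evaluated on $W(k)\gr{u}^{\mathrm{PD}}$ with $M \otimes_{W(k)} W(k)\gr{u}^{\mathrm{PD}}$, and the descent datum over the self-intersection is governed by a single operator; this is exactly the data of a $W(k)$-linear endomorphism $N: M \ra M$, obtained as the ``residue'' (the coefficient extracting $\partial_u$ at $u = 0$, up to the standard normalization by the log structure). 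Setting $\mathrm{ev}(\mE_0) := (M, N)$ defines the functor.

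\textbf{Key steps.} First I would set up the log PD-thickening $W(k)\gr{u}^{\mathrm{PD}}$ and verify it is weakly final in an appropriate sense — more precisely, verify that a log crystal is equivalent to a finite free $W(k)$-module $M$ together with a topologically quasi-nilpotent integrable log connection $\nabla: M \ra M \otimes \widehat{\Omega}$, where the log differentials here are free of rank one generated by $d\log u$, so that $\nabla$ is entirely encoded by the single operator $N = \mathrm{Res}_{u=0}\,\nabla$. This is the log-point incarnation of the standard crystals-vs-connections dictionary (\cite{SP} Tag 07J5 and the log version in \cite{Beilinson}, \cite[Appendix B]{DLMS2}); the quasi-nilpotence becomes automatic because there is a unique relevant direction and the PD-structure forces the appropriate growth condition, which in the end amounts to $N$ being an honest (necessarily nilpotent in the $F$-case, but a priori arbitrary here) endomorphism. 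Second, I would check functoriality of $(M,N) := \mathrm{ev}(\mE_0)$ and compatibility with the forgetful functors. Third, for the $F$-crystal upgrade: the Frobenius $F_{\mathrm{crys}}$ acts on the pre-log structure by $p: \N \ra \N$, hence by $u \mapsto u^p$ (up to the PD-divided-power bookkeeping) on $W(k)\gr{u}^{\mathrm{PD}}$, and chasing this through the identification yields precisely the relation $N\varphi = p\varphi N$ on $\mathrm{ev}(\mE_0)$ — this is a short computation with the chain rule $d\log(u^p) = p\, d\log u$. Fourth, the descent criterion: a crystal over $s = \spec k$ is just a finite free $W(k)$-module (no extra data, since $\spec k$ over $W(k)$ has $W(k)\gr{u}^{\mathrm{PD}}$ replaced by the trivial thickening and all PD-thickenings are pro-representable by divided-power neighborhoods with trivial differentials after restricting along $\pi$); the pullback $\pi^* \mE_0'$ corresponds to $(M', 0)$. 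Conversely if $N = 0$ then the log connection $\nabla$ descends to the zero connection on the non-log site, giving the crystal $\mE_0'$; this uses that $\pi^*$ is fully faithful on the relevant categories and that $\ker(N)$-type arguments collapse when $N$ already vanishes. The isocrystal statements follow by $\otimes \Q_p$ throughout.

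\textbf{Main obstacle.} The genuinely delicate point is making precise the claim that $W(k)\gr{u}^{\mathrm{PD}}$ (with its pre-log structure) is ``final enough'' to compute log crystals on $(s^{\log})_{\mathrm{crys}}$ — it is not literally a final object of the site, so one must either argue via the full crystals-as-stratifications / crystals-as-connections equivalence for log schemes (invoking the cited references carefully, and checking the log PD-envelope of the diagonal $s^{\log} \hookrightarrow s^{\log} \times_{W(k)} s^{\log}$ is computed by $W(k)\gr{u}^{\mathrm{PD}}$ with $u = \gamma_1/\gamma_2$ the ratio of the two copies of the log generator), or else exhibit a small explicit cofinal family of thickenings and verify the gluing conditions directly. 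In the log-point case this diagonal computation is classical (it is the heart of \cite{Hyodo_Kato}), so the work is really one of translation rather than new mathematics; I expect the proof to be short once the right PD-thickening is identified, with the Frobenius compatibility $N\varphi = p\varphi N$ being the one place where a careful sign/normalization check is needed. A secondary subtlety is being careful that, integrally, ``descends to a crystal over $s$'' means an honest equality $\mE_0 \cong \pi^*\mE_0'$ and not merely an isogeny — but since $N = 0$ is a condition stable under the integral structure, no rational fudging is needed, which is exactly the robustness the authors emphasize in the preceding remark.
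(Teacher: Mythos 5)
Your proposal correctly identifies the computational skeleton — evaluate the crystal on an explicit weakly initial object of the $p$-completed affine log crystalline site and extract $N$ from the transition data — and in that sense is aimed at the same target as the paper. But there are several places where the details as written do not hold up, and the actual computation that the paper carries out is missing from your sketch.

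First, you set $T_0 := \spec k$ and then claim $M := \mE_{0,T_0}$ is a finite free $W(k)$-module. This cannot be right: the structure sheaf of the crystalline topos evaluated on $(U,T)$ is $\mO_T$, so on the genuinely trivial thickening $T=U=s^{\log}$ you get a $k$-vector space. What you want — and what the paper uses — is $W_{\mathrm{HK}} := (W(k),0^{\N})$, i.e.\ $\spec W(k)$ with the pre-log structure $1\mapsto 0$, viewed in the $p$-completed site. Evaluating there produces the $W(k)$-module the statement requires.

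Second, you conflate two different objects. In the opening paragraph you introduce $W(k)\gr{u}^{\mathrm{PD}}$ with pre-log structure $1\mapsto u$, a toric-type lift (the log PD-envelope of $s^{\log}\hookrightarrow\A^1_{W(k)}$). But in the ``main obstacle'' paragraph you describe $u$ as the ratio $\gamma_1/\gamma_2$ of two copies of the log generator, i.e.\ a unit congruent to $1$ modulo the PD-ideal. Those are not the same object: the self-coproduct $W_{\mathrm{HK}}^{(1)}$ of $W_{\mathrm{HK}}$ has underlying ring $W\gr{t}^{\mathrm{PD}}$ with $t = u/v-1$, and its log structure is the (exactified) pushout of $\N\oplus\N$ along the two original generators — it is not the $\N$ generated by $1\mapsto t$ or $1\mapsto u/v$. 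The object you actually need for the paper's argument is the self-coproduct $W_{\mathrm{HK}}^{(1)}$, not a toric lift. The ``crystals as connections over a log smooth lift'' route you gesture at is a legitimate alternative, but it is a different strategy that relies on the general log crystals-vs-connections equivalence as a black box; the paper deliberately avoids that and works directly with stratification data so that the argument is elementary and carries over verbatim to the log isocrystal and Frobenius cases.

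Third, and most importantly, you never actually carry out the step that makes this lemma a lemma. The paper's proof writes the descent matrix as a PD-power series $T_\delta = N_0 + N_1t + N_2t^2 + \cdots$, plugs it into the cocycle condition $T_\delta(t_2)T_\delta(t_1)=T_\delta(t_1t_2+t_1+t_2)$, shows $N_0=1$, and deduces that every $N_i$ is forced — explicitly $N_i = \tfrac{1}{i!}N(N-1)\cdots(N-i+1)$ with $N:=N_1$, equivalently $T_\delta(t)=\exp(N\log(t+1))$. This closed form is what makes all four remaining claims transparent: well-definedness of $\mathrm{ev}$, the descent criterion ($T_\delta=1 \iff N=0$, combined with $W(k)$ being initial in the non-log $p$-completed affine site of $s$), the $\varphi$-compatibility (Frobenius acts on $W\gr{t}^{\mathrm{PD}}$ by $t\mapsto (t+1)^p-1$ and the descent isomorphism must commute with it, forcing $N\varphi = p\varphi N$), and the isocrystal variants. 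Your sketch punts on exactly this computation (``I expect the proof to be short once the right PD-thickening is identified''), but it is the content, not a translation; without it the functor $\mathrm{ev}$ is not constructed and the equivalence with $N=0$ is not proved.

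So: right strategy, but fix $T_0$ to be $(W(k),0^{\N})$ in the $p$-completed site, keep the self-coproduct $W\gr{t}^{\mathrm{PD}}$ distinct from a toric lift, and actually do the cocycle analysis.
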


This is a well-known result and  a simple exercise in the theory of log crystals (and at least implicitly stated in \cite[2.17, 2.18]{Hyodo_Kato}, see also \cite[Example 3.5]{Yao_st}). Nevertheless it is illustrative to recall the proof.  

\begin{proof}
Let us prove the lemma for log crystals and leave the case of log $F$-crystals to Remark \ref{remark:role_of_Frobenius}. The case for log isocrystals and log $F$-isocrystals will follow from similar arguments.   As in  \cite[Example 3.5]{Yao_st}, we consider the object $W_{\mathrm{HK}} = (W(k), 0^{\N})$ in (the opposite category of) the $p$-completed affine log crystalline site of $s^{\log}$ over $W(k)$. Taking its self-coproduct, we obtain 
\[
W_{\mathrm{HK}}^{(1)} = (W[u, v,  (u/v - 1)^{[i]} ]_{i \ge 1}/(u^{[l]})_{l \ge 1})^{\wedge}_p \cong W \gr{t}^{\mathrm{PD}}\]
where $W\gr{t}^{\mathrm{PD}}$ denotes the $p$-completed PD polynomial ring over $W$, and $t = u/v - 1$. The triple self-coproduct of $W_{\mathrm{HK}}$ is 
\[
W_{\mathrm{HK}}^{(2)} = (W[u, v, w,  (u/v - 1)^{[i]},  (v/w - 1)^{[j]}]_{i, j \ge 1}/(u^{[l]})_{l \ge 1})^{\wedge}_p \cong W \gr{t_1, t_2}^{\mathrm{PD}} 
\]
where $t_1 = u/v - 1$ and $t_2 = v/w - 1$. We can now evaluate a log crystal $\mE_0$ on the simplicial object
\begin{equation} \label{eq:simplicial_HK_complex}
W_{\mathrm{HK}} \mathrel{\substack{\textstyle\longrightarrow\\[-0.6ex] \textstyle\longrightarrow}}  W_{\mathrm{HK}}^{(1)}  \mathrel{\substack{\textstyle\longrightarrow\\[-0.6ex] \textstyle\longrightarrow \\[-0.6ex] \textstyle\longrightarrow}} W_{\mathrm{HK}}^{(2)} 
\end{equation}
where the double arrows $\iota_1, \iota_2: W_{\mathrm{HK}} \ra W_{\mathrm{HK}}^{(1)}$ are both given by the inclusion $W \ra W\gr{t}^{\mathrm{PD}}$,\footnote{In other words, they induce the same map on the underlying rings, but the two maps $\iota_1, \iota_2$ differ on the (pre-)log structures.} while the triple arrows $\iota_{12}, \iota_{23}, \iota_{13}: W_{\mathrm{HK}}^{(1)} \ra W_{\mathrm{HK}}^{(2)}$ are given by 
\[
\iota_{12} (t) = t_1, \quad \iota_{23} (t) = t_2, \quad \iota_{13} (t) = t_1t_2 + t_1 + t_2, 
\]
respectively. In other words, the data of an object $\mE_0 \in  \mathrm{Vect}((s^{\log})_{\mathrm{crys}}) $ naturally gives rise to the data of a finite free $W$-module $M_0$ equipped with a $W\gr{t}^{\mathrm{PD}}$-linear isomorphism 
\begin{equation} \label{eq:the_delta_map_over_a_point}
\delta: M_0 \otimes_{W, \iota_1} W \gr{t}^{\mathrm{PD}} \isom  W \gr{t}^{\mathrm{PD}} \otimes_{W, \iota_2} M_0
\end{equation}
that satisfies the usual cocycle condition $\iota_{23}^* (\delta) \circ \iota_{12}^* (\delta) = \iota_{13}^* (\delta).$ Pick a basis $e_1, ..., e_d$ of $M_0$ over $W$, then the descent isomorphism $\delta$ is determined by a matrix $T_{\delta} \in M_{d} (W \gr{t}^{\mathrm{PD}})$, which we may further write as 
\begin{equation} \label{eqn:expression_of_descent}
T_{\delta} = N_0 + N_1 t + N_2 t^2 + ... 
\end{equation}
with $N_i \in \frac{1}{i!} M_d (W)$ for each $i$.  The cocycle condition can now be rewritten as 
\begin{equation*} 
\iota_{23} (T_{\delta}) \cdot \iota_{12} (T_{\delta}) = \iota_{13} (T_{\delta})
\end{equation*}
in terms of matrices. If we write $T_{\delta} = T_{\delta} (t)$ as a power series on $t$ with coefficients in $M_d (K_0)$, then the cocycle condition becomes $T_{\delta} (t_2) \cdot T_{\delta} (t_1) = T_{\delta} (t_1 t_2 + t_1 + t_2),$ in other words
\begin{multline} \label{eqn:cocycle}
 \qquad  \Big( N_0 + N_1 t_2 + N_2 t_2^2 + ...  \Big) \cdot \Big( N_0 + N_1 t_1 + N_2 t_1^2 + ...  \Big)  \\  = N_0 + N_1 \big(t_1t_2+t_1+t_2 \big) + N_2 \big(t_1t_2+t_1+t_2 \big)^2 + ... \quad \qquad 
\end{multline} 
This immediately implies that $N_0 = 1$ is the identity matrix. A simple computation shows that all the $N_i$'s are determined by $N_1$. If fact, if we write $N = N_1 \in M_d (W)$, then we have $N_i = \frac{1}{i ! } N (N-1) \cdots (N-i+1)$ for all $i \ge 1$. From this, we can further compute that $T_{\delta}$ is of the form 
\begin{equation} \label{eqn:expression_of_T_delta_using_N}
T_{\delta} (t) = \exp (N \cdot \log(t+1)) := 1+ N \log(t+1) + \frac{N^2 (\log (t+1))^2}{2!} +  \cdots.
\end{equation}
This gives the desired functor in the lemma (which one checks is independent of the chosen basis).  
For the last statement, we simply observe that, since $W(k)$ is the initial object in (the opposite category of) the $p$-completed affine crystalline site of the point $s$, the category $\mathrm{Vect} (s_{\mathrm{crys}}) $ of crystals over $s$ is equivalent to the category $\mathrm{Mod}_{/W(k)}$ of finite free $W(k)$-modules. If $\mathrm{ev} (\mE_0)$ has trivial monodromy, then $T_{\delta} = 1$ is the identity matrix by (\ref{eqn:expression_of_T_delta_using_N}), thus the desired assertion follows. 
\end{proof}

\begin{remark} \label{remark:role_of_Frobenius} 
If we start with a log $F$-crystal $\mE_0 \in  \mathrm{Vect}^{\varphi} ((s^{\log})_{\mathrm{crys}})$, then evaluating on the simplicial complex (\ref{eq:simplicial_HK_complex}) gives rise to  a finite free $W$-module $M_0$ equipped with a $\varphi_{W(k)}$-linear Frobenius map $\varphi: M_0 \ra M_0$ which is an isogeny, together with a $W\gr{t}^{\mathrm{PD}}$-linear isomorphism  $\delta$ as in (\ref{eq:the_delta_map_over_a_point}). Furthermore, the descent map $\delta$ is compatible with the Frobenius map (which sends $t \mapsto (t+1)^p - 1$ on $W \gr{t}^{\mathrm{PD}}$) and satisfies the usual cocycle condition $\iota_{23}^* (\delta) \circ \iota_{12}^* (\delta) = \iota_{13}^* (\delta)$ as before. The relation $N \varphi = p \varphi N$ follows from the compatibility of the Frobenius operator $\varphi$ with the descent isomorphism $\delta$. 
\end{remark}

\subsection{Log (iso)crystals over smooth schemes over $k$} \label{ss:log_F_crystal_over_smooth_schemes}

Now let us prove Theorem \ref{theorem:log_crystal_1}. 

\begin{proof}[Proof of Theorem \ref{theorem:log_crystal_1}] We start with Part (1). Let us first prove the assertion for log crystals. The case of log isocrystals will follow from a similar argument, which we treat at the end of the proof. The case of log $F$-crystals and log $F$-isocrystals then follow automatically, similar to Remark \ref{remark:role_of_Frobenius}.  \\

\noindent \textbf{The case of $\A^n_k$ (Part I).} \\ 
Let us first treat the case when $Z = \A^n_{k}$ is the affine space over $k$. In fact, it is illustrative to first treat the case when $n = 1$, since the only difference for general affine space is notational. 

In the case when $Z = \A^1_{k}$, we have $Z^{\log}=\A_k^{1,\log}=\spec (k[x], 0^{\N})^a$. Let us consider the object $A^{(0)} = W \gr{x}$ in (the opposite category of) the $p$-completed affine crystalline site $k[x]^{\mathrm{aff}, \wedge}_{\mathrm{crys}}$ of $k[x]$, which gives rise to an object $(A^{(0)}, 0^{\N})$ in (the opposite of) the $p$-completed affine log crystalline site $(k[x], 0^{\N})_{\mathrm{crys}}^{\mathrm{aff}, \wedge}$ (see Remark \ref{remark:affine_site}). Let $A^{(1)}$ (resp. $A^{(2)}$) be the self-coproduct (resp. triple self-coproduct) of $A^{(0)}$ in $k[x]^{\mathrm{aff}, \wedge, \mathrm{op}}_{\mathrm{crys}}$, and let  $A^{(1), \log}$ (resp. $A^{(2), \log}$) be the underlying ring of the self-coproduct (resp. triple self-coproduct) of $(A^{(0)}, 0^{\N})$ in $(k[x], 0^{\N})_{\mathrm{crys}}^{\mathrm{aff}, \wedge, \mathrm{op}}$. Then we have 
\begin{align} \label{eq:A^(1)} 
    A^{(1)} & = W\gr{x_1, x_2} \gr{x_2 - x_1}^{\mathrm{PD}} =  W\gr{x_1} \gr{x_2'}^{\mathrm{PD}} \\ \nonumber 
    A^{(2)} & = W\gr{y_1, y_2, y_3} \gr{y_2 - y_1, y_3 - y_2}^{\mathrm{PD}} =  W\gr{y_1} \gr{y_2', y_3'}^{\mathrm{PD}}
\end{align}
where we have adopted the notations 
\[ x_2' = x_2 - x_1, \quad y_2' = y_2 - y_1, \quad   y_3' = y_3 - y_2.\] Moreover, we have 
\begin{align} \label{eq:A^(2)} 
    A^{(1), \log} & = W\gr{x_1} \gr{x_2', t}^{\mathrm{PD}} \\ \nonumber 
    A^{(2), \log} & = W\gr{y_1} \gr{y_2', y_3', t_1, t_2}^{\mathrm{PD}}, 
\end{align}
which is similar to the case of a log point treated in \S \ref{ss:log_point}. We have the simplicial object
\begin{equation} \label{eqn:simplicial_over_A^0}
A^{(0)} \mathrel{\substack{\textstyle\longrightarrow\\[-0.6ex] \textstyle\longrightarrow}}  
A^{(1), \log}  \mathrel{\substack{\textstyle\longrightarrow\\[-0.6ex] \textstyle\longrightarrow \\[-0.6ex] \textstyle\longrightarrow}} A^{(2), \log}  
\end{equation}
where $\iota_1, \iota_2: A^{(0)} \ra A^{(1), \log}$ are induced from the maps from $A^{(0)} $ to $A^{(1)}$ (the non-log version) given by 
\[\iota_1(x) = x_1, \quad \iota_2 (x) = 
x_1 + x_2',\] 
while the arrows $\iota_{12}, \iota_{13}, \iota_{23}: A^{(1), \log} \ra A^{(2), \log}$ are given by 
\begin{itemize}
    \item $\iota_{12}: \quad  x_1 \mapsto y_1, \quad  x_2' \mapsto y_2', \quad \mathrm{and } \:\:  t \mapsto t_1$
    \item $\iota_{13}: \quad x_1 \mapsto y_1, \quad x_2' \mapsto y_2' + y_3', \quad \mathrm{and } \:\:  t \mapsto t_1t_2 + t_1+t_2$ 
    \item $\iota_{23}: \quad x_1 \mapsto y_1+y_2', \quad x_2' \mapsto y_3', \quad \mathrm{and } \:\:  t \mapsto t_2$. 
\end{itemize}
As in the proof of Lemma \ref{lemma:descent_over_log_point} for the log point, the data of a log crystal $\mE$ over $\spec (k[x], 0^{\N})^a$ gives rise to a 
finite projective module $M$ over $W\gr{x}$, together with an $A^{(1), \log}$-linear isomorphism 
\[
\delta: M \otimes_{W\gr{x}, \iota_1} A^{(1), \log} \isom A^{(1), \log} \otimes_{W\gr{x}, \iota_2} M 
\] 
which satisfies the cocycle condition $\iota_{23}^* (\delta) \circ \iota_{12}^* (\delta) = \iota_{13}^* (\delta).$

Let us first assume that $M$ is a finite free module over $W\gr{x}$ and treat the more general case afterwards. We follow the same strategy as in the proof of Lemma \ref{lemma:descent_over_log_point}. Recall that 
$A^{(1), \log} = A^{(1)} \gr{t}^{\mathrm{PD}}$. Upon picking a basis of $M$ over $W \gr{x}$, we can write the matrix $T_{\delta}$ for the descent isomorphism $\delta$ as  
\begin{equation} \label{eq:expression_of_delta_affine}
    T_{\delta} (t) = N_0  + N_1 t + N_2 t^2 + \cdots 
\end{equation} 
where each $N_i \in \frac{1}{i!} M_d(A^{(1)})$ is a matrix with entries in $\frac{1}{i!} A^{(1)}$. We shall regard each $N_i=N_i(x_1, x_2)$ as a matrix whose entries are functions on $x_1, x_2$. In particular, for $a, b\in W(\bar{k})$ such that $a\equiv b$ mod $p$, we obtain a matrix $N_i(a, b)$ with entries in $\frac{1}{i!}W(\bar{k})$. Likewise, we may write $\iota_{12} (N_i)$ as $N_i (y_1, y_2)$ and regard it as a matrix whose entries are functions in variables $y_1, y_2$. We similarly write $N_i (y_2, y_3) = \iota_{23} (N_i)$ and $N_i (y_1, y_3) = \iota_{13}  (N_i)$. The cocycle condition becomes
\begin{align}  \label{eq:cocycle_in_coordinates}
    \Big( N_0(y_2, y_3) + N_1 (y_2, y_3) t_2 + N_2 (y_2, y_3) t_2^2 + \cdots  \Big) 
  \cdot \Big( N_0(y_1, y_2) + N_1(y_1, y_2)  t_1 + N_2 (y_1, y_2)  t_1^2 + \cdots  \Big) \:\:\:  \nonumber
 \\    
   = N_0(y_1, y_3) + N_1(y_1, y_3) \cdot \big(t_1t_2+t_1+t_2 \big) + N_2(y_1, y_3) \cdot \big(t_1t_2+t_1+t_2 \big)^2 + \cdots \quad 
\end{align}
This is an identity of power series in $t_1, t_2$ with coefficients in $M_d (A^{(2)})[1/p]$. From this we immediately obtain the relation 
\begin{align} \label{eqn:cocycle_over_S}
  N_0 (y_1, y_3)  =  N_0 (y_2, y_3) \cdot N_0 (y_1, y_2)   
\end{align}
(in $M_d (A^{(2)})[1/p]$) by comparing the constant terms on both sides. Since 
\[ A^{(1)} = W \gr{x_1} \gr{x_2'}^{\mathrm{PD}} \]  where $x_2' = x_2 - x_1$, we may further write $N_0=N_0 (x_1, x_2)$ as 
\[
N_0 = A_0 + A_1 x_2' +  A_2 (x_2')^{2} + \cdots 
\]
where each $A_i \in \frac{1}{i!} M_d (W \gr{x_1})$ is viewed as a function with variable $x_1$. Condition (\ref{eqn:cocycle_over_S}) becomes 
\begin{align*}
&\qquad \qquad \Big(A_0(y_1) + A_1(y_1) (y_2'+y_3') + A_2(y_1) (y_2' + y_3')^2  +  \cdots\Big)  =   \\  
&\Big( A_0(y_1+y_2') + A_1(y_1+y_2') y_3' + A_2 (y_1+y_2') (y_3')^2     + \cdots\Big) \cdot \Big( A_0 (y_1) + A_1(y_1) y_2' + A_2 (y_1) (y_2')^2 + \cdots\Big). 
\end{align*}
 This is an identity of matrix-valued functions in variables $y_1, y'_2, y'_3$. Setting $y_2' = y_3' = 0$ we see that $A_0 = 1$ is the identity matrix. Moreover, by setting $y_1 = y_3$ (or equivalently, setting $y_2' = - y_3'$), we see from (\ref{eqn:cocycle_over_S}) again that $N_0 \in \GL_d (A^{(1)})$ is invertible. In fact, we have 
\[
N_0 (x_1, x_2) = N_0 (x_2, x_1)^{-1}. 
\]
Next, by considering the coefficients of $t_1, t_2$ 
in the cocycle condition (\ref{eq:cocycle_in_coordinates}), we obtain the following relations
\begin{align}  
N_1 (y_1, y_3)   
  \label{eqn:cocycle_over_S_2}  
  & =    N_0 (y_2, y_3)  \cdot N_1 (y_1, y_2)   \\  \label{eqn:cocycle_over_S_3}
  & =  N_1 (y_2, y_3) \cdot  N_0 (y_1, y_2).  
\end{align}
Similar to our analysis on $N_0$, we may write $N_1 (x_1, x_2) $ as 
\begin{equation} \label{eq:expression_of_N_1}
N_1 = B_0(x_1) + B_1(x_1) \cdot  x_2' +  B_2(x_1) \cdot (x_2')^{2} + \cdots 
\end{equation} 
where each $B_i$ lives in $\frac{1}{i!} M_d (W \gr{x_1})$. 

Note that, specializing the log crystal $\mE$ to a closed $\cl k$-point $\cl \alpha$ on $\A^1$ amounts to specializing the simplicial object in (\ref{eqn:simplicial_over_A^0}) along $A^{(0)} \ra W(\cl k)$ via $x \mapsto [\cl \alpha]$, along $A^{(1)} \ra W(\cl k)$ via $x_1, x_2 \mapsto [\cl \alpha]$, and along $A^{(2)} \ra W(\cl k)$ via $y_1, y_2, y_3 \mapsto [\cl \alpha]$, where $[\cl \alpha]$ denotes the Teichmuller lift of $\cl \alpha$. Therefore, by the assumption of Theorem \ref{theorem:log_crystal_1}, we have $N_1 ([\cl \alpha], [\cl \alpha]) = 0$ for some $\cl \alpha \in \cl k$.

We claim that, if $N_1 (a, a) = 0$ for some $a \in W(\cl k)$, then $N_1 (b, c) = 0$ for any $b, c \in W(\cl k)$ such that $b \equiv c \mod p$. Note that this in turn implies that $N_1 = 0$. 
To prove the claim, let us observe that, setting $y_1 = y_3 = a$ and $y_2 = b_0$ we obtain 
\[
N_1 (a, b_0) = N_0 (a, b_0) \cdot N_1 (a, a) = 0
\] 
for any $b_0 \in W(\cl k)$ such that $b_0 \equiv a \mod p$, using $(\ref{eqn:cocycle_over_S_2})$ and  $N_0 (a, b_0) = N_0 (b_0, a)^{-1}$. Similarly, setting $y_1 = a$ and $y_2 = y_3 = b_0$ in (\ref{eqn:cocycle_over_S_3}) we know that 
\[
N_1 (b_0, b_0) =    N_1 (a, b_0) \cdot N_0 (b_0, a)  = 0 
\] 
for any $b_0 \in W(\cl k)$ such $b_0 \equiv a \mod p$. In other words, the analytic function $B_0 (x_1)$ in (\ref{eq:expression_of_N_1}) satisfies $B_0 (b_0) = 0$ for any $b_0 \in W(\cl k)$ such that $b_0 \equiv a \mod p$, thus $B_0 (x_1) = 0$ (for example, by the $p$-adic Weierstrass preparation theorem), and this further implies that $N_1 (b, b) = 0$ for any $b \in W (\cl k).$ Now applying the previous argument one more time we know that $N_1 (b, c) = 0$ for all $b, c \in W(\cl k)$ such that $b \equiv c \mod p$. This justifies the claim. 

Next, by inductively looking at coefficients of terms of degree $i$ (in other words, terms of the form $t_1^{i_1} t_2^{i_2}$ with $i_1 + i_2 = i$) in (\ref{eq:cocycle_in_coordinates}) for $i = 1, 2, \ldots$, we conclude that $N_i = 0$ for all $i \ge 1$. Consequently, the descent isomorphism $\delta$ becomes a descent isomorphism over $A^{(1)}$.  In particular, the monodromy of the log crystal is trivial at all closed points, and the log crystal descends to a crystal over $\A^1_k$. This finishes the proof under the assumption that $M$ is finite free.\\

\noindent \textbf{The case of $\A^n_k$ (Part I\!I).} \\ 

\noindent For general $M$, consider a finite cover of $Z=\A^1_k$ by affine open subschemes, on each of which $M$ is finite free. Since the statement is insensitive to replacing $k$ by a finite extension, by smoothness, we may assume that each of these affine open subschemes admits an \'etale map to $\A^1_k$. Since we can find closed points on intersections of affine open subschemes, we can argue one by one on these affine open subschemes.

Our setup is now as follows: consider an \'etale algebra $k[x] \ra S_0$ (which corresponds to a $p$-completely faithfully flat \'etale map  $A^{0} = W\gr{x} \ra S$). Let $S^{(\bullet), \log}$ (resp. $S^{(\bullet)}$) denote the $p$-completed  simplical object formed by taking the Cech nerve of $S$ in the $p$-completed affine log crystalline site of $(k[x], 0^{\N})$ (resp. in the $p$-completed affine crystalline site of $k[x]$). We have a finite free module $M_S$ over $S$ from the data of the log crystal $\mE$, which is equipped with a  descent isomorphism
\begin{equation} \label{eq:descent_iso_over_S}
\delta_S: M_S \otimes_{S, \iota_1} S^{(1), \log} \isom S^{(1), \log} \otimes_{S, \iota_2} M_S 
\end{equation} that satisfies the cocycle condition. 
Let  $I = \ker(S \otimes_{W\gr{x}} S \ra S)$ and $J = \ker(S \otimes_{W} S \ra S)$ be the kernel of the respective multiplication maps, and let \[ S' := (S \otimes_{W\gr{x}} S)  \gr{I}^{\mathrm{PD}} \]be the $p$-completed PD-envelop of $S \otimes_{W\gr{x}} S$ along the kernel $I$. Note that we have isomorphisms 
\[ S \otimes_{W\gr{x}} S \cong S\otimes_{W} S/(x_2 - x_1) = S \otimes_W S/(x_2')\]
and $I = J/(x_2')$, using the same set of notations as in Part I. In particular, there is a natural map 
\[ S^{(1)} = (S \otimes_{W} S) \gr{J}^{\mathrm{PD}} \ra S'\]   induced from the natural surjection $S \otimes_W S \ra S \otimes_{W \gr{x}} S$. Note that, the natural map $S^{(1)} \ra S$ induces a nil-thickening after modulo $p^n$ (for each $n$), thus the \'etale map $\iota_1: S \ra S^{(1)}$ induces a unique lifting 
$\iota_1: S \widehat \otimes_{W \gr{x}} S \ra S^{(1)}$ compatible with the natural projections to $S$. This in turn induces a map $S' \ra S^{(1)}$, which further induces an isomorphism 
\[
S' \gr{x_2'}^{\mathrm{PD}} \isom S^{(1)}. \footnote{
We remark that choosing $\iota_2$ instead of $\iota_1$ amounts to writing $A^{(1)}$ as $W\gr{x_2} \gr{x_2'}^{\mathrm{PD}}$ instead of $W\gr{x_1} \gr{x_2'}^{\mathrm{PD}}$. }
\]  
From this identification, we obtain  
\[S^{(1), \log} = S^{(1)} \gr{t}^{\mathrm{PD}} \cong S' \gr{x_2', t}^{\mathrm{PD}} \qquad \] and 
\[
S^{(2), \log} = S^{(2)} \gr{t}^{\mathrm{PD}}  \cong S'' \gr{y_2', y_3', t}^{\mathrm{PD}}, \: 
\]
where 
\[ S'' = (S \otimes_{W \gr{x}} S \otimes_{W \gr{x}} S) \gr{I^{(2)}}^{\mathrm{PD}}\] is the $p$-completed PD envelop of the kernel $I^{(2)}$ of the natural multiplication  map. 
Since $S^{(1), \log} = S^{(1)} \gr{t}^{\mathrm{PD}}$, we may once again express the matrix $T_{\delta_S}$ for the descent isomorphism $\delta_S$ (upon picking a basis of $M_S$ over $S$) as  
\[
T_{\delta_S} (t) = N_0  + N_1 t + N_2 t^2 + \cdots 
\] 
where each $N_i \in \frac{1}{i!} M_d(S^{(1)})$. The cocycle condition (\ref{eq:cocycle_in_coordinates}) again implies that $\iota_{13} (N_0) =  \iota_{23} (N_0) \cdot \iota_{12} (N_0).$ 

Let us introduce one slight generalization of notations. Let $\alpha, \beta: S \ra W(\cl k)$ be two $W(\cl k)$-points on $S$ such that they give rise to the same reduction map $\cl \alpha = \cl \beta: S \ra \cl k$, then we write \[(\alpha, \beta): S^{(1)} \ra W (\cl k)\] for the corresponding map it induces on $S^{(1)}$. We further write 
\[ N_i (\alpha, \beta) \in \frac{1}{i!} M_d (W(\cl k)) \] for the image of $N_i$ along the map $\frac{1}{i!}M_d (S^{(1)}) \ra \frac{1}{i!} M_d (W(\cl k))$ induced by $(\alpha, \beta)$. The cocycle condition implies that 
\[
N_0 (\alpha_1, \alpha_3) = N_0 (\alpha_2, \alpha_3) \cdot N_0 (\alpha_1, \alpha_2)
\]
for any three maps $\alpha_1, \alpha_2, \alpha_3$ such that $\cl \alpha_1 = \cl \alpha_2 =\cl \alpha_3$. In particular, this implies that $N_0 (\alpha, \alpha) = 1$ for any $\alpha$ and that $N_0 (\alpha, \beta)$ is invertible 
for any $\alpha, \beta: S \ra W(\cl k)$ such that $\cl \alpha = \cl \beta$. As in Part I, we have 
\begin{align} \label{eq:cocycle_over_general_S_for_N1}
N_1 (\alpha_1, \alpha_3) & = N_0 (\alpha_2, \alpha_3) \cdot N_1 (\alpha_1, \alpha_2)   \\ 
    & =   N_1 (\alpha_2, \alpha_3) \cdot N_0 (\alpha_1, \alpha_2) \nonumber
\end{align}
for any three such maps $\alpha_1, \alpha_2, \alpha_3$ as above. 

Now suppose that $N_1 (\alpha, \alpha) = 0$ for some $\alpha: S \ra W(\cl k)$ (this is indeed the case under our assumption). We claim that $N_1 (\beta, \beta) = 0$ for any $\beta: S \ra W(\cl k)$. This follows from a similar argument as in Part I. To simplify notations, we let $\cl N_1$ denote the image of $N_1$ along the natural projection $S^{(1)} \ra S$. Note that $N_1 (\beta, \beta) =  \cl N_1 (\beta)$ for all $\beta: S \ra W(\cl k)$.  Therefore, we have $\cl N_1 (\beta_0 ) = N_1 (\beta_0, \beta_0) = 0$ for all $\beta_0$ such that $\cl \beta_0 = \cl \alpha$ by (\ref{eq:cocycle_over_general_S_for_N1}). We may view $\cl N_1$ as a matrix whose entries are analytic functions on the rigid analytic space associated to $S[1/p]$ (which is \'etale over the rigid analytic disc by construction),  so we must have $\cl N_1 = 0$ since it is zero on a residue disc. This in turn implies that $N_1 (\alpha, \beta) = 0$ for all $\alpha, \beta$ such that $\cl \alpha = \cl \beta$ by applying (\ref{eq:cocycle_over_general_S_for_N1}) once again, and thus we know that $N_1 = 0$. 
By a similar argument as in the previous case, we know that $N_i = 0$ for all $i \ge 1$ and we are done with the case $Z=\A^1_k$. \\



\noindent \textbf{The case of $\A^n_k$ (Part I\!I\!I).} \\  
In the case when $Z = \A^n_k$, we consider $A^{(0)} = W \gr{x^{(1)}, ..., x^{(n)}}$ instead of $W \gr{x}$, and form the self-coproducts 
\begin{align*}
    A^{(1)} & = W \gr{x^{(1)}_1, ..., x^{(n)}_1} \gr{{x_2'}^{(1)}, ..., {x_2'}^{(n)}}^{\mathrm{PD}}  \\ 
     A^{(2)} & = W \gr{y^{(1)}_1, ..., y^{(n)}_1} \gr{{y_2'}^{(1)}, ..., {y_2'}^{(n)}, {y_3'}^{(1)}, ..., {y_3'}^{(n)}}^{\mathrm{PD}}
\end{align*}
in (the opposite cateogy of) the $p$-completed affine crystalline site. As before, we have 
\[ 
 A^{(1), \log} =  A^{(1)} \gr{t}^{\mathrm{PD}} \quad \text{ and } \:\: A^{(2), \log} =  A^{(2)} \gr{t_1, t_2}^{\mathrm{PD}}. 
\]
The rest of the proof carries over \textit{verbatim} as in Part I when $\mE$ gives rise to a finite free module $M$ over $A^{(0)}$, with only some notational complications. (For general $M$, one argues as in Part I\!I.) More precisely, we may still write down the matrix for the descent isomorphism as in  (\ref{eq:expression_of_delta_affine}), except now the matrices $N_i$'s have entries in $A^{(1)}$ whose elements are viewed as functions in $2n$ variables $\{x_1^{(i)}, x_2^{(i)} \}_{1 \le i \le n}$. 
Then, by varying one pair of coordinates at a time, the same argument above shows that, if $N_1 (x_1^{(i)}, x_2^{(i)})= 0$ for $x_1^{(i)} = x_2^{(i)} = a^{(i)}$ for a sequence of elements $a^{(i)} \in W(\cl k)$, then we must have $N_1 = 0$. From this, one further shows that $N_i=0$ for all $i$. Consequently, if the monodromy of $\mE$ is trivial at one closed point,  then it is trivial at all closed points, and the log crystal descends to a crystal  over $\A^n_k$.\\

\noindent \textbf{The general case.} 

\noindent For general $Z$, after replacing $k$ by a finite extension if necessary, we can cover $Z$ by affine open subschemes, each of which admits an \'etale map to $\A^n_k$. We immediately reduce to a question on each of these affine open subschemes. Then one argues in the same way as in Part I\!I above. (Notice that we can assume $M$ is finite free by further shrinking the affine open subschemes.) This finishes the proof of Theorem \ref{theorem:log_crystal_1} for log crystals.\\

\noindent \textbf{Log isocrystals.} 

\noindent For the case of log isocrystals, we have the same setup as above, except now we only have a finite $S$-module $M_S$, equipped with an isomorphism 
\begin{equation} \label{eq:descent_iso_for_isocrystal}
\delta_S:  M_S \widehat \otimes_{S, \iota_1} S^{(1), \log} \isom S^{(1), \log}  \widehat \otimes_{S, \iota_2} M_S
\end{equation}
satisfying the cocycle condition as in (\ref{eq:descent_iso_over_S}), such that $M_S[1/p]$ is a projective $S[1/p]$-module. Suppose that, at some point $\alpha: S \ra W(\cl k)$, the specialization of $\delta_S$ along $\alpha$ descends to an isomorphism coming from a (non-log) isocrystal over $W(\cl k)$, then we want to show that $\delta_S$ descends to an isomorphism $\delta_{S,0}:  M_S \widehat \otimes_{S, \iota_1} S^{(1) } \isom S^{(1)} \widehat \otimes_{S, \iota_2} M_S$ (satisfying the cocycle condition). Since $M_S$ is $p$-torsion free and we have $S^{(1), \log} = S^{(1)} \gr{t}^{\mathrm{PD}}$, it suffices to show that after inverting $p$, the map $\delta_S$ in (\ref{eq:descent_iso_for_isocrystal}) descends to an isomorphism.  
Now, since $M_S [1/p]$ is projective, we have 
\[
 \big(M_S \widehat \otimes_{S, \iota} S^{(1), \log} \big) [1/p] \cong  M_S[1/p] \otimes_{S, \iota} S^{(1), \log}.
\]
Thus after inverting $p$, (\ref{eq:descent_iso_for_isocrystal}) becomes an isomorphism 
\[
\delta_S: M_S[1/p]  \otimes_{S, \iota_1} S^{(1), \log} \isom S^{(1), \log}  \otimes_{S, \iota_2} M_S[1/p]
\]
satisfying the cocycle conditions. Pick a Zariski cover of $\spec S[1/p]$ consisting of open subschemes $\spec S[1/p, 1/f_i]$ for finitely many $f_i \in S[1/p]$, such that each module $M_S [1/p, 1/f_i]$ is a free module over $S_i := S[1/p, 1/f_i]$. By Zariski descent it suffices to show that each isomorphism 
\[
\delta_i: M_{S_i}   \otimes_{S_i, \iota_1} S_i^{(1), \log} \isom S_i^{(1), \log}  \otimes_{S_i, \iota_2} M_{S_i} 
\]
descends, where $S_i = S[1/p, 1/f_i]$ and $M_{S_i}$ is the base change of $M_S$ along $S \ra S_i$.  For this, we note that the same argument as in the case of log crystals works \textit{verbatim}. This concludes the proof for log isocrystals. \\

\noindent \textbf{Log $F$-crystals and log $F$-isocrystals.} 

\noindent 
The assertions in the theorem also hold for log $F$-crystals and log $F$-isocrystals, as Frobenius is irrelevant in the arguments above. This finishes the proof of Part (1) of the theorem.\\

\noindent \textbf{Constancy of nilpotent rank.} 

\noindent Now we prove Part (2) of the theorem. Let us retain the setup from the case of log isocrystals above. From (\ref{eq:cocycle_over_general_S_for_N1}) we may deduce that 
\[
N_1 (\beta, \beta) = N_0 (\beta, \alpha) \cdot N_1(\alpha, \alpha) \cdot  N_0 (\beta, \alpha)^{-1}
\]
for all $\alpha, \beta: S \ra W(\cl k)$ such that $\cl \alpha = \cl \beta$. Therefore, the matrix $N_1 (\beta, \beta)^m$ is conjugate to $ N_1(\alpha, \alpha)^m $ for all $m$. By our argument of Part (1), we know that for a fixed integer $m \ge 1$, either $N_1 (\alpha, \alpha)^m = 0$ for all points $\alpha: S \ra W(\cl k)$ or  $N_1 (\alpha, \alpha)^m \ne 0$ for all such points. This implies that the nilpotent rank of the monodromy operator of the log isocrystal $\mE$ is constant over $Z$. 
\end{proof}

\begin{remark}[Relation to log crystalline fundamental groups]  If we pretend that the map $Z^{\log} \ra Z$ is ``flat'' in a suitable sense, then we should expect an exact sequence 
\[ 
\pi_{1}^{\mathrm{crys}}(z^{\log}) \ra \pi_{1}^{\mathrm{crys}} (Z^{\log}) \ra \pi_{1}^{\mathrm{crys}} (Z) \ra 1
\]
of log crystalline fundamental groups, which is partially developed in \cite{logcrys_pi_1}.  In particular, a similar proof of the $l$-adic variant in the introduction should give an heuristic argument of Part (1) of Theorem \ref{theorem:log_crystal}.  Our result in fact raises the interesting question of whether such an exact sequence exists. 
\end{remark}

\newpage 
\section{\large Log (iso)crystals over semistable log schemes} \label{sec:log_crystals_on_log_schemes}

The goal of this section is to generalize the rigidity results in \S \ref{sec:log_crystals}, from the case of smooth log schemes, to log schemes of semistable type. In particular, we prove certain rigidity properties of the monodromy of log (iso)crystals over such semistable log schemes. The main results of this section are Theorem \ref{theorem:log_crystal_2} and Theorem \ref{theorem:log_crystal_2_global}, which contain Theorem \ref{theorem:log_crystals_intro_st} from the introduction as a special case. As an application, we prove a rigidity result on the crystallinity of $p$-adic local systems on a smooth rigid analytic space with semistable reduciton; in particular, we deduce Theorem \ref{thm:main_intro_for_log_schemes} in the introduction from Theorem \ref{theorem:log_crystal_2}.

\subsection{Semistable log schemes} \label{ss:st_log_scheme} For the setup, we continue to let $k$ be a perfect field of characteristic $p$. Let us fix an integer $n \ge 1$ and consider the projection map
\[ 
\pi: D^{n,\log} \ra D^{n},
\]
where $D^n$ is the scheme
\[ D^n = \spec   k[x_0, x_1,\ldots, x_n]/\prod_{i = 0}^n x_i
\] 
over $s = \spec k$, and $D^{n,\log}$ is the log scheme associated to the pre-log $k$-algebra 
\[ \N^{n+1} \ra  k[x_0, x_1, \ldots, x_n]/\prod_{i = 0}^n x_i.\] Here, the $i^{th}$-copy of $\N$ gets identified with $x_i^{\N}$ for each $i = 0, \ldots, n$. When $n$ is understood from the context, we shall simply write $D$ and $D^{\log}$ for $D^n$ and $D^{n,\log}.$  

Note that $D^{\log}$ is log smooth over the standard log point $s^{\log}=(\spec k, 0^{\N})$, where the correspondng monoid homomorphism is the diagonal map $\N \ra \N^{n+1}$ sending $1 \mapsto (1, ..., 1)$.

 \begin{notation} \label{notation:log_scheme_section}
Let us introduce the following additional notations for this section. 
 \begin{itemize} 
     \item  For every point $z:\spec k'\ra D$ on $D$, let $z^{\log}$ be the base change of $z \ra D$ along $\pi: D^{\log} \ra D$. This is similar to the convention in \S \ref{ss:notation_section_2}. 
     \item Let $D^{\mathrm{sm}}$ denote the smooth locus of $D$.
     \item  For each $i=0,\ldots, n$, let $D_i$ denote the irreducible component of $D$ given by $x_i = 0$.
     \item Let $z_* \in D$ denote the closed point given by $x_0 = x_1 = \cdots = x_n = 0$, which is the intersection of all irreducible components $D_i$ in $D$. Using the notation above,  $z_*^{\log}$ is isomorphic to the log scheme associated to the pre-log ring $\N^{n+1} \ra k$ sending every nonzero element to $0$. 
     \item For each $i=0,\ldots, n$, let $U_i$ denote the open subscheme of $D_i$ given by $x_i = 0$ and $x_j \ne 0$ for all $j \ne i$. In particular, each copy of $U_i$ is isomorphic to $\G_m^n$. We have $U_i=D_i\cap D^{\mathrm{sm}}$ and $D^{\mathrm{sm}}$ is the disjoint union of the $U_i$'s.
     \item Let $U_i^{\log}$ be $U_i$ equipped with the pullback log structure from $D^{\log}$. Note that the log structure can also be identified with the log structure obtained from the base change of $U_i \ra s$ along $s^{\log} \ra s$. In particular, if $z: \spec k'\ra U_i$ is a point on $U_i$, then $z^{\log}$ is isomorphic to a standard log point $(\spec k', 0^{\N})$.  
 \end{itemize}
\end{notation}

Let us introduce the notion \emph{log scheme of semistable type} used in this article. 
\begin{definition} \label{def:st_log_scheme}
We say that a log scheme $Z^{\log}$ over $k$ is a \textit{semistable log scheme} or has \textit{semistable type} if \'etale locally (on the underlying scheme $Z$ of the log scheme $Z^{\log}$), it is isomorphic to a log scheme strictly \'etale over $D^{n,\log} \times_{\spec k} \A^m_k$ for some $n, m \in \Z_{\ge 0}$, where $D^{n,\log} \times_{\spec k} \A^m_k$ is the fiber product of fs log schemes with $\spec k$ and $\A^m_k$ endowed with trivial log structures.
\end{definition}

\subsection{Rigidity of monodromy over semistable log schemes}

Now we state the main results of this section which contain Theorem \ref{theorem:log_crystals_intro_st} as a special case. For simplicity of exposition, let us introduce one more definition.

\begin{definition} \label{def:trivial_monodromy_in_all_directions}
Fix an integer $n \ge 1$ and let $D^{\log} = D^{n,\log}$ be as above. Let $\mE$ be a log crystal over $D^{\log}$. We say that $\mE$ \textit{has trivial monodromy at $z_*$ in all directions} if for every map $\iota: (\spec k', 0^{\N}) \ra z_*^{\log}$ from a standard log point $(\spec k', 0^{\N})$ where $k'$ is a perfect field extension of $k$, the pullback log crystal $\iota^* \mE$ of $\mE$ along the composition 
\[\iota: (\spec k', 0^{\N}) \ra z_*^{\log} \ra D^{\log} \] 
has trivial monodromy. We make similar definitions for log isocrystals and log $F$-(iso)crystals. 
\end{definition}

\begin{theorem} \label{theorem:log_crystal_2}
 Let $\mE$ be a log (iso)crystal over $D^{\log}$. Then the following are equivalent. 
 \begin{enumerate}
     \item There is a finite collection of close points $z_i \in U_i$, one for each $i=0,\ldots, n$, such that the restriction of $\mE$ at each $z_i^{\log}$ has trivial monodromy. 
     \item For each $i = 0,..., n$, the restriction of $\mE$ to $U_i^{\log}$ descends to an (iso)crystal on $U_i$. 
     \item $\mE$ has trivial monodromy at $z_*$ in all directions. 
     \item The restriction of $\mE$ at the log point $z_*^{\log}$ descends to an (iso)crystal on $z_*$. 
     \item For every map $\iota: (\spec k', 0^{\N})\ra D^{\log}$ from a standard log point $(\spec k', 0^{\N})$, the pullback log (iso)crystal $\iota^*\mE$ has trivial monodromy. 
     \item $\mE$ descends to an (iso)crystal over $D$.  
 \end{enumerate}
 The same assertion holds for log $F$-(iso)crystals. 
\end{theorem}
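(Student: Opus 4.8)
The strategy is to reduce everything to the smooth case treated in Theorem \ref{theorem:log_crystal_1}, exploiting the fact that $D^{\log}$ is glued from pieces that are log smooth over $s^{\log}$ with smooth underlying schemes once we remove the deepest stratum. First I would set up the descent picture exactly as in \S\ref{ss:log_F_crystal_over_smooth_schemes}: write $A^{(0)}$ for the $p$-completed polynomial ring $W\gr{x_0,\ldots,x_n}/\prod x_i$, form its self-coproducts $A^{(1)},A^{(2)}$ in the $p$-completed affine crystalline site of $D$, and their log variants $A^{(1),\log}=A^{(1)}\gr{t}^{\mathrm{PD}}$, $A^{(2),\log}=A^{(2)}\gr{t_1,t_2}^{\mathrm{PD}}$ — the extra PD variable $t$ records the difference of the log structures, via the diagonal monoid map $\N\to\N^{n+1}$, precisely as in the log-point computation of Lemma \ref{lemma:descent_over_log_point}. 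A log (iso)crystal $\mE$ then becomes a (projective after inverting $p$) module $M$ over $A^{(0)}$ together with a descent isomorphism $\delta$ over $A^{(1),\log}$ satisfying the cocycle condition over $A^{(2),\log}$; writing $T_\delta(t)=N_0+N_1t+N_2t^2+\cdots$, the cocycle relation gives $N_0=1$, invertibility of $N_0$, and the recursions $N_1(\alpha_1,\alpha_3)=N_0(\alpha_2,\alpha_3)N_1(\alpha_1,\alpha_2)=N_1(\alpha_2,\alpha_3)N_0(\alpha_1,\alpha_2)$ for triples of $W(\cl k)$-points reducing to the same $\cl k$-point, exactly as in \eqref{eq:cocycle_over_general_S_for_N1}. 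Descent of $\mE$ to an (iso)crystal over $D$ is equivalent to $N_i=0$ for all $i\ge1$, equivalently (by the power-series gymnastics) to $N_1=0$ identically, equivalently to $\cl N_1=0$ where $\cl N_1$ is the image of $N_1$ in $A^{(0)}$ under the diagonal $x_j\mapsto$ same variable.

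Next I would prove the cycle of implications. The implication $(6)\Rightarrow(4)\Rightarrow(3)\Rightarrow(1)$ and $(6)\Rightarrow(2)$ are immediate from pulling back the descended (iso)crystal; similarly $(6)\Rightarrow(5)$. The implication $(5)\Rightarrow(4)$ is trivial, and $(5)\Rightarrow(1)$ is trivial, and $(2)\Rightarrow(1)$ is trivial. The substantive work is in the reverse directions, and they all funnel through the following key point: \emph{the function $\cl N_1$ on (the rigid-analytic generic fiber of) $A^{(0)}$ is an analytic function whose vanishing on one residue disc forces its identical vanishing}. The underlying scheme $D=\spec k[x_0,\ldots,x_n]/\prod x_i$ has irreducible components $D_i=\{x_i=0\}$; its normalization is the disjoint union of $n+1$ affine spaces $\A^n_k$, each carrying $\cl N_1|_{D_i}$ as a function on $W\gr{x}_{\ne i}$. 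So I would argue component by component: on each $D_i$, the restriction of the log structure is, away from the other components, just the standard one $\N\to\mO$, $1\mapsto0$ — i.e. the situation of Theorem \ref{theorem:log_crystal_1} applied to the smooth variety $U_i\cong\G_m^n$ (or more precisely $D_i$ minus the other strata, which is smooth). Therefore, given a closed point $z_i\in U_i$ where the monodromy vanishes, the argument of Theorem \ref{theorem:log_crystal_1} (the $p$-adic Weierstrass / ``zero on a residue disc'' argument applied to each pair of coordinates in turn) shows $\cl N_1|_{D_i}=0$. This gives $(1)\Rightarrow$ ($\cl N_1$ vanishes on every component), and since $\cl N_1$ is a global function on $D$ (glued from the components, which cover $D$) this gives $\cl N_1=0$, hence $N_1=0$, hence $(6)$. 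The same reasoning gives $(2)\Rightarrow(6)$ directly, and to obtain $(3)\Rightarrow(6)$ or $(4)\Rightarrow(6)$ one observes that a map $\iota\colon(\spec k',0^\N)\to z_*^{\log}$ corresponds, via the diagonal $\N\to\N^{n+1}$, to a choice of how to ``approach'' the deep stratum, and by choosing $\iota$ to run into the generic direction of each component $D_i$ one recovers the vanishing of the monodromy at (the generic point of, hence by spreading out at some closed point of) each $U_i$, reducing $(3)$ and $(4)$ to $(1)$. Finally $(6)\Rightarrow(2),(3),(4),(5)$ are formal.

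For the general statement over an arbitrary semistable log scheme $Z^{\log}$ (Theorem \ref{theorem:log_crystal_2_global}, which this excerpt's theorem is the local model of), I would work \'etale-locally, where by Definition \ref{def:st_log_scheme} the log scheme is strictly \'etale over $D^{n,\log}\times_{\spec k}\A^m_k$; the product with $\A^m_k$ is harmless since it carries trivial log structure and one simply carries along extra non-log PD variables, exactly as in Part III of the proof of Theorem \ref{theorem:log_crystal_1}. The descent from the \'etale-local picture to the global one is Zariski/\'etale descent for crystals, handled as in Part II of that proof, using that intersections of the relevant opens contain closed points at which one can test. The main obstacle I anticipate is purely bookkeeping: tracking how the single PD variable $t$ (recording the diagonal log direction) interacts with the $n+1$ ``normal'' PD variables $x_i'$ on each component and at the deep stratum, and making precise the statement that ``a map from a standard log point into $z_*^{\log}$ sees each component''. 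There is no new analytic input beyond the $p$-adic Weierstrass preparation argument already used in \S\ref{sec:log_crystals}; Frobenius again plays no role, so the log $F$-(iso)crystal case follows verbatim as in Remark \ref{remark:role_of_Frobenius}. The one genuinely non-formal link in the chain is $(1)\Rightarrow(6)$, i.e. upgrading ``monodromy vanishes at one point of each component'' to global descent, and that is exactly where the component-by-component reduction to Theorem \ref{theorem:log_crystal_1} does the work.
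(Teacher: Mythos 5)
There is a genuine structural error at the foundation of your argument: you take the log self-coproduct $A^{(1),\log}$ of $A^{(0)} = W\gr{x_0,\ldots,x_n}\gr{\prod x_i}^{\mathrm{PD}}$ to be $A^{(1)}\gr{t}^{\mathrm{PD}}$, with a single extra PD variable $t$ ``recording the diagonal $\N\to\N^{n+1}$.'' That would be correct if the log structure on $D^{\log}$ were the pullback of the log structure on $s^{\log}$ along $D\to s$, but it is not. The definition in \S\ref{ss:st_log_scheme} gives $D^{\log}$ the log structure associated to the $(n+1)$-generator chart $\N^{n+1}\to k[x_0,\ldots,x_n]/\prod x_i$, $e_i\mapsto x_i$; this is strictly finer than the diagonal pullback away from the smooth locus, and in particular at the deep stratum $z_*$ the associated log structure is $\N^{n+1}\oplus\mathcal O^\times$, not $\N\oplus\mathcal O^\times$. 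Consequently the exactification introduces $n+1$ PD variables (for $n=1$, the $\delta_x$ and $\delta_y$ of the paper's computation (\ref{eq:A1_log_for_st_log_scheme})), and the descent matrix is a multivariable series $T_\delta=\sum_{i,j}F_{ij}(x_1,y_1)\,(\delta_x)^i(\delta_y)^j$. Your entire subsequent bookkeeping — the single series $N_0+N_1t+\cdots$, the cocycle relation quoted from the smooth case, the criterion ``descent $\iff N_1=0$'' — is therefore the bookkeeping for the wrong object.

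This mis-setup hides the substantive part of $(1)\Rightarrow(6)$. Once the two PD variables are in place, the correct criterion for descent is that each $F_{ij}$ be divisible by $(x_1)^i(y_1)^j$, a condition that has no analogue in your single-variable picture. Your component-by-component reduction to Theorem \ref{theorem:log_crystal_1} does carry genuine content, but it only yields the first-order information: restricting to $U_0$ (respectively $U_1$) and invoking the smooth theorem shows that $x_1\mid F_{ij}$ for $i\ge1$ (respectively $y_1\mid F_{ij}$ for $j\ge1$). Promoting this to the full divisibility by $(x_1)^i(y_1)^j$ is the whole point of Step~2 of the paper's proof — an induction on $i+j$ that explicitly extracts the coefficient of $\delta_{y_1}(\delta_{y_2})^k$ and of $(\delta_{x_1})^i(\delta_{y_2})^j$ from the two-variable cocycle identity (\ref{eq:log_scheme_cocycle_part_1})--(\ref{eq:log_scheme_cocycle_part_2}). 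Your proposal does not replicate this step and, because it has collapsed the data to a single variable, cannot even state it. Similarly, your sketch of $(3),(4)\Rightarrow(1)$ (``choose $\iota$ to run into the generic direction of each component'') glosses over the fact that the paper needs two separate non-formal lemmas here: Lemma \ref{lemma:equivalence_3_4_in_theorem_log_st}, which proves $(3)\Leftrightarrow(4)$ over the non-standard log point $s_n^{\log}$ by varying the slopes $\iota_{m,n}$ and using a Vandermonde-style argument in $(m,n)$, and a variant of Lemma \ref{lemma:descent_for_log_A1} for the passage $(4)\Rightarrow(1)$. Neither follows from ``spreading out''; each requires its own cocycle computation in the multi-variable setting. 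The main missing idea, in short, is the multi-PD-variable structure of the semistable log crystalline site and the cocycle-induction it forces; without it, the component-by-component reduction that you correctly identify as the right first step does not close the argument.
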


More generally, we have the following global  version of rigidity of log (iso)crystals over semistable log schemes. 

\begin{theorem}\label{theorem:log_crystal_2_global}
Let $Z^{\log}$ be a geometrically connected semistable log scheme over $k$ with underlying scheme $Z$. Let $Z^{\mathrm{sm}}\subset Z$ be the smooth locus and let $U_0, \ldots, U_r$ be the irreducible components of $Z^{\mathrm{sm}}$. Let $\mE$ be a log (iso)crystal over $Z^{\log}$. Then the following are equivalent.
\begin{enumerate}
    \item There is a finite collection of close points $z_i \in U_i$, one for each $i=0,\ldots, r$, such that the restriction of $\mE$ at each $z_i^{\log}$ has trivial monodromy. 
    \item For every closed point $z\in Z^{\mathrm{sm}}$, the restriction of $\mE$ at $z^{\log}$ has trivial monodromy.
    \item For every map $\iota: (\spec k', 0^{\N})\ra Z^{\log}$ from a standard log point $(\spec k', 0^{\N})$, the pullback log (iso)crystal $\iota^*\mE$ has trivial monodromy. 
    \item $\mE$ descends to an (iso)crystal over $Z$.  
\end{enumerate}
The same assertion holds for log $F$-(iso)crystals. 
\end{theorem}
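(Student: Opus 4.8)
The plan is to bootstrap from the two results already in hand: Theorem \ref{theorem:log_crystal_1} for smooth log schemes and Theorem \ref{theorem:log_crystal_2} for the local models $D^{n,\log}\times\A^m_k$. The overall strategy is to prove the cycle of implications $(1)\Rightarrow(2)\Rightarrow(3)\Rightarrow(4)\Rightarrow(1)$, where $(4)\Rightarrow(1)$ and $(3)\Rightarrow(4)$ (the latter along the canonical $\spec k'\to z^{\log}$ for any closed point $z\in Z^{\mathrm{sm}}$, together with $(4)$ for $\mE|_{z^{\log}}$) are essentially immediate, and $(2)\Rightarrow(3)$ follows because every map $\iota\colon(\spec k',0^{\N})\to Z^{\log}$ factors set-theoretically through a point $z$ of $Z$; when $z\in Z^{\mathrm{sm}}$ this is $(2)$ applied after base change to the perfect closure, and when $z$ lies in the non-smooth locus one invokes the local model and Theorem \ref{theorem:log_crystal_2}(3)$\Leftrightarrow$(5) to conclude ``trivial monodromy in all directions'' there too. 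So the real content is $(1)\Rightarrow(2)$ and then globalizing to get $(4)$.

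First I would fix, for each irreducible component $U_i$ of $Z^{\mathrm{sm}}$, a closed point $z_i\in U_i$ with trivial monodromy as in $(1)$. Since $U_i$ is smooth and geometrically connected (shrinking/passing to a finite extension of $k$ if necessary, which is harmless), the log structure on $U_i^{\log}$ is pulled back from $s^{\log}\to s$, so Theorem \ref{theorem:log_crystal_1} applies directly: $\mE|_{U_i^{\log}}$ has trivial monodromy at one closed point, hence descends to a crystal (resp.\ isocrystal, resp.\ with Frobenius) $\mE_i'$ on $U_i$, and has trivial monodromy at \emph{every} closed point of $U_i$. This gives $(2)$. For $(2)\Rightarrow(4)$ I would argue étale-locally on $Z$: by Definition \ref{def:st_log_scheme}, around any point of the non-smooth locus, $Z^{\log}$ is strictly étale over some $D^{n,\log}\times\A^m_k$; the components $U_i$ meeting such a chart pull back to (open pieces of) the standard components, and having trivial monodromy at all closed points of those pieces is hypothesis (1)/(2) of Theorem \ref{theorem:log_crystal_2} for the chart, hence by that theorem $\mE$ restricted to the chart descends to an (iso)crystal. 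Gluing these local descents is the step to be careful about: I would phrase descent in terms of the canonical functor $\pi^*$ and check that the locally-defined descended objects $\mE_i'$ agree on overlaps because the equivalence ``log (iso)crystals with trivial monodromy $\simeq$ (iso)crystals'' (the $N=0$ part of the Hyodo--Kato dictionary, Lemma \ref{lemma:descent_over_log_point} and its relative forms in \S\ref{sec:log_crystals}--\S\ref{sec:log_crystals_on_log_schemes}) is fully faithful, so isomorphisms descend uniquely; then $\mE$ descends globally to an (iso)crystal $\mE'$ on $Z$, which is $(4)$.

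The main obstacle I anticipate is the bookkeeping in $(2)\Rightarrow(4)$: making precise that the étale-local charts $D^{n,\log}\times\A^m_k$ see \emph{all} the relevant components $U_i$ and that the hypothesis of Theorem \ref{theorem:log_crystal_2} is genuinely met on each chart (in particular that an open dense piece of each standard component $\G_m^n\subset D^{n,\mathrm{sm}}$ inherits trivial monodromy at a closed point — this needs each global $U_i$ to be geometrically connected, which is why ``geometrically connected semistable log scheme'' is in the hypothesis, and it lets Theorem \ref{theorem:log_crystal_1} propagate triviality across the whole component). Once that is set up, the gluing is formal from faithful flatness of the descent functor, and the Frobenius-equivariant versions require no extra work since, as emphasized repeatedly in \S\ref{sec:log_crystals}, Frobenius is irrelevant to the descent argument. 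I would close by noting that $(4)\Rightarrow(2)\Rightarrow(1)$ is trivial, completing the equivalences, and that the ``trivial monodromy in all directions at $z_*$'' phenomenon from Theorem \ref{theorem:log_crystal_2} gives the analogous statement at every point of the non-smooth locus of $Z$ as a byproduct.
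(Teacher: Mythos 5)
Your proposal is correct and takes essentially the same route as the paper's (very terse) proof: one extends Theorem~\ref{theorem:log_crystal_2} to the local models $D^{n,\log}\times_{\spec k}\A^m_k$ and then argues by \'etale descent, with Theorem~\ref{theorem:log_crystal_1} propagating trivial monodromy across each smooth component. You fill in more detail than the paper's one-paragraph proof, including the gluing step and the geometric-connectedness caveat for the $U_i$'s, all of which the paper leaves implicit under ``follows immediately using \'etale descent.''

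One small slip in your announced structure: the claim that $(3)\Rightarrow(4)$ is ``essentially immediate'' is not accurate as stated --- condition (3) together with Lemma~\ref{lemma:descent_over_log_point} (or Lemma~\ref{lemma:equivalence_3_4_in_theorem_log_st}) only yields pointwise descent of each fibre $\mE|_{z^{\log}}$ over its own log point, not a global descent over $Z$. But this is harmless to your argument, since the cycle you actually run in the second paragraph is $(1)\Rightarrow(2)\Rightarrow(4)\Rightarrow(3)\Rightarrow(1)$, with the \'etale-local application of Theorem~\ref{theorem:log_crystal_2} plus gluing supplying the real globalization in $(2)\Rightarrow(4)$. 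You should also note, as the paper does explicitly, that Theorem~\ref{theorem:log_crystal_2} needs to be recorded for $D^{n,\log}\times\A^m_k$ (the proof is unchanged apart from carrying coefficients from $\A^m_k$), and that a chart is only \emph{strictly \'etale} over this local model, so one is implicitly invoking the non-free / \'etale-extension variant of the cocycle argument from Part~I\!I of the proof of Theorem~\ref{theorem:log_crystal_1}.
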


By Definition \ref{def:st_local_sys}, Theorem \ref{theorem:log_crystal_2} immediately implies Theorem \ref{thm:main_intro_for_log_schemes} from the introduction; while Theorem \ref{theorem:log_crystal_2_global} implies a natural generalization of Theorem \ref{thm:main_intro_for_log_schemes} (see Theorem \ref{cor:rigidity_crystalline_over_semistable}). Moreover, 
as indicated in the introduction, these rigidity results play an essential role in the proof of Shankar's conjecture (Theorem \ref{thm:conjecture_for_projective_varieties_intro}) in \S \ref{sec:conjecture}, as well as the proof of Theorem \ref{theorem:main_intro_punctured_disc} in \S \ref{section:punctured disc}.

\subsection{Log (iso)crystals over semistable log schemes: Part I} \label{ss:log_F_isocrystal_semistable_log_schemes}

We start to prove Theorem \ref{theorem:log_crystal_2}. First note that the equivalence between $(1)$ and $(2)$ follows from Theorem \ref{theorem:log_crystal_1}. Let us also note that $(4)$ implies $(3)$;  $(5)$ implies $(1)$ and $(3)$; and clearly $(6)$ implies all other claims $(1) -(5)$. Thus it remains to show that $(1)\so (6)$ and $(3) \so (4) \so (1)$. For simplicity let us assume $n = 1$ in the proof, since the additional complication in the general case is again entirely notational as in \S \ref{sec:log_crystals}. To further ease notations, let us rename the variables $x_0, x_1$ by $x, y$ in the proof, so $D = \spec k[x, y]/xy$.  

In this Part I, we prove $(1) \so (6)$.

\begin{proof}[Proof of $(1) \Rightarrow (6)$ in Theorem \ref{theorem:log_crystal_2}] \indent 

\noindent Let us start with the case of a log crystal. (The case of a log isocrystal is similar, see the last paragraph of this subsection.) The proof is similar to the proof of Theorem \ref{theorem:log_crystal_1} (and in fact builds upon it). Let 
\[
A^{(0)} = W \gr{x, y} \gr{xy}^{\mathrm{PD}} = W \gr{x, y} \gr{w}^{\mathrm{PD}}/(xy - w)
\]
and equip it with the pre-log structure $\N^{\oplus 2} \ra A^{(0), \log}$ sending $(a, b) \mapsto x^a y^b$. We denote the corresponding (pre-)log ring by $(A^{(0)}, x^{\N}\oplus y^{\N})$. Then $A^{(0)}$ (resp. $(A^{(0)}, x^{\N}\oplus y^{\N})$) is a weakly initial object in (the opposite category of) the $p$-completed affine crystalline site $(k[x, y]/xy)_{\mathrm{crys}}^{\mathrm{aff},\wedge, \mathrm{op}}$ (resp. (the opposite category of) the $p$-completed affine log crystalline site $(k[x, y]/(xy), x^{\N}\oplus y^{\N})_{\mathrm{crys}}^{\mathrm{aff},\wedge, \mathrm{op}}$). Let $A^{(1)}$ (resp. $A^{(2)}$) be the self-coproduct (resp. triple self-coproduct) of $A^{(0)}$ in  $(k[x, y]/xy)_{\mathrm{crys}}^{\mathrm{aff},\wedge, \mathrm{op}}$, and let  $A^{(1), \log}$ (resp. $A^{(2), \log}$) be the underlying ring of the self-coproduct (resp. triple self-coproduct) of $(A^{(0)}, x^{\N}\oplus y^{\N})$ in $(k[x, y]/(xy), x^{\N}\oplus y^{\N})_{\mathrm{crys}}^{\mathrm{aff},\wedge, \mathrm{op}}$. Consider the simplicial object 
\begin{equation*}  
A^{(0)} \mathrel{\substack{\textstyle\longrightarrow\\[-0.6ex] \textstyle\longrightarrow}}  
A^{(1), \log}  \mathrel{\substack{\textstyle\longrightarrow\\[-0.6ex] \textstyle\longrightarrow \\[-0.6ex] \textstyle\longrightarrow}} A^{(2), \log}  
\end{equation*} where the arrows are labeled by $\iota_1, \iota_2$ and $\iota_{12}, \iota_{13}, \iota_{23}$, and similarly for the non-log version. As in the proof of Theorem \ref{theorem:log_crystal_1}, the data of a log crystal $\mE$ over $(k[x, y]/xy, x^{\N}\oplus y^{\N})$ is equivalent to a finite projective module $M$ over $A^{(0)}$ equipped with 
an $A^{(1), \log}$-linear isomorphism 
\begin{equation}
\label{eq:descent_over_A1_log_weakly_initial_for_log_st_scheme}
\delta: M \otimes_{A^{(0)}, \iota_1} A^{(1), \log} \isom A^{(1), \log} \otimes_{A^{(0)}, \iota_2} M 
\end{equation}
which 
satisfies the cocycle condition $\iota_{23}^* (\delta) \circ \iota_{12}^* (\delta) = \iota_{13}^* (\delta)$. A similar assertion holds for (non-log) crystals over $k[xy]/xy$. 

Again as in the proof of Theorem \ref{theorem:log_crystal_1}, we treat the case when $M$ is free (for the general case of log crystals, we cover $\spec k[xy]/xy$ by affine open subschmes and argue in the same way as in Part I\!I of \S \ref{ss:log_F_crystal_over_smooth_schemes}). Our goal is to show that, under Condition (1) (or equivalently, Condition (2)), the isomorphism $\delta$ descends to an isomorphism over $A^{(1)}$. Now let us observe that 
\begin{equation} \label{eq:A1_log_for_st_log_scheme}
    A^{(1), \log} = W \gr{x_1, y_1} \gr{\delta_x, \delta_y, x_1y_1}^{\mathrm{PD}}
\end{equation}
where we view $\delta_x$ as ``$ {x_2}/{x_1} -1$'' and $\delta_y$ as ``$ {y_2}/{y_1} -1$'',  and it is equipped with a natural pre-log structure 
\[ \alpha^{(1)}: N^{(1)} \ra A^{(1), \log}\]  with $N^{(1)} = \{(a_1, a_2, b_1, b_2) \in \Z^{4} \:|\:  a_1 + a_2 \ge 0, \,\,b_1 + b_2 \ge 0 \}$. On the other hand, the (non-log) self-coproduct is given by 
\[
A^{(1)} = W \gr{x_1, y_1} \gr{d_{x}, d_{y}, x_1y_1}^{\mathrm{PD}}
\]
where $d_x = x_2 - x_1$ and $d_y = y_2 - y_1$. The natural map $A^{(1)} \ra A^{(1), \log}$ is given by sending $d_x \mapsto x_1 \cdot \delta_x$ and $d_y \mapsto y_1 \cdot \delta_y$. Upon choosing a basis of $M$ over $A^{(0)}$, we may represent the isomorphism $\delta$ as a matrix
\begin{equation} \label{eq:expression_of_delta_normal_crossing}
T_{\delta} = \sum_{i, j \ge 0} F_{ij} (x_1, y_1) \cdot (\delta_x)^{i} (\delta_y)^{j}
\end{equation}
with each $F_{ij}=F_{ij}(x_1, y_1)$ a matrix with entries in $\frac{1}{i!j!} W\gr{x_1, y_1} \gr{x_1 y_1}^{\mathrm{PD}}$. The assertion that the log crystal $\mE$ descends to a crystal over $k[x, y]/xy$ is equivalent to the following condition:
\begin{itemize}
    \item For each $i, j \ge 0$, the matrix $F_{ij}$ is divisible by $(x_1)^i (y_1)^j$. In other words, each $F_{ij} (x_1, y_1)$ can be written as $(x_1)^i (y_1)^j \sq F_{ij}(x_1, y_1)$ for some matrix $\sq F_{ij}({x_1, y_1})$ with entries in $\frac{1}{i!j!}W\gr{x_1, y_1} \gr{x_1 y_1}^{\mathrm{PD}}$.   
\end{itemize}  
 We will prove this via the following two steps: 
\begin{enumerate}
     \item First, we will show that $F_{ij}$ is divisible by $x_1$ for all $i \ge 1$ and by $y_1$ for all $j \ge 1$. For this, we will study certain specialization maps on the log crystal. 
    \item Second, we will finish the argument by showing that $F_{ij}$ is divisible by $(x_1)^i$ and by $(y_1)^j$. For this, we
    induct on $i+j$ and make full use of the cocycle condition. \\
\end{enumerate}

\noindent \subsubsection*{Step 1} 
Let us restrict the log crystal to the locally closed subschemes $U_0$ and $U_1$ of $D$, which respectively correspond to the maps 
\begin{align} \label{eq:map_from_ncd_to_one_branch}
   & i_0: k[x, y]/xy \xrightarrow{x \mapsto 0} k[y] \hookrightarrow k[y^{\pm 1}] \\ \nonumber 
   & i_1: k[x, y]/xy \xrightarrow{y \mapsto 0} k[x] \hookrightarrow k[x^{\pm 1}]. 
\end{align}
Consider the objects 
\[ (W \gr{y^{\pm 1}}, 0^\N \oplus y^\Z) \quad  \textup{  and  } \quad  (W \gr{x^{\pm 1}}, 0^\N \oplus x^\Z) \]  in the $p$-completed affine log crystalline site of $(k[y^{\pm 1}], 0^\N)$ and of $(k[x^{\pm 1}], 0^\N)$, respectively. We can view them as objects in the $p$-completed affine log crystalline site of $(k[x, y]/xy, x^{\N}\oplus y^{\N})$ via the maps $i_0$ and $i_1$ above.

Let $A^{(1), \log}_0$ (resp. $A^{(1), \log}_1$) denote the underlying ring of the self-coproduct of the object $(W \gr{y^{\pm 1}}, 0^\N \oplus y^\Z)$ (resp. of $(W \gr{x^{\pm 1}}, 0^\N \oplus x^\Z)$) in the $p$-completed affine log crystalline site of $(k[y^{\pm 1}], 0^\N)$ (resp. of  $(k[x^{\pm 1}], 0^\N)$). Similarly construct $A^{(2), \log}_0$ and $A^{(2), \log}_1$ from the triple self-coproducts. Note that  we have 
\begin{align*}
    A^{(1), \log}_0 & = W \gr{y_1^{\pm 1}} \gr{d_y, t_y}^{\mathrm{PD}}   \\  A^{(1), \log}_1 & = W \gr{x_1^{\pm 1}} \gr{d_x, t_x}^{\mathrm{PD}},
\end{align*}
where $d_y = y_2 - y_1$ and $d_x = x_2 - x_1$ as before.\footnote{This is similar to the setup as in the proof of Theorem \ref{theorem:log_crystal_1}, although there we denote $d_x$ and $d_y$ by $x_1'$ and $y_1'$, respectively. Also note that the variables $t_y$ and $t_x$ come from the exactification of the pre-log structures.}
The maps $i_0$ and $i_1$ in (\ref{eq:map_from_ncd_to_one_branch}) induce natural maps $ \iota_0: A^{(1), \log} \ra A^{(1), \log}_0$ and $\iota_1: A^{(1), \log} \ra A^{(1), \log}_1$ on the self-coproduct of the weakly initial objects in the relevant log crystalline sites, which can be described as follows. 
First, $\iota_0$ is the map 
\begin{equation} \label{eq:i_0_on_self_product}
    \iota_0: W \gr{x_1, y_1} \gr{\delta_x, \delta_y, x_1y_1}^{\mathrm{PD}} \ra W \gr{y_1^{\pm 1}} \gr{d_y, t_y}^{\mathrm{PD}}
\end{equation}
which sends 
\[
x_1 \mapsto 0, \quad y_1 \mapsto y_1, \quad  \delta_x \mapsto t_y, \quad \delta_y \mapsto d_y/y_1. 
\]
Similarly, $\iota_1$ is the map 
\begin{equation} \label{eq:i_1_on_self_product}
    \iota_1: W \gr{x_1, y_1} \gr{\delta_x, \delta_y, x_1y_1}^{\mathrm{PD}} \ra W \gr{x_1^{\pm 1}} \gr{d_x, t_x}^{\mathrm{PD}}
\end{equation}
sending 
\[
x_1 \mapsto x_1, \quad y_1 \mapsto 0, \quad  \delta_x \mapsto d_x/x_1, \quad \delta_y \mapsto t_x. 
\]
Now, by assumption (Condition (1), equivalently Condition (2)), we know that the pullback $\iota_0^* \mE$ as a log crystal over $\spec (k[y^{\pm 1}], 0^\N)$ descends to a (non-log) crystal over $\spec k[y^{\pm 1}]$, and likewise for $\iota_1^* \mE$. By the proof of Theorem \ref{theorem:log_crystal_1}, this implies that the image of $T_{\delta}$ in (\ref{eq:expression_of_delta_normal_crossing}) along $\iota_0$ in (\ref{eq:i_0_on_self_product}) has coefficients equal to $0$ in front of all positive powers of $(t_y)^i$. In other words, if we write 
\[ \iota_0 (T_\delta) = \sum_{i \ge 0} N_i \cdot (t_y)^i
\] as in (\ref{eq:expression_of_delta_affine}), then $N_i = 0$ for all $i \ge 1$. This implies that for each $i \ge 1$, we have  
\[ 
\sum_{j \ge 0} F_{ij} (0, y_1) \cdot \frac{(d_y)^j}{(y_1)^j} = 0.
\] 
This in turn implies that $F_{ij}(0, y_1) = 0$ for all $i \ge 1$ and all $j$, from which we deduce that $x_1$ divides $F_{ij}$ for each $i \ge 1$. Similarly, by considering $\iota_1$ in place of $\iota_0$, we know that $y_1$ divides $F_{ij}$ for each $j \ge 1$. This finishes the first step of the argument.

\noindent \subsubsection*{Step 2}  
Now suppose that $F_{ij}$ is divisible by $(x_1)^i (y_1)^j$ for all $i, j$ such that $i + j \le k$ for some positive integer $k$, we will show that this continues to hold for all $i, j$ such that $i + j = k + 1$, which will finish the proof by induction. 
To this end, let us first examine the cocycle condition on $T_{\delta}$. Write $\delta_{x_1} = x_2/x_1 - 1$, $\delta_{x_2} = x_3/x_2 - 1$, and similarly for $\delta_{y_1}, \delta_{y_2}$. Similar to (\ref{eq:cocycle_in_coordinates}),  the cocyle condition  in our setup becomes 
\begin{align}
 \label{eq:log_scheme_cocycle_part_1}     \Big( 
\sum_{i, j} F_{ij} \big(x_1+x_1\delta_{x_1}, y_1+y_1\delta_{y_1} \big) \cdot \big(\delta_{x_2} \big)^i  \cdot \big( \delta_{y_2} \big)^j
\Big) \cdot 
 \Big( 
\sum_{i, j} F_{ij} \big(x_1, y_1 \big) \cdot \big(\delta_{x_1} \big)^i \cdot \big( \delta_{y_1} \big)^j
\Big) \\ 
 \label{eq:log_scheme_cocycle_part_2}     
       = \sum_{i, j} F_{ij} \big(x_1, y_1\big) \cdot \big( \delta_{x_1} + \delta_{x_2} + \delta_{x_1} \delta_{x_2} \big)^i \cdot \big( \delta_{y_1} + \delta_{y_2} + \delta_{y_1} \delta_{y_2} \big)^j. 
\end{align}  
By considering the constant term (with respect to variables $\delta_{x_1}, \delta_{x_2}, \delta_{y_1}, \delta_{y_2}$), we deduce that $F_{00} = 1$ is the identity matrix.  Let us first check that $(y_1)^{j}$ divides $F_{0j}$ for $j = k+1$ and that  $(x_1)^{i}$ divides $F_{i0}$ for $i = k+1$. In fact we will only prove the divisibility for $(y_1)^j$, the divisibility for $(x_1)^i$ can be argued similarly (or by symmetry). For this divisibility, we look at the coefficient of the term $\delta_{y_1} (\delta_{y_2})^{k}$. Let us observe from the right hand side (\ref{eq:log_scheme_cocycle_part_2}) of the equation, this term is only involved with $i = 0, j = k$ or when $i = 0, j = k+1$, and the terms contributing to the coefficient of $\delta_{y_1} (\delta_{y_2})^{k}$ are 
\begin{equation} \label{eq:the_coefficient_of_1k} 
k \cdot F_{0k} (x_1, y_1) \cdot \delta_{y_1} (\delta_{y_2})^{k} + (k+1) \cdot F_{0, k+1} (x_1, y_1)  \cdot \delta_{y_1} (\delta_{y_2})^{k}.
\end{equation}
Now we look at the left hand side (\ref{eq:log_scheme_cocycle_part_1}). Since the term $\delta_{y_1} (\delta_{y_2})^{k}$ does not have positive powers of $\delta_{x_1}$ or $\delta_{x_2}$, so the contribution to the coefficient of $\delta_{y_1} (\delta_{y_2})^{k}$ from the first factor of expression (\ref{eq:log_scheme_cocycle_part_1}) can only come from $F_{0k} (x_1 + x_1 \delta_{x_1}, y_1 + y_1 \delta_{y_1}) \cdot (\delta_{y_2})^k$. Even more precisely, the only contribution to the coefficient of $\delta_{y_1} (\delta_{y_2})^{k}$ from the first factor of (\ref{eq:log_scheme_cocycle_part_1}) must come from 
\[F_{0k} (x_1, y_1 + y_1 \delta_{y_1}) \cdot (\delta_{y_2})^k\] 
since the difference 
\[ F_{0k} (x_1 + x_1 \delta_{x_1}, y_1 + y_1 \delta_{y_1}) - F_{0k} (x_1, y_1 + y_1 \delta_{y_1})\]
involves the term $\delta_{x_1}$. Next, the contributions to the coefficient of $\delta_{y_1} (\delta_{y_2})^{k}$ from the second factor of (\ref{eq:log_scheme_cocycle_part_1}) are two-fold: it either comes from $F_{00} (x_1, y_1) = 1$ or comes from $F_{01} (x_1, y_1) \cdot \delta_{y_1}$. In other words, the expression (\ref{eq:the_coefficient_of_1k}) is equal to the $\delta_{y_1} (\delta_{y_2})^k$-term in 
\[
\Big( F_{0k} (x_1, y_1 + y_1 \delta_{y_1}) \cdot (\delta_{y_2})^k \Big) \cdot \Big(1 + F_{01}(x_1, y_1) \cdot \delta_{y_1} \Big).
\]
By assumption, we know that $(y_1)^k$ divides $F_{0k}$, so we may write 
\[ F_{0k} (x_1, y_1) = (y_1)^k \cdot \sq F_{0k} (x_1, y_1).
\] 
Therefore, $(k+1) \cdot F_{0, k+1} (x_1, y_1)$ is equal to the coefficient of the $\delta_{y_1} (\delta_{y_2})^k$-term in 
\begin{align} \nonumber 
& \Big( F_{0k} (x_1, y_1 + y_1 \delta_{y_1}) \cdot (\delta_{y_2})^k \Big) \cdot \Big(1 + F_{01}(x_1, y_1) \cdot \delta_{y_1} \Big)  \\ 
& \nonumber - k \cdot 
F_{0k} (x_1, y_1) \cdot \delta_{y_1} (\delta_{y_2})^{k} \\ 
\label{eq:expression_involving_star_log_ncr_case}
= \quad & (\star)   \: + \: F_{0k} (x_1, y_1 + y_1 \delta_{y_1}) \cdot F_{01} (x_1, y_1) \cdot \delta_{y_1} (\delta_{y_2})^k  \\ \nonumber &  -   k \cdot 
F_{0k} (x_1, y_1) \cdot \delta_{y_1}, (\delta_{y_2})^{k} 
\end{align}
where we use $(\star)$ to denote the expression 
\[
(\star) := (y_1)^k \cdot \big(1+ \delta_{y_1}\big)^k \cdot \sq F_{0k} (x_1, y_1 + y_1 \delta_{y_1}) \cdot (\delta_{y_2})^k.
\]
To analyze the coefficient in front of $\delta_{y_1} (\delta_{y_2})^k$, we need to extract precisely one copy of $\delta_{y_1}$ in $(y_1)^k \cdot \big(1+ \delta_{y_1}\big)^k \cdot \sq F_{0k} (x_1, y_1 + y_1 \delta_{y_1}) $. Now observe that we can write 
\begin{align*}
    \sq F_{0k} (x_1, y_1 + y_1 \delta_{y_1}) = &  \sq F_{0k} (x_1, y_1) + H (x_1, y_1) \cdot (y_1 \delta_{y_1}) \\ & +  \textup{higher order terms in } (y_1 \delta_{y_1}). 
\end{align*}
Therefore, the $\delta_{y_1} (\delta_{y_2})^k$-term in $(\star)$ is precisely 
\begin{align} \nonumber 
& (y_1)^k  \cdot 1 \cdot H(x_1, y_1)  (y_1 \delta_{y_1}) \cdot  (\delta_{y_2})^k  +
(y_1)^k \cdot k \delta_{y_1} \cdot \sq F_{0k} (x_1, y_1) \cdot   (\delta_{y_2})^k  \\ 
= \quad & (y_1)^{k+1} H(x_1, y_1) \cdot  \delta_{y_1}  (\delta_{y_2})^k  + k \cdot F_{0k} (x_1, y_1) \cdot  \delta_{y_1}  (\delta_{y_2})^k. \label{eq:the_term_of_interest_in_star}
\end{align}
Substituting (\ref{eq:the_term_of_interest_in_star}) back into (\ref{eq:expression_involving_star_log_ncr_case}), we see that $(k+1) \cdot F_{0, k+1} (x_1, y_1)$ is equal to the coefficient of the $\delta_{y_1} (\delta_{y_2})^k$-term in 
\[
 (y_1)^{k+1} H(x_1, y_1) \cdot  \delta_{y_1}  (\delta_{y_2})^k  +  F_{0k} (x_1, y_1 + y_1 \delta_{y_1}) \cdot F_{01} (x_1, y_1) \cdot \delta_{y_1} (\delta_{y_2})^k,
\]
which is indeed divisible by $(y_1)^{k+1}$ by the induction hypotheses.

It remains to check (under the same induction hypotheses) that $(x_1)^i (y_1)^j$ divides $F_{ij}(x_1, y_1)$ with $i+j = k+1$ and $i, j \ge 1$. This is in fact more straightforward. For example, we may look at the coefficient of the term $(\delta_{x_1})^i (\delta_{y_2})^j$ in the cocycle condition. The only contribution from the right hand side (\ref{eq:log_scheme_cocycle_part_2}) comes from $F_{ij} (x_1, y_1) \cdot (\delta_{x_1})^i (\delta_{y_2})^j$. From the left hand side (\ref{eq:log_scheme_cocycle_part_1}), the contribution comes from 
\[ F_{0j} (x_1 + x_1 \delta_{x_1}, y_1) \cdot (\delta_{y_2})^j\] in the first factor, and from 
\[ F_{i'0}(x_1, y_1) \cdot (\delta_{x_1})^{i'}\] in the second factor for all $i' \le i$. For each such $i'$, 
the first factor contributes $(x_1)^{i - i'}(y_1)^j$, while the second factor is divisible by $(x_1)^{i'}$, thus $F_{ij} (x_1, y_1)$ is indeed divisible by $(x_1)^i (y_1)^j$. This proves Step 2, and thus finishes the proof of the implication $(1) \so (6)$ in the case of log crystals.

Finally, let us remark that, for log isocrystals, we apply a similar argument as in the proof of Theorem \ref{theorem:log_crystal_1}. The assertions for log $F$-crystals and log $F$-isocrystals follow from the non-Frobenius versions, since the Frobenius structures are again irrelevant in the argument, as in the proof of Theorem \ref{theorem:log_crystal_1}. 
\end{proof}

\subsection{Log (iso)crystals over semistable log schemes: Part I\!I (log points revisited)}  

Next we prove the equivalence between $(3)$ and $(4)$, which is a generalization of Lemma \ref{lemma:descent_over_log_point} and is interesting in its own right. It suffices to prove the following lemma. 

\begin{lemma}\label{lemma:equivalence_3_4_in_theorem_log_st} Consider a log point $s_n^{\log}=\spec k$ equipped with the log structure associated with the pre-log structure $\N^{n+1} \ra k$ sending every nonzero element to $0$.\footnote{There is an isomorphism $z_*^{\log}\cong s_n^{\log}$ where $z_*^{\log}$ is as in Notation \ref{notation:log_scheme_section}.} 
Let $\mE$ be a log crystal on $s_n^{\log}$. Suppose that for every map $\iota: s^{\log} \ra s_n^{\log}$ from the standard log point $s^{\log} = (\spec k, 0^\N)$, the pullback log crystal $\iota^* \mE$ has trivial monodromy, then $\mE$ descends to a crystal over $s=\spec k$. The same assertion holds for log isocrystals and log $F$-(iso)crystals. 
\end{lemma}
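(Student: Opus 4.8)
The plan is to mimic the Hyodo--Kato computation from Lemma \ref{lemma:descent_over_log_point}, now carried out over the ``fat'' log point $s_n^{\log}$ whose monoid is $\N^{n+1}$ rather than $\N$. Concretely, I would take the object $W_{\mathrm{HK}, n} = (W(k), (\N^{n+1})^a)$ in the opposite category of the $p$-completed affine log crystalline site of $s_n^{\log}$ over $W(k)$, and compute its self-coproducts. As in the rank-one case, the underlying rings of the double and triple self-coproducts will be PD-polynomial rings: one gets $W_{\mathrm{HK}, n}^{(1)} \cong W\gr{t^{(0)}, \ldots, t^{(n)}}^{\mathrm{PD}}$ with one PD-variable $t^{(i)}$ for each copy of $\N$ (coming from exactification, $t^{(i)} = u^{(i)}/v^{(i)} - 1$), and $W_{\mathrm{HK}, n}^{(2)} \cong W\gr{t_1^{(i)}, t_2^{(i)}}_{0 \le i \le n}^{\mathrm{PD}}$, with the three face maps sending $t^{(i)} \mapsto t_1^{(i)}$, $t^{(i)} \mapsto t_2^{(i)}$, $t^{(i)} \mapsto t_1^{(i)} t_2^{(i)} + t_1^{(i)} + t_2^{(i)}$, exactly one ``copy'' of the rank-one situation in each index $i$ independently.

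\textbf{Key steps.} First I would record that, since $W(k)$ is the initial object of the (non-log) $p$-completed affine crystalline site of $s$, descent to $s$ is equivalent to the descent datum $\delta$ over $W_{\mathrm{HK}, n}^{(1)}$ being trivial, i.e.\ the representing matrix $T_\delta \in \GL_d(W\gr{t^{(0)}, \ldots, t^{(n)}}^{\mathrm{PD}}[1/p])$ being the identity. Second, by the same cocycle analysis as in Lemma \ref{lemma:descent_over_log_point}, the constant term of $T_\delta$ is $1$, and one gets commuting ``partial monodromy operators'' $N^{(0)}, \ldots, N^{(n)} \in M_d(W)$ --- the coefficients of the linear terms $t^{(i)}$ --- with $T_\delta$ determined by them via a product/exponential formula $T_\delta = \prod_{i} \exp(N^{(i)} \log(t^{(i)} + 1))$ (the $N^{(i)}$ pairwise commute because the cocycle identity in the ``mixed'' variables $t^{(i)}_1, t^{(j)}_2$ with $i \ne j$ forces it). Third, and this is the heart of the matter, I would identify the monodromy of the pullback $\iota^* \mE$ along a map $\iota: s^{\log} \to s_n^{\log}$. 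Such a $\iota$ is given by a monoid map $\N^{n+1} \to \N$, $e_i \mapsto c_i$ with $c_i \in \N$ (and not all zero, for $\iota$ to be a genuine map of log points in the relevant sense --- but see below); pulling back the simplicial object sends $t^{(i)} \mapsto (t+1)^{c_i} - 1$, and a short computation with the linear terms shows the monodromy of $\iota^* \mE$ is $\sum_i c_i N^{(i)}$. Fourth: the hypothesis says $\sum_i c_i N^{(i)} = 0$ for \emph{every} choice of $(c_0, \ldots, c_n)$; taking in turn $(c_i) = e_j$ for each standard basis vector (each of which is a valid map $\N^{n+1} \to \N$) yields $N^{(j)} = 0$ for all $j$, hence $T_\delta = 1$ and $\mE$ descends. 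The isocrystal and $F$-(iso)crystal versions then follow verbatim as in the earlier proofs, Frobenius being irrelevant.

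\textbf{Main obstacle.} The delicate point is step three together with making sure the quantifier in step four really does range over maps that pin down each $N^{(j)}$ individually. I need to be careful about what a ``map $s^{\log} \to s_n^{\log}$ of (standard) log points'' is allowed to be: if one insists that the underlying monoid map $\N^{n+1} \to \N$ be, say, local or that $\iota$ be strict, the admissible $(c_i)$ might be restricted, and I must check that the diagonal-type maps and in particular the $n+1$ coordinate projections $\N^{n+1} \to \N$ (sending $e_j \mapsto 1$, $e_i \mapsto 0$ for $i \ne j$) are genuinely among them --- these are exactly the maps realizing the inclusions $z^{\log} \hookrightarrow z_*^{\log}$ for $z$ ranging over the generic points of the branches $U_j$, which is the geometrically meaningful content. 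Granting that (which matches the setup of Definition \ref{def:trivial_monodromy_in_all_directions} and the earlier discussion of $U_i^{\log}$), the argument is a clean multi-variable repetition of Lemma \ref{lemma:descent_over_log_point}; the only real work is the bookkeeping to see that the cross-variable cocycle relations force the $N^{(i)}$ to commute and that no cross terms obstruct the factorization of $T_\delta$.
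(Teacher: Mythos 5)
There is a genuine gap in the final step, and it is exactly the one you flagged but then waved away.

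The admissible monoid maps $\N^{n+1}\to\N$ underlying a morphism $\iota: s^{\log}\to s_n^{\log}$ must be \emph{local}, i.e.\ send every nonzero element of $\N^{n+1}$ to a nonzero element of $\N$ (otherwise compatibility with the structure maps $\alpha_Y(e_i)=0$ versus $\alpha_X(0)=1$ fails). The coordinate projections $e_j\mapsto 1$, $e_i\mapsto 0$ for $i\ne j$ are therefore \emph{not} valid maps of log points, so they cannot be used to isolate $N^{(j)}$. Your heuristic justification that these projections ``realize the inclusions $z^{\log}\hookrightarrow z_*^{\log}$ for $z$ on the branches $U_j$'' conflates two different things: on a branch $U_j$ the coordinates $x_i$ for $i\ne j$ become units and are absorbed into the $\mathcal O^\times$-part of the log structure (so $U_j^{\log}$ has rank one charts), whereas at the singular point $z_*$ all the coordinates lie in the maximal ideal. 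Thus the two conditions in Theorem~\ref{theorem:log_crystal_2} are genuinely distinct, and that is precisely why the lemma has content. Your argument as written ``grants'' the wrong thing.

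The good news is that the intermediate structural claim you make (that the cocycle relation forces the $N^{(i)}$ to pairwise commute and $T_\delta=\prod_i\exp\!\big(N^{(i)}\log(t^{(i)}+1)\big)$, via specializing some of the $t^{(i)}_1,t^{(i)}_2$ to zero) does check out, though you should spell out the mixed-variable specialization rather than gesture at it. And the final step is easily repaired: the hypothesis gives $\sum_i c_i N^{(i)}=0$ for \emph{all} local $(c_0,\dots,c_n)$, i.e.\ all positive integer vectors, and differencing, say $(1,\dots,1)$ against $(1,\dots,1,2,1,\dots,1)$, still yields $N^{(j)}=0$ for each $j$ — the positive orthant spans, so you never actually need the coordinate vectors themselves. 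Contrast this with the paper's proof, which makes no attempt at the exponential factorization: it simply writes $T_\delta=\sum N_{ij}(t_1)^i(t_2)^j$ and shows all $N_{ij}$ with $i+j\ge 1$ vanish directly by induction on total degree, using the universal vanishing of $\sum_{i+j=k} m^i n^j N_{ij}$ for all positive $(m,n)$ (a Vandermonde-type argument). Your route buys a nicer structural description of $T_\delta$, the paper's is shorter and avoids having to verify the commutativity claim; both are fine once the local-map constraint is respected.
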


\begin{proof} Again, as in the proof of Theorem \ref{theorem:log_crystal_2}, we only treat the case when $n = 1$ for notational simplicity. Consider the element $A^{(0), \log} = (W, 0^{\N \oplus \N})$ in the $p$-completed affine log-crystalline site of $s_1^{\log}$. Its self-coproduct is given by 
\[ 
A^{(1), \log} = W \gr{u_1, u_2, v_1/u_1 -1, v_2/u_2 - 1}^{\mathrm{PD}}/(u_1, u_2)^{\mathrm{PD}} = W \gr{t_1, t_2}^{\mathrm{PD}}.
\]
Here, our choice of notation is similar to the notation in the proof of Lemma \ref{lemma:descent_over_log_point}, where $t_1, t_2$ stands for $v_1/u_1 - 1$ and $v_2 /u_2 -1$, respectively. As in the previous arguments, 
upon choosing a basis, the descent isomorphism can be written as a matrix of the form 
\[ T_{\delta} = \sum_{i, j \ge 0 } N_{ij} (t_1)^i (t_2)^j
\]  for some matrices $N_{ij}$ with entries in $\frac{1}{i! j!} W$. We will show that, under the hypothesis of the lemma, all the $N_{ij} = 0$ except when $i = j = 0$.  To this end,  let $\iota_{m, n}$ denote the map $s^{\log} \ra s_1^{\log}$ induced from the map $\N^{\oplus 2} \ra \N$ on monoids sending $(a, b) \mapsto ma + nb$, where $(m, n) \in (\Z_{\ge 0})^{\oplus 2}$ is a pair of non-negative integers. 

Now, consider $B^{(0), \log} = (W, 0^{\N})$ in (the opposite category of) the $p$-completed affine log-crystalline site of $s^{\log}$, then its self-coproduct is $B^{(1), \log} = W \gr{t}^{\mathrm{PD}}$ as we have seen from the proof of Lemma \ref{lemma:descent_over_log_point}. Tracing through the constructions, we observe that the map $\iota_{m, n}$ induces a map\footnote{which we again denote by $\iota_{m, n}$ by a slight abuse of notation. } 
\[
\iota_{m, n}:  W \gr{t_1, t_2}^{\mathrm{PD}} \ra W \gr{t}^{\mathrm{PD}} \]
between $A^{(1), \log}$ and $B^{(1), \log}$, sending 
\[
t_1 \mapsto (t+1)^m - 1, \quad t_2 \mapsto (t+1)^n - 1.
\]
By assumption, we know that 
\begin{equation} \label{eq:expression_of_iota_mn}
\iota_{m, n} (T_{\delta}) = \sum_{ij} N_{ij} (t^m + m t^{m-1} + \cdots + mt)^i  (t^n + n t^{n-1} + \cdots + nt)^j = 1
\end{equation} for all pairs of 
$(m, n)$. The claim that all $N_{ij} = 0$ for $i+ j \ge 0$ follows from this using an induction argument on $i + j$. For the induction step, let us first note that, considering the linear term in (\ref{eq:expression_of_iota_mn}) implies that 
\[
m N_{10} + n N_{01} = 0
\] 
for all $m, n$, thus we have $N_{ij} = 0$ when $i + j = 1$. Now suppose that the claim holds up to $i + j = k-1$ for some $k \ge 2$, then we know from analyzing the coefficient of $t^{k}$ in (\ref{eq:expression_of_iota_mn}) that 
\[
m^{k} N_{k0} + m^{k-1} n N_{k-1, 1} + \cdots + n^k N_{0k} = 0
\]
for all pairs $(m,n)$ of positive integers (using the fact that we already know $N_{ij} = 0$ for $i + j\le k -1$), which then implies that 
\[ N_{k0} = N_{k-1, 1} = \cdots = N_{0k } = 0. \]  This finishes the proof of the lemma. 
\end{proof}

\subsection{Log (iso)crystals over semistable log schemes: Part I\!I\!I} \label{ss:log_F_isocrystal_semistable_log_schemes_part_2}

Let us consider a toy example before we handle the final implication $(4)\so (1)$ in the proof of Theorem \ref{theorem:log_crystal_2}. 

\begin{lemma} \label{lemma:descent_for_log_A1}
Let $\A^{1, \log}_k$ be the log scheme associated with the pre-log ring $(k[x], x^\N)$. It comes with a natural map $\A^{1, \log}_k \ra \A^{1}_k$. Let $z_0 \in \A^1_k$ denote the origin of $\A^1_k$ defined by $x = 0$ and let $z_0^{\log}$ be the pullback of $z_0$ along $\A^{1, \log}_k \ra \A^{1}_k$. In particular, $z_0^{\log}$ is isomorphic to the standard log point $(\spec k, 0^{\N})$. Then a log (iso)crystal $\mE$ on $\A^{1, \log}_k$ descends to an (iso)crystal over $\A^1_{k}$ if and only if it does so when restricted to $z_0^{\log}$, that is, if and only if it has trivial monodromy at $z_0^{\log}$. 
\end{lemma}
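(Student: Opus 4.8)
The plan is to mimic the descent argument from the proof of Theorem \ref{theorem:log_crystal_1} (the case of $\A^1_k$), now tracking the extra variable coming from the log structure along the divisor $x = 0$. Concretely, I would take $A^{(0), \log} = (W\gr{x}, x^{\N})$ as a weakly initial object in the $p$-completed affine log crystalline site of $\A^{1,\log}_k$, whose underlying ring is $W\gr{x}$. Its self-coproduct and triple self-coproduct in the log crystalline site have underlying rings
\[
A^{(1), \log} = W\gr{x_1}\gr{\delta_x, t}^{\mathrm{PD}}, \qquad A^{(2), \log} = W\gr{x_1}\gr{\delta_{x_1}, \delta_{x_2}, t_1, t_2}^{\mathrm{PD}},
\]
where $\delta_x = x_2/x_1 - 1$ (so that $d_x = x_2 - x_1 = x_1 \delta_x$) is the variable for the "scheme-theoretic" part of the thickening and $t$ is the extra PD-variable coming from exactifying the log structure, exactly as in the proof of Lemma \ref{lemma:descent_over_log_point}. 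A log crystal $\mE$ on $\A^{1,\log}_k$ (we may assume the underlying module $M$ over $W\gr{x}$ is finite free, reducing to this case by covering and arguing one affine at a time as in Part II of \S\ref{ss:log_F_crystal_over_smooth_schemes}) is then the data of a descent matrix $T_\delta = \sum_{i\ge 0} F_i(x_1)\,t^i$ with $F_i \in \frac{1}{i!} M_d(W\gr{x_1}\gr{\delta_x}^{\mathrm{PD}})$ satisfying the usual cocycle condition, and $\mE$ descends to a (non-log) crystal on $\A^1_k$ precisely when $T_\delta$ lies in the image of the (non-log) $A^{(1)} = W\gr{x_1}\gr{\delta_x}^{\mathrm{PD}}$, i.e. when $F_i = 0$ for all $i \ge 1$.

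Next I would restrict along the map $\A^{1,\log}_k \hookleftarrow z_0^{\log}$, which on weakly initial objects is the substitution $x_1 \mapsto 0$, $\delta_x \mapsto$ (a variable that gets absorbed), $t \mapsto t$ — more precisely it is the map $\iota_0\colon W\gr{x_1}\gr{\delta_x, t}^{\mathrm{PD}} \to W\gr{t}^{\mathrm{PD}}$ sending $x_1 \mapsto 0$ and $t \mapsto t$, exactly parallel to the maps $\iota_0, \iota_1$ appearing in \eqref{eq:i_0_on_self_product}--\eqref{eq:i_1_on_self_product} of the proof of Theorem \ref{theorem:log_crystal_2}. The hypothesis that $\mE$ has trivial monodromy at $z_0^{\log}$ means, by Lemma \ref{lemma:descent_over_log_point} (the Hyodo--Kato computation), that the image $\iota_0(T_\delta) = \sum_{i\ge 0} F_i(0)\, t^i$ equals the identity matrix, i.e. $F_i(0) = 0$ for all $i \ge 1$ (and $F_0(0) = 1$). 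Thus $x_1$ divides $F_i$ for every $i \ge 1$. Now I would run the same cocycle bookkeeping as in Step 2 of \S\ref{ss:log_F_isocrystal_semistable_log_schemes}: writing the cocycle condition $\iota_{23}^*(T_\delta)\cdot\iota_{12}^*(T_\delta) = \iota_{13}^*(T_\delta)$ in the variables $\delta_{x_1}, \delta_{x_2}$, and inducting on $i$, one shows that the divisibility of $F_i$ by $(x_1)^i$ propagates: if $(x_1)^i \mid F_i$ for $i \le k$, then extracting the coefficient of $\delta_{x_1}(\delta_{x_2})^{k}$ in the cocycle identity expresses $(k+1)F_{k+1}(x_1)$ in terms of products of lower $F_i$'s and a Taylor expansion of $F_k(x_1 + x_1\delta_{x_1})$, which forces $(x_1)^{k+1} \mid F_{k+1}$. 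But $F_i$ is a matrix over $W\gr{x_1}$ that is also divisible by arbitrarily high powers of $x_1$ once we iterate — more carefully, the honest claim to extract from the cocycle relation is simply $F_i = 0$ for $i \ge 1$, obtained by the same argument that in the proof of Theorem \ref{theorem:log_crystal_1} shows $N_i = 0$: once $F_i(0) = 0$, the single-variable analogue of relations \eqref{eqn:cocycle_over_S_2}--\eqref{eqn:cocycle_over_S_3} combined with invertibility of $F_0$ and a $p$-adic Weierstrass preparation / "vanishing on a residue disc" argument gives $F_i \equiv 0$. Hence $T_\delta$ descends to $A^{(1)}$ and $\mE$ descends to a crystal over $\A^1_k$. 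The converse is trivial: pullback of a descent datum restricts to a descent datum, so if $\mE \cong \pi^*\mE'$ then $\iota_0^*\mE$ descends and has trivial monodromy by Lemma \ref{lemma:descent_over_log_point}.

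For log isocrystals one repeats the argument after inverting $p$, reducing to the free case by a Zariski cover of $\spec W\gr{x}[1/p]$ exactly as in the "Log isocrystals" paragraph of the proof of Theorem \ref{theorem:log_crystal_1}; for log $F$-(iso)crystals the Frobenius structure plays no role in any of the above and the statement follows formally. The main obstacle is really just bookkeeping: making the combinatorics of extracting coefficients in the two PD-variables $\delta_{x_1}, \delta_{x_2}$ clean enough that the induction visibly closes — but since this is literally the $n=1$, single-branch degeneration of the argument already carried out in Step 1 and Step 2 of \S\ref{ss:log_F_isocrystal_semistable_log_schemes} (with the branch $y$ collapsed away), one can simply cite that argument rather than redo it. Indeed, $\A^{1,\log}_k$ is strictly \'etale-locally the smooth-branch model, so this lemma is the special case $n = 1$ of the implication "$(4)\Rightarrow(1)$ in one direction" of Theorem \ref{theorem:log_crystal_2} restricted to a single component, and the cleanest write-up is to deduce it directly from (the proof of) that theorem together with Lemma \ref{lemma:descent_over_log_point}.
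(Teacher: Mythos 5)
Your overall strategy --- translate to a descent datum, use Lemma~\ref{lemma:descent_over_log_point} for the base case, then upgrade via the cocycle condition and induction --- is the same as the paper's, but the computation of the self-coproduct $A^{(1),\log}$ is wrong, and that error propagates into a mis-stated descent condition.

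The log scheme $\A^{1,\log}_k$ in this lemma has the divisorial pre-log structure $(k[x], x^\N)$, and exactifying its PD self-coproduct yields \emph{exactly one} PD variable $\delta_x = x_2/x_1-1$:
\[
A^{(1),\log} = W\gr{x_1}\gr{\delta_x}^{\mathrm{PD}},
\]
with no extra variable $t$. A variable $t$ would appear only when the log structure has an additional $0^\N$ factor, as for the $(k[x],0^\N)$ of \S\ref{ss:log_F_crystal_over_smooth_schemes} or the $D_1^{\log}=(k[x],x^\N\oplus 0^\N)$ that appears in the proof of $(4)\Rightarrow(1)$ of Theorem~\ref{theorem:log_crystal_2}. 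Your identification of Lemma~\ref{lemma:descent_for_log_A1} with a ``single-branch degeneration'' of Theorem~\ref{theorem:log_crystal_2} fails for the same reason: the pullback of $x^\N\oplus y^\N$ from $D^{\log}$ to the branch $\{y=0\}$ is $x^\N\oplus 0^\N$, not $x^\N$, so these are genuinely different log schemes. Similarly, your non-log self-coproduct should be $A^{(1)}=W\gr{x_1}\gr{d_x}^{\mathrm{PD}}$ with $d_x=x_2-x_1$, not $W\gr{x_1}\gr{\delta_x}^{\mathrm{PD}}$; these are different rings, related by the map $d_x\mapsto x_1\delta_x$.

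Once the setup is corrected, the descent condition is \emph{not} vanishing of coefficients of $t^i$. Writing $T_\delta = \sum_{i\ge 0}N_i(x_1)\,\delta_x^i$ with $N_i\in\frac{1}{i!}M_d(W\gr{x_1})$, the datum descends to $A^{(1)}$ precisely when $(x_1)^i$ divides $N_i$ for every $i$, so that $T_\delta$ rewrites as a PD-series in $d_x = x_1\delta_x$. The correct argument (and what the paper's sketch actually records) is: restricting along $x_1\mapsto 0$, where $\delta_x$ becomes the log-point PD variable, Lemma~\ref{lemma:descent_over_log_point} gives $N_i(0)=0$, i.e.\ $x_1\mid N_i$ for $i\ge 1$; then the cocycle identity and an induction on $i$ upgrade this to $(x_1)^i\mid N_i$. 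Your proposal gestures at this (``forces $(x_1)^{k+1}\mid F_{k+1}$'') but then switches course and asserts the honest claim is $F_i=0$ for $i\ge 1$, which is not the right conclusion and not what the cocycle identity produces in the correct framework.
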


The argument is very similar to the proof in \S \ref{ss:log_F_isocrystal_semistable_log_schemes}, so we only provide a sketch here (in the case of log crystals) and invite the reader to fill in the rest of the details. 
\begin{proof} 
Consider the weakly initial object $A^{(0)} = W \gr{x}$ in (the opposite category of) the $p$-completed affine crystalline site of $k[x]$, which also gives rise to a weakly initial object $A^{(0), \log} = (A^{(0)}, x^{\N})$ in (the opposite category of) the $p$-completed affine log crystalline site of $(k[x], x^{\N})$. As in the previous arguments, we form the self-coproducts $A^{(1)}$, $A^{(1), \log}$ and the triple self-coproducts $A^{(2)}$, $A^{(2), \log}$. We compute that 
\[ 
A^{(1)} = W \gr{x_1} \gr{d_x}^{\mathrm{PD}}   \quad \text{ and} \quad   
A^{(1), \log} = W \gr{x_1} \gr{\delta_x}^{\mathrm{PD}},
\]
where $\delta_x = x_2/x_1 -1 $ and $d_x = x_2 - x_1$. The natural map $A^{(1)} \ra A^{(1), \log} $ is given by $d_x \mapsto x_1 \delta_x$. As before, the data of a log crystal $\mE$ corresponds to a finite projective $W\gr{x}$-module equipped with a descent isomorphism  $\delta$ over $A^{(1), \log}$ that satisfies the cocycle condition.\footnote{Similar as before, in the case of log isocrystals, the data of a log isocrystal $\mE$ gives rise to a finite projective module over $K_0\gr{x} = W \gr{x} [1/p]$.
} 
Our goal is to show that the descent isomorphism actually lives over $A^{(1)}$ if $\mE$ has trivial monodromy over $z_0^{\log}$. As in the proof of Theorem \ref{theorem:log_crystal_1}, we reduce to the case when $M$ is free and $\delta$ is represented by a matrix (upon choosing a basis of $M$) of the form 
\[ T_{\delta} = N_0 (x_1) + N_1 (x_1) \cdot \delta_x + N_2 (x_1) \cdot \delta_x^2 + \cdots 
\] 
where each $N_i$ is a matrix with entries in $\frac{1}{i!}W \gr{x_1}$. We want to show that each $N_i (x_1)$ is divisible by $(x_1)^i$. By the assumption that $\mE$ has trivial monodromy at $z_0^{\log}$ and (the proof of) Lemma \ref{lemma:descent_over_log_point}, we know that $N_i(x_1)$ is divisible by $x_1$ for all $i \ge 1.$ To prove the stronger claim that $N_i (x_1)$ is divisible by $(x_1)^{i}$, we use the cocycle condition on $T_{\delta}$ and apply an induction on $i$. The argument is similar to the final step in the proof of  $(1) \so (6)$ in Theorem \ref{theorem:log_crystal_2} given above. 
\end{proof}

Now we are ready to prove $(4)\so (1)$ in Theorem \ref{theorem:log_crystal_2} (and hence complete the proof of Theorem \ref{theorem:log_crystal_2}), which uses a slight variant of the lemma above. 

\begin{proof}[Proof of $(4) \Rightarrow (1)$ in Theorem \ref{theorem:log_crystal_2}] \indent 

\noindent We will treat the case of log crystals. The cases of log isocrystals and their Frobenius variants will follow from a similar argument as in the proof of Theorem \ref{theorem:log_crystal_1}.  Without loss of generality, let $z$ be a closed point on $U_1$ (see Notation \ref{notation:log_scheme_section}). We want to show that $\mE$ has trivial monodromy at $z$. Note that $U_1= \spec k[x^{\pm 1}]\cong \G_m$ is the subscheme of $D$ given by $x \ne 0, y = 0$ and $D_1 = \spec k[x]$ is a copy of $\A^1_{k}$. Let $D_1^{\log}$ be $D_1$ equipped with the pullback log structure from $D^{\log}$, namely, it is the log scheme associated to the log $k$-algebra $(k[x], x^{\N} \oplus 0^{\N})$. Let $D_1^{\partial\mathrm{-}\!\log}$ denote the log scheme associated to the log algebra $(k[x], 0^\N)$. Note that the natural map $D_1^{\log} \ra D_1^{\partial\mathrm{-}\!\log}$ agrees with the base change of the map $\A^{1, \log}_k \ra \A^1_k$ in Lemma \ref{lemma:descent_for_log_A1} along the natural projection $D_1^{\partial\mathrm{-}\!\log}  \ra \A^1_k$.

Let $\mE_1$ denote the restriction of $\mE$ over $D_1^{\log}$. We will first show that, under Condition (4), the log crystal $\mE_1$ descends to a log crystal $\mE_1^{\partial}$ on $ D_1^{\partial\mathrm{-}\!\log}$. This follows from a similar argument as the proof of Lemma \ref{lemma:descent_for_log_A1} above. The difference is that, now we consider the weakly initial objects $A^{(0)} = W \gr{x} \gr{u}^{\mathrm{PD}}$ and $A^{(0), \log} = (A^{(0)}, x^{\N} \oplus u^{\N})$ in $(k[x], 0^{\N})^{\mathrm{aff}, \wedge, \mathrm{op}}_{\mathrm{crys}}$ and $(k[x], x^{\N}\oplus 0^{\N})^{\mathrm{aff}, \wedge, \mathrm{op}}_{\mathrm{crys}}$, respectively. In turn, we compute the self-coproducts 
\begin{align*}
A^{(1)} & = W \gr{x_1} \gr{u_1, d_x, t}^{\mathrm{PD}} \qquad  \\  
A^{(1), \log} &= W \gr{x_1} \gr{u_1, \delta_x, t}^{\mathrm{PD}} 
\end{align*}
where $d_x = x_2 - x_1$, $\delta_x = x_2/x_1 - 1$, and $t = u_2/u_1 - 1$. Again we reduce to the case when the evaluation of $\mE_1$ on $A^{(0)}$ is free and thus the descent isomorphism $\delta$ is represented by a matrix of the form 
\[ 
T_{\delta} = N_0 + N_1 \cdot \delta_x + N_2   \cdot \delta_x^2 + \cdots 
\] 
where each $N_i$ is now a matrix with entries in $\frac{1}{i!}W \gr{x_1} \gr{u_1, t}^{\mathrm{PD}}$. 

As in the proof of Lemma \ref{lemma:descent_for_log_A1}, we want to show that each $N_i$ is divisible by $(x_1)^i$. To this end, consider the object $B^{(0), \log} = (W\gr{u}^{\mathrm{PD}}, 0^{\N} \oplus u^{\N})$ in (the opposite category of) the $p$-completed affine log crystalline site of $z_*^{\log}\cong s_1^{\log}$. Its self-coproduct is \[B^{(1), \log} = W \gr{u_1, \delta_x, t}^{\mathrm{PD}}.\] 
Let $\mE_*$ denote the restriction of $\mE$ to $z_*^{\log}$. We obtain a finite projective $B^{(0), \log}$-module $M_*$ together with a descent isomorphism by evaluating $\mE_*$ on $B^{(0), \log}$. Notice that we obtain the same $B^{(0), \log}$-module (with the same descent isomorphism) by first evaluating $\mE_1$ on $A^{(1), \log}$ and then base change along the maps
\[\lambda^{(0)}: W \gr{x} \gr{u}^{\mathrm{PD}}\ra W\gr{u}^{\mathrm{PD}}\] sending $x\mapsto 0$, and
\[ 
\lambda^{(1)}: W \gr{x_1} \gr{u_1, \delta_x, t}^{\mathrm{PD}} \ra W \gr{u_1, \delta_x, t}^{\mathrm{PD}} 
\]
sending $x_1 \mapsto 0$. Now, by Condition (4), we know that $\lambda^{(1)} (T_{\delta}) = 1$, so in particular, $\lambda^{(1)} (N_i) = 0$ for all $i \ge 1$. In other words, each $N_i$ is divisible by $x_1$ when $i \ge 1$.  Finally, as in the proof of Lemma \ref{lemma:descent_for_log_A1} and the proof of $(1) \so (6)$ in Theorem \ref{theorem:log_crystal_2}, we use the cocycle condition on $T_{\delta}$ and apply an induction on $i$ to prove that $N_i$ is divisible by $(x_1)^i$ for all $i \ge 1.$ 

So far, we have successively descended $\mE_1$ to some log crystal $\mE_1^{\partial}$ on $D_1^{\partial\mathrm{-}\!\log}$. But by Theorem \ref{theorem:log_crystal_1}, $\mE_1^{\partial}$ further descends to a crystal on $D_1$. In particular, $\mE$ has trivial monodromy at $z$, as desired. This completes the proof of Theorem \ref{theorem:log_crystal_2}. 
\end{proof}

\subsection{Log (iso)crystals over semistable log schemes: Part I\!V}  

Theorem \ref{theorem:log_crystal_2_global} then follows as a corollary of Theorem \ref{theorem:log_crystal_2} and its proof.  

\begin{proof}[Proof of Theorem \ref{theorem:log_crystal_2_global}] 
First let us observe that, in the statement of Theorem \ref{theorem:log_crystal_2}, we may replace $D^{\log} = D^{n,\log}$ by the base change 
\[
D^{n,\log}\times_{\spec k} \A^m_k
\]
for any positive integer $m$. Indeed, the same proof goes through \textit{verbatim}, except that now we have to carry over the coefficients coming from $\A^m_k$. Since, \'etale locally, $Z^{\log}$ admits strictly \'etale maps to $D^{n,\log}\times_{\spec k} \A^m_k$, Theorem \ref{theorem:log_crystal_2_global} then follows immediately using \'etale descent. 
\end{proof}

\subsection{Rigidity of crystallinity for semistable local systems} \label{ss:global_crystaline_rigidity_of_semistable_local_system}
 
As an application, the rigidity properties for log (iso)crystals imply a rigidity result (Theorem \ref{cor:rigidity_crystalline_over_semistable}) on crystallinity of $p$-adic local systems, which contains Theorem \ref{thm:main_intro_for_log_schemes} as a special case.  For the setup, let us first make the following (slightly nonstandard) definition of \textit{smooth boundaries} of a  rigid analytic space  with semistable reduction (compare with the notion of \textit{Shilov boundaries} of Berkovich \cite{Berkovich_spectral} and \textit{Shilov points} from \cite{DLMS2}).

\begin{definition}
Let $\fY$ be a geometrically connected $p$-adic formal scheme over $\mO_K$ with semistable reduction. Let $Y$ be the adic generic fiber over $K$ and $\fY_s$ be the special fiber over $k$. Let $\fY_s^{\mathrm{sm}}$ denote the smooth locus inside $\fY_s$. For an irreducible component $Z$ of $\fY_s$, let $Z^{\mathrm{sm}}:=Z\cap \fY_s^{\mathrm{sm}}$. Then we define a \emph{smooth boundary} of $Y$ to be an open subset $\mathrm{sp}^{-1}(Z^{\mathrm{sm}})\subset Y$ for some irreducible component $Z$ of $\fY_s$, where $\mathrm{sp}: Y\ra \fY_s$ is the specialization map. We shall denote the set of smooth boundaries in $Y$ by $\mathcal S_{\fY}$ (note that it depends on the choice of the semistable model $\fY$). 
\end{definition}

\begin{remark} \label{remark:Shilov_points}
The set $\mathcal S_{\fY}$ of smooth boundaries of $Y$ is in bijection with the set of irreducible components of $\fY_s$, which in turn corresponds to the set of \textit{Shilov boundaries} and \textit{Shilov points} on $Y$ in the sense of \cite{Berkovich_spectral,DLMS2}. 
\end{remark}

\begin{example}
The smooth boundaries of the standard thick annulus $A_1 = \{1/p \le  |z| \le 1 \}$ described in the introduction are precisely $\mathbb B_0$ and $\mathbb B_1$  (see Figure \ref{fig:model_for_A1}). 
\end{example}

The following is a generalization of Theorem \ref{thm:main_intro_for_log_schemes} to all smooth rigid analytic varieties with semistable reduction. 

\begin{theorem} \label{cor:rigidity_crystalline_over_semistable}
Let $\fY$ be a geometrically connected $p$-adic formal scheme over $\mO_K$ with semistable reduction and let $Y$ be the adic generic fiber over $K$. Let $\{y_i\}_{i \in \mathcal S_{\fY}}$ be a collection of classical $K$-points such that $y_i$ is contained in the $i^{\mathrm{th}}$-smooth boundary for each $i$. Let $\L$ be a semistable \'etale $\Z_p$-local system on $Y$, then the following are equivalent 
\begin{enumerate}
\item $\L|_{y_i}$ is crystalline for each $i\in \mathcal S_{\fY}$. 
\item $\L$ is crystalline at all classical points on $Y$. 
\item $\L$ is crystalline (in the sense of Definition \ref{def:st_local_sys}). 
\end{enumerate}
\end{theorem}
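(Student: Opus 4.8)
The plan is to reduce Theorem \ref{cor:rigidity_crystalline_over_semistable} to the local/affine statements already established, namely Theorem \ref{theorem:log_crystal_2_global} and Definition \ref{def:st_local_sys}. The implications $(3)\Rightarrow(2)\Rightarrow(1)$ are formal: a crystalline local system in the sense of Definition \ref{def:st_local_sys} is visibly crystalline at all classical points (one restricts the given $F$-isocrystal $(\mE,\varphi)$ over $\fY_s$ and the associated comparison isomorphism to each classical point, using that restriction of $F$-isocrystals to a point recovers Fontaine's crystalline period isomorphism), and $(2)\Rightarrow(1)$ is trivial since the $y_i$ are among the classical points. So the content is $(1)\Rightarrow(3)$.

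For $(1)\Rightarrow(3)$, start from the hypothesis that $\L$ is semistable on $Y$. By Definition \ref{def:st_local_sys}(1), there is a log $F$-isocrystal $(\mE,\varphi)$ over the log scheme $\fY_s^{\log} = (\fY_s, \alpha_s)$ together with a Frobenius-equivariant isomorphism $\mathbb{B}_{\mathrm{crys}}(\mE)\isom \mathbb{B}_{\mathrm{crys}}\otimes_{\Z_p}\L$. The log scheme $\fY_s^{\log}$ is, by the semistable reduction hypothesis on $\fY$, a geometrically connected semistable log scheme over $k$ in the sense of Definition \ref{def:st_log_scheme}; its smooth locus $\fY_s^{\mathrm{sm}}$ has irreducible components $U_0,\ldots,U_r$ which are exactly the smooth loci $Z^{\mathrm{sm}}$ of the irreducible components $Z$ of $\fY_s$, hence are in bijection with $\mathcal S_{\fY}$ (Remark \ref{remark:Shilov_points}). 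The next step is to translate Condition (1) into the hypothesis of Theorem \ref{theorem:log_crystal_2_global}: I must show that $\L$ being crystalline at the classical point $y_i$ forces the log $F$-isocrystal $\mE$ to have trivial monodromy at the closed point $z_i\in U_i$ obtained as the specialization of $y_i$. This uses the pointwise criterion of \cite{GY} (or the compatibility of the constructions in \cite{GY,DLMS2}): restricting the semistability data along the classical point $y_i$ recovers the $(\varphi,N)$-module of the Galois representation $\L|_{y_i}$, whose underlying log $F$-isocrystal over the log point $z_i^{\log}$ is precisely $\mE_{z_i}$ in the sense of \S\ref{ss:log_point}; crystallinity of $\L|_{y_i}$ means this $(\varphi,N)$-module has $N=0$, i.e.\ $\mE$ has trivial monodromy at $z_i$. (One should be slightly careful that $y_i$ is a $K$-point lying in the smooth boundary $\mathrm{sp}^{-1}(Z_i^{\mathrm{sm}})$, so its specialization genuinely lands in $U_i$ and not at a singular point.)

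Granting this translation, Condition (1) of Theorem \ref{cor:rigidity_crystalline_over_semistable} gives exactly Condition (1) of Theorem \ref{theorem:log_crystal_2_global}: a collection of closed points $z_i\in U_i$, one for each $i\in\mathcal S_{\fY}$, at which $\mE$ has trivial monodromy. By Theorem \ref{theorem:log_crystal_2_global}, the log $F$-isocrystal $\mE$ descends to an $F$-isocrystal $\mE'$ over $\fY_s$, i.e.\ $\mE\cong \pi^*\mE'$ with $\pi:\fY_s^{\log}\ra\fY_s$ the structure map, compatibly with Frobenius. Then the composite $\mathbb{B}_{\mathrm{crys}}(\mE') = \mathbb{B}_{\mathrm{crys}}(\mE)\isom \mathbb{B}_{\mathrm{crys}}\otimes_{\Z_p}\L$ exhibits $\L$ as crystalline in the sense of Definition \ref{def:st_local_sys}(2) — here one uses that the sheaf $\mathbb{B}_{\mathrm{crys}}(-)$ of \cite[Construction 4.1]{GY} is compatible with pullback along $\pi$ (equivalently, that $\mathbb{B}_{\mathrm{crys}}$ of a log $F$-isocrystal depends only on its restriction to the crystalline site when the monodromy is trivial). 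This gives $(1)\Rightarrow(3)$ and closes the cycle of implications.

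The main obstacle I anticipate is the bookkeeping in the translation step: matching the monodromy operator $N$ of the Hyodo--Kato $(\varphi,N)$-module attached to $\L|_{y_i}$ with the monodromy of the log $F$-isocrystal $\mE$ at $z_i$ in the precise sense of \S\ref{ss:log_point}, and checking that the comparison isomorphism $\mathbb{B}_{\mathrm{crys}}(\mE)\isom\mathbb{B}_{\mathrm{crys}}\otimes\L$ is genuinely compatible with restriction to classical points so that ``crystalline at $y_i$'' unambiguously means ``$N=0$ on $\mE_{z_i}$.'' Both facts are implicit in \cite{GY} and \cite{DLMS2} — indeed the model-independence of crystallinity and semistability for local systems, cited right after Definition \ref{def:st_local_sys}, already rests on these compatibilities — so once one invokes the pointwise criterion of \cite{GY} the argument is essentially formal; the remaining work is purely one of carefully citing and assembling these inputs.
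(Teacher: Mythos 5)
Your proposal is correct and takes the same approach as the paper, which simply states that the result "is immediate from Theorem \ref{theorem:log_crystal_2_global}." You have unpacked the translation between pointwise crystallinity of $\L$ and triviality of the monodromy of the log $F$-isocrystal $\mE$ at specializations — a compatibility the paper treats as already established by the cited works of Guo--Yang and DLMS — but the core argument is identical.
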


\begin{proof} 
This is immediate from Theorem \ref{theorem:log_crystal_2_global}. 
\end{proof}

\begin{remark}  
For local systems arising from abelian varieties, \cite{OSZ} proves the following slightly stronger result. They consider an arbitrary rigid analytic map $f: A_1 \ra \mathcal A_{g, K}^{\mathrm{an}}$ from the annulus $A_1$ to (the rigid analytification of) the Shimura variety parametrizing $g$-dimensional polarized abelian varieties with a torsion-free level structure $K \subset \mathrm{GSp}_g (\A_{f})$ and consider the $p$-adic local system $\L = f^* T_p$ coming from the pullback of the universal $p$-adic Tate module. They show that if $\L$ is crystalline at one classical point on $A_1$, it is in fact crystalline at all classical points on $A_1$. Let us remark that the proof of this result in \cite{OSZ} uses $l$-adic techniques and crucially uses the geometry of $\mathcal A_{g, K}$. We refer the curious reader to \S 3 (in particular \S 3.1.2) of \textit{loc.cit.} for more details. 
\end{remark}

\newpage 

\section{\large Maps between semistable models and the non-contracting locus} \label{sec:dodging}
 
In this section, we study maps between semistable integral models of smooth curves and analyze their local reduction behavior near a given classical point on the generic fiber. More precisely, we prove existence of certain semistable models so that the given classical point ``dodges the contracting locus'' in the sense of Definition \ref{definition:contracting_locus}. The main results we prove in this direction are Theorem \ref{theorem:modified_spreadout_for_dodging} and Theorem \ref{thm:dodging_contracting_locus_general}, where the former is a more precise version of Theorem \ref{mainthm:modified_spreadout_for_dodging_intro} from the introduction. As mentioned there, these results are intimately related to crystallinity of $p$-adic local systems. In particular, as an application of Theorem \ref{thm:dodging_contracting_locus_general}, we prove a refined $p$-adic monodromy theorem (Theorem \ref{thm:dodging}) which plays an essential role in the proof of Theorem \ref{thm:conjecture_for_projective_varieties_intro} (cf. \S \ref{sec:conjecture}).

\subsection{Points on rigid analytic curves} \label{ss:points_on_curves}

Before stating the main results of this section, let us first make precise some basic notions that are used in the article, including the notion of \emph{types} of points on smooth rigid analytic curves (cf. \S \ref{sss:classification_of_points_on_P1} and \S \ref{subsection: points on smooth curves}) and the notion of \emph{$K$-rational points} (cf. \S \ref{sss:rational_points}). Smooth rigid analytic curves are always viewed as adic spaces, unless specified.

\subsubsection{Classification of points on $\P^1$}\label{sss:classification_of_points_on_P1} 
Let us first recall the classification of points on $\P^1$ over an algebraically closed non-archimedean field. We refer the reader to \cite{Huber1,Huber_etale} (also see \cite{Berkovich_spectral, Berkovich_etale} and \cite[Example 2.20]{perfectoid}) for a detailed exposition on this subject. 
Let us fix a completed algebraic closure $\C_p$ of $K$, and fix a coordinate $t$ of $\P^1_{\C_p}$. For any element $\alpha \in \C_p$ and $r \in \R_{\ge 0}$, let
\[\D_{\C_p} (\alpha, r) = \left\{|t - \alpha| \le r\right\}  \quad (\textup{resp. } \D_{\C_p} (\alpha, <r) = \left\{|t - \alpha| < r\right\}) \] denote the closed (resp. open) disc centered at $\alpha$ of radius $r$. We classify points on the closed unit disc 
\[ \D_{\C_p} := \D_{\C_p}(0, 1) = \spa (\C_p \gr{t}, \mO_{\C_p} \gr{t})\]  
viewed as an affinoid open subset of $\P^{1, \mathrm{ad}}_{\C_p}$. Given this, one easily deduce a classification of points on $\P^{1, \mathrm{ad}}_{\C_p}$ (see Remark \ref{rmk: classificaiton of pts on P1}).

The points on $\D_{\C_p}$ are divided into the following 5 types. \\ 

\noindent \textit{Type} I. \: These points correspond to the elements of $\mO_{\C_p}$ as follows: each $\alpha \in \mO_{\C_p}$ gives rise to a continuous valuation 
\[ x_{\alpha}: \C_p \gr{t} \ra \R_{\ge 0}\] sending $f \mapsto |f(\alpha)|.$  

\vspace*{0.2cm}
\noindent \textit{Type} I\!I and I\!I\!I. \: These points correspond to the discs $\D_{\C_p} (\alpha, r) = \left\{|t - \alpha| \le r\right\} \subset \D_{\C_p}$, where $\alpha \in \mO_{\C_p}$ and $0 < r \le 1$. For each such disc, we get a continuous valuation 
\[ 
x_{\alpha, r}: \C_p \gr{t} \ra \R_{\ge 0}, \quad f \mapsto \sup_{\substack{\beta \in \mO_{\C_p} \\ |\beta-\alpha|\le r}} |f(\beta)|. 
\] 
Such a point $x_{\alpha, r}$ is a \emph{type I\!I} point if $r \in |\C_p^\times| = p^{\Q}$, otherwise $x_{\alpha, r }$ is a \emph{type I\!I\!I} point. In the case of type I\!I, the point $x_{\alpha, r}$ is called the \emph{Gaussian point} of the disc $\D_{\C_p} (\alpha, r)$.

\vspace*{0.2cm}
\noindent \textit{Type} I\!V.  These points correspond to a sequence $\eta$ of closed discs $\D_1 \supset \D_2 \supset ...$ in $\D_{\C_p}$ such that their intersection $\cap_{i \in \Z_{>0}} \D_i$ contains no type I point,\footnote{Note that this could only happen when $\C_p$ is not spherically complete, for example, when $K/\Q_p$ is a finite extension.} which gives rise to a continuous valuation 
\[x_{\eta}: \C_p \gr{t} \ra \R_{\ge 0}, \quad f \mapsto \inf_{i \in \Z_{>0}} \sup_{\substack{\beta\in \mO_{\C_p} \\ \beta \in \D_i}} |f(\beta)|. \]
In this case, we often write $x_{\eta} = \cap_{i \in \Z_{>0}} \D_i$ and view the intersection of these nested discs as the type I\!V point itself.  

\vspace*{0.2cm}
\noindent \textit{Type} V.  These are the points of rank $2$ valuations (and thus do not appear in the theory of Berkovich spaces). Let us start with the type I\!I point $x_{\alpha, r}$ corresponding to the closed disc $\D_{\C_p}(\alpha, r)$. For a sign $? \in \{>, <\}$, consider the totally ordered abelian group 
\[
\Gamma_{? r}= \begin{cases}
    \Gamma_{<r} = \R_{> 0} \times \gamma^{\Z} \quad \textrm{where } \gamma = r^{-}, \textrm{ i.e., with } r'< \gamma < r \textrm{ for all } r' < r  \\ 
    \Gamma_{>r} = \R_{> 0} \times \gamma^{\Z} \quad \textrm{where } \gamma = r^{+},  \textrm{ i.e., with } r < \gamma < r' \textrm{ for all } r' > r
\end{cases}.
\]
Let $\beta \in \mO_{\C_p}$ such that $|\beta-\alpha|\le r$. Then $\beta$ corresponds to a type I point $x_{\beta}$ in $\D_{\C_p}(\alpha, r)$. Notice that every element $f\in \C_p \gr{t} $ can be uniquely written as $f = \sum_{i=0}^{\infty} a_i (t - \beta)^i$ with $a_i\in \C_p$. There is a continuous rank 2 valuation $\C_p \gr{t} \ra \Gamma_{? r}\cup \{0\}$ sending 
\[ 
f = \sum_{i=0}^{\infty} a_i (t - \beta)^i  \:  \longmapsto \: \sup_{i} |a_i| \gamma^i. 
\]
These are points of \emph{type V} (except when $r=1$ and $?$ is ``$>$'', this continuous valuation lives in $\P^1_{\C_p}$ but not in $\D_{\C_p}$). When $?$ is ``$>$'', the valuation does not depend on the choice of $\beta$; it only depends on the disc $\D_{\C_p} (\alpha, r)$. We denote this type V point by $x_{\alpha, > r}$. On the other hand, when $?$ is ``$<$'', we denote the corresponding point by $x_{\beta, < r}$. Such a point $x_{\beta, < r}$ only depends on the open disc $\D_{\C_p} (\beta, < r) \subset \D_{\C_p} (\alpha, r)$; it does not depend on the specific choice of $\beta$ in the open disc.

\begin{remark}\label{rmk: classificaiton of pts on P1}
The classification of points on $\D_{\C_p}$ generalizes to points on $\P^{1, \mathrm{ad}}_{\C_p}$. They are still divided in five types:
\begin{itemize}
\item Type I points $x_{\alpha}$ for $\alpha\in \C_p$, together with a point ``$\infty$'';
\item Type I\!I and I\!I\!I points $x_{\alpha, r}$ corresponding to discs $\D_{\C_p}(\alpha,r)$ with $\alpha\in \C_p$ and $r>0$, which are type I\!I if $r\in p^{\Q}$ and type I\!I\!I otherwise;
\item Type I\!V points $x_{\eta}$ corresponding to nested discs with empty intersection;
\item Type V points $x_{\alpha, > r}$ and $x_{\beta, < r}$ with $\alpha, \beta\in \C_p$ and $r\in p^{\Q}$. Again, the point $x_{\alpha, > r}$ only depends on the disc $\D_{\C_p} (\alpha, r)$, while the point $x_{\beta, < r}$ only depends on the open disc $\D_{\C_p} (\beta, < r)$.
\end{itemize}
Let us note that, all the points above except for the type I\!I points are closed. The closure of a type I\!I point $x_{\alpha, r}$ precisely consists of $x_{\alpha, r}$ and the type V points $x_{\beta, < r}$ and $x_{\alpha, >r}$ described as above, and is homeomorphic to $\P^1_{\cl \F_p}$, with $x_{\alpha, r}$ being the generic point.  
\end{remark}

Now we describe $\P^{1, \mathrm{ad}}_K$. In fact, the absolute Galois group $G_K=\Gal(\overline{K}/K)$ naturally acts on $\P^{1, \mathrm{ad}}_{\C_p}$ and induces a canonical projection $\pi: \P^{1, \mathrm{ad}}_{\C_p} \ra \P^{1, \mathrm{ad}}_{K}$ as a quotient, which sends a point on $\P^{1, \mathrm{ad}}_{\C_p}$ to its $G_K$-orbit. For $\star \in \{\textup{I, I\!I,  I\!I\!I,  I\!V, V}\}$, we say that a point on $\P^{1, \mathrm{ad}}_K$ has \emph{type} $\star$ if it is the image of a point on $\P^{1, \mathrm{ad}}_{\C_p}$ of type $\star$ under $\pi$. The explicit description of points on $\P^{1, \mathrm{ad}}_{\C_p}$ immediately yields a description of points on $\P^{1, \mathrm{ad}}_K$.

For $\alpha\in \C_p$ and $r>0$, let $\D_K(\alpha,r)$ (resp. $\D_K(\alpha,<r)$) denote the image of $\D_{\C_p}(\alpha,r)$ (resp. $\D_{\C_p}(\alpha,<r)$) under the projection $\pi$. We still refer to them as the \emph{closed} (resp. \emph{open}) \emph{discs}. We caution the reader that $\D_K(\alpha,1)$ is not necessarily isomorphic to the standard closed unit disc
\[\D_K:=\D_K(0,1)=\spa (K \gr{t}, \mO_K \gr{t})\]
over $K$; in fact, $\D_K(\alpha,1)\cong \D_K$ if and only if $\D_K(\alpha,1)$ contains some $x_{\beta}$ with $\beta\in K$.

\subsubsection{Points on smooth curves}\label{subsection: points on smooth curves}

More generally, let $Y$ be a smooth rigid analytic curve over $K$, viewed as an adic space. For $\star \in \{\textup{I, I\!I,  I\!I\!I,  I\!V, V}\}$, we say that a point $x \in Y_{\C_p}$ has \emph{type} $\star$ if for an/any \'etale map $f: U \ra \P^{1, \mathrm{ad}}_{\C_p}$ from an open neighhorhood $U$ of $x$ in $Y_{\C_p}$, the image $f(x) \in \P^{1, \mathrm{ad}}_{\C_p}$ has type $\star$. We say that a point on $Y$ has \emph{type} $\star$ if it is the image of a point on $Y_{\C_p}$ of type $\star$ under the quotient $\pi:Y_{\C_p}\rightarrow Y$.

\subsubsection{$K$-rational points} \label{sss:rational_points} 
Let $Y$ be a smooth rigid analytic curve over $K$, viewed as an adic space. Recall that a point on $Y$ corresponds to a $G_K$-orbit in $Y_{\C_p}$. We say that a point in $Y_{\C_p}$ is \emph{$K$-rational} if its $G_K$-orbit consists of a single element, and we say that a point on $Y$ is \emph{$K$-rational} if it is the image of a $K$-rational point in $Y_{\C_p}$. More generally, we say that a point on $Y$ is \emph{$K'$-rational} for some finite extension $K'/K$ if it is the image of a $K'$-rational point on $Y_{\C_p}$.  

Notice that a type I point on $Y$ is \emph{classical} if and only if it is $K'$-rational for some finite extension $K'/K$. Notice that the points $x_{\alpha}$ in \S \ref{sss:classification_of_points_on_P1} (viewed as a point in $\P^1_K$) with $\alpha\notin \overline{K}$ are of type I but not classical.

\begin{remark}\label{remark:K_rational_point_under_field_extension}
If $x$ is a $K$-rational point on $Y$, and $K'/K$ is a finite extension, then the pre-image of $x$ under the projection $Y_{K'} \ra Y$ consists of a singleton, thus it can also be viewed as a $K'$-rational point on $Y_{K'}$. 
\end{remark} 
\begin{example} 
Let us consider the closed unit disc $\D_{K}=\D_K(0,1)$ over $K$. A type I point on $\D_K$ is a $G_K$-orbit of a type I point $x_{\alpha}$ on $\D_{\C_p}$, with $\alpha \in \mO_{\C_p}$ (see \S \ref{sss:classification_of_points_on_P1}). This point  is $K$-rational precisely when $\alpha \in \mO_K$ (in which case the $G_K$-orbit consists of just $x_{\alpha}$ itself), and is $K'$-rational if it is the $G_K$-orbit of some $x_{\alpha}$ where $\alpha \in \mO_{K'}$. A point of type I\!I or I\!I\!I, given by the $G_K$-orbit of $x_{\alpha, r} \in \D_{\C_p}$, is $K'$-rational if one can choose $\alpha \in \mO_{K'}$. A similar remark holds for type V points given by the $G_K$-orbit of $x_{\alpha, >}$ or $x_{\beta, <}$. In particular, every point of type I\!I, I\!I\!I, or V is $K'$-rational for some finite extension $K'/K$.
\end{example}

\begin{example}
More generally, for a smooth rigid analytic curve $Y$ over $K$, every point $x \in Y$ of type I\!I, I\!I\!I, or V is $K'$-rational for some finite extension $K'/K$. In this case, all points in the pre-image of this $x$ under the projection $Y_{K'} \ra Y$ are $K'$-rational. 
\end{example}

\begin{example}
Let $t$ be a coordinate on $\P^1_K$; namely, $t$ is a rational function on $\P^1_K$, and the field of rational functions on $\P^1_K$ is precisely $K(t)$. Then it is valid to evaluate $t$ at a $K$-rational point $x\in\P^{1, \mathrm{ad}}_K$ to obtain $t(x)\in K$; this is because a $K$-rational point on $\P^{1, \mathrm{ad}}_K$ is the same as a closed $K$-point on $\P^1_K$. For a general type I point $x\in\P^{1, \mathrm{ad}}_K$, it might not make sense to write $t(x)$; but $|t(x)|=|t|_x\in \R_{\ge 0}$ is still well-defined, namely, it is the evaluation of $t$ under the continuous valuation $|\cdot|_x$ corresponding to $x$.
\end{example}

\subsubsection{Stalks and residue fields} \label{sss:stalks}

For any point $x$ on a smooth rigid analytic curve $Y$ over $K$ (viewed as an adic space), we write $\mO_{Y, x}$ for the stalk of the structure sheaf at $x$, and denote the residue field of $\mO_{Y, x}$ by $\mathbf{k}(x)$. The continuous valuation corresponding to $x$ extends to a valuation $|\cdot|_x$ on $\mathbf{k}(x)$. The completion of $\mathbf{k}(x)$ with respect to $|\cdot|_x$ is denoted by $\sH (x)$. The field $\mathbf{k}(x)$ (resp. $\sH (x)$) is referred to as the \emph{residue field} (resp. \emph{completed residue field}) at $x$. 

\begin{example}[Stalks of non-type I points]\label{example:stalk_on_curve}
For a non-type I point $\xi$ on a smooth rigid analytic curve $Y$ over $K$, its stalk $\mO_{Y, \xi}$ is a field and is equal to $\mathbf k(\xi)$. Hence the completed stalk agrees with $\sH(\xi)$. Moreover, suppose $\xi$ is a type V point and $\xi_{\mathrm{I\!I}}$ is the unique type I\!I point specializing to $\xi$, then we have an injection $\iota:\mathbf k(\xi)\hookrightarrow \mathbf k(\xi_{\mathrm{I\!I}})$ of fields by \cite[Lemma 1.1.10]{Huber_etale}. In fact, $\iota$ is a homeomorphism onto its image and $\iota(\mathbf k(\xi))$ is dense in $\mathbf k(\xi_{\mathrm{I\!I}})$. This further induces an injection $\sH (\xi) \hookrightarrow \sH (\xi_{\mathrm{I\!I}})$ with dense image.
\end{example}

\subsubsection{Comparison with Berkovich curves} \label{sss:comparison_between_stalks} Let $Y$ be a smooth affinoid rigid analytic curve over $K$ associated to a Tate $K$-algebra $\mathscr A$. Let $Y^{\mathrm{Ber}}$ (resp. $Y^{\mathrm{ad}}$) be its associated Berkovich space (resp. adic space). Note that $Y^{\mathrm{Ber}}$ identifies as the maximal Hausdorff quotient of $Y^{\mathrm{ad}}$ as a topological space. Let $\mO_{Y^{\mathrm{Ber}}}$ denote the structure sheaf of the Berkovich space. On an open subset\footnote{This is an open subset in the Hausdorff topology of $Y^{\mathrm{Ber}}$, also referred to as the \textit{canonical} topology, in comparison with the theory of rigid analytic spaces of Tate.
}  
$U \subset Y^{\mathrm{Ber}}$, the global sections are given by 
\[ 
\Gamma(U, \mO_{Y^\mathrm{Ber}}) = \lim_{V \subset U} \mathscr A_V,
\]
where the limit is taken over all $V \subset U$ that are finite unions of affinoid subdomains (namely, \textit{special subsets}) in $Y$, and $\mathscr A_V$ is the ring of analytic functions on $V$ (see \cite[2.2.6]{Berkovich_spectral}). Let $x$ be a point on $Y^{\mathrm{Ber}}$, which we also view as a point on $Y^{\mathrm{ad}}$ (it has type I, I\!I, I\!I\!I, or I\!V). By the Gerritzen--Grauert Theorem (see \cite[\S 3.3 Theorem 20]{Bosch} and \cite[Proposition 2.2.3]{Berkovich_spectral}), the Berkovich stalk $\mO_{Y^{\mathrm{Ber}}, x}$ can be computed as 
\[ 
\mO_{Y^{\mathrm{Ber}}, x} = \underset{x \in U}{\mathrm{colim}} \lim_{V \subset U} \mathscr A_V,
\] 
where the colimit is taken over all affinoid open neighborhoods $U$ of $x$, and for each such $U$ the limit is taken over all \textit{rational subsets} $V$ contained in $U$. On the other hand, the stalk of the structure sheaf $\mO_{Y^{\mathrm{ad}}}$ of the adic space $Y^{\mathrm{ad}}$ is given by 
\[ 
\mO_{Y^{\mathrm{ad}}, x} =  \underset{x \in V}{\mathrm{colim} } \: \mathscr A_V
\] 
where the colimit is taken over all rational subsets $V \subset Y$ that contains the point $x$. Thus, we have a natural inclusion $ \mO_{Y^{\mathrm{Ber}}, x}  \hookrightarrow \mO_{Y^{\mathrm{ad}}, x}$ of local rings. This inclusion is an isomorphism when $x$ is a type I point but \textit{not} necessarily an isomorphism in general. When $x$ has type I\!I, I\!I\!I, or I\!V, the Berkovich stalk $\mO_{Y^{\mathrm{Ber}}, x}$ and adic stalk $\mO_{Y^{\mathrm{ad}}, x}$ are both fields (also see Example \ref{example:stalk_on_curve}) and are equal to the respective residue fields of the stalks. Moreover, the inclusion $ \mO_{Y^{\mathrm{Ber}}, x} \hookrightarrow
\mO_{Y^{\mathrm{ad}}, x}$ respects the topology defined by the canonical valuations on both sides. We claim that, in this case,  the inclusion $\mO_{Y^{\mathrm{Ber}}, x}  \hookrightarrow\mO_{Y^{\mathrm{ad}}, x}$ has dense image and thus becomes an isomorphism upon taking completions, which we identify and simply denote by $\sH (x)$ as in Example \ref{example:stalk_on_curve}.  To see the claim, let $h \in  \Gamma (V, \mO_{Y^{\mathrm{ad}}})$ be an analytic function on a rational subset $V$ containing $x$, we want to show that for any $\epsilon \in \R_{> 0}$, there exists an open subset $U \subset Y^{\mathrm{Ber}}$ (in the canonical topology) containing $V$ and an analytic function $h' \in \Gamma (U, \mO_{Y^{\mathrm{Ber}}})$, such that $|h - h'|_{x} < \epsilon$. Without loss of generality, we may write 
\[
V = Y (\frac{f_1, ..., f_n}{g})  =   \left\{ y \in Y \:  \Big\vert \: |f_i|_y \le |g|_y, \: \forall i    \right\}
\]
where $f_1, ..., f_n \in A$ generates the unit ideal, $g \in A$, and $f_n = \varpi^N$ for some sufficiently large integer $N$ (see \cite[Remark 2.8]{perfectoid}). By construction of rational subsets, there exists some $h'$ of the form 
\[ 
h' = \sum_{\substack{I = (i_1, ..., i_n) \\ \mathrm{finite sum} }} a_I \cdot (f_1/g)^{i_1} (f_2/g)^{i_2} \cdots (f_n/g)^{i_n}, \qquad a_I \in A,
\]
such that $|h - h'|_{V} < \epsilon$, where $|\cdot|_{V}$ denotes the spectral norm on $V$. Therefore, in particular, $|h - h'|_{x} < \epsilon$. Note that $h'$ is defined on the Zariski open subset $Y \minus V(g)$, so it is defined on some open subset $U \subset Y^{\mathrm{Ber}}$ in the canonical topology that contains $V$. This justifies our claim.

\subsection{Contracting and non-contracting loci}  \label{ss:reduction_type_near_point}
For the setup, let $f: Y' \ra Y$ be a finite cover between smooth rigid analytic curves over $K$ (viewed as adic spaces). Let $\fY$ (resp. $\fY'$) be a formal integral model of $Y$ (resp. of $Y'$) over $\mO_K$, that is, an admissible $p$-adic formal scheme over $\spf \mO_K$ such that its rigid analytic generic fiber over $K$ is isomorphic to $Y$ (resp. to $Y'$). Further suppose that $\fY'$ has semistable reduction.\footnote{To ensure the correct level of generality, we do not require $\fY$ to be a semistable formal model of $Y$ at this stage.} 

Let $\fY_s$ (resp. $\fY'_s$) denote the special fiber of $\fY$ (resp. $\fY'$) over $\spec k$ and let $\mathrm{sp}: Y \ra \fY_s$ (resp. $\mathrm{sp}: Y' \ra \fY'_s$) denote the specialization map. Let 
\begin{equation}\label{eq:choice_of_f_hat}
\widehat f: \mathfrak Y' \ra \mathfrak Y 
\end{equation} be a morphism of $p$-adic formal schemes that induces $f$ on the (rigid analytic) generic fibers,\footnote{Note that, given the map $f: Y' \ra Y$ and fix the integral model $\fY$ (but not fixing $\fY'$), after replacing $K$ by a finite extension if necessary, there always exists a semistable model $\fY'$ together with an integral model $\widehat f: \fY' \ra \fY$ of $f$ by the works of Raynaud \cite[\S 8]{Bosch} and Deligne--Mumford \cite{DM}.} and let $f_s: \fY_s' \ra \fY_s$ denote the induced map on the special fibers. Write $\fY_s' = \cup_{i \in I} Z_i$ where the $Z_i$'s are the irreducible components of $\fY_s'$. For each $i \in I$, the map $f_s$ restricts to a map $g_i: Z_i \ra \fY_s$.

\begin{definition}\label{definition:contracting_locus}
\begin{enumerate}
    \item We say that an irreducible component $Z_i \subset \fY_s'$ is a \textit{finite component} (with respect to $f_s$) if the map $g_i$ is finite. Otherwise we say that $Z_i$ is a \textit{contracting component}, in which case the map $g_i$ is not finite and the image of $Z_i$ under $g_i$ is a closed point on $\fY_s$. 
    \item Let $I_{\mathrm{c}} \subset I$ be the set of contracting components of $\fY_s'$ (with respect to $f_s$) and let $I_{\mathrm{nc}} = I - I_{\mathrm{c}}$ be the set of finite components. 
    We define  the \textit{contracting locus} of  $Y'$ (with respect to $\widehat f$ ) to be the open subset  
    \[
        Y_{\mathrm{c}}' := \mathrm{sp}^{-1} \left( \bigcup_{i \in I_{\mathrm{c}}} Z_i \right) \subset Y',
        \]  
         and define    the \textit{non-contracting locus} of  $Y'$ (with respect to $\widehat f$ ) to be its complement 
        \[
        Y_{\mathrm{nc}}' := \mathrm{sp}^{-1} \left( \fY_s' \minus \bigcup_{i \in I_{\mathrm{c}}} Z_i \right)=Y'\minus Y'_{\mathrm{c}}.
        \]        
        These notions depend on the map $\widehat f$ in (\ref{eq:choice_of_f_hat}). 
    \end{enumerate}
\end{definition}

The following question is closely related to our proof of the rigidity conjecture, and seems interesting on its own. 

 \begin{question} \label{question:dodging_contracting_locus}
     Let $f: Y' \ra Y$ be a finite map of connected smooth projective curves over $K$. Let $\fY$ be a semistable formal integral model of $Y$ over $\mO_K$. Let $y_0 \in Y(K)$ be a closed point (viewed as a $K$-rational classical point on the associated adic space). Then, up to replacing $K$ by a finite extension and replacing $Y'$ by a further finite cover, does there always exist a semistable formal integral model $\fY'$ of $Y'$ and an integral model $\widehat f: \fY' \ra \fY$ of $f$, such that $y_0$ lies on the image of the non-contracting locus $Y'_{\mathrm{nc}}$ of $Y'$ with respect to $\widehat f$? 
 \end{question} 

In other words, the question above asks for the existence of semistable models with respect to which the given point ``dodges the contracting locus''. 

\begin{remark}
When $Y$ has genus at least 2, up to enlarging $K$, it admits a minimal semistable model. In this case, in formulating the question above, we may choose $\fY$ to be this minimal semistable model, and hence the question purely concerns the generic fibers.
\end{remark}

\begin{remark}[The effect of enlarging $K$]\label{remark:enlarging_K}  In the formulation of the question above, as well as various other places later in this section, we need to  allow ourselves to replace $K$ by some finite extensions. This often leads to the following caveat: after replacing $K$ by a finite extension $K'$, a $p$-adic formal scheme with semistable reduction over $\mO_{K}$ may no longer have semistable reduction over $\mO_{K'}$.\footnote{This is related to why we do not require $\fY$ to have semistable reduction in the definition of contracting and non-contracting loci in Definition \ref{definition:contracting_locus}.} In particular, if $\widehat f: \fY' \ra \fY$ is a map between integral models over $\mO_K$ as in (\ref{eq:choice_of_f_hat}), and $y$ is a closed $K$-point on $Y'$ that specializes to the intersection point $\cl y$ of two non-contracting components on $\fY_s'$, then after replacing $K$ by a finite ramified extension $K'$, we typically need to perform further admissible blowing-ups of $\fY'_{\mO_{K'}}$ at $\cl y$ in order to obtain a semistable formal model. After such a blowing-up, $y$ will no longer live on the non-contracting locus with respect to the new map between integral models. 

Note, however, this issue goes away if $y$ specializes to a point on the smooth locus of $\fY'_s$. More precisely, let $y \in Y'(K)$ be a closed $K$-point on the non-contracting locus $Y'_{\mathrm{nc}}$ (with respect to $\widehat f: \fY'\ra\fY$ as in (\ref{eq:choice_of_f_hat})) such that $y$ specializes to a point $\cl y$ lying on the smooth locus of $\fY'_s$. Consider the base change $f_{K'}: Y'_{K'}\ra Y_{K'}$ for some finite extension $K'/K$. Then, up to replacing $K'$ by a further finite extension, there exists a semistable model $\fY''$ of $Y'_{K'}$ over $\mO_{K'}$ and an integral model $\widehat{f}_{K'}: \fY''\ra \fY_{\mO_{K'}}$ of $f_{K'}$ such that $y$ continues to live on the non-contracting locus (with respect to $\widehat{f}_{K'}$). Here $\fY''$ is obtained from $\fY'_{\mO_{K'}}$ via admissible blowing-ups, and such blowing-ups can be chosen to avoid the point $\cl y$. In particular, $\cl y$ continues to live on the smooth locus of $\fY''_s$. We will frequently make use of this observation in this section.
\end{remark}


Later in the section, we will discuss weaker versions of this question which are enough for our purpose (cf. Theorem \ref{theorem:modified_spreadout_for_dodging} and Theorem \ref{thm:dodging_contracting_locus_general}). We suspect that the answer to Question \ref{question:dodging_contracting_locus} is yes when both $Y'$ and $Y$ have good reductions, but no in general, by inspecting the proof of the following variant of the question. 

\begin{lemma}\label{lemma:dodging_contracting_locus_after_composition}
 Let $X$ be a smooth projective curve over $K$ with good reduction. Let $x_0 \in X(K)$ be a closed point (viewed as a $K$-rational classical point on the associated adic space). Then, up to enlarging $K$ if necessary, there exists a semistable model $\fX'$ of $X$ and a morphism of $p$-adic formal schemes \[ \widehat f': \fX' \ra \widehat \P^1_{\mO_K}\] which induces a finite cover $f': X \ra \P^1_K$ on the generic fibers, such that $x_0$ lies on the non-contracting locus of $X$ with respect to $\widehat f'$.
\end{lemma}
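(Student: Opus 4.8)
The plan is to construct, possibly after an unramified enlargement of $K$, a \emph{finite} morphism of $\mO_K$-schemes $\phi\colon \mX\to \P^1_{\mO_K}$, where $\mX$ is a fixed smooth projective model of $X$ over $\mO_K$ (which exists because $X$ has good reduction); one then takes $\fX'=\widehat{\mX}$, which is smooth, hence a semistable formal model of $X$, and $\widehat{f'}=\widehat\phi\colon \widehat{\mX}\to\widehat\P^1_{\mO_K}$, with $f'=\phi_\eta$ finite on generic fibres. Granting this, the conclusion is immediate and in fact stronger than asked: a finite $\widehat{f'}$ induces a finite morphism $f_s\colon \mX_s\to\P^1_k$ on special fibres, so every irreducible component of $\mX_s$ is a finite (non-contracting) component in the sense of Definition \ref{definition:contracting_locus}, and therefore the non-contracting locus of $X$ with respect to $\widehat{f'}$ is all of $X$; in particular it contains $x_0$, which extends to a section of $\mX$ by the valuative criterion (a fact not otherwise needed here).

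To build $\phi$ I would use a linear projection. Pick a relatively ample line bundle $\mM$ on the structure morphism $\pi\colon\mX\to\spec\mO_K$ and set $\mL=\mM^{\otimes n}$ for $n$ large enough that $\deg\mL_\eta=\deg\mL_s=d$ satisfies $d\ge 2g+1$ (here $g$ is the genus, and the two degrees agree by flatness). Then $\mL$ is relatively very ample with $R^1\pi_*\mL=0$, so $\pi_*\mL$ is locally free of rank $N+1=d-g+1\ge 2$ with formation commuting with base change, and the complete linear system embeds $\mX$ as a closed $\mO_K$-subscheme of $\P^N_{\mO_K}=\P(\pi_*\mL)$ that is nondegenerate on both the generic and the special fibre. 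For any linear subspace $L\cong\P^{N-2}_{\mO_K}\subset\P^N_{\mO_K}$ with $L\cap\mX=\emptyset$, projection away from $L$ restricts to a morphism $\mX\to\P^1_{\mO_K}$ that is proper (as $\mX$ is proper over $\mO_K$) and quasi-finite (each fibre equals $\mX\cap H$ for a hyperplane $H\supset L$, hence is a proper closed subscheme of a fibre curve by nondegeneracy), and therefore finite.

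The one genuine point --- and the step I expect to cost the most effort --- is producing a single center $L$ disjoint from $\mX$ \emph{over all of} $\spec\mO_K$, i.e.\ disjoint from $\mX_\eta$ and $\mX_s$ at once. For this I would work on the Grassmannian $\Gr(N-1,N+1)$ over $\mO_K$ parametrizing codimension-two linear subspaces of $\P^N_{\mO_K}$: the locus $\{L : L\cap\mX\ne\emptyset\}$ is closed, being the image in $\Gr$ of the incidence subscheme $\{(L,x):x\in\mX,\ x\in L\}$, which is proper over $\Gr$; and on each of the two fibres of $\Gr$ over $\spec\mO_K$ its complement is dense open, by the standard dimension count showing that a general $(N-2)$-plane in $\P^N$ over a field misses a fixed curve. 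Hence the open ``disjointness'' locus $\mathcal U\subset\Gr$ has nonempty special fibre; enlarging $k$ (equivalently, $K$ by an unramified extension) so that $\mathcal U_s$ acquires a rational point, and using that $\Gr$ is smooth over $\mO_K$, this point lifts to an $\mO_K$-point of $\Gr$, which necessarily factors through $\mathcal U$ since open subsets are stable under generization. The resulting $L$ yields the finite morphism $\phi$, completing the argument. I would close with the remark that this mechanism --- a finite map to $\P^1$ requiring \emph{no} blowing-up of the model --- is exactly what becomes unavailable once $X$ itself has bad reduction, which is the heuristic behind the good-reduction/general contrast flagged before Question \ref{question:dodging_contracting_locus}.
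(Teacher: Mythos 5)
Your proof is correct, and it takes a genuinely different route from the paper's. The paper deduces Lemma~\ref{lemma:dodging_contracting_locus_after_composition} from the more precise Lemma~\ref{lemma:dodging_contracting_locus_after_composition2}: it starts from an arbitrary finite cover $f\colon X\to\P^1_K$ and follows Raynaud's formal-model machinery (choosing affine formal coverings, performing admissible blow-ups, and post-composing with a scaling automorphism $\alpha\colon t\mapsto\varpi^{m_0}t$) to arrange that the relevant blow-ups occur away from $\cl x_0$. Your construction is purely algebraic: using that $X$ has good reduction, you exhibit a \emph{finite} morphism $\phi\colon\mX\to\P^1_{\mO_K}$ of $\mO_K$-schemes by linear projection from a codimension-two center $L$ disjoint from $\mX$, and find that center by a Grassmannian argument (properness of the incidence correspondence gives a closed bad locus; the dimension count makes its complement nonempty on the special fibre; smoothness of the Grassmannian lifts a $k$-rational point to an $\mO_K$-point, which lands in the open locus because open sets are stable under generization). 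Since $\phi_s$ is then finite, the non-contracting locus is \emph{all} of $X$, which is a stronger conclusion than the lemma asks for and renders the choice of $x_0$ irrelevant.

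What this buys you is a conceptually cleaner and more self-contained proof of Lemma~\ref{lemma:dodging_contracting_locus_after_composition}, phrased entirely in terms of classical projective geometry over the Dedekind base $\spec\mO_K$, with no appeal to Raynaud's theory of formal blow-ups. What it does \emph{not} give you is Lemma~\ref{lemma:dodging_contracting_locus_after_composition2} itself, which is what the paper actually invokes in the proof of Theorem~\ref{thm:dodging_contracting_locus_general}: there one must start from a \emph{given} finite cover $f$ (already tied to the local system via a neighborhood of $\xi$) and only adjust it by an automorphism of the target $\P^1_K$. Your projection $\phi$ is built from scratch and has no relation to any prescribed $f$, so it cannot be substituted wholesale into the later argument. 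As a proof of the lemma as literally stated, though, the argument is sound; the only small point worth making explicit is that you should take $d\ge 2g+2$ (or just $n$ large) so that $N\ge 2$ and codimension-two linear centers exist, and that one may first pass to a finite unramified extension to make $X$ geometrically connected so $\mX_s$ is a single smooth curve.
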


This follows from the following more precise version of the lemma. 

\begin{lemma} \label{lemma:dodging_contracting_locus_after_composition2}
Let $X$ be a smooth projective curve over $K$ with good reduction and let $\fX$ be a smooth integral model of $X$ over $\mO_K$. Let $f: X \ra \P^1_K$ be a finite morphism of curves over $K$. Let $x_0 \in X(K)$ be a closed $K$-point as above. Then, up to enlarging $K$ if necessary, there exists a semistable model $\fX'$ of $X$ over $\mO_K$ (possibly different from $\fX$), an automorphism $\alpha: \P^1_K \ra \P^1_K$, and a morphism of $p$-adic formal schemes $\widehat f': \fX' \ra \widehat \P^1_{\mO_K}$ such that
\begin{enumerate}
\item[(i)] $\widehat f'$ is an integral model of the composition $f' = \alpha \circ f: X \ra \P^1_K$; and
\item[(ii)] $x_0$ lies on the non-contracting locus $X_{\mathrm{nc}}$ of $X$ with respect to $\widehat f'$.
\item[(iii)] the image $\cl x_0$ of $x_0$ under the specialization map lies on the smooth locus of the special fiber $\fX'_s$ of $\fX'$.
\end{enumerate}
\end{lemma}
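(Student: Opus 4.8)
The plan is to reduce the statement to the existence of a suitable automorphism $\alpha$ of $\P^1_K$, by working with the rational function $h := t\circ f\in K(X)^\times$ for a fixed coordinate $t$ on $\P^1_K$; note that $h$ is transcendental over $K$ since $f$ is a finite (hence nonconstant) morphism, and that composing $f$ with $\alpha\in\mathrm{PGL}_2(K)$ replaces $h$ by $\alpha(h)$. Fixing the smooth model $\fX$, the point $x_0$ specializes to a $k$-rational point $\cl x_0$ on the smooth irreducible special fiber $\bar X:=\fX_s$, and the goal is to find $\alpha$ such that $f':=\alpha\circ f$, regarded as a rational map $\fX\dashrightarrow\widehat\P^1_{\mO_K}$, has \emph{nonconstant reduction along $\bar X$} and is \emph{defined at $\cl x_0$ with value a unit in $\mO_{\fX,\cl x_0}$}. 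Granting this, I would resolve the remaining finitely many points of indeterminacy of $f'$ by admissible blow-ups at closed points of the special fiber \emph{away from} $\cl x_0$ (possible, since $\cl x_0$ is not an indeterminacy point), and then, after enlarging $K$, perform further admissible blow-ups — again away from $\cl x_0$, which is already a smooth point of the arithmetic surface lying on the smooth locus of the special fiber — to reach a semistable model $\fX'$ equipped with an honest morphism $\widehat f'\colon\fX'\to\widehat\P^1_{\mO_K}$. Since all these modifications avoid $\cl x_0$, the component of $\fX'_s$ through $\cl x_0$ is still (the strict transform of) $\bar X$, on which $\widehat f'$ reduces to the nonconstant map, hence is finite; this gives (ii) and (iii), while (i) is built in.

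To produce $\alpha$, I would first normalize $h$ at the generic point $\eta$ of $\bar X$. The local ring $\mO_{\fX,\eta}$ is a discrete valuation ring with uniformizer $\varpi$, residue field $k(\bar X)$, and fraction field $K(X)$; write $v=v_{\bar X}$. After scaling by $\varpi^{-v(h)}$ (the automorphism $t\mapsto\varpi^{-v(h)}t$) we may assume $v(h)=0$, so $h$ has a well-defined nonzero reduction $\bar h\in k(\bar X)^\times$. If $\bar h$ is a constant $\bar c$ — enlarging $K$ we may assume the constant field of $\bar X$ is $k$, so $\bar c$ lifts to $c\in W(k)\subseteq\mO_K$ — replace $h$ by $(h-c)/\varpi^{v(h-c)}$ and repeat. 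This process \textbf{terminates}: otherwise $h$ would equal a $\varpi$-adically convergent series $c_0+c_1\varpi^{m_1}+c_2\varpi^{m_1+m_2}+\cdots$ with $c_i\in W(k)$ and exponents tending to $\infty$, forcing $h\in\mO_K$, contradicting transcendence of $h$ over $K$. Thus after finitely many translations and powers-of-$\varpi$ scalings we obtain $\alpha$ with $v(\alpha(h))=0$ and $\overline{\alpha(h)}\in k(\bar X)\setminus k$, which is (i). To arrange (ii), I would further compose with the lift to $\mathrm{PGL}_2(W(k))\subseteq\mathrm{PGL}_2(\mO_K)$ of a \emph{generic} element $\bar\beta\in\mathrm{PGL}_2(k)$: such a $\beta$ preserves both $v(\cdot)=0$ and nonconstancy of the reduction (it applies a fixed invertible matrix to the pair of reductions of numerator and denominator), and choosing $\bar\beta$ so that $\alpha^{-1}(0)$ and $\alpha^{-1}(\infty)$ are moved off the image $f(\Delta_{\cl x_0})$ of the tube $\Delta_{\cl x_0}=\mathrm{sp}^{-1}(\cl x_0)\subset X$ makes $f'$ defined at $\cl x_0$ with a unit value there — \emph{provided} neither of $\alpha^{-1}(0),\alpha^{-1}(\infty)$ is forced into $f(\Delta_{\cl x_0})$.

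The main obstacle is exactly this last proviso: it can happen that, for \emph{every} admissible normalization $\alpha$ as above, both a zero and a pole of $\alpha\circ f$ specialize to $\cl x_0$, i.e. $\cl x_0$ is an unavoidable point of indeterminacy. Concretely, the cleaning-up procedure confines the pair $(\alpha^{-1}(0),\alpha^{-1}(\infty))$ to a small disc $\D_K(C,|\varpi|^M)$ determined (up to the freedom in the lifts $c_i$ and in $\bar\beta$) by the normalization, and the obstruction occurs precisely when $f(\Delta_{\cl x_0})$ swallows this entire disc; in that regime one must blow up $\cl x_0$, and then (iii) is no longer automatic. I expect this to be the genuinely computational heart of the argument: after blowing up $\cl x_0$ one analyzes, in explicit coordinates on the exceptional $\P^1$ and on $\P^1_K$, the leading forms of the numerator and denominator of $\alpha(h)$ at $\cl x_0$, shows that composing with a generic $\mathrm{PGL}_2(k)$-lift makes the reduction of $f'$ along the exceptional divisor have unit valuation, and identifies a component of the new special fiber — either the strict transform of $\bar X$, or a non-contracting exceptional component — on which the image of $x_0$ is a smooth point; iterating this blow-up/normalize step finitely often, with termination again controlled by transcendence of $h$, finishes the argument. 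This is the step I would budget the most care for, and it is where passing to Huber's adic spaces and tracking tubes of points precisely (as in the ``dodging'' arguments of \S\ref{sec:dodging}) is essential; the remaining steps — termination of the cleaning-up, elimination of indeterminacy, and achieving semistability away from a fixed smooth point — are routine.
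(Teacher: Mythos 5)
Your plan --- compose $f$ with a well-chosen $\alpha\in\mathrm{PGL}_2(K)$, then carry out Raynaud-style admissible blow-ups away from $\cl x_0$ --- has the right shape and matches the paper. But the normalization you choose at the first step is the wrong one, and the obstruction you then encounter is a symptom of that choice rather than a genuine feature of the problem. You normalize $h=t\circ f$ at the \emph{generic point} $\eta$ of $\fX_s$, forcing $v_\eta(h)=0$ and then iterating until $\bar h\in k(\fX_s)$ is nonconstant; but this controls $h$ only in codimension one on the arithmetic surface, and a function with $v_\eta=0$ and nonconstant reduction can still have both a zero and a pole specializing to the same closed point $\cl x_0$ (locally, something of the shape $(x^2+\varpi)/x$ near $\cl x_0$). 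You correctly notice this can occur, propose to blow up $\cl x_0$, and concede (iii) would then be in jeopardy --- but the ``genuinely computational heart'' you anticipate simply does not exist in the paper's proof.

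The paper's normalization is different and dissolves the issue. Choose the coordinate $t$ on $\P^1_K$ with $t(s_0)=0$, where $s_0=f(x_0)$, so that $h=a_0\in A_0[1/\varpi]$ satisfies $a_0(x_0)=0$ on a formal affine $\fX_0=\spf A_0$ containing $\cl x_0$. Then take $\alpha\colon t\mapsto\varpi^{m_0}t$, with $m_0$ minimal such that $a_0^+:=\varpi^{m_0}a_0\in A_0$. Now $\alpha\circ f|_{X_0}$ is already an honest morphism $\fX_0\to\widehat\A^1_{\mO_K}$, $t\mapsto a_0^+$, so there is no indeterminacy to resolve at all; moreover $\overline{a_0^+}\in A_0/\varpi$ is nonzero by minimality of $m_0$, while $\overline{a_0^+}(\cl x_0)=0$ because $a_0(x_0)=0$. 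Since $A_0/\varpi$ is a domain (a nonempty open in the irreducible smooth curve $\fX_s$), a nonzero function that vanishes somewhere is nonconstant, so the reduced map on the component through $\cl x_0$ is finite and that component is non-contracting. This is the exact opposite of your requirement that $\alpha(h)$ take a \emph{unit} value at $\cl x_0$: it is precisely the forced \emph{vanishing} at $\cl x_0$, built in by $t(s_0)=0$, that makes nonconstancy automatic while keeping $a_0^+$ integral. The remaining blow-ups (gluing the local models; passing to a semistable model after enlarging $K$) are then all performed away from $\cl x_0$, as you describe, and (i)--(iii) follow. Your termination argument for the constant-subtraction loop is also moot under this normalization: the single minimality choice of $m_0$ does all the work.
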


\begin{proof} The proof builds on a careful inspection of Raynaud's observation that rigid analytic spaces can be viewed as formal schemes up to admissible blowing-ups (see \cite[\S 8]{Bosch}). Let $\fX_s$ denote the special fiber of $\fX$ over $k$ and let $\cl{x}_0$ denote the image of $x_0$ in $\fX_s$ under the specialization map.  
Let us recall Raynaud's method on how to modify the integral model $\fX$ via admissible blowing-ups in order to construct a map to $\widehat{\P}^1_{\mO_K}$. This is done in 3 steps. 
\begin{enumerate}
\item  First we cover $\fX$ by affine open formal subschemes $\{\fX_i\}_{i\in I}$ (resp. cover $\widehat{\P}^1_{\mO_K}$ by affine open formal subschemes $\{\mS_j\}_{j \in J}$) such that for each $i \in I$, the rigid analytic generic fiber $X_i$ of $\fX_i$ maps to some $S_j$, where $S_j$ is the rigid analytic generic fiber of $\mS_j$.\footnote{Strictly speaking, in order to achieve this, we might need to take a further admissible blowing-up of $\fX$ (see \cite[\S 8.4 Lemma 5]{Bosch}). For curves this is not necessary.} 
\item Then one constructs an admissible blowing-up $\pi_i: \fX_i' \ra \fX_i$ as in \cite[\S8.4 Lemma 6]{Bosch} which admits a map $\fX_i' \ra \mS_j$. 
\item Finally, one ``glues'' these $\fX'_i$'s by constructing a further admissible blowing-up $\pi: \fX' \ra \fX$ that dominates the $\fX'_i$'s in the sense of \cite[\S8.2 Proposition 14]{Bosch}. In fact, since we are in the case of curves, after additional admissible blowing-ups on $\fX'$ and enlarging $K$ if necessary, one can ensure that $\fX'$ is semistable. 
\end{enumerate}
We would like to show that, by carefully choosing the coverings of $\fX$ and $\widehat{\P}^1_{\mO_K}$, as well as modifying the map $f$ by composing with an automorphism $\alpha: \P^1_K \ra \P^1_K$, one can construct the desired map $\widehat f'$ (following Steps (1)-(3) above) with respect to which the point $x_0$ avoids the contracting locus. To this end, we first choose $\{\fX_i\}_{i\in I}$ and $\{\mS_j\}_{j \in J}$ as in Step (1). We may assume that $x_0$ is contained in $X_0$ for some index $0 \in I$ and does not belong to any other $X_i$'s. Let $s_0=f(x_0)$. Choose a coordinate $t$ on $\P^1_K$ such that $t(s_0)=0$. By shrinking $\fX_0$ is necessary, we may further assume that the image of $X_0$ under $f$ lies in the unit disc $S_0=\D_K = \{|t| \le 1 \}$. 

Write $\fX_0 = \spf A_0$ and let $\mS_0=\widehat \A^1 = \spf \mO_K \gr{t} \subset \widehat{\P}^1_{\mO_K}$. The map $f|_{X_0}: X_0 \ra \D_K$ corresponds to a ring map $K \gr{t} \ra A_0 [1/\varpi]$ sending $t$ to some $a_0 \in A_0[1/\varpi]$. Let $m_0 \ge 0$ be the smallest non-negative integer such that $a_0^+ := \varpi^{m_0} a_0$ lives in $A_0$. Consider the map $\alpha: \P^1_K \ra \P^1_K$ induced by $t \mapsto \varpi^{m_0} t$ (note that the restriction of $\alpha$ to $\D_K$ has the effect of ``shrinking'' the unit disc). Then the composition \[ \alpha \circ f|_{X_0}: X_0 \ra \D_K\]
is given by $t \mapsto a_0^+ \in A_0$, which admits an integral model $\fX_0 \ra \mS_0$ on the nose (without any blowing-ups on $\fX_0$). Now we consider the composition \[f' = \alpha \circ f: X\rightarrow \P^1_K\] 
and perform Step (2) above with respect to $f'$. In this step, by our construction, there is no need to blow up $\fX_0$ (i.e., $\fX'_0=\fX_0$). Finally, by the proof of  \cite[\S8.2 Proposition 14]{Bosch}, we know that since $\fX_0' = \fX_0$ and $x_0 \notin X_i$ for any $i \ne 0$, the map $\pi: \fX' \ra \fX$ in Step (3) above is obtained from admissible formal blowing-ups at closed points on $\fX_s$ that are disjoint from $\cl x_0$. Moreover, upon enlarging $K$, the further admissible blowing-ups needed in order to make $\fX'$ semistable over $\mO_K$ are along points that are disjoint from $\cl x_0$ (cf. Remark \ref{remark:enlarging_K}). Consequently, we obtain the desired 
\[ 
\widehat f': \fX' \ra \widehat \P^1_{\mO_{K}},
\]
with respect to which $x_0$ lies on the non-contracting locus $X_{\mathrm{nc}}$ on $X$, and $\cl x_0$ lives on the smooth locus of $\fX'_s$. This finishes the proof of the lemma, which in turn implies Lemma \ref{lemma:dodging_contracting_locus_after_composition}. 
\end{proof}


\subsection{Dodging the contracting locus on curves over $\P^1$} Lemma \ref{lemma:dodging_contracting_locus_after_composition2} asserts that, for a map from a smooth projective curve of good reduction to $\P^1_K$, Question \ref{question:dodging_contracting_locus} has a positive answer once we are allowed to modify the target by a further automorphism. For more general curves, we prove a weaker statement that suffices for our applications. Roughly speaking, we prove that for a map from an arbitrary smooth projective curve $Y$ to $\P^1_K$, the effect of ``dodging the contracting locus'' can be achieved after certain ``controlled modifications'' of $Y$. 
For convenience, let us introduce the following terminology, borrowed from \cite{DDMY}. 

\begin{definition}
   Let $X$ be a smooth rigid analytic space over $K$, view as an adic space. Let $U \subset X$ be an open subset and $f_U: V \ra U$ be a finite cover between smooth rigid analytic spaces over $K$. A \textit{spread-out of $f$ across} $X$ is a finite cover $f: Y\ra X$ of smooth rigid analytic spaces defined over $K$ such that $f_U$ agrees with the pullback of $f$ along $U\subset X$. 
\end{definition}

\begin{theorem} \label{theorem:modified_spreadout_for_dodging}
Let $Y$ be a smooth projective curve over $K$, viewed as an adic space, and let $g: Y \ra \P^1_K$ be a finite cover. Let $\xi_{0, \C_p} \in \P^1_{\C_p}$ be a non-type I point and let $s_0\in \P^1_K$ be a $K$-rational classical point. Then there exist
\begin{itemize}
\item a finite extension $L/K$;
\item an open neighborhood $U_0\subset \P^1_L$ of the image $\xi_{0,L}$ of $\xi_{0,\C_p}$ in $\P^1_L$ such that $s_0\notin U_0$ \footnote{Since $s_0$ is a $K$-rational point, it can be viewed as an $L$-rational point in $\P^1_L$ by Remark \ref{remark:K_rational_point_under_field_extension}.} and such that the pullback $g_{U_0}: g^{-1}(U_0)\ra U_0$ of $g$ along $U_0\hookrightarrow \P^1_L\ra \P^1_K$ is finite \'etale;
\item a finite cover $g': Y'\ra \P^1_L$ between smooth projective curves over $L$ such that $f$ is a spread-out of $g_{U_0}$;
\item a morphism of $p$-adic formal schemes
\[\widehat{g}': \fY' \ra \widehat{\P}^1_{\mO_L}\]
over $\mO_L$ which induces $g'$ on the generic fibers, where $\fY'$ is a semistable model of $Y'$ over $\mO_L$,
\end{itemize}
such that there exists a classical point $y_0 \in (g')^{-1} (s_0)$ that lies on the non-contracting locus $Y'_{\mathrm{nc}}$ of $Y'$ with respect to $\widehat{g}'$. Moreover, we arrange it so that $y_0$ specializes to a point on the smooth locus of the special fiber $\fY'_s$ of $\fY'$.
\end{theorem}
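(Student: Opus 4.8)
\textbf{Proof plan for Theorem \ref{theorem:modified_spreadout_for_dodging}.} The plan is to reduce everything, via a sequence of controlled modifications, to the explicit situation handled by Lemma \ref{lemma:dodging_contracting_locus_after_composition2}. First I would fix a semistable model $\fY$ of $Y$ over $\mO_K$ (after enlarging $K$, using Deligne--Mumford) together with an integral model $\widehat g \colon \fY \ra \widehat{\P}^1_{\mO_K}$ of $g$; let $\fY_s = \bigcup_i Z_i$ be the decomposition into irreducible components, with contracting locus $Y_{\mathrm c}$ and non-contracting locus $Y_{\mathrm{nc}}$ with respect to $\widehat g$. The non-type I point $\xi_{0,\C_p}$ has image $\xi_{0,L}$ in $\P^1_L$, and I would choose a small affinoid (or open) neighborhood $U_0$ of $\xi_{0,L}$ small enough that $s_0 \notin U_0$, that $g_{U_0} \colon g^{-1}(U_0) \ra U_0$ is finite \'etale (possible since $\xi_0$ is not in the finite branch locus; shrink $U_0$ and enlarge $L$ as needed), and — the key point — that $g^{-1}(U_0)$ is entirely \emph{disjoint} from the contracting locus $Y_{\mathrm c}$. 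The last requirement can be met because $Y_{\mathrm c} = \mathrm{sp}^{-1}(\bigcup_{i \in I_{\mathrm c}} Z_i)$ is a quasi-compact open that specializes to finitely many ``bad'' components, so its image in $Y$ — and hence the locus on $\P^1$ over which contraction occurs — avoids an open neighborhood of the generic point $\xi_{0,\C_p}$ provided $\xi_0$ is not one of the (finitely many) points over which a component is contracted; if it is, one first performs an admissible blow-up away from $\xi_0$ to separate things.

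Once $U_0$ is fixed, the strategy is: over $\P^1_L \setminus (\text{a slightly smaller neighborhood of }\xi_{0,L})$ — an affinoid region containing $s_0$ — I am free to \emph{replace} the cover $g$ without touching it over $U_0$. More precisely, I would use the analogue of Raynaud's theory (as used in the proof of Lemma \ref{lemma:dodging_contracting_locus_after_composition2}, following \cite[\S8]{Bosch}) to reconstruct, after suitable admissible blow-ups and enlarging $L$, a semistable model $\fY'$ of a curve $Y'$ and a map $\widehat g' \colon \fY' \ra \widehat{\P}^1_{\mO_L}$ inducing a finite cover $g' \colon Y' \ra \P^1_L$, arranged so that (a) $g'$ agrees with $g_{U_0}$ over $U_0$ (so that the given $f$ is a spread-out of $g_{U_0}$, taking $f = g'|_{g'^{-1}(U_0)}$, or rather $f$ is the prescribed spread-out and I build $g'$ extending it), and (b) near the fiber $g'^{-1}(s_0)$ the construction is carried out exactly as in Lemma \ref{lemma:dodging_contracting_locus_after_composition2}: choose a coordinate $t$ on $\P^1_L$ with $t(s_0) = 0$, localize to a formal affine chart of $\fY'$ through a chosen point $y_0$ above $s_0$, rescale the coordinate by a power of $\varpi$ (composing with an automorphism $\alpha$ of $\P^1$ supported near $s_0$, which does not disturb $U_0$ since $s_0 \notin U_0$) so that the map on that chart extends integrally without blow-up, and then glue. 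By \cite[\S8.2 Proposition 14]{Bosch} the remaining admissible blow-ups needed to globalize and to achieve semistability can be taken disjoint from the chart through $y_0$; hence $y_0$ lies on the non-contracting locus $Y'_{\mathrm{nc}}$ with respect to $\widehat g'$, and $\bar y_0$ lands on the smooth locus of $\fY'_s$.

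A few technical points need care. One must verify that the surgery on $\P^1$ (blow up, then rescale by $\varpi$, then build an integral model à la Raynaud) can be performed so that the resulting modified cover still extends the prescribed finite \'etale cover $f$ over $U_0$ unchanged — this is a matter of patching the \emph{generic fibers} over the open cover $\{U_0,\ \P^1 \setminus U_0'\}$ of $\P^1$ (with $U_0' \Subset U_0$), which works since a finite cover of a rigid curve is determined by its restrictions to an admissible cover, and then descending the patched cover to an integral map after enlarging $L$. Another point: since we allow replacing $Y'$ by a further finite cover (this is permitted by the statement, as it was in Question \ref{question:dodging_contracting_locus}), any obstruction to choosing $y_0 \in g'^{-1}(s_0)$ \emph{rational} and on a finite component can be killed by passing to a finite cover of $Y'$ ramified appropriately over $s_0$ and unramified over $U_0$.

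\textbf{Main obstacle.} The heart of the argument — and the step I expect to be the most delicate — is showing that the modification of $g$ near $s_0$ (done exactly as in Lemma \ref{lemma:dodging_contracting_locus_after_composition2} to ``shrink the disc'' and dodge the contracting locus at $y_0$) can be carried out \emph{simultaneously} with leaving the cover untouched over the neighborhood $U_0$ of the prescribed non-type I point $\xi_0$. Concretely, one must control the admissible blow-ups in the Raynaud-style gluing so that they do not propagate back into $g^{-1}(U_0)$ and do not force a blow-up at $\bar y_0$; the analogue of \cite[\S8.2 Proposition 14]{Bosch} gives that blow-ups can be localized away from a point where the local map is already integral, but keeping a whole open region ($g^{-1}(U_0)$) untouched requires a slightly more careful bookkeeping — essentially re-running Steps (1)--(3) of the proof of Lemma \ref{lemma:dodging_contracting_locus_after_composition2} with a distinguished formal-affine chart covering $g^{-1}(U_0)$ that is never blown up, using that $g^{-1}(U_0)$ avoids the contracting locus so that $g$ already has an integral model there without modification. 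Once this localization is in place, the rescaling trick with $\varpi^{m_0}$ near $s_0$ and the conclusion that $y_0$ dodges the contracting locus follow verbatim from the proof of Lemma \ref{lemma:dodging_contracting_locus_after_composition2}.
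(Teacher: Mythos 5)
Your proposal has a genuine gap at the heart of the argument, and the paper's actual proof takes a fundamentally different route to close exactly that gap.

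The problem is your treatment of the modification near $s_0$. You propose to ``rescale the coordinate by a power of $\varpi$ (composing with an automorphism $\alpha$ of $\P^1$ supported near $s_0$, which does not disturb $U_0$ since $s_0\notin U_0$).'' But automorphisms of $\P^1_L$ are Möbius transformations, and Möbius transformations are never ``supported near'' anything: the rescaling $t\mapsto \varpi^{m_0}t$ used in Lemma \ref{lemma:dodging_contracting_locus_after_composition2} fixes only $\{0,\infty\}$ and multiplies every radius by $p^{-m_0}$, so it emphatically does disturb $U_0$ (e.g., it maps $\{|t|=1\}$ to $\{|t|=1/p^{m_0}\}$). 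The fact that $s_0\notin U_0$ buys you nothing here. Similarly, your fallback of ``patching the generic fibers over the open cover $\{U_0, \P^1\setminus U_0'\}$'' begs the question: a finite cover is a global datum, and to glue you would need the modified cover over $\P^1\setminus U_0'$ to agree with $g$ over the overlap annulus $U_0\setminus U_0'$ --- but you have not produced such a cover, and composing $g$ with a Möbius transformation destroys precisely this compatibility.

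The paper's resolution is to abandon automorphisms entirely and instead pull $g$ back along a high-degree self-cover of $\P^1_L$. After choosing a base point $y_1$ on the non-contracting locus of $Y_L$ with image $s_1=g_L(y_1)$, Lemma \ref{lemma:smart_choice_of_coordinates_for_dodging} produces a coordinate $t$ with $t(s_1)=0$, $t(s_0)=a$, and a gap $|t(u)|\le\delta|a|$ (or the reverse) uniformly on $U_0$. One then considers the degree-$(n+1)$ self-cover $h_n:\, t\mapsto t - t^{n+1}/a^n$ of $\P^1_L$, which satisfies $h_n(s_0)=s_1$ while $h_n\ra\mathrm{id}$ uniformly on $U_0$ as $n\ra\infty$ thanks to the $\delta$-gap. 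The key Lemma \ref{lemma:p_adic_perturbation} (Krasner over non-type-I stalks) then says the normalized base change $Y'=Y_L\times_{\P^1_L,h_n}\P^1_L$ restricts to $g_{U_0}$ over $U_0$, while near $s_0$ the pullback inherits the local behaviour of $g_L$ near $s_1$, which was chosen to be dodged. Finding the integral model $\widehat h_n$ of $h_n$ with $s_0$ on the non-contracting locus is done via a coordinate change $t''=p^{-N}(t-a)$ in the spirit of Lemma \ref{lemma:dodging_contracting_locus_after_composition2}, and the semistable model $\fY'$ of $Y'$ is then built from the fiber product $\fY\times_{\widehat{\P}^1_{\mO_L}}\mS'$.

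So the missing idea in your proposal is the replacement of the impossible ``automorphism fixing $U_0$ and moving $s_0$'' by the degree-$(n+1)$ cover $h_n$ that $p$-adically approximates the identity over $U_0$ yet relocates $s_0$ to a pre-dodged point $s_1$. Everything else in the paper's proof (the Raynaud-style construction of $\widehat h_n$, blow-ups avoiding the distinguished point, enlarging $L$) is in the same spirit as Lemma \ref{lemma:dodging_contracting_locus_after_composition2}, as you anticipated; the new ingredient is precisely the one you flagged as the ``main obstacle'' but did not resolve.
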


Below is a cartoon of this ``dodging the contracting locus'' result.  
\begin{figure}[h]
\includegraphics[scale=0.8]{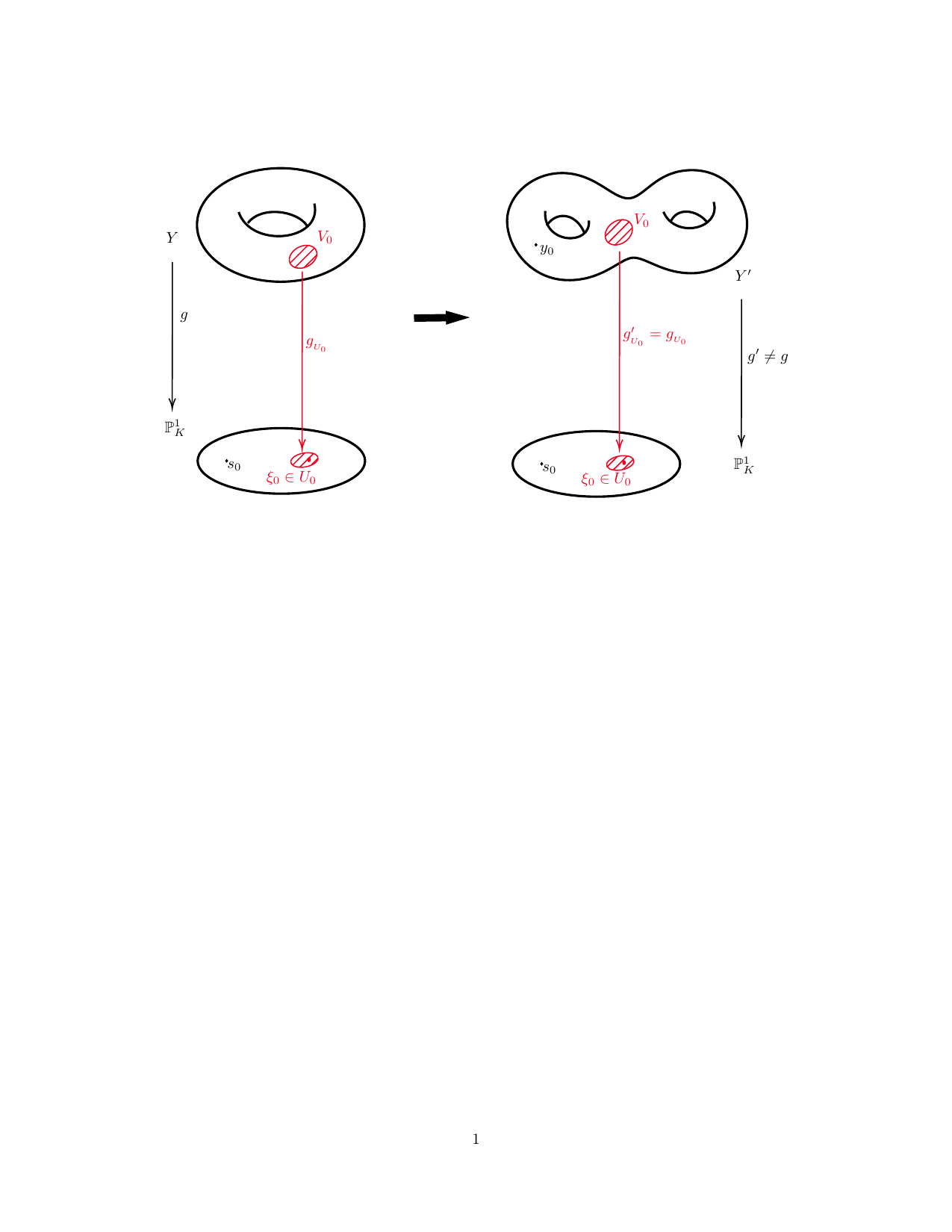}
    \caption{A cartoon illustration for dodging contracting locus over $\P^1$}
    \label{fig:dodging}
\end{figure} 
On the left-hand side, we are given a classical point $s_0$ and a non-type I point $\xi_0$ on $\P^1_K$, as well as a finite cover $g:Y\ra \P^1_K$ which restricts to $g_{U_0}: V_0\ra U_0$ (the red-shaded area) where $U_0$ is an open neighborhood of $\xi_0$. Then Theorem \ref{theorem:modified_spreadout_for_dodging} asserts that, by enlarging $K$ and shrinking $U_0$ if necessary, we can find another finite cover $g':Y'\ra \P^1_K$ while keeping $g_{U_0}$ unchanged, but this time $s_0$ admits a pre-image $y_0$ that dodges the contracting locus. This is depicted on the righ-hand side of the cartoon.

We will prove Theorem \ref{theorem:modified_spreadout_for_dodging} in \S \ref{subsection: dodging argument}. To this end, we need some preparations.

\subsection{Standard open neighborhoods of non-type I points}\label{sss:choice_of_nbhd}  

We first specify certain ``standard'' open neighborhoods of non-type I points on $\P^1_K$. For this subsection, we let
\begin{itemize}
    \item $\xi_0 \in \P^1_{K}$ be an arbitrary point of type I\!I, I\!I\!I, I\!V, or V (when $\xi_0$ is of type I\!I, I\!I\!I, or V, we further assume that $\xi_0$ is $K$-rational); 
    \item $s_1 \in \P^1_K$ be an arbitrary $K$-rational classical point. 
\end{itemize}  
Consider the following open neighborhoods of $\xi_0$ in $\P^1_{K}$, referred to as \emph{standard open neighborhoods}. We will constantly refer to these neighborhoods in the rest of \S \ref{sec:dodging}. 

\begin{enumerate}
    \item Suppose that $\xi_0$ is a type I\!I point which is $K$-rational. Then 
     there exists a coordinate $t$ on $\P^1_K$ such that $t(s_1) = 0$ and $\xi_0$ is the Gaussian point of the closed unit disc $\D_K(0, 1)$. In this case, a basis of open neighborhoods of $\xi_0$ (in $\P^1_K$) is given by $U_0$ of the form 
    \begin{equation}
    \label{eq:U_0_type_2}
    U_0 = \big\{|t| = 1\big\} - \bigcup_{\substack{\D_K (a_i, r_i) \subset \{|t| = 1\}\\ i = 1, ..., m}} \D_K (a_i, < r_i)
     \end{equation}
     with $r_i\in p^{\Q}$; namely, the complement of finitely many open discs in the thin annulus $\{|t| = 1\}$. We denote this basis of open neighborhoods by $\Sigma_{\xi_0}$. 
    \item Suppose that $\xi_0$ is a $K$-rational type I\!I\!I point. Then 
    there exists a coordinate $t$ on $\P^1_K$ such that $t(s_1) = 0$ and $\xi_0$ corresponds to $\D_K (0, r)$ for some $r \notin p^{\Q}$. In this case, we consider the basis of open neighborhoods $\Sigma_{\xi_0}$ of $\xi_0$ (in $\P^1_K$) consisting of closed annuli of the form 
  \begin{equation} 
    \label{eq:U_0_type_3} 
    U_0 = \big\{  r_1 \le |t| \le r_2 \big\} = \D_K(0, r_2) - \D_K (0, < r_1), 
  \end{equation}
    where $0 < r_1 < r < r_2, \mathrm{ and }  r_1, r_2 \in p^{\Q}$. 
    \item Suppose that $\xi_0$ is a type I\!V point. Then 
    there exists a coordinate $t$ on $\P^1_K$ such that $t (s_1) = 0$ and $\xi_0$ is represented by a sequence of nested closed discs 
    \begin{equation}\label{eq:U_0_type_4}
    \D_i =  \D_K (a_i, r_i) 
    \end{equation}  (with $r_i\in p^{\Q}$) whose intersection contains no type I point. In this case, such a choice of closed discs $\D_K (a_i, r_i)$ forms a basis of open neighborhoods of $\xi_0$ (in $\P^1_K$), which we fix and denote again by $\Sigma_{\xi_0}$.  
    \item Finally suppose that $\xi_0$ is a $K$-rational type V point. Let $\xi_{\mathrm{I\!I}} $ be the unique ($K$-rational) type I\!I point that specializes to $\xi_0$. As in Case (1) above, let $t$ be a coordinate on $\P^1_K$ such that $t(s_1) = 0$ and $\xi_{\mathrm{I\!I}}$ is the Gaussian point of $\D_K (0, 1)$. From the classification of points on $\P^1$, we know that $\xi_0$ falls into one of the following two classes: either $\xi_0=x_{a, <1}$ for some $a \in K$ with either $a = 0$ or  $|a| = 1$, or $\xi_0=x_{0, >1}$.
    \begin{itemize}
        \item If $\xi_0=x_{a, <1}$, we consider a basis of neighborhoods of $\xi_0$ (in $\P^1_K$) of the form  
   \begin{equation}
    \label{eq:U_0_type_5_a} 
        U_0 = \big\{r \le |t| \le 1 \big\} - \D_K (a, < r_a)  - \bigcup_{\substack{\D_K(a_i, r_i)\subset \D_K(0,1)\\ \D_K (a_i, r_i) \cap \D_K (a, < 1) = \emptyset\\ i = 1, ..., m}} \D_K (a_i, < r_i)
  \end{equation}
        where $r, r_a\in (0,1)\cap p^{\Q}$ and $r_i\in (0,1]\cap p^{\Q}$. 
        \item If $\xi_0=x_{0, >1}$, we consider a basis of neighborhoods of $\xi_0$ (in $\P^1_K$) of the form 
    \begin{equation}
    \label{eq:U_0_type_5b} 
        U_0 = \D_K (0, r) - \bigcup_{\substack{\D_K (a_i, r_i) \subset  \D_K (0, 1)\\ i = 1, ..., m}} \D_K (a_i, < r_i) 
     \end{equation}
        where $r\in (1,\infty)\cap p^{\Q}$ and $r_i\in (0,1]\cap p^{\Q}$. 
    \end{itemize} 
  We again denote such a basis by $\Sigma_{\xi_0}.$
\end{enumerate}

\subsection{Modification of coordinates}\label{subsection: modification of coordinates}
We resume the setup of \S \ref{sss:choice_of_nbhd}, namely, $\xi_0\in \P^1_K$ is a point of type I\!I, I\!I\!I, I\!V, or V (when $\xi_0$ is of type I\!I, I\!I\!I, or V, we further assume that $\xi_0$ is $K$-rational) and $s_1\in \P^1_K$ is a $K$-rational classical point. The discussion above provides a coordinate $t$ on $\P^1_K$ such that $t(s_1)=0$, together with a standard basis of open neighborhoods $\Sigma_{\xi_0}$ of $\xi_0$ in $\P^1_K$. The purpose of the following lemma is to find another coordinate satisfying some additional technical property, which will become convenient in the proof of Theorem \ref{theorem:modified_spreadout_for_dodging}.

\begin{lemma} \label{lemma:smart_choice_of_coordinates_for_dodging} 
Let $\xi_0, s_1\in \P^1_K$ be as above and let $U_0 \in \Sigma_{\xi_0}$. Let $s_0\in \P^1_K$ be a $K$-rational classical point different from $s_1$. Assume that $s_0, s_1\notin U_0$. Then, up to replacing $U_0$ by a smaller open neighborhood of $\xi_0$ in $\Sigma_{\xi_0}$ if necessary, there exists a coordinate $t'$ on $\P^1_K$ such that 
\begin{enumerate}
    \item $t' (s_1) = 0$; 
    \item $t' (s_0) = a$ for some $a \in K$; and 
    \item one of the following holds: 
    \begin{itemize}
        \item either there exists $0 < \delta < 1$ such that $ |t' (u)| \le  \delta \cdot |a| < |a| $ for all $u \in U_0$,
        \item or there exists $0 < \delta < 1$ such that $ |a| \le   \delta \cdot  |t' (u) | < |t' (u)|$ for all $u \in U_0$.
    \end{itemize}
\end{enumerate} 
\end{lemma}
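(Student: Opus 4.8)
The plan is to reduce everything to an explicit computation in coordinates on $\P^1_K$, using the standard open neighborhoods $\Sigma_{\xi_0}$ constructed in \S\ref{sss:choice_of_nbhd}. The key point is that the M\"obius transformations of $\P^1_K$ act transitively enough on triples of points, so after a first change of coordinates we may assume $t(s_1)=0$ and that $t$ is one of the normalized coordinates provided by the construction of $\Sigma_{\xi_0}$; in particular $U_0$ has one of the explicit shapes \eqref{eq:U_0_type_2}--\eqref{eq:U_0_type_5b}, and $s_0$ is a $K$-rational classical point lying outside $U_0$. The coordinate we seek will be $t' = t/(1 - t/c)$ (a M\"obius transformation fixing $0$, i.e.\ still sending $s_1\mapsto 0$) for a suitable $c\in K^\times$, or in some cases simply $t' = t - b$ followed by an inversion; the free parameter $c$ (resp.\ $b$) will be chosen to separate the ``size'' $|t'(s_0)|$ from the range of sizes $\{|t'(u)| : u\in U_0\}$.

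First I would fix notation case by case according to the type of $\xi_0$, and record, for each standard neighborhood $U_0\in\Sigma_{\xi_0}$, the set of values $\{|t(u)| : u\in U_0\}\subset\R_{>0}$. In the type I\!I and type I\!I\!I and type I\!V cases this set is contained in a single ``spherical shell'' $\{|t| = 1\}$ or $\{r_1\le |t|\le r_2\}$ or a shrinking disc; in the type V cases it is a shell with finitely many open discs removed, so still has controlled radii. In each case, because $s_0\notin U_0$ and $s_0$ is classical, $|t(s_0)|$ is either strictly smaller than, strictly larger than, or (possibly) equal to values attained on $U_0$ — but in the last situation $s_0$ lies in the residue disc of a point of the shell that has been excised from $U_0$, or $\xi_0$ and $s_0$ lie in different residue discs of the same Gaussian point. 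This is exactly the configuration one resolves by a M\"obius modification centered appropriately: translating by $t(s_0)$ or by the center of the excised disc containing $s_0$, or composing with $z\mapsto 1/z$, turns a ``same radius but different residue disc'' situation into a ``strictly different radius'' situation. After possibly shrinking $U_0$ within $\Sigma_{\xi_0}$ (which only shrinks the set of radii, or moves the shell, in a way compatible with the basis) one gets a genuine gap, i.e.\ a uniform $\delta<1$ as required; conditions (1) and (2) are preserved by construction since all our modifications are M\"obius transformations fixing $s_1\mapsto 0$ and sending the $K$-rational point $s_0$ to some $a\in K$.

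The main obstacle — and the only place where real care is needed — is the type V case and the ``boundary'' subcase where $|t(s_0)|$ coincides with a radius appearing in the description of $U_0$, since then no mere rescaling of $t$ separates the two. The right move is: write $\xi_{\mathrm{I\!I}}$ for the type I\!I point specializing to $\xi_0$ (or the nearby Gaussian point in the other types), observe that $s_0$ and $U_0$ specialize to distinct points of the reduction $\P^1_{\cl\F_p}$ of $\xi_{\mathrm{I\!I}}$ — this uses that $s_0\notin U_0$ and that $U_0$ is a standard neighborhood, so its complement near $\xi_{\mathrm{I\!I}}$ contains the residue disc of $s_0$ — and then choose the M\"obius transformation whose reduction sends the reduction of $s_0$ to $0$ (resp.\ $\infty$) and the reduction of $U_0$ away from it. Concretely, if the reduction $\cl{t(s_0)}$ of $t(s_0)/t(\xi_{\mathrm{I\!I}})$ in $\cl\F_p$ is some $\cl{\beta}\neq 0,\infty$, replace $t$ by $t' = t - \beta_0$ where $\beta_0\in\mO_K$ lifts $\cl\beta$: then $|t'(s_0)| < |t(\xi_{\mathrm{I\!I}})|$ strictly (since $s_0$ was in the residue disc of $\beta_0$) while $|t'(u)| = |t(u)|$ for $u\in U_0$ (since $U_0$ avoids that residue disc), giving the first alternative of (3); if $\cl\beta = \infty$ one first applies $z\mapsto 1/z$. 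One then re-normalizes by a final M\"obius transformation fixing $0$ to restore $t'(s_1)=0$, which does not disturb the strict inequality after possibly adjusting $\delta$. A short uniform-continuity / spectral-norm argument on the affinoid or pro-affinoid $U_0$ (using that $U_0$ is quasi-compact, or is a cofinal intersection of affinoids as in the type I\!V case, so the sup of $|t'|$ over $U_0$ is attained or approached) upgrades the pointwise strict inequality to the existence of a single $\delta<1$, completing the proof.
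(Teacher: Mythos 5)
Your overall approach is the same as the paper's: a case analysis on the type of $\xi_0$ and a M\"obius transformation chosen to separate $|t'(s_0)|$ from the radii appearing in $U_0$, while keeping $s_1\mapsto 0$. In particular, the formula $t' = t/(1-t/c)$ you propose in your first paragraph is, up to a scalar in $K^\times$, exactly what the paper uses: writing $c = a_0 - \epsilon$ (where $a_0 = t(s_0)$ and $\epsilon$ is a small element like $p$ or $p^{m+1}$) gives $t/(1-t/c) = -c\cdot \frac{t}{t-a_0+\epsilon}$. So the key idea is correctly identified.

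However, your detailed treatment of the hard case (type I\!I and type V when $|t(s_0)|$ agrees with the radius on $U_0$) contains a genuine error. You propose to first replace $t$ by $t - \beta_0$, which moves $s_1$ from $0$ to $-\beta_0 \ne 0$, and then ``re-normalize by a final M\"obius transformation fixing $0$ to restore $t'(s_1)=0$.'' But a M\"obius transformation fixing $0$ cannot send $-\beta_0\neq 0$ to $0$; what is actually needed is an \emph{inversion}-type map centered at $-\beta_0$, i.e.\ something of the form $z\mapsto \tfrac{z+\beta_0}{z}$, which sends $-\beta_0\mapsto 0$ and $0\mapsto\infty$. That is a substantive new step, not a mere rescaling, and it is precisely where one must verify that the norm separation achieved by the translation is preserved (or transformed in a controlled way). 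Your assertion that this ``does not disturb the strict inequality after possibly adjusting $\delta$'' is exactly what needs to be checked and is not obvious. The paper avoids this two-step wrinkle by directly applying $t' = \frac{t}{t-a_0+\epsilon}$ (equivalently your $t/(1-t/c)$ with $c = a_0-\epsilon$), choosing $\epsilon = p$ or $p^{m+1}$ so small that $\D_K(a_0, |\epsilon|/p)\cap U_0 = \emptyset$, and then computing the norms explicitly: one finds $|t'(s_0)| = |a_0/\epsilon|$ while $|t'(u)|\le |a_0/(\epsilon p)|$ for $u\in U_0$, which produces the required $\delta$ immediately. You should abandon the ``translate then re-normalize'' phrasing and instead compute norms directly after substituting the single M\"obius map; this also makes your closing ``uniform-continuity / spectral-norm'' step unnecessary, as the explicit choice of $\epsilon$ already yields a concrete gap (typically $\delta = 1/p$).
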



\begin{proof} 
We shall modify the coordinate $t$ to obtain the new coordinate $t'$. Let us proceed case by case according to the type of the point $\xi_0$. 
\vspace*{0.2cm}

\noindent \textbf{Type I\!I.}  \\ 
Let $t$ be as in \S \ref{sss:choice_of_nbhd} (1) and $U_0$ be as in (\ref{eq:U_0_type_2}). In particular, $\xi_0$ is the Gaussian point of $\D_K(0,1)$ (in coordinate $t$), $t(s_1) = 0 $, and $|t (u)| = 1$ for all $u \in U_0$. If $|t(s_0)| \ne 1$ then we can simply take $t' = t$ and we are done. Now suppose that $t(s_0) = a_0$ for some $a_0 \in K$ such that $|a_0| = 1$. By shrinking $U_0$ if necessary, we may assume that 
\[ 
\D_K (a_0, < 1) \cap U_0  = \emptyset.
\]
Now let us take 
\[ t' = \frac{t}{t- a_0 + p},\] then by construction we have 
\begin{itemize}
    \item $t' (s_1) = 0$; 
    \item $|t' (s_0)| = |a_0/p| = p$; 
    \item $|t'(u)| = 1$ for all $u \in U_0$. 
\end{itemize}
One checks that $\xi_0$ is still the Gaussian point of $\D_{K}(0,1)$ under the new coordinate $t'$. Equivalently, the automorphism $t\mapsto \frac{a_0 t}{t- a_0 + p}$ of $\P^1_K$ fixes $\xi_0$. Moreover, the open neighborhood $U_0$ is still of the form (\ref{eq:U_0_type_2}) with respect to the new coordinate $t'$.

\vspace*{0.3cm}

\noindent \textbf{Type I\!I\!I.}  \\ 
Let $U_0 = \{ r_1 \le |t| \le r_2 \}$ be as in (\ref{eq:U_0_type_3}). Let $a = t(s_0)$. Since $s_0 \notin U_0$, we have either $|a| > r_2$ or $|a| < r_1$. So we simply take $t' =t$.

\vspace*{0.3cm}

\noindent \textbf{Type I\!V.}  \\ 
As in \S \ref{sss:choice_of_nbhd} (3),  $U_0 $ is one of the $\D_K (a_i, r_i) \in \Sigma_{\xi_0}$ under the specified coordinate $t$ therein. By increasing the index $i$ if necessary, we may assume that $|t(u)|$ is constant for all $u \in U_0$. Let $a_0 = t(s_0)$. If $|t(s_0)| = |a_0| \ne |t(u)|$, then we are done. Otherwise, we have $|t(u)| = |t(s_0)| = |a_0|$ for all $u \in U_0$. We proceed in a similar way as in the case of Type I\!I points. This time we pick a sufficiently large $m$ such that $\D_K (a_0, p^{-m}) \cap U_0 = \emptyset$ and consider a new coordinate \[t' = \frac{t}{t - a_0 + p^{m+1}}.\] Then we have 
\begin{itemize}
    \item $t' (s_1) = 0$; 
    \item $|t' (s_0)| = |a_0/p^{m+1}| = p^{m+1} \cdot |a_0|$; 
    \item $|t'(u)| \le p^m \cdot |a_0|$ for all $u \in U_0$, 
\end{itemize}
as desired. One checks that $U_0$ is still of the form (\ref{eq:U_0_type_4}) in the new coordinate $t'$.

\vspace*{0.3cm}

\noindent \textbf{Type V.} \\ 
Let $\xi_{\mathrm{I\!I}}$ be the unique type I\!I point specializing to $\xi_0$. As discussed in \S \ref{sss:choice_of_nbhd} (4), we choose a coordinate $t$ such that $t(s_1)=0$ and $\xi_{\mathrm{I\!I}}$ is the Gaussian point of $\D_K(0,1)$. Then there are two cases, in which the standard open neighborhoods $\Sigma_{\xi_0}$ have the forms (\ref{eq:U_0_type_5_a}) and (\ref{eq:U_0_type_5b}), respectively. In either cases, for a fixed $0 < \epsilon < 1$, upon replacing $U_0$ by a smaller neighborhood in $\Sigma_{\xi_{0}}$ if necessary, we may assume that 
\begin{equation} \label{eq:bound_type_5_by_epsilon_strip}
\epsilon \le |t(u)| \le 1/\epsilon  
\end{equation}
for all $u \in U_0$. Let $a_0 = t(s_0)$. Again, if $|a_0| \ne 1$, we may choose an $\epsilon \in (0, 1)$ such that either $|a_0| < \epsilon$ or $|a_0| > 1/\epsilon$. Then we are done. 

From now on, let us suppose that $|a_0| = 1$. First consider the case where $\xi_0=x_{0,<1}$.  In this case, we may choose $U_0$ of the form (\ref{eq:U_0_type_5_a}) such that 
\[ 
\D_K (a_0, < 1) \cap U_0 = \emptyset 
\] 
and the proof in the case of type I\!I points above applies \textit{verbatim}. In particular, we use the new coordinate $t' = \frac{t}{t - a_0 + p}$ in this case.

Next we treat the case where $\xi_0=x_{a, <1}$ for some $a \in K$ with $|a| = 1$. Now let us pick a choice of $U_0$ in (\ref{eq:U_0_type_5_a}), and a sufficiently large $m$ such that 
\[ 
\D_K (a_0, p^{-m}) \cap U_0 = \emptyset. 
\] 
Consider the new coordinate 
\[ 
t'  = \frac{t}{t - a_0 + p^{m+1}}.
\] 
Then we have 
\begin{itemize}
    \item $t' (s_1) = 0$; 
    \item $|t' (s_0)| = |a_0 /p^{m+1}| =  p^{m+1}$; 
    \item $1 \le |t'(u)| \le p^m$ for all $u \in U_0$. 
\end{itemize}

Finally, suppose that $\xi_0=x_{0,>1}$. Then we may pick $U_0$ to be of the form 
\[
U_0 = \{1 \le |t| \le r\} - \D_K(a_0, < 1) - \bigcup_{\substack{\D_K (a_i, r_i) \subset  \{|t|=1\}\\ i = 1, ..., m}} \D_K (a_i, < r_i) 
\] 
where $r\in (1,\infty)\cap p^{\Q}$ and $r_i\in (0,1]\cap p^{\Q}$. Pick $a_1\in K$ such that $|a_1|\le 1/r$. Now consider the new coordinate 
\[
t' = \frac{t}{ t - a_0 + a_1}.
\]
We compute that 
\begin{itemize}
    \item $t' (s_1) = 0$; 
    \item $|t' (s_0)|  =  1/|a_1|\ge r $; 
    \item $|t'(u)| = 1$ for all $u \in U_0$. 
\end{itemize}
We point out that, in all situations, $\xi_{\mathrm{I\!I}}$ is still the Gaussian point of $\D_K(0,1)$ under the new coordinate $t'$. Equivalently, the automorphism $t\mapsto t'$ of $\P^1_K$ fixes $\xi_{\mathrm{I\!I}}$ (but does not necessarily fix $\xi_0$). This finishes the proof for type V points and completes the proof of the lemma. 
\end{proof}

\subsection{The dodging argument}\label{subsection: dodging argument}

Now we proceed to prove the first main result of this section.  

\begin{proof}[Proof of Theorem \ref{theorem:modified_spreadout_for_dodging}.]
Let $Y, g, \xi_{0,\C_p}$ and $s_0$ be as in the statement of the theorem. Notice that the statement is insensitive to replacing $K$ by a finite extension. By enlarging $K$ and passing to a connected component, we may assume that $Y$ is geometrically connected. By further enlarging $K$ if necessary, we may assume that $\xi_{0, \C_p}$ is $K$-rational when it has type I\!I, I\!I\!I, or V. 

We will prove the following more precise statement: there exist
\begin{itemize}
\item a finite extension $L/K$;
\item an open neighborhood $U_0\subset \P^1_L$ of the image $\xi_{0,L}$ of $\xi_{0, \C_p}$ in $\P^1_L$, such that $s_0\notin U_0$ and such that the pullback $g_{U_0}: g^{-1}(U_0)\ra U_0$ of $g$ along $U_0\hookrightarrow \P^1_L\ra \P^1_K$ is finite \'etale;
\item a finite cover $g': Y'\ra \P^1_L$ between smooth projective curves over $L$ such that $g'$ is a spread-out of $g_{U_0}$;
\item a morphism of $p$-adic formal schemes
\[\widehat{g}': \fY' \ra \mS'\]
over $\mO_L$ which induces $g'$ on the generic fibers, where $\fY'$ (resp. $\mS'$) is a semistable formal model of $Y'$ (resp. $\P^1_L$) over $\mO_L$,
\end{itemize}
such that 
\begin{itemize} 
\item there exists a classical point $y_0 \in (g')^{-1} (s_0)$ that lies on the non-contracting locus of $Y'$ with respect to $\widehat{g}'$, and that $y_0$ specializes to a point on the smooth locus of the special fiber $\fY'_s$ of $\fY'$;
\item $\mS'$ is obtained from $\widehat{\P}^1_{\mO_L}$ via an admissible blowing-up $\pi: \mS' \ra \widehat{\P}^1_{\mO_L}$ along some closed points on the special fiber of $\widehat{\P}^1_{\mO_L}$ that are disjoint from $\cl s_0$, where $\cl s_0$ is the image of $s_0$ under the specialization map.
\end{itemize}
Then we simply take $\widehat g' = \pi \circ \widehat{g}_1'$ to arrive at the conclusion asserted by the theorem.  

First, there exists a finite extension $L/K$ such that the base change $g_L:Y_L\ra \P^1_L$ of $g$ admits an integral model $\widehat g: \fY \ra \widehat{\P}^1_{\mO_L}$ over $\mO_L$ where $\fY$ is a semistable formal model of $Y_L$. Since the non-contracting locus $Y_{L,\mathrm{nc}}$ of $Y_L$ (with respect to $\widehat{g}$) is non-empty, by replacing $L$ by a further finite extension if necessary, we can pick an $L$-rational classical point $y_1 \in Y_{L,\mathrm{nc}}$ such that its image $\cl{y}_1$ under the specialization map lies on the smooth locus of $\fY_s$.\footnote{After enlarging $L$, we might need additional admissible blowing-up on $\fY$ to get a semistable model. But by Remark \ref{remark:enlarging_K}, $y_1$ still lies on the non-contracting locus with respect to the new semistable model because it specializes to a point on the smooth locus of the special fiber. We will use the trick several times throughout this proof.} We assume that $y_1\neq y_0$, otherwise there is nothing to prove. Let $s_1 = g_L(y_1)$ be its image in $\P^1_L$. 

Pick an open neighborhood $U_0$ of $\xi_{0,L}$ in $\P^1_L$ such that $s_0, s_1 \notin U_0$. By Lemma \ref{lemma:smart_choice_of_coordinates_for_dodging}, up to shrinking $U_0$, there exists a coordinate $t$ on $\P^1_L$ such that 
\begin{itemize}
\item $t (s_1) = 0$, $t (s_0) = a$ for some $a \in L$;
\item $U_0$ is a standard open neighborhood of $\xi_{0,L}$ in the sense of \S \ref{sss:choice_of_nbhd};
\item either of the following holds: 
    \begin{enumerate}
        \item there exists $0 < \delta < 1$ such that $|t(u)| \le \delta \cdot |a|$ for all $u \in U_0$; 
        \item there exists $0 < \delta < 1$ such that $|a| \le \delta \cdot |t(u)|$ for all $u \in U_0$.
    \end{enumerate}
\end{itemize}
By further shrinking $U_0$, we can make sure that the base change $g_{U_0}: g^{-1}(U_0)\ra U_0$ of $g$ along $U_0\hookrightarrow \P^1_L\ra \P^1_K$ is finite \'etale (hence $g_L:Y_L\ra \P^1_L$ is a spread-out of $g_{U_0}$) and we may assume that $g_{U_0}$ is given by a monic polynomial $F[Z] \in \mO(U_0) [Z]$, which is separable in $\mO_{{U_0}, \xi_{0,L}}[Z]$. (Notice that $\mO_{U_0, \xi_{0,L}}$ is a field as $\xi_{0,L}$ is non-type I.) We can write 
\[
F(Z) = F(Z, t) = \sum_{i = 0}^d a_i (t) Z^i
\]
with $a_i (t) \in \mO (U_0)$ and $a_d(t) = 1$. By further shrinking $U_0$ if necessary, $F(Z, t)$ can be factored as
\[F(Z, t)= \prod_{i=1}^m F_i(Z,t)\]
where all of $F_i(Z,t)\in \mO(U_0) [Z]$ are monic polynomials, such that each $F_i(Z,t)$ is irreducible in $\mO_{U_0, \xi_{0,L}}[Z]$.

Now we split the proof into two cases, according to the case (1) and (2) above.
\vspace*{0.2cm}

\noindent 
\textbf{Case (1)}. \\ 
For each $n \ge 0$, consider the automorphism $\P^1_L \ra \P^1_L$ given by 
\[ 
h_n: t \mapsto t - \frac{t^{n+1}}{a^n}. 
\] 
Notice that $h_n$ sends $s_0$ to $s_1$. Since $|\frac{t^{n+1}}{a^n}| \ra 0$ uniformly on $U_0$ as $n \ra \infty$, we have $U_0\subset (h_n)^{-1} (U_0)$ for $n$ sufficiently large.

By Lemma \ref{lemma:p_adic_perturbation} below, up to shrinking $U_0$, the finite cover of $U_0$ defined by $F_i(Z,t)=0$ does not change after a small $p$-adic perturbation of the coefficients of $F_i(Z)$. Consequently, up to shrinking $U_0$, the equation
\[ \sq F (Z, t) := F\big(Z, t - \frac{t^{n+1}}{a^n}\big)=\prod_{i=1}^m F_i\big(Z, t - \frac{t^{n+1}}{a^n}\big)=0\] 
defines the same finite cover $g_{U_0}: g^{-1}(U_0)\ra U_0$ as $F(Z, t)=0$.

Consider the normalized base change 
\begin{equation} \label{diagram:base_change_of_g_by_smart_choice_of_coordinate}
\begin{tikzcd}
    Y' \arrow[d, swap, "g'"]  \arrow[r] & Y_L \arrow[d, "g_L"]  \\ 
    \P^1_L \arrow[r, "h_n"]  & \P^1_L 
\end{tikzcd}
\end{equation}
which produces a geometrically connected smooth projective curve $Y'$ over $L$ (after further enlarging $L$ and restricting to a connected component if necessary). By construction, the map 
\[ g': Y' \ra \P^1_L
\] is indeed a spread-out of $g_{U_0}$. 

Now we claim that, the map $h_n$ admits an integral model 
\[ 
\widehat h_n: \mS' \ra \widehat{\P}^1_{\mO_L} 
\]
where, on the source of the map, $\mS'$ is obtained as an admissible blowing-up $\pi$ of (another copy of) $\widehat{\P}^1_{\mO_L}$ along a closed subscheme of the special fiber of $\widehat{\P}^1_{\mO_L}$ that is disjoint from $\cl s_0$. In particular, we have a diagram
\[ 
\begin{tikzcd}
    \mS' \arrow[r, "\widehat h_n"] \arrow[d, "\pi"] & \widehat{\P}^1_{\mO_L} \\ 
    \widehat{\P}^1_{\mO_L}
\end{tikzcd}
\]
where $\pi$ is an admissible blowing-up, and $s_0$ lives on the non-contracting locus of $\P^1_L$ (on the source of $h_n$) with respect to $\widehat h_n$. To prove the claim, we modify the coordinate on $\P^1_L$ in a similar way as in the proof of \ref{lemma:dodging_contracting_locus_after_composition2}. Let $t'' = p^{-N} (t- a)$ be a new coordinate on $\P^1_L$ (on the source of $h_n$) where $N$ is a sufficiently large integer that we fix later. We have $t''(s_0)=0$. Under this new coordinate, the map $h_n$ is given by 
\begin{align*}
    t \: \:  \longmapsto \: \: t - \frac{t^{n+1}}{a^n}  \quad &   = \: \:  p^N t'' + a - \frac{1}{a^n} \Big( p^N t'' + a \Big)^{n+1}  \\ 
    & =  \: \: - \Big(  p^N t'' + a \Big) \cdot \left( \sum_{i = 1}^n \:  {n \choose i} \Big( \frac{p^N t''}{a} \Big)^i   \right).
\end{align*}
For $N$ sufficiently large, we can make sure all the coefficients of $(t'')^i$ in this expansion lie in $\mO_L$. Now, the claim follows from a similar argument as in the proof of \ref{lemma:dodging_contracting_locus_after_composition2}. 

Given the claim, we are ready to finish the proof of the theorem. Consider the fiber product \[\fY'_{\mathrm{naive}}=\fY \times_{\widehat{\P}^1_{\mO_L}} \mS'\] in the category of $p$-adic formal schemes. By enlarging $L$ if necessary, $Y'$ admits a semistable formal model $\fY'$ obtained by successive admissible blowing-ups on $\fY'_{\mathrm{naive}}$ (so that the blowing-ups avoid the smooth locus of the special fiber of $\fY'_{\mathrm{naive}}$). This induces a morphism 
\[\widehat{g}':\fY'\ra \mS'\]
which induces $g: Y'\ra \P^1_L$ on the generic fibers, such that 
there exists a classical point $y_0 \in (g')^{-1} (s_0)$ that lies on the non-contracting locus of $Y'$ with respect to $\widehat{g}'$, as required. In fact, by our choice of the point $y_1$ at the beginning of the proof, we know that the point $y_0$ can be chosen such that its image $\cl y_0$ under the specialization map $Y' \ra \fY'_{s}$ lies on the smooth locus of $\fY'_{s}$.

This finishes the proof of Case (1). \\ 

\noindent 
\textbf{Case (2)}. \\ 
The argument is similar to Case (1). Instead of $h_n$, here we consider the map on $\P^1_L$ given by 
\[ 
h_n': t \mapsto t - \frac{a^n}{t^{n-1}}
\]
for each $n$ sufficiently large. The rest of the argument follows \textit{verbatim}. To find the desired integral model $\widehat h_n': \mS' \ra \widehat{\P}^1_{\mO_L}$, we again consider a new coordinate $t'' = p^{-N} (t - a)$. Under this coordinate, $h_n'$ becomes 
\[ 
t \: \: \longmapsto  \: \: t - \frac{a^n}{t^{n-1}} \:  = \:   \frac{a}{\big(1+ \frac{p^N t''}{a} \big )^{n-1}} \cdot \left( \sum_{i = 1}^n \:  {n \choose i} \Big( \frac{p^N t''}{a} \Big)^i   \right),
\]
which again has integral coefficients when $N$ is chosen large enough. This handles Case (2) and finishes the proof of the theorem.  
\end{proof}

The following result is used in the proof above. 

\begin{lemma}[$p$-adic perturbation of finite covers of affinoids]\label{lemma:p_adic_perturbation}
Let $Y$ be an affinoid smooth rigid analytic curve over $K$, viewed as an adic space. Let $\xi$ be a non-type I point on $Y$ and let $V = \spa (A, A^+)$ be an open neighborhood of $\xi$ that is a rational subset of $Y$. Let $V'$ be a finite cover of $V$ defined by a monic irreducible polynomial 
\[ F(Z) =  Z^d +  a_{d-1} Z^{d-1} + \cdots    +  a_0 \] where $a_i \in A$, which remains irreducible as a polynomial over $\mO_{Y, \xi}$. Then there exists sufficiently small $\epsilon \in \R_{> 0}$ such that the following holds: for any polynomial
\[ 
 G(Z) =  Z^d +  b_{d-1} Z^{d-1} + \cdots + b_0 
\]
with $b_i\in A$, satisfying $|a_i - b_i|_{V} < \epsilon$ for the spectral norm $|\cdot |_V$ on $V$, up to replacing $V$ by a smaller rational subset containing $\xi$ if necessary, $G(Z)$ defines the same finite cover $V' \ra V$.     
\end{lemma}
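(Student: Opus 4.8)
The plan is to use the rigidity of separable irreducible factorizations over the Henselian local ring $\mO_{Y, \xi}$, together with the fact that the stalk $\mO_{Y, \xi}$ at a non-type I point is a field, and then ``spread out'' the resulting factorization from the stalk to a small rational subset. First, I would observe that since $\xi$ is a non-type I point, the stalk $\mO_{Y, \xi}$ is a field (see Example \ref{example:stalk_on_curve}), which is Henselian (being a filtered colimit of the rings of analytic functions on rational subsets, each of which is a ring for which Hensel's lemma holds over the completed residue field $\sH(\xi)$; alternatively one works directly in $\sH(\xi)$ using that $\mO_{Y,\xi}\hookrightarrow\sH(\xi)$ is an isomorphism after completion and both are fields). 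The polynomial $F(Z)$ remains irreducible and separable over $\mO_{Y,\xi}$ by hypothesis, so $\mO_{Y,\xi}[Z]/(F(Z))$ is a finite separable field extension $E/\mO_{Y,\xi}$, generated by the image $\theta$ of $Z$.

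The key step is the continuity of roots. Since $|a_i - b_i|_V < \epsilon$ with respect to the spectral norm, the images of $a_i$ and $b_i$ in $\sH(\xi)$ are $\epsilon$-close, so $|F(Z) - G(Z)|$ is uniformly small on a residue disc around $\theta$ in $\overline{\sH(\xi)}$. By the Krasner-type estimate (or by Hensel's lemma applied to $G$ over the complete field $\sH(\xi)$, using $F(\theta)=0$, $F'(\theta)\neq 0$, and $|G(\theta)| = |G(\theta) - F(\theta)|$ small), for $\epsilon$ sufficiently small (depending only on $F$, i.e.\ on $|F'(\theta)|$ and the spread of the roots of $F$), the polynomial $G$ has a root $\theta'$ in $\sH(\xi)(\theta) = E\otimes_{\mO_{Y,\xi}}\sH(\xi)$ with $|\theta - \theta'|$ small, and moreover $\sH(\xi)(\theta') = \sH(\xi)(\theta)$, hence $G$ is irreducible over $\sH(\xi)$ with the same splitting field. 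Descending this back to the colimit: the element $\theta'\in E$, being a root of $G$ in the finite extension $E/\mO_{Y,\xi}$, is actually defined over $\mO_{Y, \xi}$, i.e.\ $G$ has a root in $\mO_{Y,\xi}[Z]/(F(Z))$; comparing degrees and using irreducibility of $F$ forces $\mO_{Y,\xi}[Z]/(G(Z)) \cong \mO_{Y,\xi}[Z]/(F(Z))$ as $\mO_{Y,\xi}$-algebras.

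Finally I would spread this isomorphism out. The isomorphism $\mO_{Y,\xi}[Z]/(G(Z)) \isom \mO_{Y,\xi}[Z]/(F(Z))$ is given by finitely many elements of $\mO_{Y,\xi}$, each of which, being an element of the filtered colimit $\mathrm{colim}_{\xi\in V'} \mO(V')$ over rational subsets $V'\ni\xi$, is defined on some rational subset; taking a common refinement, there is a rational subset $V'' \subseteq V$ containing $\xi$ on which all the relevant data are defined and on which the relations defining a mutually inverse pair of $\mO(V'')$-algebra maps $\mO(V'')[Z]/(G) \leftrightarrow \mO(V'')[Z]/(F)$ hold (the relations are a finite list of polynomial identities, so each holds on a possibly smaller rational subset, and we intersect). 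This gives an isomorphism $\mO(V'')[Z]/(G(Z)) \cong \mO(V'')[Z]/(F(Z))$ of finite $\mO(V'')$-algebras, i.e.\ $G(Z) = 0$ defines the same finite cover of $V''$ as $F(Z)=0$, as claimed.

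The main obstacle I anticipate is making the ``spreading out'' step clean and making sure the threshold $\epsilon$ genuinely depends only on $F$ (and $V$) and not on $\xi$ in a way that breaks later uniformity: one must be careful that the spectral norm $|\cdot|_V$ controls $|\cdot|_\xi$ (which it does, since $\xi\in V$), and that Hensel/Krasner over $\sH(\xi)$ produces a root in the \emph{algebraic} extension $E$ rather than merely in its completion — this is where separability and the fact that $\mO_{Y,\xi}$ is already a field (so that $E$ is étale over it and hence its own completion-stable part) is used. A secondary technical point is that a priori $G$ need only be monic of degree $d$, not irreducible; but the argument above shows $G$ becomes irreducible over $\mO_{Y,\xi}$ once $\epsilon$ is small, which is all that is needed since the cover is recovered from $\mO(V'')[Z]/(G)$ regardless.
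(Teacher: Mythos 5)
Your proposal follows essentially the same route as the paper: reduce the claim to one about the completed residue field $\sH(\xi)$, invoke Krasner's lemma there, and descend back to the stalk. But the descent step — which you yourself identify as the crux — is asserted rather than proved, and the justifications you offer do not hold up. The statement you need is that the base-change functor from finite separable extensions of $\mO_{Y,\xi}$ to finite separable extensions of $\sH(\xi)$ is an equivalence, i.e.\ that $(\mO_{Y,\xi}, |\cdot|_\xi)$ is a Henselian \emph{valued} field. Being a field makes $\mO_{Y,\xi}$ trivially Henselian as a local ring, but that is vacuous and not what is needed: $\Q$ with the $p$-adic valuation is a field yet not Henselian as a valued field. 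Your first justification (``a filtered colimit of rings for which Hensel's lemma holds'') does not work because the rings of functions on rational subsets are not local, so Hensel's lemma has no meaning for them; your second (``$E$ is étale over $\mO_{Y,\xi}$ and hence its own completion-stable part'') is circular. The paper supplies this step by citing Berkovich's quasicompleteness of the Berkovich stalk $\mO_{Y^{\mathrm{Ber}}, \xi}$ (\cite[Theorem 2.3.3, Proposition 2.4.1]{Berkovich_etale}), together with the identification $\mO_{Y^{\mathrm{Ber}}, \xi}^{\wedge} = \mO_{Y,\xi}^{\wedge} = \sH(\xi)$ from the comparison of adic and Berkovich stalks in \S\ref{sss:comparison_between_stalks}. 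Without this input the argument has a genuine gap.

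A second, smaller issue: your argument treats all non-type-I points uniformly, but for a type V point the valuation $|\cdot|_\xi$ has rank 2, so the straightforward ``complete $\Rightarrow$ Henselian $\Rightarrow$ Krasner'' chain over $\sH(\xi)$ is not available in the usual form, and the Berkovich quasicompleteness argument also does not directly apply (type V points are not Berkovich points). The paper deals with this by passing to the unique type I\!I point $\xi_{\mathrm{I\!I}}$ specializing to $\xi$, using the injection $\mO_{Y,\xi} \hookrightarrow \mO_{Y,\xi_{\mathrm{I\!I}}}$ with dense image and the observation that $|a_i-b_i|_V < \epsilon$ also bounds $|a_i-b_i|_{\xi_{\mathrm{I\!I}}}$, thereby reducing to the already-handled type I\!I case. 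You should add this reduction (or an equivalent treatment of the rank-2 case) to make the proof complete. The spreading-out paragraph at the end, on the other hand, is correct and is a useful explicit elaboration of a step the paper leaves implicit.
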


\begin{proof} It suffices to show that, as long as $\epsilon$ is sufficiently small, $F(Z)$ and $G(Z)$ define the same extension of the stalk $\mO_{Y, \xi}$. 
  First suppose that $\xi$ has type I\!I, I\!I\!I, or I\!V. Then the natural map $\mO_{Y, \xi} \ra \mO_{Y, \xi}^{\wedge} = \sH (\xi)$ from the (adic) stalk to its completion under the seminorm given by $\xi$ induces an equivalence  between the categories of finite separable extensions of $\mO_{Y, \xi}$ and that of $\sH(\xi)$. This follows, for example, from considering the Berkovich stalk $\mO_{Y^{\mathrm{Ber}}, \xi}$ and the fact that it is \textit{quasicomplete} in the sense of \cite[Theorem 2.3.3, Proposition 2.4.1]{Berkovich_etale}, and using the fact that we have an identification  $\mO_{Y^{\mathrm{Ber}}, \xi}^{\wedge} = \mO_{Y, \xi}^{\wedge} = \sH (\xi)$  (see \S \ref{sss:comparison_between_stalks}). 
  Therefore, in this case, it suffices to show that for sufficiently small $\epsilon$, $F(Z)$ and $G(Z)$ define the same extension of $\sH(\xi)$. Note that the condition  $|a_i - b_i|_{V} < \epsilon$ in particular implies that $|a_i - b_i|_{\xi} < \epsilon$, thus the claim follows from Krasner's lemma since $\sH(\xi)$ is a complete Henselian field.
    
 Now suppose that $\xi$ has type V and let $\xi_{\mathrm{I\!I}}$ denote the unique type I\!I point that specializes to $\xi$. Then $\xi_{\mathrm{I\!I}} \in V$. By  \cite[Lemma 1.1.10]{Huber_etale} (also see Example \ref{example:stalk_on_curve}), we have an injective map $\mO_{Y, \xi} \hookrightarrow \mO_{Y, \xi_{\mathrm{I\!I}}}$ with dense image. To check that $F(Z)$ and $G(Z)$ defines the same extension of $\mO_{Y, \xi}$, it suffices to check this over $\mO_{Y, \xi_{\mathrm{I\!I}}}$. Notice that the condition  $|a_i - b_i|_{V} < \epsilon$ implies that $|a_i - b_i|_{\xi_{\mathrm{I\!I}}} < \epsilon$, thus we are reduced to the case of type I\!I points. This finishes the proof of the lemma. 
\end{proof}

\subsection{A spread-out theorem}\label{subsection:spread-out thm}

In \S \ref{ss:dodging_II}, we shall prove a generalization of Theorem \ref{theorem:modified_spreadout_for_dodging}. In the proof, we will need the following spread-out theorem. In fact, we only need the case of curves, but we present the theorem in full generality.

\begin{theorem}\label{thm:spread-out thm}
Let $X$ be a smooth projective variety over $K$ and let $X^{\mathrm{ad}}$ denote the associated adic space. Let $\xi$ be a non-type I point on $X^{\mathrm{ad}}$ and let $U \subset X^{\mathrm{ad}}$ be an affinoid open neighborhood of $\xi$. Let $f: V \ra U$ be a finite \'etale cover. Then, up to shrinking $U$, there exist
\begin{itemize}
\item a finite cover $V'\ra V$;
\item an alteration $\sq f: Y\ra X$ (in the sense of \cite{deJong_alteration}) such that 
\begin{enumerate}
\item the composition $V'\ra V\xrightarrow[]{f}U$ is the pullback of $\sq f$ along $U\subset X^{\mathrm{ad}}$,
\item there exists a finite extension $K'/K$ such that $Y_{K'}$ is smooth with semistable reduction over $\mO_{K'}$.
\end{enumerate}
\end{itemize}
Moreover, in the case of curves, we can take $V'=V$.
\end{theorem}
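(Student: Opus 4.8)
The plan is to reduce the assertion to a statement in field theory at the point $\xi$: realize the finite \'etale cover $f$ near $\xi$ by a genuine algebraic finite extension of the function field $K(X)$, normalize $X$ in it, and then invoke de Jong's alterations to obtain semistability. Since $\xi$ is non-type I, the stalk $\mO_{X^{\mathrm{ad}},\xi}$ is a field (Example \ref{example:stalk_on_curve}), so, after decomposing $V$ into its connected components near $\xi$ and treating each separately, the cover $f$ corresponds, upon shrinking $U$, to a finite separable extension $L = \mO_{X^{\mathrm{ad}},\xi}[Z]/(P)$ with $P$ monic, irreducible and separable. As recalled in the proof of Lemma \ref{lemma:p_adic_perturbation}, for such $\xi$ the natural map $\mO_{X^{\mathrm{ad}},\xi}\to\sH(\xi)$ induces an equivalence between the categories of finite separable extensions (using quasicompleteness of the Berkovich stalk, cf. \cite{Huber_etale,Berkovich_etale}); in particular $P$ remains irreducible over the complete, hence Henselian, field $\sH(\xi)$.

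The next step is to descend to $K(X)$. The key input is that $K(X)$ is dense in $\mO_{X^{\mathrm{ad}},\xi}$, and hence in $\sH(\xi)$: indeed $K(X)$ lies inside the affinoid algebra $\mO(W)$ of every rational subdomain $W\ni\xi$, and polynomials are dense in Tate algebras. Perturbing the coefficients of $P$ to sufficiently close elements of $K(X)$ produces a monic $\widetilde P\in K(X)[Z]$ which is still irreducible over $\sH(\xi)$ (irreducibility of monic polynomials is an open condition over a Henselian field), hence irreducible over $K(X)$, and whose roots generate the same extension of $\sH(\xi)$ as those of $P$ by Krasner's lemma. Setting $M := K(X)[Z]/(\widetilde P)$, a finite separable extension of $K(X)$, the equivalence of categories above gives $M\otimes_{K(X)}\mO_{X^{\mathrm{ad}},\xi}\cong L$, while Lemma \ref{lemma:p_adic_perturbation} applied to $P$ and $\widetilde P$ shows that $\widetilde P$ defines the same finite \'etale cover $V\to U$ after a further shrinking of $U$.

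It then remains to globalize and to apply alterations. Let $Y_0$ be the normalization of $X$ in $M$; after enlarging $K$ we may take $Y_0$ geometrically connected, so $\widetilde f_0\colon Y_0\to X$ is a finite cover which, after shrinking $U$ into the finite \'etale locus of $\widetilde f_0$, pulls back over $U$ to $f\colon V\to U$. In the curve case $Y_0$ is already smooth, and by the Deligne--Mumford semistable reduction theorem \cite{DM} the curve $Y_0$ acquires semistable reduction over $\mO_{K'}$ for a suitable finite extension $K'/K$; taking $Y=Y_0$ and $\widetilde f=\widetilde f_0$ then works, with $V' = (\widetilde f^{\mathrm{ad}})^{-1}(U) = V$. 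For general $X$ one applies de Jong's theorem \cite{deJong_alteration} to obtain an alteration $Y\to Y_0$ with $Y$ smooth over $K$ and $Y_{K'}$ admitting a semistable model over $\mO_{K'}$; then $\widetilde f\colon Y\to X$ is an alteration, $V' := (\widetilde f^{\mathrm{ad}})^{-1}(U)$ is open in $Y^{\mathrm{ad}}$ and hence smooth, and after shrinking $U$ into a dense open subset of $X$ over which $\widetilde f$ is finite, $V'\to U$ is a finite cover factoring through $V$, so $V'\to V$ is a finite cover and $V'\to U$ is the pullback of $\widetilde f$ along $U$ by construction.

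The main obstacle I expect is the field-theoretic descent in the middle step: one must verify that the perturbed polynomial $\widetilde P$ really stays irreducible over $\sH(\xi)$ and that Krasner's lemma genuinely identifies $L$ with $M\otimes_{K(X)}\sH(\xi)$, which rests on the equivalence of finite \'etale categories at $\xi$ (transparent for curves via \cite{Huber_etale,Berkovich_etale}, but needing care in higher dimension), together with the bookkeeping of the successive shrinkings of $U$ and finite extensions of $K$ required to make the identities ``$V'=$ pullback of $\widetilde f$ along $U$'' and ``$V'\to U$ factors through $f\colon V\to U$'' hold on the nose.
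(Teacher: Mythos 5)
Your proposal follows the same overall strategy as the paper's proof: realize the cover near $\xi$ by a monic polynomial, perturb its coefficients to lie in $K(X)$, normalize $X$ in the resulting function field extension, and then apply de Jong's alterations. However, the central step is asserted rather than proved, and the justification you give for it is incorrect.

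You write that ``$K(X)$ is dense in $\mO_{X^{\mathrm{ad}},\xi}$, and hence in $\sH(\xi)$: indeed $K(X)$ lies inside the affinoid algebra $\mO(W)$ of every rational subdomain $W\ni\xi$, and polynomials are dense in Tate algebras.'' This is not true as stated: $K(X)$ is the field of rational functions, and a rational function with a pole inside $W$ does not belong to $\mO(W)$ (for instance, on $\P^1$ with coordinate $t$ and $W$ the closed unit disc, $1/(t-2)\notin\mO(W)$ when $|2|=1$). The correct statement is about the intersection $K(X)\cap\Gamma(U,\mO_{X^{\mathrm{ad}}})$, and its density in $\Gamma(U,\mO_{X^{\mathrm{ad}}})$ (after shrinking $U$) is a genuine theorem --- the $p$-adic Runge theorem, Proposition \ref{prop:p_adic_Runge}. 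The paper proves it by choosing a finite cover $X\to\P^n_K$ that is \'etale near $\xi$, establishing density over $\P^n_K$ by direct manipulation of Laurent domains, and then propagating density along the finite cover. Without this input, the perturbation of the coefficients of $P$ into $K(X)$ (and simultaneously into the affinoid algebra of a neighborhood of $\xi$, which you need so that $\widetilde P$ defines a cover of $U$ and not merely a stalk extension) is precisely the missing ingredient. Once you invoke Proposition \ref{prop:p_adic_Runge}, the rest of your argument --- irreducibility preserved over the Henselian field $\sH(\xi)$, identification of the covers via Lemma \ref{lemma:p_adic_perturbation}, normalization and alteration --- is in line with the paper; your detour through the equivalence of finite separable extensions of $\mO_{X^{\mathrm{ad}},\xi}$ and of $\sH(\xi)$ is also a mild reformulation of what Lemma \ref{lemma:p_adic_perturbation} already supplies.
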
 

To prove Theorem \ref{thm:spread-out thm}, a key ingredient we need is the following approximation result (Proposition \ref{prop:p_adic_Runge}). In the special case of curves, this is essentially a result of Raynaud \cite[Corollaire 3.5.2]{Raynaud},\footnote{The approximation result of Raynaud in \cite{Raynaud} is stated in the context of Berkovich curves. Note that our proof of Proposition \ref{prop:p_adic_Runge} below yields a similar statement for Berkovich spaces -- which is slightly different from Raynaud's result as an affinoid open subset in an adic space might not yield an open subset in the corresponding Berkovich space.} which he calls the \textit{$p$-adic Runge's Theorem}, as it can be viewed as an analogue of Runge's theorem in the theory of complex Riemann surfaces. 

\begin{proposition}\label{prop:p_adic_Runge}
Let $X$ be a smooth projective variety over $K$ and let $K(X)$ denote the field of rational functions on $X$. Let $\xi$ be a point on $X^{\mathrm{ad}}$ and let $U \subset X^{\mathrm{ad}}$ be an affinoid open neighborhood of $\xi$. Then, after shrinking the neighborhood $U$ if necessary, $K(X) \cap \Gamma (U, \mO_{X^{\mathrm{ad}}})$ (intersection taken in the fractional field of $\Gamma (U, \mO_{X^{\mathrm{ad}}})$) is dense in $\Gamma (U, \mO_{X^{\mathrm{ad}}})$ with respect to the topology induced from the spectral norm on $U$. 
\end{proposition}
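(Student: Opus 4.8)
The plan is to reduce the statement to the case where $U$ is a rational subdomain of a closed polydisc sitting inside the analytification of an affine variety, where it becomes a direct approximation argument. First I would use projectivity of $X$ to embed $X \hookrightarrow \P^N_K$ and pass to a standard affine chart $X_0 = X \cap \A^N_K$ containing $\xi$; write $A_0 = \mO(X_0)$, a finitely generated $K$-algebra with $\mathrm{Frac}(A_0) = K(X)$. Since the affinoid neighbourhood $U$ is quasi-compact, the coordinate functions $x_1, \dots, x_N$ are bounded on it, so after rescaling (which is harmless, as rescaled coordinates are still rational functions on $X$) we may assume $U$ is contained in the affinoid $\mathbb{X} := \{y \in X_0^{\mathrm{ad}} : |x_i(y)| \le 1 \text{ for all } i\} = \spa(B, B^\circ)$, where $B = K\langle x_1, \dots, x_N\rangle / (I\cdot K\langle x_1, \dots, x_N\rangle)$ is the affinoid quotient attached to $X_0$. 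The image of $A_0$ in $B$, which consists of rational functions on $X$ regular on $X_0$, is dense for the spectral norm: polynomials are dense in the Tate algebra and $B$ is a quotient of it, while the residue norm and the spectral norm on $B$ are equivalent by the open mapping theorem for affinoid algebras.

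Second, I would shrink $U$ so that it takes the shape of a rational subdomain of $\mathbb{X}$ whose defining functions all lie in $A_0$. Rational subdomains of $\mathbb{X}$ form a basis of neighbourhoods of $\xi$, so one picks $V = \mathbb{X}(f_1/f_0, \dots, f_k/f_0)$ with $\xi \in V \subseteq U$. On $V$ the continuous non-vanishing function $|f_0|$ attains a strictly positive infimum by quasi-compactness; approximating each $f_i$ within $A_0$ by $q_i$ with $|f_i - q_i|_{\mathbb{X}}$ smaller than $\inf_V |f_0|$ and than the analogous thresholds for the other constraints, the standard perturbation-stability of rational subdomains (the same principle as in Lemma \ref{lemma:p_adic_perturbation}) gives $\mathbb{X}(q_1/q_0, \dots, q_k/q_0) = V$ as subsets. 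Hence we may assume from the outset that $U = \mathbb{X}(q_1/q_0, \dots, q_k/q_0)$ with all $q_l \in A_0$ and $q_0$ invertible on $U$. \emph{This step is the main obstacle}: it hinges on combining the density of $A_0$ in $B$ with the perturbation-stability of rational subdomains, and one must carefully match the size of the allowed perturbation against the infima of the denominators over the subdomain; everything afterwards is essentially formal.

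Finally, I would carry out the approximation directly. By the Tate description of functions on a rational subdomain, $\Gamma(U, \mO_{X^{\mathrm{ad}}}) = B\langle q_1/q_0, \dots, q_k/q_0\rangle$, so every element is a limit, in the spectral norm of $U$, of finite sums $\sum_\alpha b_\alpha \prod_l (q_l/q_0)^{\alpha_l}$ with $b_\alpha \in B$. Each monomial $\prod_l (q_l/q_0)^{\alpha_l} = \big(\prod_l q_l^{\alpha_l}\big) / q_0^{|\alpha|}$ is a rational function on $X$, is regular on $U$ because $q_0$ is invertible there, and has spectral norm $\le 1$ on $U$ because $|q_l| \le |q_0|$ there; hence it lies in $K(X) \cap \Gamma(U, \mO_{X^{\mathrm{ad}}})$. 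Replacing each $b_\alpha$ by a sufficiently close $\tilde b_\alpha \in A_0 \subseteq K(X) \cap \Gamma(U, \mO_{X^{\mathrm{ad}}})$ — possible since $A_0$ is dense in $B$ and $|\cdot|_U \le |\cdot|_{\mathbb{X}}$ on $B$ — changes the sum by at most $\max_\alpha |b_\alpha - \tilde b_\alpha|_U$ in the spectral norm of $U$, the monomials having norm $\le 1$, and produces an element of the ring $K(X) \cap \Gamma(U, \mO_{X^{\mathrm{ad}}})$. Letting the approximations improve shows $K(X) \cap \Gamma(U, \mO_{X^{\mathrm{ad}}})$ is dense, which is the claim.
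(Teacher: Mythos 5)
Your proof is correct but takes a genuinely different route from the paper's. The paper constructs a finite cover $f\colon X \to \P^n_K$ (projecting away from a linear center, chosen so that $f$ is \'etale near $\xi$), shrinks $U$ so that $U = f^{-1}(W)$ for an affinoid $W \subset \P^{n,\mathrm{ad}}_K$, transports density from $\P^n_K$ to $X$ using module generators of a finite \'etale algebra $B_0/A_0$, and then verifies the statement directly for $\P^n_K$ via Laurent domains. You instead go the ``contravariant'' way: embed $X \hookrightarrow \P^N_K$, pass to an affine chart, box $U$ into a Weierstrass affinoid $\mathbb X$ where polynomial density is automatic, and then perturb the defining data of the rational subdomain itself so that it becomes cut out by rational functions. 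This avoids both the construction of the finite cover and the module-finiteness transfer, at the cost of a more delicate perturbation argument for rational subdomains, which you rightly flag as the crux. One refinement worth recording there: the threshold that makes both inclusions $V \subseteq V'$ and $V' \subseteq V$ go through (and preserves generation of the unit ideal) is $c := \inf_{\mathbb X} \max_l |f_l|$; since $\mathbb X = \bigcup_l V_l$ with $V_l = \{|f_j| \le |f_l| \ \forall j\}$, one has $c = \min_l \inf_{V_l}|f_l|$, so your phrase ``the analogous thresholds for the other constraints'' is exactly what is needed --- $\inf_V |f_0|$ alone only gives $V \subseteq V'$. Also, the appeal to Lemma \ref{lemma:p_adic_perturbation} is only by analogy (that lemma perturbs the coefficients of a monic polynomial defining a finite cover, not the defining functions of a rational subdomain), but the underlying ultrametric stability is the same, so the intended principle is sound. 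The closing step, using the Tate description $\Gamma(U,\mO_{X^{\mathrm{ad}}}) = B\langle q_1/q_0, \dots, q_k/q_0 \rangle$ together with the bound $|q_l/q_0|_U \le 1$ to approximate by finite sums with coefficients in $A_0$, is clean and self-contained.
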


\begin{proof} The idea is to reduce to the case of the projective spaces $\P^n_K$. Let $n$ be the dimension of $X$ over $K$. Let us pick  a finite surjective cover  $f: X \ra \P^n_{K}$ such that $f$ is smooth (thus finite \'etale) on an affinoid neighborhood containing $\xi$. To see such a cover exists, one can embed $X \subset \P_K^N$ as a closed subscheme where $N > n$, and consider a linear subspace $L \subset \P_K^{N}$ of complementary dimension (so $L$ is isomorphic to a copy of $\P_K^{N-n-1}$) such that $L$ is disjoint from $X$. The projection map from $\P_K^{N}$ that projects ``away from $L$'' determines a finite map $f: X \ra \P_K^n$. We have to arrange $L$ such that the singular locus does not contain $\xi$. To see that this is possible, let $\xi$ be a point of (cohomological) dimension $d_0$ and let $Z$ be a $d_0$-dimensional irreducible closed subscheme of $X$ that contains $\xi$ (i.e., such that $\xi \in Z^{\mathrm{ad}}$). We just need to arrange $f$ so that it is generically smooth on $Z$. Indeed, we pick a closed point $z \in Z$ and arrange the linear subspace $L$ so that it does not intersect the tangent space $T_z X$ of $X$ inside $\P_K^N$ (by comparing dimensions). The resulting finite map $f: X \ra \P_K^n$ is then smooth at the point $z \in Z$, so it is generically smooth on $Z$, as desired.

Now let $U$ be an affinoid open neighborhood of $\xi$ in $X^{\mathrm{ad}}$. By shrinking $U$, we may assume that $f$ is finite \'etale on $U$. Let $W$ be the image of $U$ under $f: X^{\mathrm{ad}}\ra \P_K^{n, \mathrm{ad}}$. In particular, since $f$ is smooth, $W$ is an open subset of $\P_K^{n, \mathrm{ad}}$ (for example, by \cite[Proposition 1.7.8]{Huber_etale}). Note that, in order to prove the proposition, it suffices to replace $U$ by a smaller open affinoid subset, so by possibly shrinking $W$ (and thus $U$) we may assume that $W$ is affinoid. We claim that, by further shrinking $U$ and $W$, we may assume that $U$ is a connected component of $f^{-1}(W) = W \times_{\P^{n,\mathrm{ad}}_K} X^{\mathrm{ad}}$. To see this, we reduce to the case where $U$ is irreducible and let $U_1$ be the irreducible component in $f^{-1} (W)$ that contains $U$ and let $Z = U_1 \minus U$ be the complement, since finite maps are proper (in the sense of adic spaces),  $f(Z) \subset W$ is closed and its complement $W' \subset W$ is a non-empty open subset. Let $U' = U \cap f^{-1} (W')$, then $U'$ is a connected component in $f^{-1}(W')$. By shrinking further if necessary, we may again assume that both $W'$ and $U'$ are affinoid open subsets of $\P^{n,\mathrm{ad}}_{K}$ and $X^{\mathrm{ad}}$, respectively. This finishes the proof of the claim. Given the claim, we further reduce to the case that $U = f^{-1}(W)$ without loss of generality. 

Consider the following commutative diagram
\[ 
\begin{tikzcd}
K(\P_K^n) \cap \Gamma(W, \mO_{\P_K^{n,\mathrm{ad}}}) \arrow[r, "\alpha"] \arrow[d] & \Gamma(W,  \mO_{\P_K^{n, \mathrm{ad}}}) \arrow[d]    \\     K(X) \cap \Gamma(U, \mO_{X^{\mathrm{ad}}}) \arrow[r, "\beta"]  &  \Gamma (U, \mO_{X^{\mathrm{ad}}}). 
\end{tikzcd}
\] 
Suppose that the map $\alpha$ has dense image, we would like to show that $\beta$ also has dense image. 
Let us consider an affine open subscheme $W_0 = \spec A_0 \subset \P^n_K$ such that $W \subset W_0$, and let $U_0 = W_0 \times_{\P_K^n} X$ be the pullback scheme. We may assume that $W_0$ is small enough so that $U_0 = \spec B_0$ for a finite \'etale algebra $B_0$ over $A_0$ (and we identify $A_0$ as a sub-algebra of $B_0$). In particular, there exist elements $x_1, \ldots, x_m \in B_0$ which generate $B_0$ as a module over $A_0$. Let $B = \Gamma(U, \mO_{X^{\mathrm{ad}}})$ and $A = \Gamma(W, \mO_{\P_K^{n, \mathrm{ad}}})$. Then $B$ is generated by $x_1, \ldots, x_m$ as an $A$-module. By scaling, we may assume that $x_1, \ldots, x_m$ all have spectral norm $\le 1$ on $U$. Now, for any $b \in B$, we can write $b = \sum_{i=1}^m a_i x_i$ with $a_i \in A$. By assumption, for any $\epsilon > 0$, there exists $a'_i \in K(\P_K^n) \cap \Gamma(W, \mO_{\P_K^{n, \mathrm{ad}}})$ such that $|a_i - a'_i|_W < \epsilon$. Take \[b' = \sum_{i=1}^m a'_i x_i \in K(X) \cap \Gamma (U, \mO_{X^{\mathrm{ad}}}),\] then $|b - b'|_U < \epsilon$. (Notice that $|a_i-a'_i|_U\le |a_i-a'_i|_W$.)

Consequently, we are reduced to proving the proposition for $\P^n_K$, which is the content of the next lemma. 
\end{proof}

\begin{lemma}
Let $U$ be an affinoid open neighborhood of some point $\xi \in \P^{n, \mathrm{ad}}_{K}$. Then, up to shrinking $U$, the intersection $K(\P_K^n) \cap \Gamma(U, \mO_{\P_K^{n, \mathrm{ad}}})$ is dense in $\Gamma(U, \mO_{\P_K^{n, \mathrm{ad}}})$ with respect to the spectral norm on $U$.
\end{lemma}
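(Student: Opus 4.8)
The plan is to reduce the statement to an elementary, explicit density claim for rational functions on a polydisc, which can be checked by hand. First I would pick a point $\xi \in \P^{n,\mathrm{ad}}_K$ and an affinoid open neighborhood $U$. After possibly translating by an automorphism of $\P^n_K$, I may assume $\xi$ lies in the standard affine chart $\A^n_K = \spec K[t_1,\dots,t_n] \subset \P^n_K$, and in fact that $|t_i|_\xi \le 1$ for all $i$, so that $\xi$ lies in the closed unit polydisc $\D^n_K = \spa(K\langle t_1,\dots,t_n\rangle, \mO_K\langle t_1,\dots,t_n\rangle)$. Since we are free to shrink $U$, I would shrink $U$ so that $U$ is a rational subset of $\D^n_K$. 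Concretely, $U = \D^n_K\big(\tfrac{g_1,\dots,g_r}{g}\big)$ for finitely many $g_1,\dots,g_r,g \in K\langle t_1,\dots,t_n\rangle$ generating the unit ideal, and (as in \cite[Remark 2.8]{perfectoid}) I may even arrange that one of the $g_i$ equals $\varpi^N$ for large $N$. Then $\Gamma(U,\mO_{\P^{n,\mathrm{ad}}_K})$ is, by definition of rational subsets, the completion of the subring of $K\langle t_1,\dots,t_n\rangle[1/g]$ generated over $K\langle t_1,\dots,t_n\rangle$ by the elements $g_i/g$, with respect to the spectral norm $|\cdot|_U$.

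The key reduction is then the following: it suffices to show (a) that $K[t_1,\dots,t_n]$ is dense in $K\langle t_1,\dots,t_n\rangle$ with respect to the Gauss (spectral) norm on $\D^n_K$, and (b) that polynomial combinations of $g_i/g$ with coefficients in $K\langle t_1,\dots,t_n\rangle$ are dense in $\Gamma(U,\mO)$, which holds by construction of rational subsets. Combining (a) and (b): given $h \in \Gamma(U,\mO)$ and $\epsilon>0$, by (b) I can find a polynomial expression $h' = \sum_I a_I (g_1/g)^{i_1}\cdots(g_r/g)^{i_r}$ with $a_I \in K\langle t_1,\dots,t_n\rangle$ and $|h-h'|_U < \epsilon/2$; by (a) I can replace each $a_I$ and each $g_i$ and $g$ by polynomials in $K[t_1,\dots,t_n]$ that are $|\cdot|_{\D^n}$-close (hence $|\cdot|_U$-close) to them. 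The only subtlety is that $g$ appears in a denominator, so I must ensure the polynomial approximation $\widetilde g$ of $g$ still satisfies $|\widetilde g|_\xi = |g|_\xi \neq 0$ and more generally $|\widetilde g - g|_U$ is small compared to $\inf_{u\in U}|g|_u$; since $g$ is a unit on $U$ (being part of the data $\{g_i/g\}$ of a rational subset, so $|g|_u \ge |g_i|_u$ and in particular $|g|_u \ge |\varpi^N|_u = |\varpi|^N$ is bounded below), $1/\widetilde g$ then converges to $1/g$ in $|\cdot|_U$ by the usual geometric series estimate. This produces $h'' \in K(\P^n_K) \cap \Gamma(U,\mO)$ with $|h - h''|_U < \epsilon$.

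It thus remains to prove (a), the density of $K[t_1,\dots,t_n]$ in the Tate algebra $K\langle t_1,\dots,t_n\rangle$ under the Gauss norm — but this is immediate and classical: any $f = \sum_{\nu} a_\nu t^\nu \in K\langle t_1,\dots,t_n\rangle$ has $|a_\nu| \to 0$ as $|\nu|\to\infty$, so its truncations to finite degree are polynomials converging to $f$ in the Gauss norm. The main obstacle — really the only point requiring care — is bookkeeping around the denominator $g$: one must confirm that $g$ is invertible with norm bounded away from $0$ on all of $U$ (not just at $\xi$), so that a polynomial approximant of $g$ remains invertible on a slightly shrunk rational subdomain and its inverse approximates $1/g$; allowing ourselves to shrink $U$ once more absorbs any issue here. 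Everything else is formal manipulation with the spectral norm (non-archimedean ultrametric inequality, multiplicativity up to the sup-norm on the relevant affinoid), so I would not belabor those computations.
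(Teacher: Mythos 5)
Your proposal is correct and follows essentially the same route as the paper: place $\xi$ in a standard polydisc, write $U$ as a rational/Laurent subdomain, and approximate its defining functions by polynomials using density of $K[t_1,\dots,t_n]$ in the Tate algebra under the Gauss norm. The one slicker observation the paper makes, which lets it skip all of your denominator bookkeeping, is that when $U$ is expressed as a Laurent domain $\{|f_i|\le a_i,\ |g_j|\ge b_j\}$, a sufficiently close polynomial approximation $f_i',g_j'$ of the defining functions produces the \emph{same} Laurent domain $U$ (by the ultrametric inequality), so $U$ is then literally cut out by rational functions and $\Gamma(U,\mO)$ is, by construction, the completion of a ring generated by rational functions — no error propagation through $1/g$ is needed.
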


\begin{proof}
  We first place $U$ inside a standard $n$-dimensional unit ball $\mathbb{D}_K^n$ such that the coordinates $t_1, ..., t_n \in K(\P_K^n)$ are global rational functions on $\P_K^n$. Without loss of generality, we may assume that $U$ is a Laurent domain of the form \[U = \{z \in \mathbb{D}_K^n \: | \:| f_i(z)| \le a_i, |g_j(z)|\ge b_j\}\] for some $f_i, g_j \in \mO(\mathbb{D}_K^n)$ and $a_i, b_j\in p^{\Q}$. Now we may replace $f_i, g_j$ by $f_i', g_j'$ which are polynomials in $t_1, ..., t_n$ while keeping 
\[U = \{z \in \mathbb{D}_K^n \: | \:| f'_i(z)| \le a_i, |g'_j(z)|\ge b_j\}.\] In particular, these $f'_i$'s and $g'_j$'s are rational functions on $\P_K^n$, in which case, the claim becomes evident.
\end{proof}
 

\begin{proof}[Proof of Theorem \ref{thm:spread-out thm}]
By shrinking $U$ (and hence $V$) and passing to connected components of $V$, we may assume that \[\Gamma(V, \mO_{X^{\mathrm{ad}}}) = \Gamma(U, \mO_{X^{\mathrm{ad}}})[T]/F(T)\] for a monic polynomial $F(T) = \sum_{i=0}^l\beta_i T^i$ with coefficients $\beta_i \in \Gamma(U, \mO_{X^{\mathrm{ad}}})$ (in particular, $\beta_l=1$) such that $F(T)$ is irreducible in $\mO_{X^{\mathrm{ad}}, \xi}[T]$.  By Proposition \ref{prop:p_adic_Runge}, upon further shrinking $U$ if necessary, for any $\epsilon > 0$, there exists $\sq \beta_i\in K(X)\cap \Gamma(U, \mO_{X^{\mathrm{ad}}})$ such that $|\beta_i-\sq \beta_i|_U<\epsilon$ for all $0\le i\le l-1$ (set $\sq \beta_l=1$), where $|\cdot|_U$ stands for the spectral norm on $U$. Applying Lemma \ref{lemma:p_adic_perturbation}, we know that, for $\epsilon$ sufficiently small, up to further shrinking $U$, the polynomials $F(T)$ and $\sq F(T)=\sum_{i=0}^l \sq \beta_i T^i$ define the same finite \'etale cover $V\ra U$. Now, since $\sq F(T) \in K(X)[T]$, it defines a separable extension $L/K(X)$. Let $Y'$ denote the normalization of $X$ in $L$, so in particular, we have $V \cong Y' \times_X U$. Let $Z^{\mathrm{sing}} \subset Y'$ denote the singular locus (which is disjoint from $V$). Then by the work of de Jong (\cite[Theorem 4.1 and Theorem 6.5]{deJong_alteration}, and the proof explained in Section 6.7 -- 6.16 of \textit{loc.cit.}), there exists an alteration $\sq g: Y \ra Y'$ where $Y$ is smooth and of semistable reduction over some finite extension $K'/K$,\footnote{More precisely, we first find an alteration $Y_1 \ra Y'$ with respect to the pair $(Y', Z^{\mathrm{sing}})$ to obtain a smooth projective scheme $Y_1$ over some finite extension $K_1/K$. Let $D_1 \subset Y_1$ be the pre-image of $Z^{\mathrm{sing}}$ in $Y_1$. Then we take an integral model $\fY_1/\mO_{K_1}$ of $Y_1$ and apply \cite[Theorem 6.5]{deJong_alteration} to the pair $(\fY_1, \fY_{1, s} \cup D_1)$ to obtain a projective scheme $\fY/\mO_{K'}$ with semistable reduction, for some finite $K'/K$. We then take $Y$ to be the generic fiber of $\fY$.} such that ${\sq g}^{-1} (Z^{\mathrm{sing}}) \subset Y$ is a union of normal crossing divisors and  $\sq g$ is finite flat over $Y' \minus Z$ for some proper closed subscheme $Z \subset Y'$ ($Z$ is called the \emph{center} of the alteration $\sq g$ in \cite{deJong_alteration}). Finally, we take $\sq f: Y \ra X$ to be the composition of $Y'\ra X$ with the alteration $\sq g$, and take $V'$ to be the pullback of $\sq g$ along $V \ra Y'$. Notice that, the alteration step is unnecessary if $X$ is a curve.
\end{proof}


\subsection{Dodging the contracting locus: Part II} \label{ss:dodging_II}

Theorem \ref{theorem:modified_spreadout_for_dodging} in fact implies the following more general result, which we will apply in the proof of Theorem \ref{thm:dodging}.  

\begin{theorem}\label{thm:dodging_contracting_locus_general}
Let $g: Y \ra X $ be a finite cover between smooth projective curves over $K$ (viewed as adic spaces) where $X$ has good reduction with a smooth integral model $\fX$ over $\mO_K$. Let $\xi_{\C_p}$ be a non-type I point on $X_{\C_p}$ and let $x_0\in X(K)$ be a closed $K$-point (viewed as a $K$-rational classical point on the associated adic space). Then there exist
\begin{itemize}
\item a finite extension $L/K$;
\item an open neighborhood $U\subset X_L$ of the image $\xi_L$ of $\xi_{\C_p}$ in $X_L$, such that $x_0\notin U$;\footnote{Since $x_0$ is a $K$-rational point, it can be viewed as an $L$-rational point in $X_L$ by Remark \ref{remark:K_rational_point_under_field_extension}.} 
\item a nonempty open subset $V$ of $g_L^{-1}(U)$ where $g_L: Y_L\ra X_L$ is the base change of $g$, such that $g_L|_V: V\ra U$ is a finite \'etale cover;
\item a finite \'etale cover $h:V'\ra V$;
\item a finite cover $g': Y'\ra X_L$ between smooth project curves over $L$ such that $g'$ is a spread-out of the composition \[V'\xrightarrow[]{h} V\xrightarrow[]{g_L|_V}U\] across $X_L$; 
\item a morphism of $p$-adic formal schemes 
\[\widehat{g}': \fY'\ra \fX_{\mO_L}\]
that induces $g'$ on the generic fibers, where $\fY'$ is a semistable model of $Y'$ over $\mO_L$,
\end{itemize}
such that there exists a classical point $y_0\in (g')^{-1}(x_0)$ living on the non-contracting locus of $Y'$ with respect to $\widehat{g}'$. Moreover, we can make sure that $y_0$ specializes to a point on the smooth locus of the special fiber $\fY'_s$ of $\fY'$.
\end{theorem}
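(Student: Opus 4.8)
The plan is to reduce to the already-established case of $\P^1$ (Theorem \ref{theorem:modified_spreadout_for_dodging}) by pushing $g$ forward along a carefully chosen finite map $X\to\P^1$. First I would pick a finite morphism $f\colon X\to\P^1_K$ whose pole divisor is $\deg(f)\cdot[x_0]$ — such an $f$ exists by Riemann--Roch applied to $\mathcal{O}_X(n[x_0])$ for $n$ large — so that $f^{-1}(f(x_0))=\{x_0\}$ set-theoretically. Applying Lemma \ref{lemma:dodging_contracting_locus_after_composition2} to $X$, its smooth model $\fX$, the map $f$ and the point $x_0$, I obtain (after enlarging $K$) a finite morphism $f'=\alpha\circ f\colon X\to\P^1_K$ with $\alpha$ an automorphism of $\P^1$, together with an integral model $\widehat{f'}\colon\fX'\to\widehat{\P}^1_{\mathcal{O}_K}$ such that $\fX'$ is a semistable model of $X$ obtained from $\fX$ by blow-ups disjoint from $\bar x_0:=\mathrm{sp}(x_0)$, the point $x_0$ lies on the non-contracting locus of $X$ with respect to $\widehat{f'}$, and $\bar x_0$ lies on the smooth locus of $\fX'_s$. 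Since $\alpha$ is an isomorphism we still have $(f')^{-1}(s_0)=\{x_0\}$ where $s_0:=f'(x_0)$, a $K$-rational classical point on $\P^1_K$; and $\eta_{\C_p}:=f'(\xi_{\C_p})$ is again non-type I because $f'$ is \'etale near $\xi$.

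Next I would apply Theorem \ref{theorem:modified_spreadout_for_dodging} to the finite cover $f'\circ g\colon Y\to\P^1_K$ with the non-type I point $\eta_{\C_p}$ and the classical point $s_0$. This yields, after enlarging $K$ to a finite extension $L$: an open neighborhood $W_0\subset\P^1_L$ of $\eta_L$ with $s_0\notin W_0$ over which $f'\circ g$ is finite \'etale, a finite cover $g_1'\colon Y_1'\to\P^1_L$ of smooth projective curves which is a spread-out of $(f'\circ g)_{W_0}$, a morphism $\widehat{g_1'}\colon\fY_1'\to\mathscr{S}'$ with $\fY_1'$ semistable and $\mathscr{S}'\to\widehat{\P}^1_{\mathcal{O}_L}$ an admissible blow-up along points disjoint from $\bar s_0$, and a classical point $y_1\in(g_1')^{-1}(s_0)$ on the non-contracting locus of $Y_1'$ (with respect to $\fY_1'\to\mathscr{S}'\to\widehat{\P}^1$) which specializes to the smooth locus of $\fY_{1,s}'$. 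Since $Y_1'|_{W_0}\cong(f'\circ g)^{-1}(W_0)=g^{-1}\big((f')^{-1}(W_0)\big)$, composing with $g$ defines a rational map from $Y_1'$ to $X_L$ which is regular over $W_0$ and hence (smooth projective curves!) extends to a finite morphism $g'\colon Y':=Y_1'\to X_L$ with $f'\circ g'=g_1'$. Taking $U$ to be a small connected neighborhood of $\xi_L$ inside $(f')^{-1}(W_0)$ — so that $x_0\notin U$ because $s_0\notin W_0$ — one checks from $f'\circ g'=g_1'$ that $(g')^{-1}(U)=g_L^{-1}(U)$ and $g'|_U=g_L|_V$ with $V:=g_L^{-1}(U)$; thus $g'$ is a spread-out of $V'\xrightarrow{h}V\xrightarrow{g_L|_V}U$ with $V'=V$ and $h=\mathrm{id}_V$. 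Moreover $(f')^{-1}(s_0)=\{x_0\}$ and $f'\circ g'=g_1'$ give $(g')^{-1}(x_0)=(g_1')^{-1}(s_0)\ni y_1$, so I may take $y_0:=y_1$.

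It remains to produce the integral model $\widehat{g'}\colon\fY'\to\fX_{\mathcal{O}_L}$ making $y_0$ dodge. I would base change $\widehat{f'}$ to $\mathcal{O}_L$ and blow up disjointly from $\bar x_0$ (Remark \ref{remark:enlarging_K}, as $\bar x_0$ is on the smooth locus) to get a semistable model $\fX'_{\mathcal{O}_L}$ of $X_L$ mapping both to $\widehat{\P}^1_{\mathcal{O}_L}$ and, by a blow-up, to $\fX_{\mathcal{O}_L}$, with $\bar x_0$ lying on the smooth locus, on the strict transform of $\fX_{L,s}$, and on a finite component over $\widehat{\P}^1$; then form $\fX'_{\mathcal{O}_L}\times_{\widehat{\P}^1_{\mathcal{O}_L}}\mathscr{S}'$ and semistabilize disjointly from $\bar x_0$ (possible since $\mathscr{S}'\cong\widehat{\P}^1$ near $\bar s_0$) to obtain $\fX''\to\mathscr{S}'$; finally take $\fY'$ to be a semistable model of $Y'$ dominating the closure of the graph of $g'$ inside $\fY_1'\times_{\mathscr{S}'}\fX''$, with all blow-ups (including those putting $\bar y_1$ on the smooth locus) chosen disjoint from $\bar y_1$, so that $\widehat{g'}\colon\fY'\to\fX''\to\fX_{\mathcal{O}_L}$ induces $g'$ on generic fibres and $\fY'\to\fY_1'$ is an isomorphism near $\bar y_1$. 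Then $\bar y_0=\bar y_1$ lies on the smooth locus of $\fY'_s$, and the component $Z_0$ of $\fY'_s$ through it is identified near $\bar y_1$ with the component $D_0$ of $\fY_{1,s}'$, which maps finitely onto the component $\bar S_0$ of $\mathscr{S}'_s$ through $\bar s_0$ (the non-contracting hypothesis on $y_1$ over $\widehat{\P}^1$ forces $\bar S_0$ to be the strict transform of $\P^1_k$); since the component $C_0$ of $\fX''_s$ through $\bar x_0$ also maps finitely onto $\bar S_0$ and, being the strict transform of $\fX_{L,s}$, maps finitely (indeed birationally) onto $\fX_{L,s}$, the composite $Z_0\to C_0\to\fX_{L,s}$ is finite, i.e.\ $y_0$ lies on the non-contracting locus with respect to $\widehat{g'}$ and specializes to the smooth locus of $\fY'_s$, as required.

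The hard part, as already in Lemma \ref{lemma:dodging_contracting_locus_after_composition2} and Theorem \ref{theorem:modified_spreadout_for_dodging}, will be this last step: assembling the integral models into a compatible diagram and controlling which irreducible components of the special fibres are contracting versus finite through every base change, fibre product, normalization and (semistabilizing) admissible blow-up — in particular verifying that the component $Z_0$ through $\bar y_0$ remains a finite component after blowing down all the way to the smooth model $\fX_{\mathcal{O}_L}$. The entire argument rests on keeping the distinguished points $\bar x_0$, $\bar s_0$ and $\bar y_1$ on smooth loci so that all the modifications can be chosen to avoid them; by contrast, the identification of the data $(V,V',h)$ and the (trivial) formula $y_0=y_1$ are immediate once one has chosen $f$ to be totally ramified over $f(x_0)$, which is what makes the fibre $(f')^{-1}(s_0)$ — and hence the lift back to $X_L$ — unambiguous.
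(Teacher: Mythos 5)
Your proposal is correct in its overall strategy, but it takes a genuinely different route from the paper's, and the comparison is worth spelling out.

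The paper's proof picks $f\colon X\to\P^1$ \emph{\'etale} at $x_0$, which means $(f')^{-1}(s_0)$ is a multi-element fiber and the $\P^1$-dodging output $Y_1'$ cannot be lifted back to a cover of $X_L$ in any canonical way. To fix this, the paper passes from the residue field $\mathbf k(\xi_0)$ at $\xi_0=f'(\xi)$ to an auxiliary extension $\sq K$ over which $\mathbf k(\xi_K)/\mathbf k(\xi_0)$ splits completely, spreads this out into a finite cover $W\to U_0$, applies $\P^1$-dodging to the spread-out $g_1$ of $W\to U_0$, and then takes the normalized base change of $Y_1'$ along $f'_L\colon X_L\to\P^1_L$. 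This produces a curve $Y'$ of degree equal to $[\sq K:\mathbf k(\xi_0)]$ over $X_L$ and a genuinely nontrivial finite \'etale cover $h\colon V'\to V$. Your choice of $f$ with pole divisor $\deg(f)\cdot[x_0]$ --- so $(f')^{-1}(s_0)=\{x_0\}$ --- is exactly what makes all of this unnecessary: because the fiber over $s_0$ is a single point, the rational map from $Y_1'$ to $X_L$ obtained over $W_0$ extends uniquely to a morphism $g'$ with $f'\circ g'=g_1'$, one gets $(g')^{-1}(x_0)=(g_1')^{-1}(s_0)$ on the nose, and the spread-out data collapses to $V'=V$ and $h=\mathrm{id}$. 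This is cleaner, produces a smaller $Y'$ (of degree $\deg g$ over $X_L$ rather than $[\sq K:\mathbf k(\xi_0)]$), and yields a slightly stronger conclusion than the theorem demands; the paper's route is more heavy-handed but does not rely on being able to engineer a special $f$. It is also worth noting that the paper's residue-field-splitting step is an instance of the same idea that drives Theorem \ref{thm:spread-out thm}, so the paper's version is more uniform with the surrounding machinery, while yours is more self-contained.

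Two small cautions. First, the assertion that $\eta_{\C_p}$ is non-type I because $f'$ is \'etale near $\xi$ is unnecessary and slightly off --- finite morphisms of adic curves preserve the type of a point regardless of ramification, which is the fact you actually need. Second, and more substantively, in the final assembly you claim the semistabilizing blow-ups of the closure of the graph inside $\fY_1'\times_{\mathscr S'}\fX''$ can be taken disjoint from $\bar y_1$, so that $\fY'\to\fY_1'$ is an isomorphism near $\bar y_1$. This needs an argument: unlike in Theorem \ref{theorem:modified_spreadout_for_dodging}, where one of the two factors is a formal $\P^1$ that is an isomorphism over a neighborhood of $\bar s_0$, here both $\fY_1'\to\mathscr S'$ and $\fX''\to\mathscr S'$ may be ramified at $\bar y_1$ and $\bar x_0$ respectively (ramification on the special fiber can appear even when $f'$ and $g_1'$ are \'etale at $x_0$ and $y_1$ on the generic fiber), so the fiber product --- and hence the graph closure --- can a priori be singular at $(\bar y_1,\bar x_0)$. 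To be fair, the paper's own treatment of Theorem \ref{thm:dodging_contracting_locus_general} passes over the same point by invoking ``the same argument as in Theorem \ref{theorem:modified_spreadout_for_dodging},'' so you are not worse off than the reference proof; but if you want a watertight write-up, you should either argue directly that after normalization and semistabilization the strict transform of the component through $(\bar y_1,\bar x_0)$ still maps finitely to $\fX_{L,s}$ and carries a classical point over $x_0$ specializing to its smooth locus, or supply the missing local computation showing that the relevant local model has no singularity at the special point.
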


\begin{proof} 
The statement is insensitive to replacing $K$ by a finite extension. Up to enlarging $K$ and passing to connected components, we may assume that both $X$ and $Y$ are geometrically connected, and that the image of $\xi_{\C_p}$ in $X$ is $K$-rational when it has type I\!I, I\!I\!I, or V. Let us first pick a finite map $f: X \ra \P^1_K$ that is \'etale at $x_0$. By Lemma \ref{lemma:dodging_contracting_locus_after_composition2}, up to enlarging $K$ if necessary, there exists a semistable model $\fX'$ of $X$, an automorphism $\alpha: \P^1_K \ra \P^1_K$, and an morphism of $p$-adic formal schemes  
\[ \widehat{f}': \fX' \ra \widehat \P^1_{\mO_K} \] which induces the composition $f' = \alpha \circ f$ on the generic fibers, such that $x_0$ lies on the non-contracting locus of $X$ with respect to $\widehat{f}'$, and $x_0$ specializes to a point on the smooth locus of the special fiber of $\fX'$.)

Let $\xi_K$ denote the image of $\xi_{\C_p}$ in $X$. Let $\xi_0 = f' (\xi_K)$ (resp. $s_0 = f' (x_0)$) be the image of $\xi_K$ (resp. $x_0$) in $\P^1_K$. Pick an affinoid open neighborhood $U_1$ of $\xi_K$ in $X$ and let $U_0\subset \P^1_K$ be the image of $U_1$ under $f'$. By shrinking $U_1$, we can make sure that $x_0\notin U_1$ and the restriction of $f'$ on $(f')^{-1}(U_0)$ is finite \'etale. 

Pick an arbitrary $\sq \xi\in Y$ in the pre-image of $\xi_K$. Since the points $\sq \xi, \xi_K, \xi_0$ are non-type I, we know that the stalks $\mO_{Y, \sq \xi}, \mO_{X, \xi_K}, \mO_{\P^1_K, \xi_0}$ are fields, and are equal to the residue fields $\mathbf k(\sq \xi), \mathbf k(\xi_K), \mathbf k(\xi_0)$, respectively (see \S \ref{sss:classification_of_points_on_P1} and \S \ref{sss:stalks}, and the references therein). Let us pick a finite extension $\sq K/ \mathbf k(\sq \xi)$ such that the separable extension $\mathbf k(\xi_K)/\mathbf k(\xi_0)$ (say, of degree $d$) splits completely over $\sq K$; in other words, the tensor product 
\[ 
\mO_{X, \xi_K} \otimes_{\mO_{\P^1_K, \xi_0}} \sq K = \mathbf  k (\xi_K) \otimes_{\mathbf k (\xi_0)} \sq K = \prod_{i=1}^d \sq K_i 
\] 
splits into a product of fields $\sq K_i$, with each $\sq K_i$ isomorphic to $\sq K$. By shrinking $U_1$ if necessary, the composition of field extensions $\mathbf k (\xi_0)\ra\mathbf k (\xi_K)\ra \mathbf k (\sq \xi)\ra \sq K$ spreads into a composition of finite \'etale covers 
\[g_{1,W}:  W \ra V_1\xrightarrow[]{g} U_1 \xrightarrow[]{f'} U_0, \]
where $V_1$ is an affinoid open neighborhood of $\sq \xi$ in $Y$, such that  
\begin{enumerate}
\item The fiber product $U_1 \times_{U_0} W$ splits as a disjoint union  
\[ 
U_1 \times_{U_0} W \cong \bigsqcup_{i = 1}^d W_i,
\] 
with each $W_i \cong W$ as adic spaces.
\item The natural morphism $U_1 \times_{U_0} W\ra U_1$ factors as \[U_1 \times_{U_0} W\cong \bigsqcup_{i = 1}^d W_i \ra U_1,\] where the last arrow is given by the composition of $W \ra V_1\ra U_1$ and the isomorphism $W_i \cong W$.
\end{enumerate}
Applying Theorem \ref{thm:spread-out thm}, by further shrinking $U_1$ (and hence shrinking $U_0$, $V_1$, and $W$) if necessary, there exists a finite cover $g_1:Y_1\ra \P^1_K$ of smooth projective curves over $K$ that spreads out $g_{1,W}:W\ra U_0$; namely, $g_{1,W}$ is precisely the base change of $g_1$ along $U_0\subset \P^1_K$.

Now, apply Theorem \ref{theorem:modified_spreadout_for_dodging} to the data $\left(g_1: Y_1\ra \P^1_K,\, \xi_{0,\C_p}, \, s_0\right)$ where $\xi_{0,\C_p}$ is the image of $\xi_{\C_p}$ under the base change $f'_{\C_p}: X_{\C_p}\ra \P^1_{\C_p}$. We know that there exist
\begin{itemize}
\item a finite extension $L/K$;
\item an open neighborhood $U'_0\subset \P^1_L$ of the image $\xi_{0,L}$ of $\xi_{0,\C_p}$ in $\P^1_L$\footnote{Equivalently, $\xi_{0,L}$ is the image of $\xi_{\C_p}$ under the composition $X_{\C_p}\xrightarrow[]{f'_{\C_p}} \P^1_{\C_p}\ra \P^1_L$} such that the image of $U'_0$ in $\P^1_K$ is contained in $U_0$ (in particular, $s_0\notin U'_0$ and the base change \[g_{1, W'}:W'\ra U'_0\] of $g_{1,W}$ along $U'_0\ra U_0$ is a finite \'etale cover);
\item a finite cover $g'_1: Y'_1\ra \P^1_L$ between smooth projective curves over $L$ such that $g'_1$ is a spread-out of $g_{1, W'}$;
\item a morphism of $p$-adic formal schemes 
\[\widehat{g}'_1: \fY'_1\ra \widehat{\P}^1_{\mO_L}\]
which induces $g'_1$ on the generic fibers, where $\fY'_1$ is a semistale model of $Y'_1$ over $\mO_L$,
\end{itemize}
such that there exists a classical point $y'_0$ in $(g'_1)^{-1}(s_0)$ lying on the non-contracting locus of $Y'_1$ with respect to $\widehat{g}'_1$. Moreover, $y'_0$ specializes to a point on the smooth locus $\fY'_{1,s}$ of $\fY'_1$.

Now we consider the normalized base change 
\[ 
\begin{tikzcd}
Y' \arrow[r] \arrow[d, "g'"] & Y'_1 \arrow[d, "g'_1"] \\ 
X_L \arrow[r, "f'_L"] & \P^1_L 
\end{tikzcd}
\]
where $f'_L$ is the base change of $f'$. By the same argument as in the proof of Theorem \ref{theorem:modified_spreadout_for_dodging} (but considering the fiber product $\fX' \times_{\widehat{\P}^1_{\mO_L}} \fY'_1$ instead), we know that, up to enlarging $L$, there exists a semistable integral model $\fY'$ of $Y'$ and an integral model $\widehat g': \fY' \ra \fX_{\mO_L}$ of $g'$ such that there exists a point $y_0$ in $(g')^{-1} (x_0)$ that lives on the non-contracting locus of $Y'$ with respect to $\widehat g'$, and that $y_0$ specializes to a point on the smooth locus $\fY'_s$ of $\fY'$.

Recall that $g'_1: Y_1\ra \P^1_L$ is a spread-out of $g_{1,W'}:W'\ra U'_0$. Hence $g' :Y'\ra X_L$ is a spread-out of 
\[g'|_{V'}: V'\ra U\]
where $V'=U_1\times_{U_0} W'$ and $U= U_1\times_{U_0} U'_0$.
Notice that $g'|_{V'}$ is also the base change of $U_1\times_{U_0}W\ra U_1$ along $U'_0\ra U_0$. By construction, $U_1\times_{U_0}W\ra U_1$ factors as \[U_1\times_{U_0}W\ra V\xrightarrow[]{g|_{V_1}} U_1.\]
Therefore, $g'|_{V'}:V'\ra U$ factors as 
\[V'\xrightarrow[]{h}U\xrightarrow[]{g_L|_V} U\]
where $V\subset Y_L$ is the base change of $V_1\subset Y$ along $U'_0\ra U_0$, as desired.
\end{proof}

\subsection{A refined $p$-adic monodromy theorem}

In practice, we apply Theorem \ref{thm:dodging_contracting_locus_general} to obtain the following refined version of the $p$-adic monodromy theorem for curves. This will be a key ingredient in the proof of Theorem \ref{thm:conjecture_for_projective_varieties_intro}.

\begin{theorem}[Refined $p$-adic monodromy theorem for curves]\label{thm:dodging}
    Let $X$ be a smooth projective curve over $K$ with good reduction. Let $\fX$ be a smooth integral model of $X$ over $\mO_K$. Let $\L$ be a de Rham $\Z_p$-local system on $X$ and $x_0 \in X(K)$ be a closed $K$-point (viewed as a $K$-rational classical point on the associated adic space).  Suppose that there exists a smooth projective curve $Y$ over $K$ with semistable reduction, together with a finite cover $g: Y \ra X$ such that $g^* \L $ is a semistable local system on $Y$. Then, up to enlarging $K$ if necessary, there exists a possibly different finite cover $f: Y' \ra X$ such that the following holds: 
    \begin{enumerate}
        \item $Y'$ is a smooth projective curve over $K$ with a semistable integral model $\fY'$ over $\mO_K$; 
        \item the pullback local system $f^* \L$ is a semistable local system on $Y'$; 
        \item there is a morphism of $p$-adic formal schemes \[ \widehat f: \fY' \ra \fX\] inducing $f$ on the generic fibers, such that there exists a classical point $y_0 \in (f)^{-1} (x_0)$ that lives on the non-contracting locus $Y'_{\mathrm{nc}}$ of $Y'$ with respect to $\widehat f$. 
        \item $y_0$ specializes to a point on the smooth locus of the special fiber $\fY'_s$ of $\fY'$.
    \end{enumerate}
\end{theorem}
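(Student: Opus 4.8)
\textbf{Proof strategy for Theorem \ref{thm:dodging}.}

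The plan is to combine the $p$-adic monodromy theorem (Theorem \ref{thm:p_adic_monodromy}) for the given $X$ with the dodging result Theorem \ref{thm:dodging_contracting_locus_general}, and then to transport the semistability of the local system along the normalized base change. First I would observe that, by hypothesis, we are already given a finite cover $g: Y \ra X$ with $Y$ of semistable reduction over $\mO_K$ and $g^*\L$ semistable on $Y$; this is exactly the output of the $p$-adic monodromy theorem applied to the de Rham local system $\L$, so no further work is needed to produce such a $g$. The point $x_0 \in X(K)$ specializes to some closed point $\cl x_0$ on the smooth special fiber $\fX_s$ of $\fX$. Since $\fX_s$ is a curve and $g$ is a finite cover, the fibre $g^{-1}(x_0)$ is a finite set of classical points; however, for an arbitrary integral model $\widehat g:\fY\ra\fX$ (which exists after enlarging $K$ by Raynaud and Deligne--Mumford), these points may specialize onto contracting components of $\fY_s$, and that is precisely the obstruction we need to remove via dodging.

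Next I would pick a non-type I point $\xi_{\C_p}$ on $X_{\C_p}$ --- for instance the Gaussian point of a small closed disc on $X_{\C_p}$ chosen so that $x_0$ lies outside that disc and so that $\L$ is, on an open neighbourhood of $\xi_{\C_p}$, a spread-out of a finite \'etale cover (one can always arrange this by passing to a small enough neighbourhood). Applying Theorem \ref{thm:dodging_contracting_locus_general} to the finite cover $g: Y \ra X$ together with $\xi_{\C_p}$ and $x_0$, we obtain, after a finite extension $L/K$: an open neighbourhood $U \subset X_L$ of $\xi_L$ avoiding $x_0$; a nonempty open $V \subset g_L^{-1}(U)$ with $g_L|_V$ finite \'etale; a finite \'etale cover $h: V' \ra V$; a finite cover $g': Y' \ra X_L$ of smooth projective curves spreading out the composition $V' \xrightarrow{h} V \xrightarrow{g_L|_V} U$ across $X_L$; and a morphism of $p$-adic formal schemes $\widehat{g}': \fY' \ra \fX_{\mO_L}$ with $\fY'$ a semistable model of $Y'$, such that some classical point $y_0 \in (g')^{-1}(x_0)$ lies on the non-contracting locus $Y'_{\mathrm{nc}}$ and specializes to the smooth locus of $\fY'_s$. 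After renaming $L$ as $K$ and $g'$ as $f: Y' \ra X$, this gives conditions (1), (3), and (4) of the theorem immediately.

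It remains to verify condition (2): that $f^*\L = (g')^*\L$ is a semistable local system on $Y'$. Here is where the main obstacle lies, since $g'$ and $g$ are genuinely different covers. The key point is that $g'$ agrees with $g_L$ over the open subset $U$ after the finite \'etale base change along $h$; more precisely, $V' \subset Y'$ maps finite \'etale onto $V \subset Y_L$ via a map compatible with the two projections to $X_L$, so the pullback of $g_L^*\L$ to $V'$ coincides with the restriction of $(g')^*\L$ to $V'$. Since $g^*\L$ is semistable on $Y$ and semistability is preserved under the finite base change to $Y_L$ and then under the finite \'etale pullback to a connected component, we learn that $(g')^*\L|_{V'}$ is semistable, i.e. crystalline/semistable \emph{at the classical points of $V'$} --- but that alone only controls $\L$ on an open subset of $Y'$, not globally. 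To upgrade this to semistability of $(g')^*\L$ on all of $Y'$, I would invoke the fact that $(g')^*\L$ is de Rham on $Y'$ (being the pullback of the de Rham local system $\L$ under the algebraic, hence adic, map $g'$, using the Liu--Zhu rigidity theorem since $\L$ is de Rham at one classical point) and then apply the $p$-adic monodromy theorem (Theorem \ref{thm:p_adic_monodromy}) to $(g')^*\L$ on $Y'$: after a finite cover $Y'' \ra Y'$, the pullback becomes semistable. One then shows that the monodromy-theoretic obstruction to $(g')^*\L$ itself being semistable is detected by the log isocrystal over the special fiber, and uses the rigidity results of \S\ref{sec:log_crystals} and \S\ref{sec:log_crystals_on_log_schemes} (Theorem \ref{theorem:log_crystal_2_global}) together with the fact that $(g')^*\L$ restricted to $V'$ is already known to be semistable (hence its associated log isocrystal has trivial "extra monodromy" on a smooth boundary component meeting $V'$), to conclude that no finite cover is needed, i.e. $(g')^*\L$ is already semistable on $Y'$. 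This last rigidity bootstrapping --- transporting semistability from the open locus $V'$ to the whole curve $Y'$ using the constancy of the nilpotent rank of monodromy --- is the technical heart of the argument, and it is exactly the kind of statement the earlier sections were designed to supply.
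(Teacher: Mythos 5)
Your argument for producing $g'\colon Y' \ra X_L$ satisfying conditions (1), (3), and (4) by a single application of Theorem \ref{thm:dodging_contracting_locus_general} is fine, but your treatment of condition (2) has a genuine gap, and the step you flag as ``the technical heart of the argument'' is not one the earlier sections actually supply.

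The trouble is that Theorem \ref{thm:dodging_contracting_locus_general} only constrains $g'$ to agree (after the auxiliary finite \'etale pullback $h$) with $g_L$ over the open neighborhood $U$ of $\xi_L$. Over the rest of $X_L$ the cover $g'$ may be arbitrarily different from $g_L$, and there is no reason for $(g')^*\L$ to be semistable there. You correctly observe that $(g')^*\L|_{V'}$ is semistable, but then propose to upgrade this to global semistability of $(g')^*\L$ by a ``rigidity bootstrap'' using Theorem \ref{theorem:log_crystal_2_global}. This step does not work: Theorems \ref{theorem:log_crystal_1}, \ref{theorem:log_crystal_2}, and \ref{theorem:log_crystal_2_global} are statements about the monodromy of a log (iso)crystal that is already globally defined over the special fiber --- they presuppose that the local system is semistable on all of $Y'$ so that there is an associated log $F$-isocrystal on $\fY'_s$ in the first place. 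They cannot be used to establish semistability of $(g')^*\L$ outside $V'$; they only convert semistability into crystallinity once semistability is known. Concluding ``de Rham plus semistable on an open subset implies semistable everywhere'' on a curve with good reduction is essentially the content of Shankar's conjecture (Theorem \ref{thm:conjecture_for_projective_varieties_intro}), which is being proved \emph{using} Theorem \ref{thm:dodging}, so invoking any form of it here would be circular. The alternative patch you suggest --- passing to a further finite cover $Y'' \ra Y'$ via the $p$-adic monodromy theorem --- also fails, because it destroys the non-contracting property of $y_0$ you just established.

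The paper's actual proof closes this gap with a compactness argument on $X_{\C_p}$, applying the local constructions at \emph{every} point rather than one. For each $\xi \in X_{\C_p}$ one produces a finite extension $L(\xi)/K$, an open neighborhood $U_\xi$, a finite cover $f_\xi\colon Y_\xi \ra X_{L(\xi)}$ with semistable model and a formal model $\widehat f_\xi \colon \fY_\xi \ra \fX_{\mO_{L(\xi)}}$, such that the pullback of $\L$ is semistable over $U_\xi$ and there is a preimage of $x_0$ on the non-contracting locus specializing to the smooth locus. For type I points $\xi$ this follows from the local $p$-adic monodromy theorem around $\xi$ (Proposition \ref{prop:Shimizu_generalization}) with $f_\xi = \mathrm{id}$; for non-type I points one applies Theorem \ref{thm:dodging_contracting_locus_general} as you do. By compactness, finitely many $U_{\xi,\C_p}$ cover $X_{\C_p}$. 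One then passes to a common finite extension $L$ and takes $Y'$ to be the normalized fiber product of all the $Y_{\xi,L}$ over $X_L$. Because $Y'$ dominates each $Y_\xi$, the pullback of $\L$ is semistable on the open preimage of every $U_\xi$, hence on all of $Y'$, and the non-contracting property is preserved (with the usual care from Remark \ref{remark:enlarging_K} about further blow-ups after enlarging the field). This global control is exactly what is missing from your single-point version of the argument.
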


To prove Theorem \ref{thm:dodging}, we need the following generalization of \cite[Theorem 9.2]{Shimizu_2}, which also deal with non-classical type I points.

\begin{proposition}[Local $p$-adic monodromy theorem around a type I point]\label{prop:Shimizu_generalization}
Let $X$ be a smooth rigid analytic space over $K$ (viewed as an adic space) and let $\L$ be a de Rham $\Z_p$-local system on $X$. Let $x\in X_{\C_p}$ be a type I point. Then there exist a finite extension $L/K$ and an open neighborhood $U\subset X_{L}$ of the image $x'$ of $x$ in $X_L$ such that $\L|_U$ is semistable at all classical points in $U$. 
\end{proposition}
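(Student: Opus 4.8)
\textbf{Proof strategy for Proposition \ref{prop:Shimizu_generalization}.}

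The plan is to reduce to the statement of \cite[Theorem 9.2]{Shimizu_2} (local $p$-adic monodromy around a \emph{classical} type I point) by a Galois descent and spreading-out argument. First I would observe that the question is \'etale-local on $X$, and since $x$ is a type I point, the completed residue field $\sH(x)$ is a finite extension of $\widehat{\overline{K}} = \C_p$-completed point of some classical field; more precisely, by the theory of adic spaces the local ring $\mO_{X_{\C_p}, x}$ has residue field which is the completion of an algebraic extension of the function field. I want to pass to a suitable finite cover of a small neighborhood on which the situation becomes ``classical.'' Concretely, I would choose, after shrinking, an \'etale map $U \to \mathbb{D}^n$ (a polydisc) sending $x$ to a classical point; such a map exists locally because $x$ is of type I, so its image in $\mathbb{D}^n$ under any coordinate chart can be moved to a classical point after a finite extension $L/K$, using that the residue field $\mathbf{k}(x)$ of a type I point is an algebraic extension of $K$ composed with the function field. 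Here the non-classicality of $x$ means $\mathbf{k}(x)$ may be an infinite algebraic extension of $K$, so one cannot immediately assume $x$ is $K'$-rational for a \emph{finite} $K'$; this is exactly the gap between the classical statement and what is asked.

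The main device would be the following: since $\L$ is de Rham, by the theory of \cite{Liu_Zhu} and \cite{DLLZ2} the associated $p$-adic Riemann--Hilbert module $(\mathbb{D}_{\mathrm{dR}}(\L), \nabla)$ is a vector bundle with an integrable connection on $X$, and de Rham-ness at a point is detected by the existence of a full set of flat sections after base change to the completed stalk. The key point is that the hypothesis ``$\L$ is de Rham'' is a condition on $\L$ \emph{as a local system on $X$}, hence is preserved under shrinking $U$ and under pullback along finite \'etale maps. So I would: (i) shrink to a neighborhood $U$ of $x$ admitting an \'etale chart to a polydisc; (ii) observe that the image point in the polydisc, while possibly non-classical, becomes classical after a finite cover of the polydisc branched away from a neighborhood of the image point — this uses a spreading-out / approximation argument in the style of Theorem \ref{thm:spread-out thm} and Proposition \ref{prop:p_adic_Runge}, replacing the possibly transcendental coordinates of $x$ by nearby algebraic (even $L$-rational after finite extension) ones via $p$-adic approximation; (iii) pull $\L$ back along this cover to get a de Rham local system over a space in which our point of interest is genuinely $L$-rational classical; (iv) apply \cite[Theorem 9.2]{Shimizu_2} there to conclude semistability at all classical points of a small neighborhood; (v) descend: since the finite cover is \'etale near the point, semistability of the pullback at the classical points of the cover forces semistability of $\L$ at the corresponding classical points downstairs, because semistability of a Galois representation can be checked after a finite unramified (more generally, any finite) base extension of the residue field in the sense that if $\rho|_{G_{K'}}$ is semistable and $K'/K_v$ is finite then... — actually here one uses that for a classical point $u$ downstairs with a classical point $\tilde u$ above it, $\L|_{\tilde u}$ is $\L|_u$ restricted to an open subgroup, and a Galois representation which is semistable after restriction to an open subgroup is \emph{potentially} semistable, but we need honest semistability.

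To handle step (v) correctly, rather than descending semistability I would instead arrange the cover to be of the form needed so that the classical points downstairs are themselves hit by classical points upstairs whose residue field is \emph{the same} — i.e., use that in a neighborhood where the cover $V \to U$ is finite \'etale, a $K'$-rational classical point $u$ of $U$ with $\L|_u$ de Rham lifts to a $K'$-rational classical point of $V$ provided one of the sheets is rational, which can be guaranteed after a further finite extension since there are only finitely many sheets over any given classical point. Then semistability of $\L|_{\tilde u}$ over the same field $K'$ is literally semistability of $\L|_u$. The main obstacle I anticipate is precisely this descent/bookkeeping of fields of definition: ensuring that after the spreading-out and the finite cover, the classical points in the neighborhood $U$ we end up with are hit by classical points over fields no larger than necessary, so that ``semistable at all classical points of the cover'' translates to ``semistable at all classical points of $U$'' without an unwanted field extension creeping in that would only give \emph{potential} semistability. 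I expect this to be manageable by combining the approximation techniques of \S\ref{subsection:spread-out thm} with the observation that semistability is insensitive to unramified base change and that one may always absorb a fixed finite extension into $L$ at the start.
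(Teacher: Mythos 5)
Your approach diverges significantly from the paper's, and the central step does not work. Let me first describe the paper's argument: it works entirely on $X_{\C_p}$, invoking \cite[Proposition 9.6]{Shimizu_2} to produce a neighborhood $U_x \subset X_{\C_p}$ of the type~I point $x$ isomorphic to a closed polydisc over $\C_p$, and \cite[Theorem 9.7]{Shimizu_2} to trivialize the Liu--Zhu connection $(D_{\mathrm{dR}}(\L),\nabla)$ on that polydisc. The decisive observation is then elementary: a $\C_p$-polydisc always contains a $K'$-rational point for some finite $K'/K$, hence its image $U'_x \subset X_{K'}$ is a polydisc over $K'$ on which the connection (already defined over $K$) is trivial. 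Shrinking to a torus $\mathbb T^n_{K'}$ and running \cite[\S 9.4]{Shimizu_2} produces, after a further finite extension $L/K'$, semistability at all classical points of $U'_{x,L}$, which is the desired neighborhood. No covers of $X$ are constructed and there is no descent of semistability across a finite map.

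The main gap in your proposal is step (ii). A finite cover of (a neighborhood in) $X$, however constructed, cannot turn a non-classical type~I point $x'$ into a classical one: the residue field $\mathbf{k}(x')$ of $x'$ in $X_L$ can be an infinite (or transcendental) extension of $L$, and pulling back along a finite \'etale morphism changes this field only by a finite amount. The spreading-out machinery of Theorem~\ref{thm:spread-out thm} and Proposition~\ref{prop:p_adic_Runge} is designed to approximate a given finite \'etale cover of a neighborhood by a globally-defined algebraic one; it has no bearing on the non-classicality of the point. Your alternative gloss --- ``replacing the possibly transcendental coordinates of $x$ by nearby algebraic ones'' --- is really an approximation of $x$ by a nearby classical point $y$ (which needs no covers), but then applying \cite[Theorem 9.2]{Shimizu_2} at $y$ gives a neighborhood of $y$ with no control over whether it contains $x$; the paper's argument is precisely designed to get that control, by trivializing the connection on an explicit polydisc containing $x$ and only afterward choosing the $K'$-rational point inside it. Your step (v) has a second, independent, problem: semistability of $\L|_{\tilde u}$ for $\tilde u$ over a finite extension $K''/K'$ only forces potential semistability of $\L|_u$ when $K''/K'$ is ramified, and the needed extensions vary with $u$; neither absorbing a fixed finite extension into $L$ nor forcing one sheet of the cover to be $K'$-rational can be done uniformly in $u$. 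In short, the paper's proof sidesteps both difficulties by never introducing a cover, and I would encourage you to study that route.
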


\begin{proof}
With some modifications, the proof of \cite[Theorem 9.2]{Shimizu_2} remains valid.

First, by the same argument as in \cite[Proposition 9.6]{Shimizu_2}, $x$ admits an open neighborhood $U_x$ that is isomorphic to a closed unit polydisc over $\C_p$. The $p$-adic Riemann-Hilbert functor of Liu-Zhu \cite{Liu_Zhu} yields a vector bundle with connection $(D_{\mathrm{dR}}(\L), \nabla)$ over $X$. By base change, we obtain a vector bundle with connection $(D_{\mathrm{dR}}(\L)_{\C_p}, \nabla)$ over $X_{\C_p}$. By \cite[Theorem 9.7]{Shimizu_2}, up to shrinking the polydisc, we may assume that the connection is trivial on $U_x$.

Notice that $U_x$ contains a $K'$-rational point for some finite extension $K'/K$. In particular, the image $U'_x$ of $U_x$ in $X_{K'}$ is isomorphic to the closed unit polydisc over $K'$. By construction, the vector bundle with connection $(D_{\mathrm{dR}}(\L)_{K'}, \nabla)$ is trivial over $U'_x$, hence $\L|_{U'_x}$ is horizontal de Rham. By shrinking $U'_x$, we may assume that $U'_x$ is isomorphic to 
\[\mathbb{T}^n_{K'}\cong \spa\left(K'\langle T_1^{\pm}, \ldots, T_n^{\pm}\rangle, \mO_{K'}\langle T_1^{\pm}, \ldots, T_n^{\pm}\rangle\right).\]
Finally, by the same argument as in \cite[\S 9.4]{Shimizu_2}, there exists a finite extension $L/K'$ such that $\L|_{U'_{x, L}}$ is horizontal de Rham and semistable at all classical points.
\end{proof}

\begin{proof}[Proof of Theorem \ref{thm:dodging}]
We first claim that, for any given point $\xi \in X_{\C_p}$ (of any type), there exist
\begin{itemize}
\item a finite extension $L(\xi)$ over $K$;
\item an open neighborhood $U_{\xi}$ of the image of $\xi$ in $X_{L(\xi)}$;
\item a finite cover $f_{\xi}: Y_{\xi}\ra X_{L(\xi)}$ of smooth projective curves over $L(\xi)$ where $Y_{\xi}$ admits a semistable integral model $\fY_{\xi}$ over $\mO_{L(\xi)}$, together with a morphism of $p$-adic formal schemes
\[
\widehat f_{\xi}: \fY_{\xi} \ra \fX_{\mO_{L(\xi)}}
\] 
that induces $f_{\xi}$ on the generic fibers,
\end{itemize}
such that the following two conditions hold
\begin{itemize}
    \item Let $f_{U_{\xi}}: V_{\xi} \ra U_{\xi}$ denote the base change of $f_{\xi}$ along $U_{\xi} \subset X_{L(\xi)}$. Then $f_{U_{\xi}}^* (\L|_{U_{\xi}})$ is semistable when restricted to every classical point in $V_{\xi}$. 
    \item There exists a classical point $\sq y_{\xi} \in Y_{\xi}$ lying above $x_0$, that lives on the non-contracting locus with respect to $\widehat f_{\xi}$, and moreover specializes to a point on the smooth locus of the special fiber $\fY_{\xi, s}$ of $\fY_{\xi}$. 
\end{itemize}
If $\xi$ is a type I point, the claim follows from Proposition \ref{prop:Shimizu_generalization}. In this case, $f_{\xi}$ is simply the identity map. If $\xi$ is non-type I, we first restrict $f: Y \ra X$ to some neighborhood $U$ of the image of $\xi$ in $X$ over which $f$ is \'etale and such that $x_0\notin U$, and then apply Theorem \ref{thm:dodging_contracting_locus_general}. 

So far, for each $\xi\in X_{\C_p}$, we have produced a quadruple
\[\left(L(\xi),\,\,\, U_{\xi}\subset X_{L(\xi)},\,\,\, f_{\xi}: Y_{\xi}\ra X_{L(\xi)},\,\,\, \widehat f_{\xi}: \fY_{\xi} \ra \fX_{\mO_{L(\xi)}} \right).\]
By shrinking all the other $U_{\xi}$'s if necessary, we may assume that $x_0 \notin U_{\xi}$ for all points $\xi \ne x_0$. Notice that $\{U_{\xi, \C_p}\}_{\xi\in X_{\C_p}}$ cover $X_{\C_p}$. By compactness, $X_{\C_p}$ is covered by a finite sub-collection $\{U_{\xi, \C_p}\}_{\xi\in \mathscr{C}}$. Pick a finite extension $L/K$ containing $L(\xi)$ for all $\xi\in \mathscr{C}$, and hence $\{U_{\xi, L}\}_{\xi\in \mathscr{C}}$ form a cover of $X_L$.

Now let us consider the curve $Y'$ formed by the normalization of the fiber product of all $\{Y_{\xi, L} \}_{\xi \in \mathscr{C}}$ over $X_L$ and denote the canonical map to $X_L$ by $f: Y' \ra X_L$. By construction, after replacing $L$ by a further finite extension if necessary, there exists a semistable integral model $\fY'$ of $Y'$ over $\mO_L$ and a map 
\[ \widehat f: \fY' \ra \fX_{\mO_L}\] inducing $f$ on the generic fibers, such that $f^* \L$ is semistable, and there exists a classical point $y_0 \in Y'$ above $x_0$ which lives on the non-contracting locus. (Here we have used Remark \ref{remark:enlarging_K} once again.) Moreover, by construction, $y_0$ specializes to a point on the smooth locus of $\fY'_s$. This finishes the proof of the theorem.
\end{proof}

\newpage 

\section{\large The crystalline rigidity conjecture} \label{sec:conjecture} 

In this section, we prove the crystalline rigidity conjecture of Shankar (Theorem \ref{thm:conjecture_for_projective_varieties_intro}).  Let us restate the result for convenience of the reader. 

\begin{theorem}\label{thm:rigidity_conjecture_main_text}
Let $X$ be a geometrically connected smooth projective variety over $K$ with good reduction and let $\L$ be an \'etale $\Z_p$-local system on $X$. If $\L|_{x_0}$ is potentially crystalline at one classical point $x_0$ on $X$, then $\L$ is potentially crystalline at all classical points on $X$. 
\end{theorem}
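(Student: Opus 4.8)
The plan is to reduce the statement to the case of curves, and there to combine the refined $p$-adic monodromy theorem for curves (Theorem \ref{thm:dodging}) with the rigidity of monodromy of log $(F\text{-})$isocrystals over smooth schemes (Theorem \ref{theorem:log_crystal_1}). First I would observe that $\L|_{x_0}$ potentially crystalline is in particular de Rham, so by the theorem of Liu--Zhu \cite{Liu_Zhu} recalled in the introduction $\L$ is a de Rham local system on $X$; hence $\L|_x$ is de Rham at every classical point $x$, and the restriction of $\L$ to any smooth closed subvariety of $X$ is again de Rham. Given an arbitrary target classical point $x_1$, if $x_1$ has the same specialization as $x_0$ I would first pass through an auxiliary classical point $x_0'$ with a distinct specialization (connecting $x_0,x_0'$ by a smooth projective curve of good reduction inside $X$ via Proposition \ref{mainprop:reducing_to_curves} and treating $x_0'$ by the curve case below), so that we may assume $x_0$ and $x_1$ have distinct specializations. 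Proposition \ref{mainprop:reducing_to_curves} then produces, after a finite extension of $K$, a smooth projective curve $C\hookrightarrow X$ with good reduction (smooth model $\mathfrak C/\mO_K$) passing through $x_0$ and $x_1$. It therefore suffices to prove: for such a $C$, the de Rham local system $\mathbb M:=\L|_C$, and a classical point $x_0$ with $\mathbb M|_{x_0}$ potentially crystalline, $\mathbb M$ is potentially crystalline at every classical point of $C$.

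For the curve case the main device is the following: if $f\colon Y'\to C$ is a finite cover with $Y'$ of semistable reduction over $\mO_K$ such that $f^*\mathbb M$ is a semistable local system, and $y$ is a classical point of $Y'$ lying above a point of $C$ at which $\mathbb M$ is potentially crystalline, then $(f^*\mathbb M)|_y$ is both semistable and potentially crystalline, hence crystalline --- a semistable $p$-adic representation which is potentially crystalline has vanishing monodromy, since the monodromy operator on $D_{\mathrm{st}}$ does not change under finite base change. By Theorem \ref{thm:p_adic_monodromy} such a cover exists; and, for a given target point of $C$, I would run the refined $p$-adic monodromy theorem (Theorem \ref{thm:dodging}) at that point to arrange, in addition, a classical point $y_\star$ above it lying on the non-contracting locus of $Y'$ and specializing to the smooth locus of $\mathfrak Y'_s$. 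Then $y_\star$ lies on the smooth boundary $U=Z^{\mathrm{sm}}$ of a \emph{finite} component $Z\subset\mathfrak Y'_s$, and the induced map $Z\to\mathfrak C_s$ is finite and surjective (as $\mathfrak C_s$ is an irreducible smooth curve). Feeding the device above a point of $\mathrm{sp}^{-1}(U)$ lying over $x_0$ (automatically available when Theorem \ref{thm:dodging} is run at $x_0$, since then $y_\star$ itself lies over $x_0$) gives a classical point of $\mathrm{sp}^{-1}(U)$ at which $f^*\mathbb M$ is crystalline, i.e.\ at which the associated log $F$-isocrystal $\mathcal E$ over $(\mathfrak Y'_s)^{\log}$ has trivial monodromy; and on the smooth boundary $U$ the log structure is the standard one $\N\to\mO_U$, $1\mapsto 0$. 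Theorem \ref{theorem:log_crystal_1} then forces $\mathcal E|_{U^{\log}}$ to descend to an $F$-isocrystal on $U$, so by Definition \ref{def:st_local_sys} the local system $f^*\mathbb M$ is crystalline at \emph{all} classical points of $\mathrm{sp}^{-1}(U)$.

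Finally I would transport crystallinity back down to $C$. Since $Z\to\mathfrak C_s$ is finite and surjective, away from a finite set $B\subset\mathfrak C_s$ (the branch locus, together with the images of the finitely many singular points of $\mathfrak Y'_s$ on $Z$) the map $\widehat f$ identifies, fibrewise, the residue disc of a point $\bar x\notin B$ with the residue disc of a point of $Z^{\mathrm{sm}}$ above it (after enlarging $K$); consequently every classical point of $C$ whose specialization avoids $B$ lifts into $\mathrm{sp}^{-1}(U)$ and is therefore a point at which $\mathbb M$ is potentially crystalline. Running Theorem \ref{thm:dodging} once at $x_0$ thus already shows $\mathbb M$ is potentially crystalline at all classical points of $C$ outside finitely many residue discs; for a point $x_1$ inside such a ``bad'' disc I would run Theorem \ref{thm:dodging} again at $x_1$, choose a previously-treated auxiliary classical point $x_0''$ whose specialization avoids the new bad locus, apply the device to a point of $\mathrm{sp}^{-1}(U)$ over $x_0''$, and use Theorem \ref{theorem:log_crystal_1} to propagate crystallinity of $f^*\mathbb M$ to $y_\star$; this forces $\mathbb M|_{x_1}$ to be potentially crystalline, completing the curve case and hence the theorem. (If one ever needed $y_\star$ to specialize to a node of $\mathfrak Y'_s$, the more robust rigidity statement over semistable log schemes, Theorem \ref{theorem:log_crystal_2_global} and Theorem \ref{cor:rigidity_crystalline_over_semistable}, would be invoked in place of Theorem \ref{theorem:log_crystal_1}.)

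I expect the essential difficulty to be this last Step: passing to the finite cover $Y'$ typically destroys good reduction and may contract components, so one must both invoke the dodging arguments of \S\ref{sec:dodging} (Theorem \ref{thm:dodging}) to keep the relevant point on a finite component inside the smooth locus, and transport crystallinity back down through the finite surjection $Z\to\mathfrak C_s$, which is what necessitates the two-step ``fill in the bad discs'' bookkeeping. One must also verify carefully the compatibility used throughout --- that, for a classical point specializing into the smooth locus of a semistable special fiber, crystallinity of the Galois representation $(f^*\mathbb M)|_y$ is precisely the vanishing of the monodromy of $\mathcal E$ at that point --- which rests on the Hyodo--Kato construction together with the pointwise description of semistable local systems underlying Definition \ref{def:st_local_sys}.
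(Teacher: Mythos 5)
Your argument is correct, and it reproduces the paper's overall skeleton (reduce to curves via Proposition \ref{mainprop:reducing_to_curves}, apply Theorem \ref{thm:dodging} to produce a semistable cover $f:Y'\to C$ with a non-contracting classical point $y_\star$ above $x_0$ specializing to the smooth locus, use Theorem \ref{theorem:log_crystal_1} to spread trivial monodromy of $\mE$ along the finite component $Z$, and iterate the dodging step at the finitely many remaining bad residue discs). Where you diverge is the mechanism for transporting crystallinity from $Z^{\mathrm{sm}}$ back down to almost all classical points of $C$. The paper introduces the notion of \emph{linked} finite components (Definition \ref{defn: linked}) and proves Lemma \ref{lemma: linked} using the connectedness of the tubes $X_w$ together with Mann's openness theorem for finite rigid maps; this propagates trivial monodromy of $\mE$ to the smooth boundary of \emph{every} finite component at once, and is what makes the paper's notion of ``good point'' (all preimages non-contracting) usable. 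You instead work with the single component $Z$ containing $\bar y_\star$ and observe that, outside the finite set $B$ consisting of the branch locus of $g=f_s|_Z$ and the images of the nodes of $\fY'_s$ lying on $Z$, any preimage $\bar y\in Z^{\mathrm{sm}}$ of $\bar x$ is a smooth unramified point, and then (after an unramified extension of $K$ matching residue fields) the map $\widehat f$ is \'etale at $\bar y$ as a map of two-dimensional regular complete local $\mO_K$-algebras whose reduction mod $\varpi$ is an isomorphism; Hensel's lemma then makes $\mathrm{sp}^{-1}(\bar y)\isom\mathrm{sp}^{-1}(\bar x)$ an isomorphism of residue discs, so every classical point of $C$ over $\bar x$ lifts into $\mathrm{sp}^{-1}(Z^{\mathrm{sm}})$. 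This is leaner and avoids the linkage bookkeeping, at the cost of not producing the paper's byproduct that $\mE$ descends on the entire non-contracting locus; it also trades Mann's openness theorem for a direct deformation-theoretic calculation, which is worth writing out explicitly since it is the crux of your alternative. Your ``device'' (semistable plus potentially crystalline implies crystalline, because $D_{\mathrm{st}}$ and its monodromy operator do not change under finite base change) is exactly the observation the paper uses implicitly in the bad-point step, and the final iteration---running Theorem \ref{thm:dodging} at a bad point $x_1$ and feeding a previously-treated auxiliary point outside $B\cup B'$ into the new cover---matches the paper's final paragraph.
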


The idea is to first reduce to the case of curves, and then combine the $p$-adic monodromy theorem for curves (as well as its refined version) with the rigidity results for log $F$-isocrystals that we have developed in \S \ref{sec:log_crystals} and \S \ref{sec:log_crystals_on_log_schemes}.

\subsection{Reduction to curves} \label{ss:reducing_to_curves}

The goal of this subsection is to reduce the proof of Theorem \ref{thm:rigidity_conjecture_main_text} to the case of smooth projective curves. As stated in the introduction, the key is to show that we can connect two arbitrary classical points on $X$ by a smooth projective curve with good reduction. To proceed, let us switch to the following setup (only for \S \ref{ss:reducing_to_curves})
\begin{itemize}
    \item Assume that the residue field $k$ of $K$ is algebraically closed.  
    \item Let $\mathcal X$ be a smooth projective scheme over $\spec \mO_K$ of relative dimension $r \ge 2$. 
    \item Let $X$ be the generic fiber of $\mX$ over $K$ and let $\mX_s$ be its special fiber over $k$. 
    \item Let $\iota: \mX \hookrightarrow \P^n_{\mO_K}$ be a closed embedding of $\mX$ into the projective space of relative dimension $n$ with $n > r$, and write $\iota_s: \mX_s \ra \P^n_{k}$ for its base change to $\spec k$. 
    \item Let $\mathcal T \subset \mX$ be a closed subscheme that is smooth over $\mO_K$ and let $\mathcal T_s$ denote the special fiber of $\mathcal T$ over $k$.  
  \end{itemize}

Let us first prove the following two elementary results in algebraic geometry (Lemma \ref{lemma:no_obstruction_in_lifting} and Lemma \ref{lemma:bertini}). The first result is a simple observation on deformations of large degree hypersurfaces in $\P^n_{k}$ (and makes no reference to $\mX$). 
 
\begin{lemma}[Unobstructed lifting of hypersurfaces of sufficiently large degree]\label{lemma:no_obstruction_in_lifting} 
Notice that $\mathcal T_s$ is a smooth closed subvariety in $\P^n_k$ of codimension at least $1$. Then there exists a sufficiently large integer $d$ such that any hypersurface $H_s \subset \P^n_{k}$ of degree at least $d$ containing $\mathcal T_s$ lifts to a hypersurface $\mathcal H \subset \P^n_{\mO_K}$ that contains $\mathcal T$. 
\end{lemma}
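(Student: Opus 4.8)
The plan is to interpret degree-$e$ hypersurfaces through $\mathcal{T}_s$ (resp. through $\mathcal{T}$) as nonzero global sections of the twisted ideal sheaf $\mathcal{I}_{\mathcal{T}_s}(e)$ on $\P^n_k$ (resp. of $\mathcal{I}_{\mathcal{T}}(e)$ on $\P^n_{\mO_K}$, with reduction mod $\varpi$ nonzero), and then to lift such a section along the reduction map. So the statement reduces to showing that, for $e$ sufficiently large, the reduction map
\[
H^0\big(\P^n_{\mO_K}, \mathcal{I}_{\mathcal{T}}(e)\big) \lra H^0\big(\P^n_{k}, \mathcal{I}_{\mathcal{T}_s}(e)\big)
\]
is surjective.

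The first step is a base-change observation: since $\mathcal{T}$ is flat over $\mO_K$, tensoring $0\to\mathcal{I}_{\mathcal{T}}\to\mO_{\P^n_{\mO_K}}\to\mO_{\mathcal{T}}\to 0$ with $k$ over $\mO_K$ kills the $\mathrm{Tor}_1$ term, giving $\mathcal{I}_{\mathcal{T}}\otimes_{\mO_K}k\cong\mathcal{I}_{\mathcal{T}_s}$; the same sequence shows $\mathcal{I}_{\mathcal{T}}$ is itself $\mO_K$-flat. Next, as $\P^n_{\mO_K}$ is projective over the Noetherian ring $\mO_K$ and $\mathcal{I}_{\mathcal{T}}$ is coherent, Serre vanishing provides an integer $d$ with $H^1(\P^n_{\mO_K},\mathcal{I}_{\mathcal{T}}(e))=0$ for all $e\ge d$. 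Taking the long exact cohomology sequence of the multiplication-by-$\varpi$ sequence
\[
0 \lra \mathcal{I}_{\mathcal{T}}(e) \xrightarrow{\ \varpi\ } \mathcal{I}_{\mathcal{T}}(e) \lra \mathcal{I}_{\mathcal{T}_s}(e) \lra 0
\]
(whose exactness uses $\mO_K$-flatness of $\mathcal{I}_{\mathcal{T}}$ for injectivity on the left, and the base-change identification for the cokernel) then yields the desired surjectivity of $H^0(\P^n_{\mO_K},\mathcal{I}_{\mathcal{T}}(e))\to H^0(\P^n_k,\mathcal{I}_{\mathcal{T}_s}(e))$ for every $e\ge d$.

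Finally, I would check that lifting the \emph{equation} genuinely lifts the \emph{hypersurface}. Given $H_s\subset\P^n_k$ of degree $e\ge d$ containing $\mathcal{T}_s$, its defining equation is a nonzero $\bar f\in H^0(\P^n_k,\mathcal{I}_{\mathcal{T}_s}(e))$; choose a lift $f\in H^0(\P^n_{\mO_K},\mathcal{I}_{\mathcal{T}}(e))$, i.e. a degree-$e$ homogeneous polynomial over $\mO_K$ vanishing on $\mathcal{T}$ with $f\bmod\varpi=\bar f\ne 0$. The only delicate point is that $\mathcal H:=V(f)$ is an honest hypersurface (flat over $\mO_K$): since $\bar f\neq 0$, $f$ is not divisible by $\varpi$, so $\mO_K[x_0,\dots,x_n]/(f)$ is $\varpi$-torsion-free and hence flat over the DVR $\mO_K$; and $\mathcal H\supseteq\mathcal{T}$ because $f$ vanishes on $\mathcal{T}$. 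There is no real obstacle here beyond bookkeeping of flatness so that the cohomological surjectivity above translates into the geometric lifting statement; taking the $d$ from Serre vanishing completes the argument.
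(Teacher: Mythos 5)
Your argument is correct and is essentially the paper's: both identify degree-$e$ hypersurfaces through $\mathcal{T}_s$ (resp.\ $\mathcal{T}$) with sections of the twisted ideal sheaf and reduce to surjectivity of reduction on $H^0$, granted Serre vanishing of $H^1(\P^n_{\mO_K},\mathcal{I}_{\mathcal{T}}(e))$ for $e\gg 0$. The paper invokes this vanishing in a single line, while you unpack it via the multiplication-by-$\varpi$ sequence and the flat base-change identification $\mathcal{I}_{\mathcal{T}}\otimes_{\mO_K}k\cong\mathcal{I}_{\mathcal{T}_s}$ — a more explicit version of the same step, plus a small flatness check on $V(f)$ that the paper leaves implicit.
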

 
\begin{proof}
Let $\gamma: \mathcal T \hookrightarrow \P^n_{\mO_K}$ (resp. $\gamma_s: \mathcal T_s \hookrightarrow \P^n_{k}$) denote the closed embedding of $\mathcal T$ (resp. $\mathcal T_s$) into the projective space, and let  $\mI \subset \mO_{\P^n_{\mO_K}}$ (resp. $\mI_s \subset \mO_{\P^n_k}$) denote the corresponding sheaf of ideals. Then, for each integer $d$, we have an exact sequence 
\[
0 \ra \mI (d) \ra \mO_{\P^n_{\mO_K}} (d) \ra 
\mO_{\mathcal T} (d) \ra 0
\]
of sheaves on $\P^n_{\mO_K}$, and a similar exact sequence on $\P^n_{k}$. Now we define 
\begin{align*}
    W(d) & := \ker \Big(H^0 (\P^n_{\mO_K}, \mO_{\P^n_{\mO_K}} (d)) \lra  H^0 (\mO_{\mathcal T}, \mO_{\mathcal T} (d))  
    \Big) \\ 
       W(d)_s & := \ker \Big(H^0 (\P^n_{k}, \mO_{\P^n_{k}} (d)) \:  \lra \:   H^0 (\mO_{\mathcal T_s}, \mO_{\mathcal T_s} (d)). 
    \Big) 
\end{align*}
Here $W(d)$ (resp. $W(d)_s$) is a finitely generated $\mO_K$-module (resp. $k$-vector space) parameterizing hypersurfaces of degree $d$ that pass through $\mathcal T$ (resp. $\mathcal T_s$). Note that for large enough $d$, we have $H^1 (\P^n_{\mO_K}, \mI (d)) = 0$, so the natural map 
\[ 
W(d) \lra W(d)_s
\]
is surjective. This finishes the proof of the lemma. 
\end{proof}
 
The second result is a Bertini-type lemma, and is where we need the assumption that $k$ is algebraically closed. For the rest of \S \ref{ss:reducing_to_curves}, we specialize to the case when $\mathcal T = \{x_1, ..., x_m\} \hookrightarrow \mX$ is a finite union of $m$ disjoint closed  $\mO_K$-points on $\mX$. In this case, the special fiber $\mathcal T_s$ consists of $m$ distinct closed points on $\mX_s$ and is reduced (in particular, $\mathcal T$ is indeed smooth over $\mO_K$). 

\begin{lemma}[Refined Bertini for hypersurfaces] \label{lemma:bertini} 
Let $\mathcal T = \{x_1, ..., x_m\}$ be a finite union of $m$ disjoint closed $\mO_K$-points on $\mX$ as above. Then there exists a hypersurface $H_s \subset \P^n_k$ (of sufficiently large degree) containing $\mathcal T_s$, such that the intersection $H_s \cap \mX_s$ is a smooth variety over $k$ of dimension $r - 1$, where $r = \dim_k \mX_s$ is the dimension of $\mX_s$. 
\end{lemma}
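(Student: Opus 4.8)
This is a Bertini-type statement over an algebraically closed field $k$, with the extra constraint that the hypersurface must pass through the finite set of prescribed points $\mathcal T_s = \{\cl x_1, \ldots, \cl x_m\}$. The plan is to apply the classical Bertini smoothness theorem to the linear system of degree-$d$ hypersurfaces containing $\mathcal T_s$, once $d$ is taken large enough that this linear system has the right positivity properties. First I would fix $d$ large enough so that the ideal sheaf $\mI_{\mathcal T_s}(d)$ on $\P^n_k$ is globally generated (this holds for all $d \gg 0$ by Serre vanishing / Castelnuovo--Mumford regularity applied to $\mI_{\mathcal T_s}$, together with the fact that $\mathcal T_s$ is a finite, hence $0$-dimensional and in particular arithmetically Cohen--Macaulay, reduced subscheme). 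Let $V := H^0(\P^n_k, \mI_{\mathcal T_s}(d)) \subset H^0(\P^n_k, \mO(d))$ be the space of degree-$d$ forms vanishing on $\mathcal T_s$, which cuts out a linear subsystem $\mathfrak d \subset |\mO_{\P^n_k}(d)|$ whose base locus is exactly (the scheme) $\mathcal T_s$.

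The key step is then to run Bertini on the restricted linear system $\mathfrak d|_{\mX_s}$ on the smooth projective variety $\mX_s$. Away from the base locus $\mathcal T_s$, the linear system $\mathfrak d$ separates points and tangent directions of $\P^n_k$ (being globally generated of sufficiently high degree — concretely, for $d$ large it contains all products of a fixed degree-$d_0$ form vanishing on $\mathcal T_s$ with arbitrary forms of complementary degree, so it is ``very ample off $\mathcal T_s$''), hence its restriction to $\mX_s$ gives a morphism on $\mX_s \setminus \mathcal T_s$ that is an immersion. By the standard Bertini theorem over an algebraically closed field (e.g. Hartshorne II.8.18 and III.10.9, or Jouanolou), a general member $H_s \in \mathfrak d$ has the property that $H_s \cap \mX_s$ is smooth away from the base locus $\mathcal T_s \cap \mX_s = \mathcal T_s$ and of the expected dimension $r-1$ there (note $H_s \not\supset \mX_s$ since $\dim \mX_s = r \ge 2 > 0$, using that $\mathfrak d$ has no $\mX_s$ as a fixed component for $d$ large). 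It remains to control $H_s \cap \mX_s$ at the finitely many points of $\mathcal T_s$ themselves: here I would argue pointwise. For each $\cl x_i$, the local condition that the chosen hypersurface $H_s$ be smooth at $\cl x_i$ and meet $\mX_s$ transversally there is that its degree-$1$ part (in local coordinates at $\cl x_i$) be nonzero and not vanish on the tangent space $T_{\cl x_i}\mX_s$. Since $\mathcal T_s$ is reduced and the points are distinct, for $d$ large the evaluation map $V = H^0(\mI_{\mathcal T_s}(d)) \to \bigoplus_i \mathfrak m_{\cl x_i}/\mathfrak m_{\cl x_i}^2 \otimes \mO(d)|_{\cl x_i}$ is surjective (again by global generation of $\mI_{\mathcal T_s^{(2)}}(d)$, the ideal of the first infinitesimal neighbourhood, which holds for $d \gg 0$); hence the set of $H_s \in \mathfrak d$ that are singular at some $\cl x_i$ or fail transversality to $\mX_s$ at some $\cl x_i$ is a proper closed subset of $\mathfrak d$. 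Intersecting the (nonempty, since $k$ is algebraically closed hence infinite, and $\mathfrak d$ is positive-dimensional) Bertini-general locus with the complement of these finitely many proper closed conditions, we obtain a single $H_s$ that works at every point, i.e. $H_s \cap \mX_s$ is smooth of dimension $r-1$ everywhere.

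I expect the main obstacle to be the bookkeeping at the base points $\mathcal T_s$: the usual Bertini theorem only guarantees smoothness of a general member \emph{outside} the base locus, so one must separately verify — by the explicit linear-algebra argument with second-order jets sketched above — that a general member is also smooth, and transverse to $\mX_s$, \emph{at} the prescribed points. This is precisely where the hypotheses ``$\mathcal T_s$ reduced'' (so the relevant jet-evaluation map is the expected one) and ``$d$ sufficiently large'' (so that map is surjective) enter, and where algebraic closedness of $k$ is used to guarantee that the complement of these finitely many proper closed subsets of the linear system is nonempty and contains an actual $k$-point $H_s$. Once $H_s$ is produced, combining it with Lemma \ref{lemma:no_obstruction_in_lifting} (which lifts $H_s$, of sufficiently large degree, to a hypersurface $\mathcal H \subset \P^n_{\mO_K}$ through $\mathcal T$) will, after iterating $r-1$ times and applying smoothness of $\mathcal H \cap \mX$ over $\mO_K$ via the smoothness of the special fiber, yield the curve asserted in Proposition \ref{mainprop:reducing_to_curves} — but that iteration is carried out in the subsequent argument, not here.
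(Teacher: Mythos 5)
Your approach is essentially the same as the paper's: both arguments separate the problem into (i) smoothness of a general member of the linear system $\mathfrak d = \P(W(d)_s)$ away from $\mathcal T_s$, and (ii) smoothness (equivalently transversality to $\mX_s$) at the prescribed points $\cl x_i$, and both reduce each part to surjectivity of a jet-evaluation map from $W(d)_s$ onto $\mO_{\mX_s,w}/\fm_w^2$ (resp.\ onto $\fm_{\cl x_i}/\fm_{\cl x_i}^2$). The paper performs the dimension count for (i) explicitly via the incidence variety $B \subset \P^\vee_s \times (\mX_s \setminus \mathcal T_s)$ and produces explicit forms $f_i g_i$ to verify jet-surjectivity, whereas you outsource (i) to a cited Bertini theorem and invoke Serre vanishing for the jets; these are presentational rather than mathematical differences. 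One caveat worth flagging: Hartshorne III.10.9 is a characteristic-zero statement and does not directly apply here (the base field $k$ has characteristic $p$); the characteristic-free version you actually need is the one for linear systems whose induced map is an immersion off the base locus, which is exactly what your "separates points and tangent directions off $\mathcal T_s$" remark supplies — and which, unpacked, is precisely the incidence-variety count the paper carries out.
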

 
\begin{proof}
      The argument is similar to the classical proof of Bertini's theorem. Let $\mathcal T_s = \{\cl x_1, ..., \cl x_m\}$ denote the special fiber of $\mathcal T$, which consists of $m$ distinct closed points in $\P^n_{k}$. Let $\P^\vee_s = \P(W(d)_s)$ denote the projective space parametrizing degree $d$ hypersurfaces in $\P^n_{k}$ passing through $\mathcal T_s$, where $W(d)_s$ is defined as in the proof of the previous lemma. Let us first show that, for a generic hypersurface $H$ in $\P^\vee_s$, the intersection $\mX_s \cap H$ is smooth away from the points in $\mathcal T_s$. For this, we consider the following incidence variety 
      \[
    B \subset \P_s^\vee \times (\mX_s - \mathcal T_s) 
      \] defined as the union 
     \[
        B = \left\{(H, w) \,| \,H \in \P^\vee_s , w \in H, \,H \cap \mX_s \textrm{ is singular at } w \right\}   \cup \left\{(H, w) \,| \,\mX_s \subset H\right\}. \]
It suffices to show that the projection map $B \ra \P^\vee_s$ is not dominant. By counting dimensions, it suffices to show that, for each closed point $w \in \mX_s$ such that $w \notin \mathcal T_s$, the following closed subvariety
  \[
B_w := \left\{H \,|\,w \in H, H \cap \mX_s \textrm{ is singular at } w\right\} \subset \P^\vee_s
  \] 
has codimension at least $r + 1$ in $\P^\vee_{s}$, where $r = \dim \mX_s$. For this, let us pick a degree $d$ hypersurface $H_0$ given by a homogeneous polynomial $f_0$ of degree $d$ such that $H_0 $ does not contain  the point $w$ or any point in $\mathcal T_s$. Let $\fm_w \subset \mO_{\mX_s, w}$ denote the maximal ideal in the Zariski local ring of $\mX_s$ at $w$. Then we have a map 
\[ 
\theta_{w}: W(d)_s \ra \mO_{\mX_s, w} \ra \mO_{\mX_s, w}/\fm_w^2
\]
sending an element $f \in W(d)_s$ to the restriction of $f/f_0$ to $\mO_{\mX_s, w}$. The condition that the hypersurface $H = H(f)$ given by $f$ lies in $B_w$ is equivalent to $\theta_w (f) = 0$. Now $\dim_k \mO_{\mX_s, w}/\fm_w^2 = r + 1$ since $\mX_s$ is smooth, thus it suffices to show that for $d$ large enough the map $\theta_w$ is surjective. In fact it suffices to show that the kernel $\ker (W(d)_s \ra \mO_{\mX_s, w}/\fm_w)$ surjects onto the Zariski cotangent space $\fm_w/\fm_w^2$ at $w$. The latter can be achieved, for example, as follows. Let $f_1, ..., f_r$ be a set of linear homogeneous polynomials  which represents a basis of $\fm_w/\fm_w^2$. For each $i$, we may consider the union of the hyperplane $H_1$ defined by $f_i$ and a degree $d-1$ hypersurface $H_2$ defined by $g_i$, which contains $\mathcal T_s$ but does not contain $w$. Then we have  $\theta_w(f_i \cdot g_i) = \mathrm{unit} \cdot f_i \in \fm_w/\fm_w^2$. This proves that $\theta_w$ is indeed surjective and thus the desired claim that for a general hypersurface $H$ in $\P^\vee_s$, $H \cap \mX_s$ is smooth at all points away from $\mathcal T_s$. 

It remains to show that, for a generic hypersurface $H \subset \P^\vee_s$, $H \cap \mX_s$ is smooth at each $\cl x_i \in \mathcal T_s$. The proof here is in fact similar to the proof above. For each $\cl x_i \in \mathcal T_s$, let us consider the map 
\[ 
\theta_i: W(d)_s \ra \mO_{\mX_s, \cl x_i} \ra \mO_{\mX_s, \cl x_i}/\fm_{\cl x_i}^2.
\] 
Note that this map already factors through 
$W(d)_s \ra \fm_{\cl x_i}/\fm_{\cl x_i}^2$ (contrary to the previous situation). Since these $\cl x_i$'s are distinct closed points, it suffices to show that each $\theta_i$ is not constantly $0$. But this is clear, as the same argument as above in fact shows that $\theta_i$ surjects onto $\fm_{\cl x_i}/\fm_{\cl x_i}^2$. This proves the claim, and thus completing the proof the lemma. 
\end{proof}

Now we restate a more precise version of Proposition \ref{mainprop:reducing_to_curves}. 

\begin{proposition}  \label{prop_in_text:reducing_to_curves}
Let $\mathcal T = \{x_1, ..., x_m\}$ be a finite union of $m$ disjoint closed $\mO_K$-points on $\mX$ as above. Then there exists a closed subvariety $\mC \subset \mX$ that is smooth over $\mO_K$ of relative dimension $1$ containing $\mathcal T$. 
\end{proposition}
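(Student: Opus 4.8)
The plan is to produce the curve $\mathcal{C}$ by intersecting $\mathcal{X}$ with $r-1$ hypersurfaces in $\mathbb{P}^n_{\mathcal{O}_K}$ of large degree, chosen inductively so that at each stage the intersection stays smooth over $\mathcal{O}_K$ and still contains $\mathcal{T}$. First I would apply Lemma \ref{lemma:bertini} to find a hypersurface $H_s^{(1)} \subset \mathbb{P}^n_k$ of sufficiently large degree passing through $\mathcal{T}_s$ such that $H_s^{(1)} \cap \mathcal{X}_s$ is smooth over $k$ of dimension $r-1$. By enlarging the degree if necessary, Lemma \ref{lemma:no_obstruction_in_lifting} lets me lift $H_s^{(1)}$ to a hypersurface $\mathcal{H}^{(1)} \subset \mathbb{P}^n_{\mathcal{O}_K}$ containing $\mathcal{T}$. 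Then $\mathcal{X}^{(1)} := \mathcal{H}^{(1)} \cap \mathcal{X}$ is a closed subscheme of $\mathbb{P}^n_{\mathcal{O}_K}$ whose special fiber is smooth of dimension $r-1$; since $\mathcal{X}^{(1)}$ is flat over $\mathcal{O}_K$ (being a Cartier divisor on the flat scheme $\mathcal{X}$, cut out by a non-zero-divisor because the special fiber has the expected codimension) and its special fiber is smooth over $k$, the scheme $\mathcal{X}^{(1)}$ is smooth over $\mathcal{O}_K$ of relative dimension $r-1$, and it contains $\mathcal{T}$ by construction.

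Next I would iterate. Having produced a smooth closed subscheme $\mathcal{X}^{(j)} \subset \mathcal{X}$ over $\mathcal{O}_K$ of relative dimension $r-j$ containing $\mathcal{T}$ (still sitting inside $\mathbb{P}^n_{\mathcal{O}_K}$ via $\iota$), I apply Lemma \ref{lemma:bertini} with $\mathcal{X}^{(j)}$ in place of $\mathcal{X}$ — note that the proof of that lemma only uses that the ambient special fiber is smooth and that $\mathcal{T}_s$ is a finite reduced set of closed points on it, so it applies verbatim — to get a hypersurface $H_s^{(j+1)}$ of large degree through $\mathcal{T}_s$ with $H_s^{(j+1)} \cap \mathcal{X}^{(j)}_s$ smooth of dimension $r-j-1$. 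Lifting via Lemma \ref{lemma:no_obstruction_in_lifting} (applied to the smooth $\mathcal{O}_K$-scheme $\mathcal{T} \hookrightarrow \mathbb{P}^n_{\mathcal{O}_K}$; only $\mathcal{T}$, not $\mathcal{X}^{(j)}$, needs to lift, and the lemma is about lifting hypersurfaces through $\mathcal{T}_s$) gives $\mathcal{H}^{(j+1)} \subset \mathbb{P}^n_{\mathcal{O}_K}$ through $\mathcal{T}$, and $\mathcal{X}^{(j+1)} := \mathcal{H}^{(j+1)} \cap \mathcal{X}^{(j)}$ is again flat over $\mathcal{O}_K$ with smooth special fiber, hence smooth over $\mathcal{O}_K$ of relative dimension $r-j-1$, and contains $\mathcal{T}$. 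After $r-1$ steps we reach $\mathcal{C} := \mathcal{X}^{(r-1)}$, which is smooth over $\mathcal{O}_K$ of relative dimension $1$ and contains $\mathcal{T}$; being a closed subscheme of the projective $\mathcal{O}_K$-scheme $\mathcal{X}$, it is projective, hence $\mathcal{C}$ is the desired curve. (Connectedness of $\mathcal{C}$, or passing to a connected component through a chosen point, can be arranged if needed, but is not required by the statement as phrased.)

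The main obstacle I anticipate is the flatness bookkeeping at each intersection step: one must check that $\mathcal{H}^{(j+1)} \cap \mathcal{X}^{(j)}$ really is flat over $\mathcal{O}_K$ and has the expected dimension, rather than acquiring embedded or vertical components over the closed point. The clean way around this is the standard fact that if $\mathcal{Z}$ is flat over the discrete valuation ring $\mathcal{O}_K$ with $\mathcal{Z}_s$ of pure dimension $d$, and $\mathcal{H}$ is a hypersurface with $\mathcal{H}_s$ meeting $\mathcal{Z}_s$ properly (dimension exactly $d-1$), then the local equation of $\mathcal{H}$ is a non-zero-divisor on $\mathcal{O}_{\mathcal{Z}}$ and on $\mathcal{O}_{\mathcal{Z}_s}$, forcing $\mathcal{H}\cap\mathcal{Z}$ to be $\mathcal{O}_K$-flat with special fiber $\mathcal{H}_s \cap \mathcal{Z}_s$; smoothness of that special fiber then upgrades flatness to smoothness over $\mathcal{O}_K$ by the fibral criterion. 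The hypotheses needed for this — properness of the intersection on the special fiber — are exactly what Lemma \ref{lemma:bertini} delivers, so the steps chain together; I expect the only genuine care to be in stating the dimension/non-zero-divisor argument correctly, everything else being a routine induction.
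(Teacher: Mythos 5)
Your proposal is correct and matches the paper's approach: the paper's proof is a one-line invocation of Lemma \ref{lemma:no_obstruction_in_lifting} and Lemma \ref{lemma:bertini} by induction on the relative dimension, and you have simply filled in the details of that induction (including the flatness and fibral-smoothness bookkeeping at each hypersurface section). Your observation that Lemma \ref{lemma:bertini}'s proof applies verbatim with $\mX_s$ replaced by the smooth special fiber $\mX^{(j)}_s$ of the intermediate intersection, and that Lemma \ref{lemma:no_obstruction_in_lifting} only sees the fixed $\mathcal T$, is exactly what makes the induction go through.
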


\begin{proof}
This immediately follows from Lemma \ref{lemma:no_obstruction_in_lifting} and Lemma \ref{lemma:bertini} by induction on the relative dimension of $\mX/\mO_K$.     
\end{proof}
 
\begin{remark}[Nori] \label{remark:Nori}
If $\mathcal T$ consists only of two points (which is what we actually need in this article), say $x_1$ and $x_2$, then Lemma \ref{lemma:no_obstruction_in_lifting} and Lemma \ref{lemma:bertini} still hold even if they have the same reduction $\cl x_1 = \cl x_2$ in characteristic $p$. This follows from the argument we give above, since the tangent space $T_{\cl x_i} \mathcal T_s$ is at most $1$-dimensional, so the image of $\theta_i$  in $\fm_{\cl x_i}/\fm_{\cl x_i}^2$ has dimension at least $r - 1 \ge 1$.
\end{remark}

We are ready to reduce Theorem \ref{thm:rigidity_conjecture_main_text} to the case of curves. Here we no longer assume that $k$ is algebraically closed.

\begin{corollary} \label{corollary:reducing_to_curves}
   If Theorem \ref{thm:rigidity_conjecture_main_text} holds for all smooth projective curves $X$ over $K$ with good reduction, then it holds in general. 
\end{corollary}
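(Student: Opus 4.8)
Assume Theorem \ref{thm:rigidity_conjecture_main_text} holds for smooth projective curves with good reduction. Let $X$ be a geometrically connected smooth projective variety over $K$ with good reduction, fixed as the generic fiber of a smooth projective $\mX/\mO_K$, let $\L$ be an \'etale $\Z_p$-local system on $X$, and suppose $\L$ is potentially crystalline at a classical point $x_0$. We want to show $\L$ is potentially crystalline at an arbitrary classical point $x_1$ on $X$. The plan is to connect $x_0$ and $x_1$ by a smooth projective curve with good reduction and invoke the one-dimensional case. Since the statement is insensitive to replacing $K$ by a finite extension (potential crystallinity is preserved and created by finite base change), we may first enlarge $K$ so that both $x_0$ and $x_1$ are $K$-rational classical points, i.e., sections $\spec \mO_K \to \mX$ after spreading out; then further enlarge $K$ so that the residue field $k$ is algebraically closed (or just separably/algebraically closed enough to apply the Bertini argument of Lemma \ref{lemma:bertini}), which is harmless for the same reason. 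We thus place ourselves in the setup of \S \ref{ss:reducing_to_curves}.

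First I would treat the case where $x_0$ and $x_1$ have distinct specializations $\cl x_0 \ne \cl x_1$ in $\mX_s$. Then $\mathcal T = \{x_0, x_1\}$ is a disjoint union of two $\mO_K$-points, smooth over $\mO_K$, and Proposition \ref{prop_in_text:reducing_to_curves} (applying Lemma \ref{lemma:no_obstruction_in_lifting} and Lemma \ref{lemma:bertini}, iterating down the relative dimension via repeated hyperplane-type sections) produces a closed subscheme $\mathcal C \subset \mX$ smooth over $\mO_K$ of relative dimension $1$ containing $\mathcal T$. Passing to a connected component of $\mathcal C$ containing both points — here one uses that we may replace the hypersurface sections by generic ones so as to keep the cut-down curve connected, e.g.\ by the connectedness part of Bertini over the algebraically closed residue field, or simply by choosing the ample line bundle large enough and passing to a connected component of the generic fiber $C := \mathcal C_K$ that contains $x_0$ (and then re-running the argument if necessary so $x_1$ lies on the same component) — we obtain a smooth projective curve $C/K$ with good reduction and a closed immersion $C \hookrightarrow X$ whose image contains $x_0$ and $x_1$. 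Restricting $\L$ to $C$, we have $\L|_C$ potentially crystalline at $x_0 \in C(K)$, so by the curve case it is potentially crystalline at $x_1$; since crystallinity of a local system at a classical point depends only on the stalk at that point (a Galois representation), $\L|_{x_1} = (\L|_C)|_{x_1}$ is potentially crystalline, as desired.

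The remaining case is when $\cl x_0 = \cl x_1$, i.e., the two points have the same reduction in characteristic $p$. This is exactly the situation addressed by Remark \ref{remark:Nori}: the tangent space to the (non-reduced) ``subscheme'' $\{x_0, x_1\}$ at the common reduction point is at most one-dimensional, so the surjectivity input $\theta_i \colon W(d)_s \to \fm_{\cl x_i}/\fm_{\cl x_i}^2$ used in Lemma \ref{lemma:bertini} (and the unobstructedness in Lemma \ref{lemma:no_obstruction_in_lifting}) still goes through — concretely, one works with $\mathcal T$ the scheme-theoretic closure of $\{x_0, x_1\}$ in $\mX$, which is flat over $\mO_K$ and has the prescribed reductions, and checks that a sufficiently large degree hypersurface $H_s \subset \P^n_k$ through $\cl x_0$ can be chosen so that $H_s \cap \mX_s$ is smooth there, then lifts. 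Iterating, one again obtains a smooth projective curve $C/K$ with good reduction through both $x_0$ and $x_1$, and concludes as before. I expect the main obstacle to be bookkeeping around connectedness: ensuring that after successively cutting $\mX$ down by hypersurface sections one lands on a single geometrically connected smooth curve containing both prescribed points rather than on a disjoint union separating them; this is handled by taking the hypersurfaces of sufficiently high degree and generic (so that irreducibility of the intersection is preserved by the appropriate Bertini irreducibility theorem over the algebraically closed residue field, and hence over $K$ after possibly one more finite extension), and by choosing a connected component of the resulting curve — if $x_1$ fails to lie on the chosen component, one re-selects the sections. Once the curve is in hand, the reduction is immediate from the hypothesis together with the observation that potential crystallinity at a classical point is intrinsic to the stalk.
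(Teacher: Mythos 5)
Your proof is correct and follows essentially the paper's own reduction; the one notable divergence is in the case $\cl x_0 = \cl x_1$, where you invoke Remark~\ref{remark:Nori} directly (treating the scheme-theoretic closure of $\{x_0, x_1\}$ as $\mathcal T$), whereas the paper's primary route is a simpler trick: pick a third point $x_2$ with a distinct specialization and apply the distinct-reduction case twice ($x_0 \leadsto x_2$ and then $x_2 \leadsto x_1$), mentioning the Nori-style argument only as an alternative. Both routes work; the third-point trick avoids having to verify that Lemmas~\ref{lemma:no_obstruction_in_lifting} and~\ref{lemma:bertini} apply to the non-reduced $\mathcal T$, while your route is more self-contained since it doesn't presuppose the distinct-reduction case as a black box. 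On the connectedness worry you flag: this is actually automatic and requires no re-selection. Each hypersurface section $H_s \cap \mX_s$ of Lemma~\ref{lemma:bertini} is an ample divisor on a smooth connected projective variety of dimension $\ge 2$ over an algebraically closed field, hence connected, so the special fiber $\mathcal C_s$ of the resulting relative curve is connected; by Stein factorization for a proper flat $\mO_K$-scheme with geometrically connected special fiber, the generic fiber $C$ is then geometrically connected as well, and both $x_0$ and $x_1$ lie on it. So your hedge of ``re-selecting the sections'' is unnecessary, but harmless.
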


\begin{proof}
    Let $x_1$ be any closed point on $X$. Without loss of generality, we may assume that both $x_0$ and $x_1$ are closed $K$-points. Suppose that $x_0$ and $x_1$ have different reductions in the special fiber, then  Proposition \ref{prop_in_text:reducing_to_curves} tells us that, up to replacing $K$ by a finite unramified extension, there is a smooth curve $C \subset X$ with good reduction connecting $x_0$ and $x_1$. Restricting the $\Z_p$-local system to $C$ and applying the theorem for curves, we know that $\L|_{x_1}$ is potentially crystalline. If $x_0$ and $x_1$ have the same reduction mod $p$, we may find a third point $x_2$ with different reduction in the special fiber, then apply the previous argument twice to deduce that $\L$ is potentially crystalline at both $x_1$ and $x_2$. (Alternatively, one could apply Proposition \ref{prop_in_text:reducing_to_curves} together with Remark \ref{remark:Nori}.)
\end{proof}

\subsection{The proof of the rigidity conjecture for curves} \label{ss:proof_of_Shankar_conjecture_for_curves}

Now we finish the proof of Theorem \ref{thm:rigidity_conjecture_main_text}. Without loss of generality, we may assume that $k$ is algebraically closed. By Corollary \ref{corollary:reducing_to_curves}, it suffices to treat the case when $X$ is a smooth projective curve over $K$ with good reduction, which we assume from now on. We fix a smooth formal integral model $\fX$ of $X$ over $\mO_K$, and let $\fX_s$ denote its special fiber over $k$. For simplicity of exposition, let us introduce the following (non-standard) terminology.  

\begin{definition}\label{defn: crystalline point on special fiber}
\begin{enumerate}
\item For any closed point $w\in \fX_s$, let $X_w$ denote the pre-image of $w$ in $X$ under the specialization map $\mathrm{sp}: X\ra \fX_s$.
\item We say that a closed point $w \in \fX_s$ is \emph{potentially crystalline} if for every classical point $x\in X_w$, we have $\L|_x$ is potentially crystalline.
\end{enumerate}
\end{definition}

Our task is to show that all closed points on $\fX_s$ are potentially crystalline. By assumption, $\L|_{x_0}$ is potentially crystalline at some classical point $x_0\in X$. By enlarging $K$ if necessary, we may assume that $x_0$ is a $K$-point and $\L|_{x_0}$ is actually crystalline. Since $\L|_{x_0}$ is de Rham, $\L$ must be de Rham by the rigidity theorem of Liu-Zhu \cite[Theorem 1.1]{Liu_Zhu}. By Theorem \ref{thm:p_adic_monodromy}, after enlarging $K$ if necessary, there exists a finite cover $f: Y \ra X$ between smooth projective curves over $K$ such that $Y$ has semistable reduction over $\mO_K$ and that $f^* \L$ is a semistable local system on $Y$. Up to further enlarging $K$, there exists a morphism of $p$-adic formal schemes
\[ 
\widehat f: \fY \ra \fX
\]
inducing $f$ on the generic fibers, where $\fY$ is a semistable formal integral model of $Y$ over $\mO_K$. Let $f_s: \fY_s \ra \fX_s$ be the induced map on the special fibers. We may view $f_s$ as a map \[f_s:\fY_s^{\log} \ra \fX_s^{\log}\] between log schemes, where both $\fY_s^{\log}$ and $\fX_s^{\log}$ are equipped with the mod $\varpi$ divisorial log structure. Equivalently, $\fX_s$ is equipped with the log structure coming from the base log point $s^{\log}= \spec (k, 0^{\N})$, as in the beginning of \S \ref{sec:log_crystals}, while $\fY_s^{\log}$ is semistable in the sense of Definition \ref{def:st_log_scheme}. Let $\mE$ denote the log $F$-isocrystal over $\fY_s^{\log}$ associated with the semistable local system $f^* \L$. 

As a map between schemes, the local behavior of the map $f_s$ around a closed point $v$ on $\fY_s$ falls into one of the following three cases: 
\begin{enumerate}
    \item The point $v$ lies on a finite component of $\fY_s$ with respect to $f_s$ in the sense of Definition \ref{definition:contracting_locus} and $v$ lies on the smooth locus $\fY_s^{\mathrm{sm}}$ of $\fY_s$. In this case, we say that $v$ lies on the \emph{good locus} of $\fY_s$ with respect to $f_s$.
    \item The map $f_s$ is finite at $v$, but $v$ is a singular point of $\fY_s$ (which is \'etale locally a nodal point by assumption). In this case, we say that $v$ lies on the \emph{finite-singular locus} of $\fY_s$ with respect to $f_s$.
    \item The point $v$ lies on a contracting component of $\fY_s$ with respect to $f_s$ in the sense of Definition \ref{definition:contracting_locus} (so $f_s$ sends this entire irreducible component to a point). In this case, we say that $v$ lies on the \emph{degenerate locus} of $\fY_s$ with respect to $f_s$.
\end{enumerate}
This is captured by Figure \ref{fig:f_splitting_into_3_cases} below. We caution the reader that, in Case (3), the point $v$ could also be a singular point of $\fY_s$; namely, it does not have to avoid the singular point as illustrated in Figure \ref{fig:f_splitting_into_3_cases}.

\begin{figure}[h]  
\tikzset{every picture/.style={line width=0.75pt}} 
   
\scalebox{0.7}{ 
\begin{tikzpicture}[x=0.75pt,y=0.75pt,yscale=-1,xscale=1]

\draw  [color={rgb, 255:red, 239; green, 19; blue, 55 }  ,draw opacity=1 ][line width=1.5] [line join = round][line cap = round] (148.8,88.26) .. controls (148.02,93.11) and (145.63,96.05) .. (142.21,100.25) .. controls (138.84,104.4) and (135.95,108.95) .. (131.72,112.45) .. controls (98.3,140.09) and (57.98,100.33) .. (25.72,102.93) .. controls (22.72,103.17) and (19.91,107.11) .. (18.57,108.8) .. controls (10.27,119.28) and (8.09,133.33) .. (7.15,145.79) .. controls (6.97,148.11) and (7.65,150.43) .. (8.35,152.67) .. controls (17.74,182.83) and (47.9,168.86) .. (69.92,159.48) .. controls (74.88,157.37) and (91.92,148.83) .. (101.76,149.21) .. controls (121.81,149.97) and (132.66,165.26) .. (145.15,177.38) .. controls (148.23,180.36) and (158.36,192.31) .. (160.78,190.73) ;
\draw  [color={rgb, 255:red, 239; green, 19; blue, 55 }  ,draw opacity=1 ][line width=1.5] [line join = round][line cap = round] (15.51,280.13) .. controls (22.16,279.25) and (29.84,277.46) .. (36.86,276.27) .. controls (44.9,274.91) and (82.15,269.56) .. (83.99,269.44) .. controls (130.43,266.32) and (144.95,271.32) .. (172.51,276.9) ;
\draw  [color={rgb, 255:red, 239; green, 19; blue, 55 }  ,draw opacity=1 ][line width=1.5] [line join = round][line cap = round] (389.4,65.8) .. controls (360.21,92.2) and (332.09,117.69) .. (296.17,135.75) .. controls (286.43,140.64) and (255.48,151.17) .. (254.86,131.67) .. controls (254.27,113.3) and (269.12,107.9) .. (288.03,113.41) .. controls (314.17,121.03) and (326.6,132.76) .. (344.17,151.25) .. controls (374.32,182.97) and (375.41,185.15) .. (395.27,210.75) ;
\draw  [color={rgb, 255:red, 239; green, 19; blue, 55 }  ,draw opacity=1 ][line width=1.5] [line join = round][line cap = round] (255.93,280.13) .. controls (262.58,279.25) and (270.27,277.46) .. (277.29,276.27) .. controls (285.33,274.9) and (322.57,269.56) .. (324.41,269.44) .. controls (370.86,266.32) and (385.38,271.31) .. (412.94,276.9) ;
\draw  [line width=1.5] [line join = round][line cap = round] (542.59,75.38) .. controls (542.59,109.56) and (542.59,143.74) .. (542.59,177.92) ;
\draw  [color={rgb, 255:red, 239; green, 19; blue, 55 }  ,draw opacity=1 ][line width=1.5] [line join = round][line cap = round] (492,280.11) .. controls (498.65,279.23) and (506.33,277.45) .. (513.35,276.25) .. controls (521.39,274.89) and (558.63,269.55) .. (560.47,269.42) .. controls (606.92,266.3) and (621.44,271.3) .. (649,276.89) ;
\draw  [color={rgb, 255:red, 239; green, 19; blue, 55 }  ,draw opacity=1 ][line width=1.5] [line join = round][line cap = round] (494.36,149.07) .. controls (499.52,147) and (504.9,143.84) .. (510.15,141.4) .. controls (516.16,138.6) and (544.73,126.6) .. (546.25,126.14) .. controls (584.56,114.67) and (601.02,116.97) .. (629.36,117.5) ;
\draw    (102.04,181.16) -- (102.04,243.66) ;
\draw [shift={(102.04,245.66)}, rotate = 270] [color={rgb, 255:red, 0; green, 0; blue, 0 }  ][line width=0.75]    (10.93,-3.29) .. controls (6.95,-1.4) and (3.31,-0.3) .. (0,0) .. controls (3.31,0.3) and (6.95,1.4) .. (10.93,3.29)   ;
\draw    (336.69,177.92) -- (336.69,240.41) ;
\draw [shift={(336.69,242.41)}, rotate = 270] [color={rgb, 255:red, 0; green, 0; blue, 0 }  ][line width=0.75]    (10.93,-3.29) .. controls (6.95,-1.4) and (3.31,-0.3) .. (0,0) .. controls (3.31,0.3) and (6.95,1.4) .. (10.93,3.29)   ;
\draw    (581.81,177.1) -- (581.81,239.6) ;
\draw [shift={(581.81,241.6)}, rotate = 270] [color={rgb, 255:red, 0; green, 0; blue, 0 }  ][line width=0.75]    (10.93,-3.29) .. controls (6.95,-1.4) and (3.31,-0.3) .. (0,0) .. controls (3.31,0.3) and (6.95,1.4) .. (10.93,3.29)   ;
\draw  [fill={rgb, 255:red, 0; green, 0; blue, 0 }  ,fill opacity=1 ] (63.45,112.89) .. controls (63.45,111.58) and (64.52,110.52) .. (65.84,110.52) .. controls (67.16,110.52) and (68.23,111.58) .. (68.23,112.89) .. controls (68.23,114.2) and (67.16,115.26) .. (65.84,115.26) .. controls (64.52,115.26) and (63.45,114.2) .. (63.45,112.89) -- cycle ;
\draw  [fill={rgb, 255:red, 0; green, 0; blue, 0 }  ,fill opacity=1 ] (63.45,272.08) .. controls (63.45,270.77) and (64.52,269.71) .. (65.84,269.71) .. controls (67.16,269.71) and (68.23,270.77) .. (68.23,272.08) .. controls (68.23,273.38) and (67.16,274.45) .. (65.84,274.45) .. controls (64.52,274.45) and (63.45,273.38) .. (63.45,272.08) -- cycle ;
\draw  [fill={rgb, 255:red, 0; green, 0; blue, 0 }  ,fill opacity=1 ] (311.8,125.21) .. controls (311.8,123.9) and (312.87,122.84) .. (314.19,122.84) .. controls (315.52,122.84) and (316.59,123.9) .. (316.59,125.21) .. controls (316.59,126.52) and (315.52,127.58) .. (314.19,127.58) .. controls (312.87,127.58) and (311.8,126.52) .. (311.8,125.21) -- cycle ;
\draw  [fill={rgb, 255:red, 0; green, 0; blue, 0 }  ,fill opacity=1 ] (311.8,271.13) .. controls (311.8,269.82) and (312.87,268.76) .. (314.19,268.76) .. controls (315.52,268.76) and (316.59,269.82) .. (316.59,271.13) .. controls (316.59,272.44) and (315.52,273.5) .. (314.19,273.5) .. controls (312.87,273.5) and (311.8,272.44) .. (311.8,271.13) -- cycle ;
\draw  [fill={rgb, 255:red, 0; green, 0; blue, 0 }  ,fill opacity=1 ] (539.93,272.08) .. controls (539.93,270.77) and (541,269.71) .. (542.33,269.71) .. controls (543.65,269.71) and (544.72,270.77) .. (544.72,272.08) .. controls (544.72,273.38) and (543.65,274.45) .. (542.33,274.45) .. controls (541,274.45) and (539.93,273.38) .. (539.93,272.08) -- cycle ;
\draw  [fill={rgb, 255:red, 0; green, 0; blue, 0 }  ,fill opacity=1 ] (539.93,114.79) .. controls (539.93,113.48) and (541,112.42) .. (542.33,112.42) .. controls (543.65,112.42) and (544.72,113.48) .. (544.72,114.79) .. controls (544.72,116.1) and (543.65,117.16) .. (542.33,117.16) .. controls (541,117.16) and (539.93,116.1) .. (539.93,114.79) -- cycle ;

\draw (104.11,198.96) node [anchor=north west][inner sep=0.75pt]    {$f_{s}$};
\draw (339.07,198.96) node [anchor=north west][inner sep=0.75pt]    {$f_{s}$};
\draw (583.47,198.96) node [anchor=north west][inner sep=0.75pt]    {$f_{s}$};
\draw (64.95,89.1) node [anchor=north west][inner sep=0.75pt]    {$v$};
\draw (304.69,94.79) node [anchor=north west][inner sep=0.75pt]    {$v$};
\draw (543.35,95.73) node [anchor=north west][inner sep=0.75pt]    {$v$};
\draw (63.1,246.39) node [anchor=north west][inner sep=0.75pt]    {$w$};
\draw (305.72,245.44) node [anchor=north west][inner sep=0.75pt]    {$w$};
\draw (542.46,245.44) node [anchor=north west][inner sep=0.75pt]    {$w$};
\draw (31.32,175.27) node [anchor=north west][inner sep=0.75pt]    {$\fY_{s}$};
\draw (264.36,144.95) node [anchor=north west][inner sep=0.75pt]    {$\fY_{s}$};
\draw (502.06,148.74) node [anchor=north west][inner sep=0.75pt]    {$\fY_{s}$};
\draw (35.19,283.29) node [anchor=north west][inner sep=0.75pt]    {$\fX_{s}$};
\draw (277.8,284.24) node [anchor=north west][inner sep=0.75pt]    {$\fX_{s}$};
\draw (511.67,284.24) node [anchor=north west][inner sep=0.75pt]    {$\fX_{s}$};
\draw (73,313) node [anchor=north west][inner sep=0.75pt]   [align=left] {Case (1)};
\draw (313,314) node [anchor=north west][inner sep=0.75pt]   [align=left] {Case (2)};
\draw (554,313) node [anchor=north west][inner sep=0.75pt]   [align=left] {Case (3)};
\end{tikzpicture}
} 
    \caption{Local behavior of $f_s$ at a closed point $v \in \fY_s$}
    \label{fig:f_splitting_into_3_cases}
\end{figure}
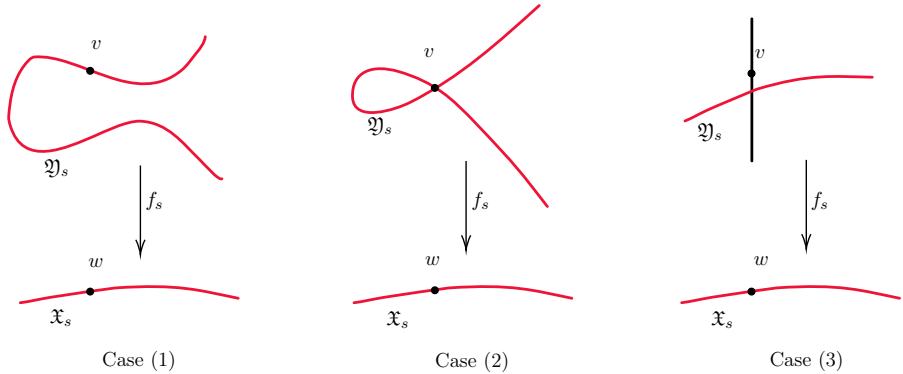

Back to the proof of Theorem \ref{thm:rigidity_conjecture_main_text}. Recall that $\L|_{x_0}$ is crystalline at some $K$-rational classical point $x_0\in X$. Applying Theorem \ref{thm:dodging}, we may assume that there exists a classical point $y_0\in f^{-1}(x_0)$ such that the image $v_0$ of $y_0$ in $\fY_s$ (under the specialization map $\mathrm{sp}: Y\ra \fY_s$) lies in the good locus of $\fY_s$ with respect to $f_s$. In particular, $\mE$ has trivial monodromy at $v_0$. Let $Z$ denote the irreducible component of $\fY_s$ containing $v_0$ and let $Z^{\mathrm{sm}}:=Z\cap \fY_s^{\mathrm{sm}}$. By Theorem \ref{theorem:log_crystal_1} and Theorem \ref{theorem:main_intro}, we conclude that $\mE$ has trivial monodromy at all closed points in $Z^{\mathrm{sm}}$ and $f^*\L$ is crystalline at all classical points in $\mathrm{sp}^{-1}(Z^{\mathrm{sm}})$.

We claim that $\mE$ actually has trivial monodromy at all closed points on the good locus of $\fY_s$. To prove this, we need some preparations.

\begin{definition}\label{defn: linked}
\begin{enumerate}
\item For a closed point $v\in \fY_s$, let $Y_v$ denote the pre-image of $v$ in $Y$ under the specialization map $\mathrm{sp}: Y\ra \fY_s$.
\item Let $v_1, v_2$ be two closed points lying on the good locus of $\fY_s$. We say that $v_1$ and $v_2$ are \emph{linked} if $f_s(v_1)=f_s(v_2)$ and $f(Y_{v_1})\cap f(Y_{v_2})\neq \emptyset$ in $X$.
\item Let $Z_1$ and $Z_2$ be two irreducible components of $\fY_s$. Assume that both $Z_1$ and $Z_2$ are finite components with respect to $f_s$ in the sense of Definition \ref{definition:contracting_locus}. We say that $Z_1$ and $Z_2$ are \emph{linked} if there exist closed points $v_1\in Z_1^{\mathrm{sm}}$ and $v_2\in Z_2^{\mathrm{sm}}$ such that $v_1$ and $v_2$ are linked.
\end{enumerate}
\end{definition}

\begin{lemma}\label{lemma: linked}
Let $Z_1$ and $Z_2$ be two finite components of $\fY_s$ that are linked with each other. Suppose $\mE$ has trivial monodromy at every closed point on $Z_1^{\mathrm{sm}}$, then $\mE$ also has trivial monodromy at every closed point on $Z_2^{\mathrm{sm}}$.
\end{lemma}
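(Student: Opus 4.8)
Lemma~\ref{lemma: linked} asserts a ``propagation of triviality of monodromy'' along linked finite components. The plan is as follows.

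\textbf{Set-up of the comparison.} Since $Z_1$ and $Z_2$ are linked, by Definition~\ref{defn: linked} there exist closed points $v_1 \in Z_1^{\mathrm{sm}}$ and $v_2 \in Z_2^{\mathrm{sm}}$ with $f_s(v_1) = f_s(v_2) =: w \in \fX_s$, and with $f(Y_{v_1}) \cap f(Y_{v_2}) \ne \emptyset$ inside $X$, where $Y_{v_i} = \mathrm{sp}^{-1}(v_i)$ is the residue disc (tube) above $v_i$. The key point is that $Y_{v_i}$ is an open subset of the good locus, hence $Z_i^{\mathrm{sm}}$ is the smooth locus of a \emph{good} component and the log $F$-isocrystal $\mE$ restricted over $Z_i^{\mathrm{sm}}$ is just a log $F$-isocrystal on the log scheme $(Z_i^{\mathrm{sm}}, 0^{\N})$ obtained by base change from $s^{\log}$. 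Thus, by Theorem~\ref{theorem:log_crystal_1}, the property ``$\mE$ has trivial monodromy at every closed point of $Z_i^{\mathrm{sm}}$'' is equivalent to ``$\mE|_{Z_i^{\mathrm{sm}}}$ descends to a (classical, i.e.\ non-log) $F$-isocrystal on $Z_i^{\mathrm{sm}}$'', and equivalent to triviality of monodromy at a \emph{single} closed point of $Z_i^{\mathrm{sm}}$. So it suffices to prove that $\mE$ has trivial monodromy at \emph{one} point of $Z_2^{\mathrm{sm}}$.

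\textbf{Transporting triviality through the generic fibre.} First I would translate the statement into one about $p$-adic local systems via Definition~\ref{def:st_local_sys}: $\mE$ is the log $F$-isocrystal attached to the semistable local system $f^*\L$, and triviality of monodromy at a point $v \in Z_i^{\mathrm{sm}}$ is equivalent (by Theorem~\ref{theorem:log_crystal_1} and Theorem~\ref{theorem:main_intro_1}, applied to the good component $Z_i$ viewed as a smooth formal model near $v_i$) to $f^*\L$ being crystalline at all classical points of $Y_{v_i} = \mathrm{sp}^{-1}(v_i)$. Now pick a classical point $x$ in $f(Y_{v_1}) \cap f(Y_{v_2}) \subset X$; after enlarging $K$ we may take $x$ to be $K$-rational and to admit preimages $y_1 \in Y_{v_1}$ and $y_2 \in Y_{v_2}$ that are classical. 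Since $\mE$ has trivial monodromy at every closed point of $Z_1^{\mathrm{sm}}$, by the equivalence above $f^*\L$ is crystalline at all classical points of $\mathrm{sp}^{-1}(Z_1^{\mathrm{sm}})$, in particular at $y_1$. But $f^*\L|_{y_1} = \L|_x = f^*\L|_{y_2}$ as $p$-adic Galois representations of $G_K$ (all three are the stalk of $\L$ at the common image $x$, since $x = f(y_1) = f(y_2)$), so $f^*\L$ is crystalline at $y_2$ as well. Hence $\mE$ has trivial monodromy at $v_2$, and therefore — again by Theorem~\ref{theorem:log_crystal_1} applied to the good component $Z_2$ — at every closed point of $Z_2^{\mathrm{sm}}$.

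\textbf{Main obstacle.} The routine verifications are: that $Y_{v_i}$ really is the generic fibre of a smooth formal model with special fibre an open of $Z_i$ (this is exactly what ``good locus'' buys us), and that the Hyodo--Kato monodromy of $\mE$ at $v_i$ matches the ``arithmetic monodromy'' of $f^*\L|_{y_i}$ under the pointwise comparison of \cite{GY} — both are essentially bookkeeping given Definition~\ref{def:st_local_sys} and Theorem~\ref{theorem:log_crystal_1}. The one genuine subtlety is ensuring that the intersection point $x \in f(Y_{v_1}) \cap f(Y_{v_2})$ can be taken to be a \emph{classical} point with classical preimages in both residue discs, rather than merely a non-classical type~I (or higher-type) point of $X$; this is where one uses that $f(Y_{v_i})$ is an open subset of $X$ (as $v_i$ lies on a finite component, $f_s$ is finite, hence $f$ is finite flat near $v_i$, so $f(Y_{v_i})$ is open), so the non-empty open intersection contains classical points, each of which lifts along the finite étale (near these points) map $f$. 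I expect this to be the main point requiring care; everything else reduces to the rigidity theorems already proved in \S\ref{sec:log_crystals}.
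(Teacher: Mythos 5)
Your proposal follows the same overall strategy as the paper's proof: use the openness of the finite map $f$ to find a classical point in $f(Y_{v_1}) \cap f(Y_{v_2})$ that admits classical preimages $y_1, y_2$ in the two residue discs, use crystallinity of $f^*\L$ at $y_1$ (granted by Theorem~\ref{theorem:log_crystal_1} applied over $Z_1^{\mathrm{sm}}$) to deduce crystallinity at $y_2$, and then propagate back to all of $Z_2^{\mathrm{sm}}$ by Theorem~\ref{theorem:log_crystal_1} again. So the route is essentially the paper's.

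However, there is a genuine (though small and fixable) gap at the central step. You write $f^*\L|_{y_1} = \L|_x = f^*\L|_{y_2}$ ``as $p$-adic Galois representations of $G_K$,'' arranging only that $x$ is $K$-rational and that $y_1, y_2$ are \emph{classical}. But $f^*\L|_{y_i}$ is a representation of $G_{\sH(y_i)}$, and $\sH(y_i)$ is in general a nontrivial finite extension of $\sH(x) = K$ even when $x$ is $K$-rational. In other words, $f^*\L|_{y_i}$ is the \emph{restriction} of $\L|_x$ along $G_{\sH(y_i)} \hookrightarrow G_K$, not literally the same representation; knowing $f^*\L|_{y_1}$ is crystalline over $\sH(y_1)$ only gives you that $\L|_x$ is \emph{potentially} crystalline, not crystalline, and likewise restricting further to $\sH(y_2)$ only gives that $f^*\L|_{y_2}$ is potentially crystalline. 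One could try to repair this by enlarging $K$ until all three points become $K$-rational, but then the semistable model $\fY$ and its good/bad locus may need to be readjusted (cf.\ Remark~\ref{remark:enlarging_K}), and you would need to track that $v_1, v_2$ remain on the smooth non-contracting locus — all of which you sweep under the rug. The paper avoids this entirely with a sharper argument: it deduces that $f^*\L|_{y_2}$ is \emph{potentially} crystalline, observes that it is also semistable (since $f^*\L$ is a semistable local system on $Y$), and invokes the standard fact that a semistable representation which is potentially crystalline is already crystalline (because the monodromy operator $N$ of its $(\varphi,N)$-module is defined over $K_0$, and vanishing after a finite extension forces vanishing over $K_0$). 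This sidesteps both the rationality issue and the need to enlarge $K$ at this step. Your ``main obstacle'' paragraph correctly identifies the relevant concern about getting a classical point with classical preimages, but misses that the deeper issue is the equality of the three Galois representations rather than mere existence of classical preimages.
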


\begin{proof}
By definition, there exist $v_1\in Z_1^{\mathrm{sm}}$ and $v_2\in Z_2^{\mathrm{sm}}$ such that $v_1$ and $v_2$ are linked. By \cite[Theorem 0.1]{Mann}, the finite map $f$ is open, thus $f(Y_{v_1})\cap f(Y_{v_2})$ is a non-empty open subset of $X$. Pick a classical point $w\in f(Y_{v_1})\cap f(Y_{v_2})$. Since $\mE$ has trivial monodromy everywhere on $Z_1^{\mathrm{sm}}$, we know that $f^*\L$ is crystalline on $\mathrm{sp}^{-1}(Z_1^{\mathrm{sm}})$. Therefore, $\L$ is potentially crystalline at all classical points in $f(Y_{v_1})$. In particular, $\L$ is potentially crystalline at $w$. 

Let $y_2\in Y_{v_2}$ be a classical point such that $f(y_2)=w$. Then $f^*\L|_{y_2}$ is potentially crystalline. Since $f^*\L|_{y_2}$ is also semistable, it must be crystalline. Applying Theorem \ref{theorem:log_crystal_1} and Theorem \ref{theorem:main_intro} once again, we conclude that $\mE$ has trivial monodromy at every closed point on $Z_2^{\mathrm{sm}}$ and $f^*\L$ is crystalline on $\mathrm{sp}^{-1}(Z_2^{\mathrm{sm}})$.
\end{proof}

Now, we are ready to prove the claim. Thanks to Lemma \ref{lemma: linked}, it suffices to prove the following: for any two finite components $Z$ and $Z'$ of $\fY_s$, there exists a sequence of finite components $Z_0, Z_1, \ldots, Z_m$ such that $Z=Z_0$, $Z'=Z_m$ and such that $Z_i$ and $Z_{i+1}$ are linked, for all $i=0,1,\ldots, m-1$. We say that a closed point $w\in \fX_s$ is \emph{good} if all points in the pre-image $f_s^{-1}(w)$ live on the good locus of $\fY_s$ with respect to $f_s$; otherwise, we say $w$ is \emph{bad}. Notice that all but finitely many closed points on $\fX_s$ are good. Also notice that $f_s(Z^{\mathrm{sm}})$ is the entire $\fX_s$ minus finitely many points (similarly for $f_s({Z'}^{\mathrm{sm}})$). In particular, there exists a good point $w$ lying in the intersection $f_s(Z^{\mathrm{sm}})\cap f_s({Z'}^{\mathrm{sm}})$; i.e., there exist $v\in Z^{\mathrm{sm}}$ and $v'\in {Z'}^{\mathrm{sm}}$ such that $f_s(v)=f_s(v')=w$. We have
\[X_w=\bigcup_{v\in f_s^{-1}(w)} f(Y_v).\]
Using the connectedness of $X_w$ and the openness of $f$, there exists a sequence of closed points $v_0, v_1, \ldots, v_m$ in $f_s^{-1}(w)$ such that $v_0=v$, $v_m=v'$ and such that $v_i$ and $v_{i+1}$ are linked, for all $i=0,1,\ldots, m-1$. Then we take $Z_i$ to be the irreducible component containing $v_i$. This finishes the proof of the claim.

\begin{remark}
Let $\fY_s^{\mathrm{nc}}$ denote the union of the good locus and the finite-singular locus of $\fY_s$ with respect to $f_s$. The claim together with Theorem \ref{theorem:log_crystal_2_global} imply that the restriction of $\mE$ on $\fY_s^{\mathrm{nc}}$ descends to a (non-log) $F$-isocrystal on $\fY_s^{\mathrm{nc}}$, and hence $f^*\L$ is crystalline on $\mathrm{sp}^{-1}(\fY_s^{\mathrm{nc}})$.\footnote{Notice that $\mathrm{sp}^{-1}(\fY_s^{\mathrm{nc}})$ is precisely the non-contracting locus $Y_{\mathrm{nc}}$ of $Y$ with respect to $\widehat{f}$ in the sense of Definition \ref{definition:contracting_locus}.} But we do not need this fact in the argument below.
\end{remark}

The claim above actually implies that every good point in $\fX_s$ is potentially crystalline in the sense of Definition \ref{defn: crystalline point on special fiber}. There are only finitely many bad points left to be dealt with. Let $w''$ be one of these bad points. We have to show that for every classical point $x''\in X_{w''}$, $\L|_{x''}$ is potentially crystalline. By enlarging $K$, we may assume that $x''$ is $K$-rational. By Theorem \ref{thm:dodging}, up to further enlarging $K$ if necessary, we can find a new finite cover $f':Y'\ra X$ between smooth projective curves over $K$ such that
\begin{itemize}
\item $Y'$ admits a semistable formal integral model $\fY'$ over $\mO_K$;
\item there is a morphism of $p$-adic formal schemes
\[\widehat{f}': \fY'\ra \fX\]
inducing $f'$ on the generic fibers, such that ${f'}^*\L$ is a semistable local system;
\item there exists $y''\in (f')^{-1}(x'')$ such that the image $v''$ of $y''$ in $\fY'_s$ (under the specialization map $\mathrm{sp}: Y'\ra \fY'_s$) lies on the good locus of $\fY'_s$ with respect to $f'_s$, where $f'_s: \fY'_s\ra \fX_s$ is the mod $\varpi$ reduction of $\widehat f'$.
\end{itemize}
Let $\mE'$ be the log $F$-isocrystal on ${\fY'}_s^{\log}$ associated with the semistable local system ${f'}^*\L$. Let $Z''$ denote the irreducible component of $\fY'_s$ containing $v''$. Since $Z''$ is a finite component with respect to $f'_s$, we know that $f'_s({Z''}^{\mathrm{sm}})$ is $\fX_s$ minus finitely many points. Hence, there exist a closed point $w''\in \fX_s$ that we already know is potentially crystalline such that ${Z''}^{\mathrm{sm}}$ contains one of the pre-image $v^{\flat}\in (f'_s)^{-1}(w'')$. In particular, $\mE'$ has trivial monodromy at $v^{\flat}$. By applying Theorem \ref{theorem:log_crystal_1} one more time, we conclude that ${f'}^*\L$ is crystalline on $\mathrm{sp}^{-1}({Z''}^{\mathrm{sm}})$; in particular, $\L$ is potentially crystalline at $x''$. This finishes the proof of Theorem \ref{thm:rigidity_conjecture_main_text}.

\begin{remark} \label{remark:counter_example_to_optimistic_guess}
The curious reader may wonder whether in Theorem \ref{thm:conjecture_for_projective_varieties_intro} the local system $\L$ always becomes crystalline after replacing the base field by a finite extension. In this remark, we produce a counterexample in the spirit of \cite{Lawrence_Li} to show that this is false. Let $E$ be an elliptic curve with good ordinary reduction over $\Q_p$ and consider the isogeny $[p]: E \ra E$ given by multiplication by $p$. Let $\L = [p]_*  \ul{\Z_p}$ denote the pushforward of the constant local system. Then $\L$ is potentially crystalline at every closed point on $E$. We claim that for each totally ramified finite extension $L/\Q_p$ we can find a closed point $x \in E (L)$ such that the restriction of $\L$ to $x$ is not crystalline. As in \cite{Lawrence_Li}, given a closed point $x$ on $E(L)$, $\L|_x$ gives rise to a de Rham presentation $\rho_x$ of $G_L$ with finite image, which is crystalline precisely when it is unramified. Now consider the $p$-divisible group $E[p^\infty]$, which is an extension of $\Q_p/\Z_p$ by $\mu_{p^\infty}$. Near the origin, the multiplication by $p$ map is given by the $p$-power map on $\mu_{p^\infty}$. In particular, for $L = \Q_p (\varpi_L)$ where $\varpi_L$ is a uniformizer, there exists a point $x$ in $E$ over which $[p]^{-1} (x)$ becomes unramified over $L(\varpi_L^{1/p}).$
\end{remark}

\newpage 

\section{\large Extension of local systems across normal crossing divisors}\label{section:punctured disc}

In the remainder of the article, we prove Theorem \ref{mainthm:extend_across_ncd} which asserts that a pointwise-crystalline $p$-adic local systems on the complement of a normal crossing divisor on a smooth adic space always extends to the entire space. As mentioned in the introduction, we shall first prove the theorem in the case of a punctured disc, and then deduce the general case from it.

\begin{theorem}[Theorem \ref{theorem:main_intro_punctured_disc}]\label{thm:extendability}
Let $\D=\D_K$ be the closed unit disc over $K$ and let $\D^\times=\D-\{0\}$ be the punctured closed unit disc. Let $\L$ be an \'etale $\Z_p$-local system on $\mathbb D^\times$. Assume that $\mathbb{L}|_x$ is crystalline at all classical points $x$ in $\mathbb D^\times$. Then $\mathbb{L}$ extends uniquely to an \'etale $\Z_p$-local system on $\mathbb{D}$ and is necessarily a crystalline local system.
\end{theorem}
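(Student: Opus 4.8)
\textbf{Proof strategy for Theorem \ref{thm:extendability}.}

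The plan is to follow the outline sketched in the introduction, working one sub-annulus at a time. For each $m \ge 1$, write $\D_m = \{1/p^m \le |z| \le 1\}$ with its standard semistable formal model $\mathcal D_m$ over $\mO_K$, whose special fiber $\mathcal D_{m,s}$ is a chain of $m+1$ smooth curves (a tree of $\P^1_k$'s with $\A^1_k$'s at the two ends). First I would check that $\L|_{\D_m}$ is a semistable $\Z_p$-local system: since $\D_m$ has a semistable model and $\L$ is crystalline (hence de Rham, hence semistable) at every classical point of $\D_m$, the pointwise criterion of Guo--Yang \cite{GY} applies. This produces a log $F$-isocrystal $\mE_m^{\log}$ over $\mathcal D_{m,s}^{\log}$, where $\mathcal D_{m,s}$ carries the divisorial log structure from the special fiber. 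Next, because $\L$ is crystalline at \emph{every} classical point of $\D_m$, the smooth boundaries $\mathbb B_0, \dots, \mathbb B_m$ in particular contain classical points at which $\L$ is crystalline, so Theorem \ref{theorem:log_crystal_2_global} (applied to the semistable log scheme $\mathcal D_{m,s}^{\log}$, a chain of type $D^1$'s glued along tori) shows $\mE_m^{\log}$ descends to an $F$-isocrystal $\mE_m$ over the scheme $\mathcal D_{m,s}$ (and in particular has ``trivial monodromy in all directions'' at each node).

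The second main input is the logarithmic $p$-adic Riemann--Hilbert functor of \cite{DLLZ2} applied to the pair $(\D, \{0\})$: from $\L$ on $\D^\times$ (viewed as $Y - D$ with $Y = \D$, $D = \{0\}$) one obtains a vector bundle with logarithmic connection $(\mathbb D_{\mathrm{dR},\log}(\L), \nabla_\L)$ on $\D$, with at worst a log pole at the origin. The heart of the argument is to compare, over each annulus $\D_m$, the restriction of $(\mathbb D_{\mathrm{dR},\log}(\L), \nabla_\L)$ with the $F$-isocrystal $\mE_m$ on $\mathcal D_{m,s}$ --- this is the rigid-analytic incarnation of the comparison between log crystalline and log de Rham cohomology. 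Since $\mE_m$ is an honest (non-log) $F$-isocrystal, the associated connection has no monodromy obstruction, and one deduces that $\nabla_\L$ admits a full set of horizontal sections over each $\D_m$. Taking $m \to \infty$ and using that the $\D_m$ exhaust $\D^\times$, one concludes $\nabla_\L$ has a full set of horizontal sections over $\D^\times$; in particular the residue of $\nabla_\L$ at the origin is nilpotent (indeed trivial on the relevant graded pieces). By the upgraded logarithmic $p$-adic Riemann--Hilbert correspondence (Theorem \ref{prop:residue vs monodromy}), vanishing/nilpotence of the residue is exactly the condition for $\L$ to extend across the puncture: so $\L$ extends (uniquely, by the usual rigidity/Hartogs-type argument for local systems on $\D$ versus $\D^\times$) to an \'etale $\Z_p$-local system $\widetilde{\L}$ on $\D$. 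Finally, once $\widetilde\L$ is defined on all of $\D$, the trivial $F$-isocrystals $\mE_m$ on $\mathcal D_{m,s}$ glue (being restrictions of one object to an increasing chain, with the origin's residue disc filling in) to an $F$-isocrystal on the special fiber of a smooth model of $\D$, and comparing with the Riemann--Hilbert data shows $\widetilde\L$ is crystalline in the sense of Definition \ref{def:st_local_sys}.

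The step I expect to be the main obstacle is the comparison between $(\mathbb D_{\mathrm{dR},\log}(\L), \nabla_\L)|_{\D_m}$ and $\mE_m$ --- concretely, extracting from the descent of $\mE_m^{\log}$ to $\mE_m$ the statement that $\nabla_\L$ has a full set of solutions over $\D_m$ (equivalently, that the connection matrix can be gauged to $0$, or that the monodromy around the ``core'' of the annulus is trivial). This requires carefully matching the log de Rham realization of the Riemann--Hilbert functor with the Hyodo--Kato-type realization of the log $F$-isocrystal, tracking the log pole at $0$ and the behavior at the chain of nodes in $\mathcal D_{m,s}$, and verifying that triviality of the Hyodo--Kato monodromy operator of $\mE_m^{\log}$ translates into solvability of $\nabla_\L$ on the generic fiber. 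The remaining points --- the pointwise semistability input, the invocation of Theorem \ref{theorem:log_crystal_2_global}, uniqueness of the extension, and the final crystallinity check --- are comparatively routine given the tools assembled in the earlier sections.
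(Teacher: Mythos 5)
Your high-level decomposition matches the paper's: exhaust $\D^\times$ by the annuli $\D_m$, get semistability of $\L|_{\D_m}$ from Guo--Yang's pointwise criterion, descend the log $F$-isocrystal $\mE_m^{\log}$ to an $F$-isocrystal $\mE_m$ on $\mathcal D_{m,s}$ via the rigidity result (the paper invokes Theorem \ref{theorem:log_crystal_2}, but \ref{theorem:log_crystal_2_global} works just as well), compare with $\D_{\mathrm{dR},\log}(\L)$ (the paper does this via Construction \ref{construction:vector_bundle_with_connections} plus \cite[Prop.\ 3.46]{DLMS2}), and then apply Theorem \ref{prop:residue vs monodromy}. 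So you have correctly assembled the ingredients.

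However, there is a genuine gap exactly at the point you flag as the ``main obstacle,'' and the reason you sketch does not close it. You write that ``since $\mE_m$ is an honest (non-log) $F$-isocrystal, the associated connection has no monodromy obstruction,'' and conclude $\nabla_\L$ admits a full set of solutions over $\D_m$. This implication is false in general: the thin annulus $\{|z| = 1/p^j\}$ specializes to a copy of $\G_m$ in the smooth locus of a component of $\mathcal D_{m,s}$, and a (non-log) $F$-isocrystal on $\G_m$ can certainly give rise to a vector bundle with \emph{non-trivial} integrable connection on the thin annulus (e.g.\ Kummer-type connections $d - a\,\tfrac{dy}{y}$, which are regular, not log-singular, on $\{|y|=1\}$). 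Being ``honest'' rather than ``log'' removes the pole at the boundary, but it does not force horizontal sections. What actually forces triviality in the paper's argument (Lemma \ref{lemma:isocrystal_from_P1}) is a \emph{properness} input: the middle components of $\mathcal D_{m,s}$ are $\P^1_k$'s, so after descending $\mE_m^{\log}$ one gets an $F$-isocrystal on $\P^1_k$, whose associated vector bundle with integrable connection on $\P^{1}_K$ is automatically trivial; restricting this back to the thin annulus gives the full set of solutions. One then needs the further observation (the paper's sublemma) that a connection on a closed annulus $A_{\alpha,\beta}$ which is trivial on both thin boundary annuli is trivial on all of $A_{\alpha,\beta}$, obtained by gluing in trivial bundles on discs to produce a bundle with connection on $\P^1_K$. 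Without the $\P^1$-properness step and the gluing sublemma, ``descends to a non-log $F$-isocrystal'' does not yield ``full solutions on $\D_m$.''

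Two minor further points. First, you say the residue is ``nilpotent (indeed trivial on the relevant graded pieces),'' but for extension as an \'etale local system (Theorem \ref{prop:residue vs monodromy}(1)) you need the residue to be \emph{zero}, not merely nilpotent; nilpotence only gives unipotent geometric monodromy (part (2)), which does not suffice. Having full solutions on $\{0<|z|\le 1/p\}$ does give zero residue, so the conclusion is right, but the statement as written conflates the two. Second, for the final crystallinity of the extension, the paper does not glue $F$-isocrystals across the special fibers of the $\mathcal D_m$'s; instead it observes that $\L$ is crystalline on $\{|z|=1\}$ by Guo--Yang, in particular at the Gaussian point (a type I\!I point), and then appeals to purity \cite[Thm.\ 1.2]{Moon}. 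Your gluing route is plausible but is not the argument in the paper and would require more care at the origin.
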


To prove Theorem \ref{thm:extendability}, we consider the natural inclusion
$j: \mathbb D^\times \hookrightarrow\mathbb{D}$ where $\mathbb{D}$ is equipped with the log structure associated with the puncture. Let $j_*\mathbb{L}$ be the Kummer \'etale $\Z_p$-local system on $\mathbb{D}$ extending the \'etale local system $\mathbb{L}$ (cf. \cite[Corollary 6.3.4]{DLLZ1}). Our goal is to show that $j_*\mathbb{L}$ actually comes from an \'etale local system on $\mathbb{D}$. To this end, we will apply the logarithmic $p$-adic Riemann-Hilbert functor $\D_{\mathrm{dR}, \log} $ (constructed in \cite{DLLZ2},  which we recall in \S \ref{subsection:log RH}) to $\L$. In particular, we obtain a vector bundle with integrable log connection $(\D_{\mathrm{dR}, \log} (\mathbb{L}), \nabla)$ on $\mathbb{D}$. We will show that, if $\mathbb{L}$ satisfies the condition of the theorem, the restriction of the connection on $\mathbb{D}^{\times}$ must admit a full set of solutions. This step uses the full strength of the rigidity result for log $F$-isocrystals that we have established in \S \ref{sec:log_crystals} and \S \ref{sec:log_crystals_on_log_schemes}. This in turn implies that $(\D_{\mathrm{dR}, \log} (\mathbb{L}), \nabla)$ has trivial residue around the puncture. Finally, we will show that this is enough to ensure that $j_* \L$ must have trivial geometric monodromy around the puncture and thus extends to $\D$. It is then not difficult to deduce that the extended local system is indeed crystalline. 

\subsection*{Notation} In this section, let $\D^{n, \log }$ denote the $n$-dimensional ``log polydisc'',  given by the $n$-dimensional closed unit polydisc \[\mathbb{D}^n=\spa(K\langle T_1, \ldots, T_n\rangle, \mathcal{O}_K\langle T_1, \cdots, T_n\rangle)\] equipped with the log structure associated with the normal crossing divisor $\{T_1\cdots T_n=0\}$. 


\subsection{The logarithmic $p$-adic Riemann-Hilbert functor}\label{subsection:log RH}
In this section, we recall some results from \cite{DLLZ2} on the logarithmic $p$-adic Riemann-Hilbert functor.  Throughout \S \ref{subsection:log RH}, we work with the following setup: 
\begin{itemize}
    \item 
Let $K$ be a $p$-adic field as before and let $Y$ be a smooth rigid analytic space over $K$, viewed as an adic space. 
   \item Let $D\subset Y$ be a (reduced) normal crossing divisor (cf. \cite[Example 2.3.17]{DLLZ1}) and write $U:=Y-D$. In other words,  analytic locally on $Y$, up to replacing $K$ by a finite extension, $Y$ and $D$ are of the form $S\times \mathbb{D}^r$ and $S\times \{T_1\cdots T_r=0\}$ where $S$ is a connected smooth rigid analytic space over $K$ and the closed immersion $D\hookrightarrow Y$ is given by the pullback of the natural closed immersion \[ \{T_1\cdots T_r=0\}\hookrightarrow \mathbb{D}^r. \]
   \item Equip $Y$ with the log structure given by the natural inclusion $\alpha: \mathcal{M}_Y\rightarrow \mathcal{O}_{Y_{\ett}}$ where 
\[\mathcal{M}_Y=\big\{ f\in \mathcal{O}_{Y_{\ett}}\,\big|\, f \textrm{ is invertible on } U\big\}.\]
Then $(Y, \mathcal{M}_Y, \alpha)$ is a noetherian fs log adic space which is log smooth over $K$.  
\end{itemize}

Note that, \'etale locally, $Y$ admits a \emph{toric chart} $Y\rightarrow \mathbb{D}^{n, \log}$ (in other words, $Y$ is affinoid and $Y \rightarrow \mathbb{D}^{n, \log}$ is strictly \'etale), such that $D\hookrightarrow Y$ is the pullback of the inclusion 
\[\{T_1\cdots T_r=0\}\hookrightarrow \mathbb{D}^{n, \log} \]  for some $1\leq r\leq n$.  Let $\mathcal{V}$ be a vector bundle on the analytic site $Y_{\mathrm{an}}$ equipped with an integrable logarithmic connection \[\nabla:\mathcal{V}\rightarrow \mathcal{V}\otimes_{\mO_Y}\Omega_Y^{1, \log}.\] Let $Z\subset D$ be an irreducible component. There is a well-defined \emph{residue} map along the component $Z$, which is an  $\mathcal{O}_Z$-linear endomorphism \[\mathrm{Res}_Z(\nabla): \mathcal{V}|_Z\rightarrow \mathcal{V}|_Z.\]
\'Etale locally, suppose that $Y$ is equipped with a toric chart $Y\rightarrow \mathbb{D}^{n, \log}$ as above and that $Z$ is given by $\{T_i=0\}$ for some $1\leq i\leq r$, then $\mathrm{Res}_{Z}(\nabla)$ is given by $\nabla(T_i \frac{\partial}{\partial T_i})$. For more details, see for example \cite[\S 11.1]{ABC}.

Now we recall the following definition from {\cite[Definition 6.3.7]{DLLZ1}}. Let $Y_{\ket}$ denote the Kummer \'etale site on the fs log adic space $Y$ (see \cite[\S 4.1]{DLLZ1}). If $\mathbb{L}$ is a $\mathbb{Z}_p$-local system on $Y_{\ket}$ as in  Definition 6.3.1 in \emph{loc.cit.}, we shall use $\widehat{\mathbb{L}}$ to denote the associated $\widehat{\mathbb{Z}}_p$-local system on the pro-Kummer \'etale site $Y_{\proket}$ (see Definition 6.3.2 of \emph{loc.cit.}). 

\begin{definition}\label{defn:geometric monodromy}
We say that a $\mathbb{Z}_p$-local system $\mathbb{L}$ on $Y_{\ket}$ has \emph{trivial geometric monodromy} (resp. \emph{unipotent geometric monodromy}) along $D$ if the Kummer \'etale fundamental group $\pi_1^{\ket}(Y(s), \tilde{s})$ acts trivially (resp. unipotently) on the stalk $\mathbb{L}_{\tilde{s}}$ for every log geometric point $\tilde{s}$ of $Y$ lying above every geometric point $s$ of $D$, where $Y(s)$ denote the strict localization of $Y$ at $s$ equipped with the pullback log structure from $Y$.
\end{definition}

\begin{remark}
Let us note that $\mathbb{L}$ has trivial geometric monodromy if and only if the Kummer \'etale local system $\mathbb{L}$ comes from an \'etale local system. In other words, if $\mathbb{L}\simeq \varepsilon_{\ett}^*\mathbb{L}_0$ for some $\mathbb{Z}_p$-local system $\mathbb{L}_0$ on $Y_{\ett}$ where $\varepsilon_{\ett}:Y_{\ket}\rightarrow Y_{\ett}$ is the natural projection of sites.
\end{remark}

\begin{remark}\label{remark:smooth locus}
By\cite[Lemma 6.3.11]{DLLZ1} and its proof, to check whether $\mathbb{L}$ has trivial geometric monodromy (resp. unipotent geometric monodromy) along $D$, it suffices to verify the condition for geometric points $s$ lying above the smooth locus of $D$. 
\end{remark}

Now we recall the logarithmic $p$-adic Riemann-Hilbert functor of \cite{DLLZ2}. Consider the natural projection $\mu: Y_{\proket}\rightarrow Y_{\mathrm{an}}$ from the pro-Kummer \'etale site onto the analytic site, and the 
\emph{geometric log de Rham period sheaves} $\mathcal{O}\mathbb{B}^+_{\mathrm{dR}, \log, Y}$ and $\mathcal{O}\mathbb{B}_{\mathrm{dR}, \log, Y}$ on $Y_{\proket}$ (see \S 2.2 of \emph{loc. cit.}).\footnote{When the context is clear, we often omit the subscript `$Y$' and denote the period sheaves by $\mathcal{O}\mathbb{B}^+_{\mathrm{dR}, \log}$ and $\mathcal{O}\mathbb{B}_{\mathrm{dR}, \log}$.} For any $\mathbb{Z}_p$-local system $\mathbb{L}$ on $Y_{\ket}$, define
\[\D_{\mathrm{dR}, \log}(\mathbb{L}):=\mu_*(\widehat{\mathbb{L}}\otimes_{\widehat{\mathbb{Z}}_p} \mathcal{O}\mathbb{B}_{\mathrm{dR}, \log}).\]
There is an integrable log connection $\nabla_{\mathbb{L}}$ as well as a decreasing filtration $\mathrm{Fil}^{\bullet}\D_{\mathrm{dR}, \log}(\mathbb{L})$ on $\D_{\mathrm{dR}, \log}(\mathbb{L})$ inherited from the ones on $\mathcal{O}\mathbb{B}_{\mathrm{dR}, \log}$. The assignment \[\mathbb{L}\mapsto (\D_{\mathrm{dR}, \log}(\mathbb{L}), \nabla_{\mathbb{L}}, \mathrm{Fil}^{\bullet}\D_{\mathrm{dR}, \log}(\mathbb{L}))\] defines a functor $\D_{\mathrm{dR}, \log}$ from the category of $\mathbb{Z}_p$-local systems on $Y_{\ket}$ to the category of vector bundles on $Y_{\mathrm{an}}$ equipped with an integrable log connection and a decreasing filtration satisfying Griffiths transversality. The main results of \cite{DLLZ2} (Theorem 3.2.7, Theorem 3.2.12 of \textit{loc.cit.}) assert that, if the restriction of $\mathbb{L}$ to $U_{\ett}$ is de Rham, then $\D_{\mathrm{dR}, \log} (\L)$ is a vector bundle of rank $\mathrm{rk}_{\mathbb{Z}_p}\mathbb{L}$. Moreover, the functor $\D_{\mathrm{dR}, \log} $ restricts to a tensor functor from the category of $\mathbb{Z}_p$-local systems on $Y_{\ket}$ whose restriction to $U_{\ett}$ is de Rham and have unipotent geometric monodromy along $D$ to the category of filtered vector bundles on $Y_{\mathrm{an}}$ equipped with an integrable log connection with nilpotent residues along $D$. Our first task is to prove a slightly refined version of the last statement, in other words, a strengthening of \cite[Theorem 3.2.12(ii)]{DLLZ2}.

\begin{theorem}[Oswal--Shankar--Zhu]\label{prop:residue vs monodromy}
Suppose the restriction of $\mathbb{L}$ on $U_{\ett}$ is de Rham. Then 
\begin{enumerate}
    \item  $\mathbb{L}$ has trivial geometric monodromy  along $D$ (in other words, $\L$ extends to an \'etale local system on $Y$)  if and only if $\D_{\mathrm{dR}, \log} (\mathbb{L})$ has zero residue  along $D$, if and only if $(\D_{\mathrm{dR}, \log}(\mathbb{L}), \nabla_{\mathbb{L}})$ extends to a vector bundle with connection on $Y$. 
    \item Similarly,  $\mathbb{L}$ has unipotent geometric monodromy  along $D$ if and only if $\D_{\mathrm{dR}, \log} (\mathbb{L})$  nilpotent residue along $D$.  
\end{enumerate}
\end{theorem}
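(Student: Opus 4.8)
The statement refines \cite[Theorem 3.2.12]{DLLZ2}, and the plan is to bootstrap from it. First I would recall that \cite[Theorem 3.2.12(ii)]{DLLZ2} already gives the implication ``unipotent geometric monodromy along $D$ $\Rightarrow$ $\D_{\mathrm{dR},\log}(\mathbb L)$ has nilpotent residue along $D$'', and that the two assertions are local on $Y$ (for the ``only if'' direction) by Remark \ref{remark:smooth locus}, so we may assume $Y = S \times \mathbb D^{n,\log}$ with a toric chart and even (after shrinking and localizing) that $D$ is smooth, i.e.\ $Y \to \mathbb D^{1,\log}$ with $D = \{T = 0\}$. Hence the crux is the converse: from a statement about the residue endomorphism $\mathrm{Res}_Z(\nabla_{\mathbb L}) = \nabla_{\mathbb L}(T\partial_T)$ on $\D_{\mathrm{dR},\log}(\mathbb L)|_Z$ being zero (resp.\ nilpotent), recover triviality (resp.\ unipotence) of the $\pi_1^{\ket}$-action on the nearby stalk.

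The mechanism I would use is the standard ``residue $=$ log of local monodromy mod nilpotents'' dictionary, realized $p$-adically via the period sheaf $\mathcal O\mathbb B_{\mathrm{dR},\log}$. Concretely, over a log geometric point $\bar s$ above a point of the smooth locus of $D$, pro-Kummer-\'etale descent along $Y(\bar s) \to Y$ presents the stalk $\widehat{\mathbb L}_{\bar s}$ together with its Galois/monodromy action; the generator $\sigma$ of the (tame, pro-$p$) Kummer \'etale fundamental group $\pi_1^{\ket}(Y(\bar s))^{(p)} \cong \mathbb Z_p(1)$ acts on $\mathcal O\mathbb B_{\mathrm{dR},\log}$ through the exponential of $t_T := \log([T^\flat]/T)$ times $T\partial_T$ (this is exactly the computation underlying the construction of $\nabla_{\mathbb L}$ from the $\pi_1^{\ket}$-action in \cite[\S 3]{DLLZ2}). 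Taking $\mu_*$ and comparing with the connection, one gets that the action of $\log\sigma$ on $\D_{\mathrm{dR},\log}(\mathbb L)$ is conjugate (by a unipotent, in fact exponential-of-nilpotent-in-$t_T$, automorphism) to $-2\pi i \cdot \mathrm{Res}_Z(\nabla_{\mathbb L})$ modulo $\mathrm{Fil}^1$-type corrections. Thus $\mathrm{Res}_Z(\nabla_{\mathbb L}) = 0$ forces $\log\sigma$ to act nilpotently and with trivial semisimple part, hence (by the structure of the $p$-adic representation and the fact that a pro-$p$ group acting on a finite free $\mathbb Z_p$-module through $1 + (\text{topologically nilpotent})$ with vanishing ``log'' acts trivially after passing to the de Rham realization, combined with full faithfulness of $\D_{\mathrm{dR}}$ on de Rham local systems over $U$) the geometric monodromy is trivial; likewise nilpotent residue forces $\sigma$ unipotent. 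The third equivalence in (1) is then formal: $(\D_{\mathrm{dR},\log}(\mathbb L),\nabla_{\mathbb L})$ extends to a connection on $Y$ (without log poles) exactly when its residues along all components of $D$ vanish, which is the definition of a vector bundle with connection (as opposed to log connection) on $Y$; and the ``$\mathbb L$ extends to an \'etale local system'' reformulation is the content of the remark following Definition \ref{defn:geometric monodromy}.

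I would organize the write-up as: (i) reduce to the smooth-divisor local case; (ii) recall the $\pi_1^{\ket}$-equivariant description of $\mathcal O\mathbb B_{\mathrm{dR},\log}$ near a point of $D^{\mathrm{sm}}$ and extract the relation between $\mathrm{Res}_Z(\nabla_{\mathbb L})$ and $\log\sigma$; (iii) deduce the eigenvalue/nilpotency translation, invoking \cite[Theorem 3.2.7, 3.2.12]{DLLZ2} for the de Rham comparison and for one direction already proved there; (iv) upgrade ``eigenvalues of residue are zero'' to ``monodromy trivial'' using that the relevant representation takes values in $1 + p M_d(\mathbb Z_p)$ on the tame pro-$p$ part so that its logarithm is well-defined and injective on the group; (v) conclude (1) and (2), and record the equivalence with extendability of the connection and of the local system. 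The main obstacle is step (ii)--(iv): making the comparison between $\mathrm{Res}_Z(\nabla_{\mathbb L})$ and the logarithm of local monodromy precise enough to conclude \emph{exact} triviality (not merely unipotence) of the monodromy from \emph{exact} vanishing of the residue. In the complex-analytic setting this is immediate; $p$-adically one must be careful that the exponential/logarithm dictionary is valid, which is why one passes to the pro-$p$ tame quotient and uses that $\mathbb Z_p(1)$-actions arising here are automatically ``pro-$p$-unipotent'' so that the Baker--Campbell--Hausdorff calculus applies without convergence issues; I expect this to be the technically delicate core of the argument, and it is precisely where the careful bookkeeping with the filtration on $\mathcal O\mathbb B_{\mathrm{dR},\log}$ in \cite{DLLZ2} is needed.
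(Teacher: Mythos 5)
Your overall route --- reduce to the smooth-divisor local case and read the geometric monodromy generator $\gamma_i$ off of $\mathrm{Res}_{Z_i}(\nabla_{\L})$ via the $\pi_1^{\ket}$-equivariant structure of $\mathcal{O}\mathbb{B}_{\mathrm{dR},\log}$ --- is the same as that of \cite[Theorem~5.7]{OSZ}, which is exactly what the paper cites rather than reproves. The precise identification you want, that $\mathrm{Res}_{Z_i}(\nabla_{\L})$ equals $\tfrac{1}{t}\log\gamma_i$ acting on a suitable $B^+_{\mathrm{dR}}$-lattice, is \cite[Lemma~3.4.7]{DLLZ2}, and the ``careful bookkeeping with the filtration'' you flag at the end is precisely what \cite[Lemmas~3.4.9, 3.4.12, 3.4.13]{DLLZ2} carry out.

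Your step (iv), however, is wrong as stated, and it sits precisely at the point you identified as delicate. There is no reason the action of $\pi_1^{\ket}(Y(s),\tilde s)\cong\widehat{\Z}\gamma_i$ on $\widehat{\L}_{\tilde s}$ should land in $1+pM_d(\Z_p)$ --- not even after restricting to the tame pro-$p$ quotient --- and in any case you need to kill the whole $\widehat{\Z}$-action, not merely its pro-$p$ part. The mechanism that actually works replaces your pro-$p$/BCH reasoning by three facts from \cite[\S 3.4 and Appendix A]{DLLZ2}: (a) the decompleting system produces a finite-level $B^+_{\mathrm{dR}}/\xi^r$-lattice $N_{r,m}$ in which $\gamma_i$ \emph{automatically} acts unipotently on a distinguished submodule $(N^+/\xi^r)/(T_i)$ (Lemma~3.4.13 of \emph{loc.\ cit.}, which uses the de Rham hypothesis, not a pro-$p$ approximation); this is what makes $\log\gamma_i$, and hence the residue, well-defined; (b) zero residue then forces $\log\gamma_i=0$, hence $\gamma_i=1$, on that submodule; (c) a base-change isomorphism over the smooth locus $Z_i^{\mathrm{sm}}$ identifies this submodule, after tensoring up to $\mathbb{B}^+_{\mathrm{dR}}/\xi^r$ along $Z_i$, with $\widehat{\L}\otimes\mathbb{B}^+_{\mathrm{dR}}/\xi^r$, compatibly with $\gamma_i$, so that specializing at a log geometric point $\tilde s$ over $Z_i^{\mathrm{sm}}$ and taking $r=1$ transports the triviality (resp.\ unipotence) of $\gamma_i$ to $\widehat{\L}_{\tilde s}$ by faithful flatness. ``Full faithfulness of $\D_{\mathrm{dR}}$'' over $U$ plays no role in this chain. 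If you intend to write out the proof rather than cite OSZ, you should replace your (iii)--(iv) by (a)--(c).
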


\begin{proof}
    The first claim is essentially \cite[Theorem 5.7]{OSZ}. The second claim follows from a similar argument as the first. 
\end{proof}

\subsection{From log $F$-isocrystals to vector bundles with connections}

Given a $p$-adic formal scheme with semistable reduction, one can attach a vector bundle with connection on the generic fiber from a log $F$-isocrystal on the special fiber. The purpose of this subsection is to recall this construction in the case of the standard ``thick annulus'' with semistable reduction. This will be used in our proof of Theorem \ref{thm:extendability}.

\begin{construction} \label{construction:vector_bundle_with_connections}
As in the statement of Theorem \ref{thm:main_intro_for_log_schemes}, let  \[
A_1 = \{ 1/p \le |z| \le 1 \}\]
be the standard ``thick annulus'' with semistable reduction over $K$; in other words, $A_1$ is the affinoid adic space obtained as the generic fiber of the $p$-adic formal scheme \[\mathcal D=\spf \mO_K \gr{x, y}/(xy - \varpi).\] Let $\mathcal D_s$ denote the special fiber 
\[\mathcal D_s=\spec k[x, y]/xy\] 
and let $\mathcal D_s^{\log}$ denote the log scheme associated with the pre-log algebra $(k[x, y]/xy, x^{\N} \oplus y^{\N})$ as in \S \ref{sec:log_crystals_on_log_schemes}. Let $\mE$ be a log $F$-isocrystal on $\mathcal D_s^{\log}$. We attach a vector bundle with connection $(\mathbb E, \nabla_{\mathbb{E}})$ on $A_1$ as follows. For simplicity, let us assume that $K$ is unramified for the moment, in other words, $K = K_0 = W(k) [1/p]$. The general case will follow from a similar construction together with ``Dwork's trick'', see Remark \ref{remark:general_K_vector_bundle}. 

Consider $B^{(0)} = W \gr{x, y}/(xy - p)$, 
equipped with a (pre-)log structure $x^{\N} \oplus y^{\N}$. Let $B^{(1), \log}$ denote the usual $p$-completed self-coproduct of $B^{(0)}$ in (the opposite category of) the $p$-completed affine log crystalline site of $(k[x, y]/xy, x^{\N} \oplus y^{\N})$. Then we have 
\begin{align*}
B^{(1), \log} & = W \gr{x_1, y_1} \gr{\delta_x, \delta_y}^{\mathrm{PD}}/ \big(x_1 y_1 - p, (\delta_x + 1)(\delta_y +1) -1 \big) \\
& =  W \gr{x_1, y_1} \gr{\delta_x}^{\mathrm{PD}}/ (x_1 y_1 - p)
\end{align*}
where $\delta_x = x_2/x_1 - 1$ and $\delta_y = y_2/y_1 -1$. The data of the log $F$-isocrystal $\mE$ gives rise to a $B^{(0)}$-module $M$ such that $M[1/p]$ is a finite free $B^{(0)}[1/p]$-module, 
together with a descent isomorphism 
\begin{equation} \label{eq:descent_iso_for_xy-p}
\delta_B: M  \otimes_{B^{(0)}, \iota_1} B^{(1), \log} \isom B^{(1), \log} \otimes_{B^{(0)}, \iota_2} M 
\end{equation}
satisfying the cocycle condition. Next, we consider the self-coproduct $B^{(0)}\otimes_{(W, p^{\N})} B^{(0)}$ of pre-log rings $(W, p^{\N}) \ra (B^{(0)}, x^\N \oplus y^\N)$ where the map on monoids is given by the diagonal map $a \mapsto (a, a)$, and denote by $\sq B^{(1), \log}$ the $p$-completed exactification of this self-coproduct with respect to the projection map $B^{(0)}\otimes_{(W, p^{\N})} B^{(0)} \ra B^{(0)}$. Note that  
\[ 
\sq B^{(1), \log} = W \gr{x_1, y_1, \delta_x, \delta_y} / \big(x_1 y_1 - p, (\delta_x + 1)(\delta_y +1) -1 \big), 
\] 
and that the exactified pre-log structure is given by $\N \oplus \N \oplus \Z \ra \sq B^{(1), \log}$ sending $(a, b, c) \mapsto x_1^a y_1^b (\delta_x + 1)^c$.  Let $\sq I$ denote the kernel of the projection $\sq B^{(1), \log} \ra B^{(0)}$ induced by the product map. Then $\sq I$ is generated by $\delta_x, \delta_y$ and we have 
\[
\sq B^{(1), \log}/(\sq I)^2 = W\gr{x_1, y_1, \delta_x}/(x_1 y_1 - p, \delta_x^2) = B^{(0)}
[\delta_x]/(\delta_x)^2\]
since the condition $(\delta_x + 1)(\delta_y + 1) = 1$ becomes $\delta_x = - \delta_y$ in this quotient. Now, we observe that $\sq B^{(1), \log}/(\sq I)^2$, equipped with the pre-log structure induced from $\N \oplus \N \oplus \Z$, naturally lives in the $p$-completed affine log-crystalline site of $(k[x, y]/xy, x^\N \oplus y^\N)$. In fact,  we have a natural map \[ 
\pi: B^{(1), \log} \ra \sq B^{(1), \log}/(\sq I)^2
\]
sending $\delta_x \mapsto \delta_x$ and the PD powers $\delta_x^{[n]} \mapsto 0$ for all $n \ge 2$. Let $\cl \delta_B$ denote the base change of the isomorphism $\delta_B$ along $\pi$, and define the map 
\[
\nabla:  M \ra M \otimes_{B^{(0)}, \iota_1} \sq B^{(1), \log}/(\sq I)^2
\]
by 
\[
\nabla (m) = (\cl \delta_B)^{-1} (\iota_2^* m) - \iota_1^* m  = (\cl \delta_B)^{-1} (1 \otimes m) - m \otimes 1. 
\] 
Note that, the image of $\nabla$ lives in 
\[M \otimes_{B^{(0)}, \iota_1} \sq I \sq B^{(1), \log}/(\sq I)^2 \cong M \otimes_{B^{(0)}} \Omega^{1, \log}_{(B^{(0)}, \N^2)/(W, p^\N)}
\]
using the identification $\sq I \sq B^{(1), \log}/(\sq I)^2 \cong \Omega^{1, \log}_{(B^{(0)}, \N^2)/(W, p^\N)}$ sending $\delta_x$ to $\mathrm{d}\!\log x$. Inverting $p$, we obtain the desired vector bundle $\mathbb E$ (corresponding to the finite free module $M[1/p]$ over $B^{(0)}[1/p]$) with a logarithmic connection $\nabla_{\mathbb E}$ over the annulus $A_1$. 
\end{construction}

\begin{remark}[Dwork's trick]\label{remark:general_K_vector_bundle} 
For the general case, it suffices to observe that there is an equivalence between the category of log $F$-isocrystals on $(k[x,y]/xy, x^{\N} \oplus y^{\N})$ and log $F$-isocrystals on  $(\mO_K[x,y]/(xy - \varpi, p), x^{\N} \oplus y^{\N})$ induced by the $e^{\mathrm{th}}$-iterates of the Frobenius map, where $e$ is the ramification index of $K/\Q_p$.  
\end{remark} 




Let us record the following lemma for later use. 

\begin{lemma} \label{lemma:isocrystal_from_P1}
Let us retain notations from Construction  \ref{construction:vector_bundle_with_connections}.  Suppose that $\mE$ descends to an $F$-isocrystal $\mE_0$ on $\mathcal D_s$ (or equivalently, $\mE$ satisfies any of the equivalent conditions in \ref{theorem:log_crystal_2}), and further suppose that the restriction of $\mE$ along the closed immersion 
\[ 
\iota_y: \A^1_{k}  = \spec k[y] \lra \mathcal D_s = \spec k[x, y]/xy
\] 
extends to an $F$-isocrystal on $\P^1_k$,  then the associated vector bundle $(\mathbb E, \nabla_{\mathbb E})$ from the construction above has full solutions when restricted to the ``thin annulus'' 
\[
\mathbb B_1 = \{|y| = 1\} =  \{|x| = 1/p\} 
\]
inside the ``thick annulus'' $A_1$. 
\end{lemma}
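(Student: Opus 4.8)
The plan is to trace through Construction \ref{construction:vector_bundle_with_connections} with the extra hypotheses and exhibit, after restricting to the thin annulus $\mathbb B_1 = \{|y| = 1\}$, an explicit fundamental system of horizontal sections of $(\mathbb E, \nabla_{\mathbb E})$. First I would reduce to the unramified case $K = K_0 = W(k)[1/p]$ exactly as in Construction \ref{construction:vector_bundle_with_connections}, invoking Dwork's trick (Remark \ref{remark:general_K_vector_bundle}) to replace the general base by an $e$-fold Frobenius twist; the statement about full solutions on a residue disc is insensitive to this. Then, since $\mE$ descends to an $F$-isocrystal $\mE_0$ on $\mathcal D_s$ by hypothesis, the descent isomorphism $\delta_B$ of \eqref{eq:descent_iso_for_xy-p} already lives over the non-log self-coproduct $B^{(1)} = W\gr{x_1, y_1}\gr{d_x}^{\mathrm{PD}}/(x_1 y_1 - p)$, where $d_x = x_2 - x_1$; concretely the matrix $T_{\delta_B}$, expressed in the variable $\delta_x = x_2/x_1 - 1 = d_x/x_1$, has the form $T_{\delta_B} = \sum_{i \ge 0} F_i(x_1, y_1)\,\delta_x^i$ with each $F_i$ divisible by $x_1^i$, so that $\sum_i \sq F_i(x_1,y_1)\,d_x^i$ is a well-defined descent datum over $B^{(1)}$. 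Passing to $\sq B^{(1), \log}/(\sq I)^2$ as in the construction, the connection $\nabla_{\mathbb E}$ is then the usual connection attached to the crystal $\mE_0$, with $\nabla_{\mathbb E}(x \tfrac{\partial}{\partial x}) = -N_0$ for the matrix $N_0 := F_1(x_1, y_1)|_{\text{diagonal}}$, which is now divisible by $x_1$, i.e. $\nabla_{\mathbb E} = \mathrm{d} + \Theta\,\mathrm{d}\!\log x$ with $\Theta \in x\cdot M_d(B^{(0)}[1/p])$.

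Next I would bring in the second hypothesis. Restricting $\mE_0$ along $\iota_y : \A^1_k = \spec k[y] \hookrightarrow \mathcal D_s$ and extending to an $F$-isocrystal on $\P^1_k$ gives, after evaluation on the weakly initial object $W\gr{y}$ of the affine crystalline site of $k[y]$ (resp. on $W\gr{y, 1/y}$ near $|y|=1$), a finite free module with a convergent connection over the \emph{thin} annulus $\mathbb B_1 = \{|y|=1\}$ that arises from an $F$-isocrystal on $\P^1_k$; by the overconvergence/full-faithfulness properties of isocrystals on $\P^1_k$ — equivalently, because an isocrystal on a proper smooth curve over $k$ is determined by a convergent (indeed overconvergent) isocrystal whose associated connection on the tube of a point has a full set of solutions — this restricted connection is trivial (has a full set of horizontal sections) on $\mathbb B_1$. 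On the thin annulus $\mathbb B_1$ we have $|x| = |y|^{-1} = 1/p$, so $x = \varpi/y$ lies in the maximal ideal; the block of $\nabla_{\mathbb E}$ transverse to $\iota_y$ is exactly the $\Theta\,\mathrm{d}\!\log x$ term, whose coefficient matrix $\Theta$ is divisible by $x$ and hence has spectral norm $< 1$ on $\mathbb B_1$. Therefore the full connection $\nabla_{\mathbb E}|_{\mathbb B_1}$ is a small ($p$-adically convergent) perturbation — along the $x$-direction — of the trivial connection coming from $\P^1_k$ along the $y$-direction, and the standard Dwork-style power-series solution argument (Picard iteration / the $p$-adic Cauchy–Kovalevskaya theorem for connections with overconvergent coefficients, as used to solve $\nabla(x\partial_x) + \Theta = 0$ when $\Theta$ is topologically nilpotent) produces a full set of solutions of $\nabla_{\mathbb E}$ over $\mathbb B_1$.

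The main obstacle, and the step I would spend the most care on, is the transition from "$\mE|_{\iota_y}$ extends to an $F$-isocrystal on $\P^1_k$" to "the transverse coefficient matrix is divisible by $x$ \emph{and} the solutions in the $y$-direction are genuinely convergent on the full thin annulus $|y|=1$": one must keep track of the two filtrations/gradings (powers of $x_1$ from the semistable descent, powers of $\delta_x$ from the PD structure) simultaneously, and verify that the convergence radius of the horizontal sections in the $y$-variable, guaranteed abstractly by properness of $\P^1_k$, is large enough to cover $\{|y|=1\}$ rather than merely a small residue disc — this is where the extension-to-$\P^1_k$ hypothesis (as opposed to mere extension to $\A^1_k$, which would only give convergence on $|y|\le 1$ and could degenerate at the missing point) is used. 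Once convergence on $\mathbb B_1$ in the $y$-direction is in hand, combining it with the topological nilpotence of $\Theta$ in the $x$-direction via the usual majorant estimates is routine, and yields the claimed full set of solutions of $(\mathbb E, \nabla_{\mathbb E})$ on $\mathbb B_1$.
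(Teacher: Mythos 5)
The paper's own proof of this lemma is a short global argument: the hypothesis supplies a vector bundle with integrable connection $(\sq{\mathbb E}, \nabla_{\sq{\mathbb E}})$ on all of $\P^1_K$ (built from $\sq\mE_y$), any such pair on $\P^1$ is trivial (Grothendieck splitting plus $H^0(\P^1,\Omega^1)=0$), and $(\mathbb E,\nabla_{\mathbb E})|_{\mathbb B_1}$ is identified with the restriction of this trivial connection. Your proposal instead attempts local estimates, and there the key step breaks down.

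First, a geometric confusion: $A_1$ has relative dimension $1$ over $K$, so $\Omega^{1,\log}_{A_1/K}$ is rank one and $\mathrm{d}\!\log x = -\,\mathrm{d}\!\log y$ on $\mathbb B_1$. There is no ``block of $\nabla_{\mathbb E}$ transverse to $\iota_y$'' separate from a ``$y$-direction''; the whole connection is the single term $\Theta\,\mathrm{d}\!\log x$. So the decomposition into a perturbation ``along the $x$-direction'' of a trivial ``connection along the $y$-direction'' is not meaningful here, and the Picard-iteration step built on it has no base connection to iterate from.

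Second, and more substantively: the step where you claim the restriction to $\mathbb B_1$ is trivial ``by the overconvergence/full-faithfulness properties of isocrystals on $\P^1_k$ — equivalently, because \ldots the associated connection on the tube of a point has a full set of solutions'' does not give what is needed. Convergence of the isocrystal guarantees a full solution set on the tube (residue disc) of each closed point; but $\mathbb B_1=\{|y|=1\}$ is the tube of $\G_m\subset\P^1_k$, a connected affinoid that is a union of infinitely many residue discs, and a connection can have full local solutions on every residue disc without admitting a global horizontal basis on the annulus. (Take $\nabla = \mathrm{d} - c\,\mathrm{d}\!\log y$ with a scalar $c$, $0<|c|<1$: it is topologically nilpotent and has solutions $y^c$ on each residue disc, yet is not trivial on $\mathbb B_1$.) This also shows that ``$\Theta$ has spectral norm $<1$ on $\mathbb B_1$, hence Picard iteration produces full solutions'' is false as a general principle on the thin annulus: topological nilpotence of the coefficient gives residue-disc solutions, not global ones. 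You flag this as the main obstacle and correctly note that the $\P^1_k$-hypothesis must be what closes it, but you never actually close it. The missing ingredient is precisely the global statement that a vector bundle with flat connection on $\P^1$ (algebraic or, by GAGA, rigid) is trivial — this is an honest cohomology-vanishing argument on the compact curve, not an overconvergence estimate, and it is what makes the restriction to $\mathbb B_1$ trivial in one stroke with no estimates at all.
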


\begin{proof} 
The condition that the $F$-isocrystal $\mE_y := (\iota_y)^* \mE$ on $\A^1_k$ extends to an $F$-isocrystal $\sq \mE_y$ on $\P^1_k$ is extremely restrictive: one can associate a vector bundle $(\sq{\mathbb E}, \nabla_{\sq{\mathbb E}})$ on $\P^1_K$ following a similar construction as in Construction \ref{construction:vector_bundle_with_connections}, but such a connection on a vector bundle on $\P^1$ is necessary trivial.\footnote{In fact, one can show that any $F$-isocrystal on $\P^1_k$ is necessarily constant, in the sense that it comes from the base change of an $F$-isocrystal over the point $\spec k$.} This implies the claim by observing that the restriction of $(\mathbb E, \nabla_{\mathbb E})$ to $\mathbb B_1= \{|y| = 1 \}$ agrees with the restriction from $(\sq{\mathbb E}, \nabla_{\sq{\mathbb E}})$. 
\end{proof}

\subsection{Proof of Theorem \ref{thm:extendability}}\label{subsection:proof of extendability}
 
Let $\D_m$ denote the annulus 
\[\D_m = \{1/p^m \le |z| \le 1\} \] 
inside the punctured disc $\mathbb D^\times$. It has a semistable model $\mathcal D_m$ constructed as follows. First, consider the standard semistable model
\[
\mathcal {A}_j = \spf \mO_K \gr{x_{j-1}, y_{j-1}}/(x_{j-1} \cdot y_{j-1}-\varpi)
\] 
for the annulus 
\[
A_j = \{1/p^j \le |z| \le 1/p^{j-1}\}
\]
where each $x_k$ can be viewed as ``$\varpi^{k} x_0$''. Then, for $1 \le j \le m-1$, we glue $\mathcal A_j$ with $\mathcal A_{j+1}$ along the $p$-adic formal torus $\widehat \G_m$ to form the semistable $p$-adic formal scheme $\mathcal D_m$. More precisely, we glue $\spf \mO_K \gr{(y_{j-1})^{\pm}} \subset \mathcal A_j$ with $\spf \mO_K \gr{x_{j}^{\pm}} \subset \mathcal A_{j+1}$ by identifying $y_{j-1}$ with $(x_j)^{-1}$. The special fiber $\mathcal D_{m,s}$ of $\mathcal D_m$ is thus obtained from gluing the $m$ copies of schemes $k[x,y]/xy$. Notice that $\mathcal D_{m,s}$ consists of two copies of $\A^1_k$ and $(m-1)$ copies of $\P^1_k$, where each $\P^1_k$ intersect the other copies of $\A^1_k$ or $\P^1_k$ precisely at two points (``$0$'' and ``$\infty$''). The $\A^1_k$'s lie at the two ends and each intersects with one of the $\P^1_k$'s at one point. Let $\mathcal D_{m, s}^{\log}$ denote the corresponding log scheme, where the log structure comes from the base change of divisorial log structure on $\mathcal D_{m}$ along the divisor $\varpi = 0$ as usual. 

Now let 
\[
\iota_m:  \mathcal D_{m, s} \ra \mathcal D_{m+1, s}
\]
be the natural (open) immersion of the special fibers. We shall view $\mathbb D^\times = \cup_{m \ge 1} \D_m$ as an infinite union of annuli, which then has a ``semistable integral model'' $\mathcal D_\infty$, whose special fiber consists of one $\A^1_k$ and infinitely many $\P^1_k$'s, glued along the maps $\iota_m$'s. 

Now we are ready to prove Theorem \ref{thm:extendability}. 
\begin{proof}[Proof of Theorem \ref{thm:extendability}]

As above, we write $\mathbb D^\times = \cup_{m \ge 1} \mathbb D_m$. On each $\D_m$, applying the pointwise criterion of Guo-Yang (\cite[Theorem 1.1]{GY}), we know that the local system $\L$ is a semistable local system, since it is crystalline (thus in particular semistable) at all classical points. Let $\mE_{m}^{\log}$ denote the associated log $F$-isocrystal on the log scheme $\mathcal D_{m, s}^{\log}$. Note that, by construction, $\mE_m^{\log}$ agrees with the restriction $\iota_m^* (\mE_{m+1}^{\log})$ of $\mE_{m+1}^{\log}$ to $\mathcal D_{m,s}^{\log}$. Let $(\mathbb E_m, \nabla_{m})$ denote the vector bundle with connection on $\mathbb D_{m}$ obtained from $\mE_{m}^{\log}$ as in Construction \ref{construction:vector_bundle_with_connections}.\footnote{More precisely, it is glued from the vector bundles with connection on each $A_j$ as in Construction \ref{construction:vector_bundle_with_connections}.} By construction, we know that $(\mathbb E_m, \nabla_m)$ agrees with the restriction of $(\mathbb E_{m+1}, \nabla_{m+1})$ to $\D_m$ for each $m$. By \cite[Proposition 3.46]{DLMS2}, we know that 
\[
(\mathbb E_m, \nabla_m) = (\D_{\mathrm{dR}}(\mathbb{L}) |_{\D_m}, \nabla_{\L}) = (\D_{\mathrm{dR}, \log}(\mathbb{L}) |_{\D_m}, \nabla_{\L}).   
\]
By the assumption that $\L$ is crystalline at all classical points on $\D^\times$ and Theorem \ref{theorem:log_crystal_2}, we know that each log $F$-isocrystal $\mE_{m}^{\log}$ descends to an $F$-isocrystal $\mE_m$ on $\mathcal D_{m, s}$. Now, applying Lemma \ref{lemma:isocrystal_from_P1} repeatedly, we conclude that $(\D_{\mathrm{dR}, \log}(\mathbb{L}), \nabla_{\L})$ restricts to a trivial bundle with trivial connection over each ``thin annulus'' $\{|z| = 1/p^m\}$, for all $m \ge 1$. This in turn implies that the restriction of  $(\D_{\mathrm{dR}, \log}(\mathbb{L}), \nabla_{\L})$ to the smaller punctured disc $\{0 < |z| \le 1/p\}$ is trivial by the following sublemma. 

\vspace*{0.2cm}

\noindent{\textit{Sublemma}.}  Suppose that a vector bundle with (automatically integrable) connection over a closed annulus $A_{\alpha, \beta} = \{\alpha \le |z| \le \beta\}$ is trivial when restricted to its two boundary ``thin annulus'' $\partial_{\alpha} = \{|z| = \alpha\}$ and $\partial_{\beta} = \{|z| = \beta\}$, then it is trivial over the entire $A_{\alpha, \beta}$. 

\vspace*{0.2cm}

\noindent \textit{Proof of the sublemma}. By assumption, we can glue the vector bundle with connection over $A_{\alpha, \beta}$ to a trivial bundle with trivial connection over a disc with boundary equal to $\partial_\alpha$ (resp. $\partial_\beta$). This way, we get a vector bundle with connection over $\P^1_K$, which has to be trivial. This proves the sublemma.

\vspace*{0.2cm}

Consequently, the connection $(\D_{\mathrm{dR}, \log}(\mathbb{L}), \nabla_{\L})$ has zero residue around the puncture and hence $\L$ extends to the entire disc $\D$ by Theorem \ref{prop:residue vs monodromy}. 

Finally, we show that the extended local system is necessarily crystalline. Indeed, by \cite[Theorem 1.1]{GY}, the restriction of $\L$ on the thin annulus $\{|z|=1\}$ is a crystalline local system. In particular, $\L$ is crystalline at the Gaussian point of $\D$ (which is a type I\!I point). This implies the crystallinity of the extended local system by purity (\cite[Theorem 1.2]{Moon}).
\end{proof}

\subsection{Proof of extendability of pointwise crystalline $p$-adic local systems}
 
Finally, we deduce Theorem \ref{mainthm:extend_across_ncd} from Theorem \ref{thm:extendability}. 
Let us state the result again for reader's convenience. As in \S \ref{subsection:log RH}, we let $Y$ be a connected smooth  rigid analytic space  over $K$. Let $D\subset Y$ be a reduced normal crossing divisor, and let $U = Y -  D$ be the complement of $D$.

\begin{theorem} 
Let $\L$ be an \'etale $\Z_p$-local system on $U$.    Suppose that $\L$ is crystalline at all classical points on $U$. Then $\L$ extends to an \'etale $\Z_p$-local system on $Y$. The extension is necessarily unique. Moreover, if $Y$ is quasi-compact, then after some finite extension $L$ of the base field $K$, $\mathbb{L}$ is crystalline at all classical points on $Y_L$. 
\end{theorem}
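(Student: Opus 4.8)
\textbf{Proof strategy for Theorem \ref{mainthm:extend_across_ncd}.} The plan is to reduce the general statement to the case of a punctured disc, which is Theorem \ref{thm:extendability}. First, recall that the question is local on $Y$: to show that the Kummer \'etale local system $j_* \L$ (where $j: U \hookrightarrow Y$, with $Y$ carrying the log structure associated to $D$) has trivial geometric monodromy along $D$, it suffices by Remark \ref{remark:smooth locus} to verify this at log geometric points lying above the smooth locus $D^{\mathrm{sm}}$ of $D$. Around a point of $D^{\mathrm{sm}}$, up to replacing $K$ by a finite extension, $Y$ \'etale locally looks like $S \times \D$ and $D$ like $S \times \{0\}$ for a connected smooth affinoid $S$; the Kummer \'etale fundamental group of the relevant strict localization is then $\widehat{\Z}$, generated by a single loop $\gamma$ around the puncture, so the question is whether $\gamma$ acts trivially on the stalk of $\L$.

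The first key step is to cut down to a one-dimensional situation. Given a classical point $s \in S$, restricting $\L$ to $\{s\} \times \D^\times$ produces an \'etale $\Z_p$-local system on the punctured disc over the residue field of $s$, which by hypothesis is crystalline at all classical points; Theorem \ref{thm:extendability} then tells us this restriction extends across the origin, so the local monodromy operator $\rho(\gamma)$ restricted to that fiber is trivial. Since this holds for a Zariski-dense set of classical points $s$, and $\rho(\gamma) - \mathrm{id}$ is (the stalk of) a morphism of local systems on $S$, it must vanish identically; equivalently, one may run the argument of Theorem \ref{prop:residue vs monodromy} fiberwise to conclude that $\D_{\mathrm{dR},\log}(\L)$ has zero residue along $D^{\mathrm{sm}}$, hence along $D$. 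Invoking Theorem \ref{prop:residue vs monodromy}(1) (the Oswal--Shankar--Zhu result) then gives that $j_* \L$ has trivial geometric monodromy along $D$, i.e. descends to an \'etale $\Z_p$-local system $\widetilde{\L}$ on $Y_{\ett}$. Uniqueness is automatic: two extensions agree on the dense open $U$, hence are isomorphic as \'etale local systems on $Y$ (an \'etale local system on a connected normal rigid space is determined by its restriction to a dense open, since $\pi_1^{\ett}(U) \twoheadrightarrow \pi_1^{\ett}(Y)$).

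For the final crystallinity assertion, assume $Y$ is quasi-compact. Cover $Y$ by finitely many toric charts $Y_i \to \D^{n_i, \log}$; it suffices to show $\widetilde{\L}|_{Y_i}$ is crystalline at all classical points after a finite base extension, and since there are finitely many charts we may enlarge $K$ once to handle all of them. Fix one chart and the corresponding formal model. For a classical point $y \in Y_i$, we want $\widetilde{\L}|_y$ crystalline. If $y$ lies in $U$ this is the hypothesis; for $y$ on $D$, one argues by a purity/specialization argument: after enlarging $K$ so that $y$ specializes into the smooth locus of a semistable formal model and so that $\widetilde{\L}$ restricted to a suitable subchart is semistable (via the pointwise criterion of \cite{GY}), the associated log $F$-isocrystal on the special fiber has, by the previous paragraph, trivial monodromy everywhere (it descends, because we have already seen the residue vanishes), so $\widetilde{\L}$ is crystalline on the preimage of the whole smooth locus of that component; running this over the finitely many components and using that every classical point specializes into some component, we conclude. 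Alternatively, and more cleanly: $\widetilde{\L}$ is crystalline at some classical point of $Y_i$ lying in $U$, hence (applying Theorem \ref{theorem:main_intro} or Theorem \ref{cor:rigidity_crystalline_over_semistable} on the relevant semistable model, after a finite base change making everything semistable) crystalline at all classical points of $Y_i$; then invoke purity (\cite[Theorem 1.2]{Moon}) if needed to pass from a dense set of points to all points.

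\textbf{Expected main obstacle.} The genuinely substantive input is Theorem \ref{thm:extendability} (the punctured-disc case) together with Theorem \ref{prop:residue vs monodromy}; granting those, the reduction above is largely formal. The one point requiring care is the ``fiberwise to global'' passage: justifying that vanishing of the local monodromy operator (equivalently, of the residue) on a dense set of classical fibers of $S \times \D \to S$ forces its vanishing as a morphism of coherent sheaves / local systems on all of $S \times \D^\times$. This is where one must be careful about whether $\rho(\gamma) - \mathrm{id}$ genuinely assembles into a global section of an appropriate sheaf on $S$ whose vanishing locus is closed — this is exactly the mechanism already used in the proof of Theorem \ref{prop:residue vs monodromy}, so the cleanest route is to phrase the reduction directly in terms of $\D_{\mathrm{dR},\log}(\L)$ and its residue endomorphism along $D^{\mathrm{sm}}$, rather than in terms of the abstract monodromy action, and then quote Theorem \ref{prop:residue vs monodromy} once at the end. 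The second mild subtlety is keeping the finitely many finite base extensions (one per chart, plus the ones needed to reach semistable models) under control so that a single finite $L/K$ works — this is fine because $Y$ is quasi-compact.
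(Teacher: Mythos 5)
Your proposal is correct in its main lines, but it takes a genuinely different route from the paper in how it organizes the reduction. The paper proceeds by induction on the number $r$ of irreducible components of $D$: in the inductive step it introduces $Y^{(r)} := Y \setminus \{T_r = 0\}$, over which the complement of $U$ has only $r-1$ components, establishes the extension and pointwise crystallinity over $Y^{(r)}$ by the induction hypothesis, and then applies the base case $r = 1$ to the inclusion $Y^{(r)} \hookrightarrow Y$. You instead invoke Remark \ref{remark:smooth locus} to reduce immediately to the smooth locus $D^{\mathrm{sm}}$, where the local geometry is already the $r=1$ situation $S' \times \D$, and then phrase the fiberwise vanishing as vanishing of $\mathrm{Res}_Z(\nabla_{\L})$ for $\D_{\mathrm{dR},\log}(\L)$; this avoids the induction entirely. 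Both approaches reduce to the same $r=1$ engine (Theorem \ref{thm:extendability} plus Theorem \ref{prop:residue vs monodromy}), but the paper's induction carries the crystallinity claim along as part of its inductive hypothesis, which is precisely what it needs to re-feed into the $r=1$ case at each stage, whereas your route establishes the extension first and handles crystallinity afterwards as a separate step. Your reduction is arguably a bit leaner for the extension statement, at the cost of having to argue crystallinity on the nose rather than getting it for free from the induction.

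On the crystallinity assertion, your purity-based argument (mirroring the end of the proof of Theorem \ref{thm:extendability}: once $\widetilde{\L}$ is defined, it is crystalline at a Shilov point of a small chart with good reduction because that point lies in $U$, and \cite[Theorem 1.2]{Moon} does the rest) is the right mechanism and matches what the paper ultimately relies on. However, the "alternatively, and more cleanly" variant you propose — applying Theorem \ref{theorem:main_intro} or Theorem \ref{cor:rigidity_crystalline_over_semistable} — has a real gap: those theorems require $\widetilde{\L}$ to be \emph{semistable} on the relevant formal model, and you have not established that; the pointwise criterion of Guo--Yang would need to be invoked for the extended local system on the larger chart (not just on $U$), which you cannot do without first knowing something at the boundary. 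So that "cleaner" shortcut is not actually available as stated, and you should stick with the purity route. You are also right that the single finite extension $L/K$ comes from quasi-compactness (finitely many toric charts, enlarge $K$ once for all of them), which the paper leaves implicit.
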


\begin{proof} 
Let us first note that, analytic locally on $Y$, up to replacing $K$ by a finite extension, $Y$ and $D$ are of the form $S\times \mathbb{D}^r$ and $S\times \{T_1\cdots T_r=0\}$ where $S$ is a connected smooth rigid analytic space over $K$ and the closed immersion $D\hookrightarrow Y$ is given by the pullback of the natural closed immersion 
\begin{equation} \label{eq:number_of_branches_in_ncd}
\{T_1\cdots T_r=0\}\hookrightarrow \mathbb{D}^r. 
\end{equation}  
 We shall prove the theorem by induction on $r$ in (\ref{eq:number_of_branches_in_ncd}), which is the number of irreducible components of $D$. We start with the case when $r=1$. Since the question is analytic local (after some finite extension of $K$), we may assume that $D\hookrightarrow Y$ is the closed immersion 
 \[ S\times \{T=0\}\hookrightarrow S\times \mathbb{D} 
 \] for some smooth rigid analytic space $S$, and hence $U=S\times \mathbb{D}^{\times}$. For every classical point $s\in S$, taking pullback along $s\hookrightarrow S$, we obtain a local system $\mathbb{L}|_{\{s\}\times \mathbb{D}^{\times}}$ which is crystalline at all classical points. By the proof of Theorem \ref{thm:extendability},
 the log connection $D_{\mathrm{dR}, \log}\left(\mathbb{L}|_{\{s\}\times \mathbb{D}^{\times}}\right)$ has zero residue around the puncture $\{s\}\times\{T=0\}$. Since this is true for all classical points $s\in S$, it follows that the log connection $D_{\mathrm{dR}, \log}(\mathbb{L})$ has zero residue along the boudnary divisor $D=S\times\{T=0\}$. This in turn implies that $\mathbb{L}$ has trivial geometric monodromy around $D$. In particular, $\mathbb{L}$ extends to $Y=S\times \mathbb{D}$. Moreover, by the proof of Theorem \ref{thm:extendability}, we know that $\mathbb{L}$ is crystalline everywhere on $D$ (and thus everywhere on $Y$).

Next, we prove the case for $r$  assuming that the statement holds for $\leq r-1$. Once again, after some finite extension of $K$, we may assume that $D\hookrightarrow Y$ is the closed immersion 
\[ S\times \{T_1\cdots T_r=0\}\hookrightarrow S\times \mathbb{D}^r \]  obtained from the pullback of the natural closed immersion $\{T_1\cdots T_r=0\}\hookrightarrow \mathbb{D}^r$. Consider \[Y^{(r)}:= S\times\left(\mathbb{D}^r\backslash \{T_r=0\}\right)\subset X.\]
Then we have a closed immersion $U\hookrightarrow Y^{(r)}$, given by  
\[\left(S\times \mathbb{D}^{\times}\right)\times \{T_1\cdots T_{r-1}=0\}\hookrightarrow\left(S\times \mathbb{D}^{\times}\right)\times\mathbb{D}^{r-1}\]
obtained from the pullback of the natural closed immersion $\{T_1\cdots T_{r-1}=0\}\hookrightarrow \mathbb{D}^{r-1}$. By the induction hypothesis, $\mathbb{L}$ extends (uniquely) to a local system on $Y^{(r)}$ and it is crystalline everywhere on $Y^{(r)}$.

Now, the inclusion $Y^{(r)}\rightarrow Y$ is simply \[\left(S\times \mathbb{D}^{r-1}\right)\times \mathbb{D}^{\times}\hookrightarrow \left(S\times \mathbb{D}^{r-1}\right)\times \mathbb{D}\]
obtained from the pullback of the inclusion $\mathbb{D}^{\times}\hookrightarrow\mathbb{D}$. Applying the case for $r=1$ one more time and we obtain the desired claim. This finishes the proof of the theorem. 
 \end{proof}
 

\newpage 

\bibliographystyle{amsalpha}
\bibliography{library}

\vspace{1cm}

\end{document}